\newif\ifmonochrome
\tikzset{bluep/.style={blue,dash pattern = on .7 pt off .5 pt,line width=.3pt}}
\tikzset{redp/.style={red,dash pattern = on 2pt off 1pt,line width=.8pt}}
\tikzset{greenp/.style={green,dash pattern = on 1pt off 1pt,line width = .55pt}}
\tikzset{bluep/.style={blue}}
\tikzset{redp/.style={red}}
\tikzset{greenp/.style={green}}
\newcommand{\arxiv}[1]{\href{http://arxiv.org/abs/#1}{\tt
    arXiv:\nolinkurl{#1}}}
\newtheorem{theorem}{Theorem}[section]
\newtheorem{lemma}[theorem]{Lemma}
\newtheorem{corollary}[theorem]{Corollary}
\theoremstyle{definition}
\newtheorem{definition}[theorem]{Definition}
\newtheorem{remark}[theorem]{Remark}
\DeclareMathOperator{\image}{im}
\newcommand{\cH}{H}
\newcommand{\dH}{AH}
\newcommand{\op}{\textup{op}}
\newcommand{\rev}{\textup{rev}}
\newcommand{\pmd}{\textup{-pmod}}
\newcommand{\rh}{\mathrm{Heis}}
\def\e{e}
\def\h{h}
\def\s{s}
\def\E{E}
\def\H{H}
\def\S{\mathfrak{S}}
\def\p{p}
\def\c{c}
\def\bull{{\scriptstyle\,\oplus\,}}
\def\star{{\scriptstyle\,\ominus\,}}
\def\jonschi{{\!\begin{tikzpicture}[baseline=-1mm]\node at
    (0,0){$\chi$};\end{tikzpicture}\!\!\!}}
\def\ind{\operatorname{ind}}
\def\res{\operatorname{res}}
\def\Add{\operatorname{Add}}
\def\Kar{\operatorname{Kar}}
\def\Sym{\operatorname{Sym}}
\def\SymZ{\operatorname{Sym}}
\def\SymQ{\operatorname{Sym}_\Q}
\def\SYM{{\mathcal{S}ym}}
\def\Heis{{\mathcal{H}eis}}
\def\AH{{\mathcal{AH}}}
\newcommand{\End}{\operatorname{End}}
\newcommand{\unit}{\mathds{1}}
\newcommand{\Hom}{\operatorname{Hom}}
\newcommand{\Z}{\mathbb{Z}}
\newcommand{\N}{\mathbb{N}}
\newcommand{\Q}{\mathbb{Q}}
\def\la{\lambda}
\renewcommand{\k}{\Bbbk}
\renewcommand{\eta}{\operatorname{flip}}
\tikzset{anchorbase/.style={baseline={([yshift=-0.5ex]current bounding box.center)}}}
\def\red#1{{\color{red} #1}}
\def\blue#1{{\color{blue} #1}}
\def\green#1{{\color{green} #1}}
\def\clockright{\begin{tikzpicture}[baseline=-.9mm]
\filldraw[white] (0,0) circle (1.72mm);
\draw[-] (0,-0.18) to[out=180,in=-90] (-.18,0);
\draw[-] (-0.18,0) to[out=90,in=180] (0,0.18);
\draw[->] (0,0.18) to[out=0,in=90] (0.18,0);
\draw[-] (0.178,0.02) to[out=-78,in=0] (0,-0.18);
\end{tikzpicture}}
\def\anticlockleft{\begin{tikzpicture}[baseline=-.9mm]
\filldraw[white] (0,0) circle (1.72mm);
\draw[-] (0,-0.18) to[out=180,in=-102] (-.178,0.02);
\draw[<-] (-0.18,0) to[out=90,in=180] (0,0.18);
\draw[-] (0.18,0) to[out=-90,in=0] (0,-0.18);
\draw[-] (0,0.18) to[out=0,in=90] (0.18,0);
\end{tikzpicture}}
\def\smallclockred{\begin{tikzpicture}
\filldraw[white] (0,0) circle (1mm);
\draw[-,redp] (0,-0.1) to[out=180,in=-90] (-.1,0);
\draw[-,redp] (-0.1,0) to[out=90,in=180] (0,0.1);
\draw[->,redp] (0,0.1) to[out=0,in=90] (0.09,-0.04);
\draw[-,redp] (0.09,-0.02) to[out=-96,in=0] (0,-0.1);
\end{tikzpicture}}
\def\smallanticlockred{\begin{tikzpicture}
\filldraw[white] (0,0) circle (1mm);
\draw[-,redp] (0,-0.1) to[out=180,in=-84] (-.09,-.02);
\draw[<-,redp] (-0.09,-0.04) to[out=90,in=180] (0,0.1);
\draw[-,redp] (0.1,0) to[out=-90,in=0] (0,-0.1);
\draw[-,redp] (0,0.1) to[out=0,in=90] (0.1,0);
\end{tikzpicture}}
\def\clockrightred{\begin{tikzpicture}[baseline=-.9mm]
\filldraw[white] (0,0) circle (1.72mm);
\draw[-,redp] (0,-0.18) to[out=180,in=-90] (-.18,0);
\draw[-,redp] (-0.18,0) to[out=90,in=180] (0,0.18);
\draw[->,redp] (0,0.18) to[out=0,in=90] (0.18,0);
\draw[-,redp] (0.178,0.02) to[out=-78,in=0] (0,-0.18);
\end{tikzpicture}}
\def\clocktopred{\begin{tikzpicture}[baseline=-.9mm]
\filldraw[white] (0,0) circle (1.72mm);
\draw[-,redp] (0,-0.18) to[out=180,in=-90] (-.18,0);
\draw[->,redp] (-0.18,0) to[out=90,in=180] (0,0.18);
\draw[-,redp] (-0.02,0.178) to[out=12,in=90] (0.18,0);
\draw[-,redp] (0.18,0) to[out=-90,in=0] (0,-0.18);
\end{tikzpicture}}
\def\anticlockrightred{\begin{tikzpicture}[baseline=-.9mm]
\filldraw[white] (0,0) circle (1.72mm);
\draw[-,redp] (0,-0.18) to[out=180,in=-90] (-.18,0);
\draw[-,redp] (-0.18,0) to[out=90,in=180] (0,0.18);
\draw[-,redp] (0,0.18) to[out=0,in=78] (0.178,-0.02);
\draw[<-,redp] (0.18,0) to[out=-90,in=0] (0,-0.18);
\end{tikzpicture}}
\def\anticlockleftred{\begin{tikzpicture}[baseline=-.9mm]
\filldraw[white] (0,0) circle (1.72mm);
\draw[-,redp] (0,-0.18) to[out=180,in=-102] (-.178,0.02);
\draw[<-,redp] (-0.18,0) to[out=90,in=180] (0,0.18);
\draw[-,redp] (0.18,0) to[out=-90,in=0] (0,-0.18);
\draw[-,redp] (0,0.18) to[out=0,in=90] (0.18,0);
\end{tikzpicture}}
\def\smallclockblue{\begin{tikzpicture}
\filldraw[white] (0,0) circle (1mm);
\draw[-,bluep] (0,-0.1) to[out=180,in=-90] (-.1,0);
\draw[-,bluep] (-0.1,0) to[out=90,in=180] (0,0.1);
\draw[->,bluep] (0,0.1) to[out=0,in=90] (0.09,-0.04);
\draw[-,bluep] (0.09,-0.02) to[out=-96,in=0] (0,-0.1);
\end{tikzpicture}}
\def\smallanticlockblue{\begin{tikzpicture}
\filldraw[white] (0,0) circle (1mm);
\draw[-,bluep] (0,-0.1) to[out=180,in=-84] (-.09,-.02);
\draw[<-,bluep] (-0.09,-0.04) to[out=90,in=180] (0,0.1);
\draw[-,bluep] (0.1,0) to[out=-90,in=0] (0,-0.1);
\draw[-,bluep] (0,0.1) to[out=0,in=90] (0.1,0);
\end{tikzpicture}}
\def\clockrightblue{\begin{tikzpicture}[baseline=-.9mm]
\filldraw[white] (0,0) circle (1.72mm);
\draw[-,bluep] (0,-0.18) to[out=180,in=-90] (-.18,0);
\draw[-,bluep] (-0.18,0) to[out=90,in=180] (0,0.18);
\draw[->,bluep] (0,0.18) to[out=0,in=90] (0.18,0);
\draw[-,bluep] (0.178,0.02) to[out=-78,in=0] (0,-0.18);
\end{tikzpicture}}
\def\clocktopblue{\begin{tikzpicture}[baseline=-.9mm]
\filldraw[white] (0,0) circle (1.72mm);
\draw[-,bluep] (0,-0.18) to[out=180,in=-90] (-.18,0);
\draw[->,bluep] (-0.18,0) to[out=90,in=180] (0,0.18);
\draw[-,bluep] (-0.02,0.178) to[out=12,in=90] (0.18,0);
\draw[-,bluep] (0.18,0) to[out=-90,in=0] (0,-0.18);
\end{tikzpicture}}
\def\anticlockleftblue{\begin{tikzpicture}[baseline=-.9mm]
\filldraw[white] (0,0) circle (1.72mm);
\draw[-,bluep] (0,-0.18) to[out=180,in=-102] (-.178,0.02);
\draw[<-,bluep] (-0.18,0) to[out=90,in=180] (0,0.18);
\draw[-,bluep] (0.18,0) to[out=-90,in=0] (0,-0.18);
\draw[-,bluep] (0,0.18) to[out=0,in=90] (0.18,0);
\end{tikzpicture}}
\def\bigdot{{\color{white}\bullet}\hspace{.1mm}\!\!\!\pmb\circ}
\def\dot{{\color{white}\bullet}\!\!\!\circ}
\def\up{\:{\begin{tikzpicture}[anchorbase]\draw[->] (0,0) to (0,.32);\end{tikzpicture}}\:}
\def\redup{\:{\begin{tikzpicture}[anchorbase]\draw[->,redp] (0,0) to (0,.32);\end{tikzpicture}}\:}
\def\smallredup{\:{\begin{tikzpicture}[anchorbase]\draw[->,redp] (0,0) to (0,.22);\end{tikzpicture}}\:}
\def\blueup{\:{\begin{tikzpicture}[anchorbase]\draw[->,bluep] (0,0) to (0,.32);\end{tikzpicture}}\:}
\def\smallblueup{\:{\begin{tikzpicture}[anchorbase]\draw[->,bluep] (0,0) to (0,.22);\end{tikzpicture}}\:}
\def\greenup{\:{\begin{tikzpicture}[anchorbase]\draw[->,greenp] (0,0) to (0,.32);\end{tikzpicture}}\:}
\def\down{\:{\begin{tikzpicture}[anchorbase]\draw[<-] (0,0) to (0,.32);\end{tikzpicture}}\:}
\def\reddown{\:{\begin{tikzpicture}[anchorbase]\draw[<-,redp] (0,0) to (0,.32);\end{tikzpicture}}\:}
\def\bluedown{\:{\begin{tikzpicture}[anchorbase]\draw[<-,bluep] (0,0) to (0,.32);\end{tikzpicture}}\:}
\begin{document}

\title[Degenerate Heisenberg category]{The degenerate Heisenberg
  category\\and its Grothendieck ring}

\author[J.~Brundan]{Jonathan Brundan}
\address{Department of Mathematics\\
University of Oregon\\ Eugene\\ OR 97403\\ USA}
\email{brundan@uoregon.edu}

\author[A.~Savage]{Alistair Savage}
\address{
  Department of Mathematics and Statistics\\
  University of Ottawa\\
  Ottawa, ON\\ Canada
}
\urladdr{\href{http://alistairsavage.ca}{alistairsavage.ca}, \textrm{\textit{ORCiD}:} \href{https://orcid.org/0000-0002-2859-0239}{orcid.org/0000-0002-2859-0239}}
\email{alistair.savage@uottawa.ca}

\author[B.~Webster]{Ben Webster}
\address{Department of Pure Mathematics, University of Waterloo \&
  Perimeter Institute for Theoretical Physics\\
Waterloo, ON\\ Canada}
\email{ben.webster@uwaterloo.ca}

\thanks{J.B.\ supported in part by NSF grant DMS-1700905. A.S.\ and B.W.\ are supported by Discovery Grants RGPIN-2017-03854 and RGPIN-2018-03974 from the Natural Sciences and Engineering Research Council of Canada.}
\thanks{This research was supported in part by Perimeter Institute for Theoretical Physics. Research at Perimeter Institute is supported by the Government of Canada through the Department of Innovation, Science and Economic Development Canada and by the Province of Ontario through the Ministry of Research, Innovation and Science.}

\subjclass[2010]{17B10, 18D10}
\keywords{Heisenberg category, Grothendieck ring}

\begin{abstract}
The degenerate Heisenberg category
$\Heis_k$ is a strict monoidal category which
was originally introduced in the special case $k=-1$
by Khovanov in 2010.
Khovanov conjectured that the Grothendieck ring
of the additive Karoubi envelope of his category is isomorphic to a
certain $\Z$-form for the
universal enveloping algebra of the infinite-dimensional Heisenberg
Lie algebra specialized at central charge $-1$. We prove this
conjecture and extend it to arbitrary central charge $k \in \Z$.
We also explain how to categorify the comultiplication (generically).
\end{abstract}

\maketitle

\section{Introduction}

Throughout the article, we work over a fixed ground field $\k$ of characteristic zero.
The {degenerate Heisenberg category} $\Heis_k$ of central charge $k \in \Z$
is a strict $\k$-linear
monoidal category which was
introduced originally by Khovanov \cite{K} in the special case $k=-1$, motivated by the
calculus of induction and restriction functors between representations
of the symmetric groups. 
Khovanov's definition was extended to
arbitrary central charge in \cite{MS18, B2}.
The relations of this category are modeled on those of a $\Z$-form $\rh_k$ for a central reduction of
the universal enveloping algebra $U(\mathfrak{h})$ of the
infinite-dimensional Heisenberg Lie algebra.
By \cite{K, MS18}, there is an injective ring homomorphism
\begin{equation}
\gamma_k: \rh_k \rightarrow K_0(\Kar(\Heis_k))
\end{equation}
to the Grothendieck ring of the additive Karoubi
envelope of $\Heis_k$.
In this paper, we prove that $\gamma_k$ is also surjective, so that
$\Heis_k$ {\em categorifies} $\rh_k$, as was conjectured in \cite{K, MS18}.
We also take a first step towards categorification of the comultiplication on $U(\mathfrak{h})$.

To give more precise statements, we need to recall some basic notions.
Let $\SymZ$ be the ring of {\em symmetric functions}; see \cite{Mac}.
It is freely generated either by the {\em elementary symmetric functions}
$\{\e_n\}_{n \geq 1}$ or the {\em complete symmetric funtions} $\{\h_n\}_{n
  \geq 1}$.
We also have the {\em power sums} $\{\p_n\}_{n \geq 1}$ whose images generate
                     $\SymQ := \Q\otimes_{\Z} \SymZ$.
Moreover, $\SymZ$ is a Hopf ring with comultiplication $\delta:\SymZ
\rightarrow \SymZ\otimes_\Z \SymZ, f \mapsto \sum_{(f)}
f_{(1)}\otimes f_{(2)}$ satisfying
\begin{align}\label{comultiplication}
\delta(\h_n) &= \sum_{r=0}^n
\h_{n-r} \otimes \h_{r},
&
\delta(\e_n) &= \sum_{r=0}^n
\e_{n-r} \otimes \e_{r},
&
\delta(\p_n) &= \p_n \otimes 1 + 1 \otimes \p_n,
\end{align}
where $\h_0 = \e_0 = 1$ by convention.
As a $\Z$-module, $\SymZ$ is free with the canonical basis
$\{\s_\lambda\}_{\lambda \in \mathcal P}$ of {\em Schur functions}
indexed by the set $\mathcal P$ of all partitions.

The {\em infinite-dimensional Heisenberg Lie algebra} is the
Lie algebra $\mathfrak{h}$ over $\Q$ 
with basis $\left\{\c, \p_n^{\pm}\:\big|\:n \geq 1\right\}$
and Lie bracket defined from
\begin{align}
[\c,\p_n^{\pm}] &=
[\p_m^+, \p_n^+] = [\p_m^-,\p_n^-] = 0,&
[\p_m^+,\p_n^-] &= \delta_{m,n} n\c.
\end{align}
The central reduction
$U(\mathfrak{h}) / (\c - k)$ of its universal enveloping algebra
may also be realized as 
the {\em Heisenberg
  double}
$\SymQ \#_\Q \SymQ$ with respect
to the bilinear  Hopf
pairing 
\begin{equation} \label{Hopfpair}
  \langle -, - \rangle_k \colon \SymQ \times \SymQ \to \Q,\quad
  \langle \p_m, \p_n \rangle_k = \delta_{m,n}nk.
\end{equation}
By definition, $\SymQ\#_\Q \SymQ$ is
the vector space
$\SymQ \otimes_\Q \SymQ$ with associative multiplication defined by
\[
  (e \otimes f)(g \otimes h) := \sum_{(f),(g)} \left\langle
    f_{(1)}, g_{(2)} \right\rangle_k eg_{(1)} \otimes f_{(2)} h.
\]
The pairing 
of two complete symmetric functions is an integer, as follows for
example by comparing the coefficients appearing in \cite[Th.~5.3]{Sua}
to \cite[(2.2)]{Sua}.  
Thus we can restrict to obtain a biadditive form $\langle-,-\rangle_k:\SymZ \times \SymZ
\to \Z$. The resulting Heisenberg double
\begin{equation}
\rh_k := \SymZ\#_\Z\SymZ
\end{equation}
gives us a natural $\Z$-form for $U(\mathfrak{h}) / (\c-k)
\cong \SymQ \#_\Q \SymQ$.
For $f \in \SymZ$, we write $f^-$ and $f^+$ for the elements
$f \otimes 1$ and $1 \otimes f$ of $\rh_k$, respectively.
Then 
$\rh_k$ is generated as a ring by the elements $\{\h_n^+, \e_n^-\}_{n \geq 0}$ subject to
the relations 
\begin{align}\label{upper}
\h_0^+&=\e_0^-=1,&
\h_m^+ \h_n^+ &= \h_n^+ \h_m^+,&
\e_m^- \e_n^- &= \e_n^- \e_m^-,&
  \h_m^+ \e_n^- &= 
\sum_{r=0}^{\min(m,n)}
                  \binom{\,k\,}{\,r\,}\:\e_{n-r}^- \h_{m-r}^+.
\end{align}
See \cite[Section 5]{Sua} and \cite[Appendix
A]{LRS} where this and other presentations are derived. 
The usual comultiplication on $U(\mathfrak{h})$ descends
to ring homomorphisms
\begin{equation}\label{talent}
\delta_{l|m}:\rh_k \rightarrow \rh_l \otimes_\Z \rh_m,\quad
f^\pm \mapsto \sum_{(f)} (f_{(1)})^\pm \otimes (f_{(2)})^{\pm}
\end{equation}
for $k=l+m$ and $f \in \SymZ$.
The antipode induces 
$\sigma_k:\rh_k \stackrel{\sim}{\rightarrow} (\rh_{-k})^{\op},
s_\lambda^{\pm} \mapsto (-1)^{|\lambda|} s_{\lambda^T}^{\pm}$.
Also there is an
isomorphism
\begin{equation} \label{sedai}
    \omega_k \colon \rh_{k}\stackrel{\sim}{\rightarrow} \rh_{-k},\qquad
    s_\lambda^\pm\mapsto s_{\lambda^T}^\mp.
\end{equation}

The {\em degenerate Heisenberg category} $\Heis_k$ is a
strict $\k$-linear monoidal category 
with two generating objects $\up$ and $\down$ and six generating morphisms
\begin{align*}
\mathord{
\begin{tikzpicture}[baseline = 0]
	\draw[->] (0.08,-.3) to (0.08,.4);
      \node at (0.08,0.05) {$\dot$};
\end{tikzpicture}
}, \qquad
\mathord{
\begin{tikzpicture}[baseline = 0]
	\draw[->] (0.28,-.3) to (-0.28,.4);
	\draw[->] (-0.28,-.3) to (0.28,.4);
\end{tikzpicture}
}, \qquad
\mathord{
\begin{tikzpicture}[baseline = 1mm]
	\draw[<-] (0.4,0.4) to[out=-90, in=0] (0.1,0);
	\draw[-] (0.1,0) to[out = 180, in = -90] (-0.2,0.4);
\end{tikzpicture}
}\:, \qquad
\mathord{
\begin{tikzpicture}[baseline = 1mm]
	\draw[<-] (0.4,0) to[out=90, in=0] (0.1,0.4);
	\draw[-] (0.1,0.4) to[out = 180, in = 90] (-0.2,0);
\end{tikzpicture}
}\:, \qquad
\mathord{
\begin{tikzpicture}[baseline = 1mm]
	\draw[-] (0.4,0.4) to[out=-90, in=0] (0.1,0);
	\draw[->] (0.1,0) to[out = 180, in = -90] (-0.2,0.4);
\end{tikzpicture}
}\:, \qquad
\mathord{
\begin{tikzpicture}[baseline = 1mm]
	\draw[-] (0.4,0) to[out=90, in=0] (0.1,0.4);
	\draw[->] (0.1,0.4) to[out = 180, in = 90] (-0.2,0);
\end{tikzpicture}
}\:.
\end{align*}
A full set of relations between these generating morphisms
is recorded in
Definition~\ref{maindef} below, 
where we adopt the usual string calculus for strict monoidal
categories. 
The relations imply that $\Heis_k$ is {\em strictly pivotal} with 
duality functor $*$ defined on a morphism 
by rotating its string diagram through $180^\circ$.
In particular, the generating objects $\up$ and $\down$ are duals of
each other.
Letting $\S_n$ denote the symmetric group with basic transpositions $s_1,\dots,s_{n-1}$,
there is also an algebra homomorphism 
$\imath_n: \k \S_n
  \rightarrow \End_{\Heis_k}\left(\up^{\otimes n}\right)$,
which sends $s_i$ to the crossing of the $i$th and $(i+1)$th
strings. Note we always number strings in diagrams by $1,2,\dots$ from {\em right to left}.

By the {\em additive Karoubi envelope}
$\Kar(\Heis_k)$ of $\Heis_k$, we mean
the idempotent completion of its additive envelope
$\Add(\Heis_k)$.
Let $K_0(\Kar(\Heis_k))$ be the Grothendieck ring
of the monoidal category $\Kar(\Heis_k)$, i.e., the split
Grothendieck group with multiplication $[X][Y] := [X
\otimes Y]$.
For $\lambda \in \mathcal P$ with $|\lambda|=n$, let $e_\lambda \in \k
\S_n$ be the corresponding Young symmetrizer, so that the left ideal $S(\lambda) := (\k \S_n)
e_\lambda$ is the usual (irreducible) {\em Specht module} for the
symmetric group.
Associated to the idempotent $e_\lambda$, we also have the object
\begin{align}\label{goodtimes}
S_\lambda^+ &:=
\left(\up^{\otimes n}, \imath_n(e_\lambda)\right) \in \Kar(\Heis_k).
\end{align}
Let $S_\lambda^- := (S_\lambda^+)^*$,
and set $\H_n^{\pm} := S_{(n)}^{\pm}$ and
$\E_n^{\pm} := S_{(1^n)}^{\pm}$
for short.
Our first main result is as follows.

\begin{theorem}\label{t1}
There is a ring isomorphism $\gamma_k:\rh_k \stackrel{\sim}{\rightarrow}
K_0(\Kar(\Heis_k))$ such that 
$\s_\lambda^{\pm}\mapsto
[S_\lambda^{\pm}]$ for each $\lambda \in \mathcal{P}$.
In particular,
$\h_n^{\pm} \mapsto [\H_n^{\pm}]$ and
$\e_n^{\pm} \mapsto [\E_n^{\pm}]$.
Also for $X \in \Kar(\Heis_k)$ we have that
$[X] = 0 \Rightarrow X = 0$.
\end{theorem}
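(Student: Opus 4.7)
Since \cite{K, MS18} already produces the injective ring homomorphism $\gamma_k: \rh_k \to K_0(\Kar(\Heis_k))$ with $\h_n^\pm \mapsto [\H_n^\pm]$, the substance of the theorem is threefold: identifying $\gamma_k(s_\lambda^\pm) = [S_\lambda^\pm]$, proving surjectivity, and establishing the faithfulness statement $[X]=0 \Rightarrow X=0$. The plan is to produce a spanning set of $K_0(\Kar(\Heis_k))$ indexed in the same way as the natural PBW basis of $\rh_k$, and then lean on injectivity.

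The identification on Schur functions is a categorical Jacobi--Trudi calculation. Using the determinantal formula $s_\lambda = \det(h_{\lambda_i-i+j})$ together with the relations of $\Heis_k$, one expresses the Young-symmetrizer object $S_\lambda^+$ in the Grothendieck ring as an alternating sum of products of classes $[\H_n^+]$, matching $\gamma_k(s_\lambda^+)$; equivalently, one decomposes $\imath_n(e_\lambda)$ in terms of the Young symmetrizers for $\H_n^+$'s. A dual argument handles $s_\lambda^-$.

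The core of the proof is surjectivity, via a categorified PBW decomposition. Every object of $\Heis_k$ is a tensor word in $\up$ and $\down$, and the ``mock Heisenberg'' relations of $\Heis_k$ (which categorify $[\p_m^+,\p_n^-]=\delta_{m,n} mk$) provide, inside $\Kar(\Heis_k)$, an isomorphism rewriting $\down\otimes\up$ in terms of $\up\otimes\down$ plus summands of shorter words. Iterating, every object of $\Kar(\Heis_k)$ is a direct summand of a sum of objects $\down^{\otimes b} \otimes \up^{\otimes a}$. Since $\k\S_n$ is semisimple, the algebra homomorphism $\imath_n$ yields
\[
\up^{\otimes n} \;\cong\; \bigoplus_{|\lambda|=n} (S_\lambda^+)^{\oplus \dim S(\lambda)} \quad\text{in } \Kar(\Heis_k),
\]
and dually for $\down^{\otimes n}$. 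Combining, the classes $\{[S_\lambda^-][S_\mu^+]\}_{\lambda,\mu\in\mathcal P}$ span $K_0(\Kar(\Heis_k))$ as a $\Z$-module. On the algebra side, $\rh_k = \Sym_\Z \#_\Z \Sym_\Z$ has the PBW $\Z$-basis $\{s_\lambda^- s_\mu^+\}_{\lambda,\mu\in\mathcal P}$, and $\gamma_k$ sends this basis onto the spanning set above; by the already-known injectivity, it is a $\Z$-basis, so $\gamma_k$ is an isomorphism.

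The faithfulness statement $[X]=0 \Rightarrow X=0$ then follows from Krull--Schmidt for $\Kar(\Heis_k)$, which reduces to showing that the endomorphism ring of each indecomposable summand of every $S_\lambda^\pm$ is local, indeed $\cong\k$; this rests on a diagrammatic basis theorem for $\Hom$-spaces in $\Heis_k$ together with Schur's lemma for Specht modules. The main obstacle is the rewriting in the surjectivity argument: controlling the ``error terms'' produced when commuting a $\down$ past a string of $\up$'s, and showing they really do live as summands of shorter tensor words, requires a sufficiently explicit basis theorem for $\Hom$-spaces in $\Heis_k$. That same basis theorem is what feeds the Schur-type calculation needed for the Krull--Schmidt step, so establishing it is the central technical task on which every part of the proof depends.
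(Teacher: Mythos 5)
Your overall architecture (produce a spanning set matching the PBW basis of $\rh_k$, then invoke injectivity) is reasonable, and leaning on the known injectivity from \cite{K,MS18} is a legitimate shortcut the paper deliberately avoids. But the central step — surjectivity — has a genuine gap. Knowing that every object of $\Kar(\Heis_k)$ is a direct summand of a sum of words $\down^{\otimes b}\otimes\up^{\otimes a}$, and that each such word decomposes as $\bigoplus S_\mu^-\otimes S_\lambda^+$ with multiplicities, does \emph{not} imply that the classes $[S_\mu^-\otimes S_\lambda^+]$ span $K_0$: a summand of an object need not have its class in the $\Z$-span of the classes of the ambient objects (already $K_0(\k\times\k)=\Z^2$ is not spanned by $[\k\times\k]$). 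What you actually need is control of $K_0$ of the endomorphism algebras $\End_{\Heis_k}\bigl(\bigoplus \down^{\otimes b}\otimes\up^{\otimes a}\bigr)$, which by the basis theorem are \emph{infinite-dimensional}, containing copies of $AH_{b}\otimes AH_{a}\otimes\Sym$ plus the two-sided ideal generated by cups and caps. The paper handles this with two ingredients you never invoke: a splitting theorem for $K_0$ of a ring with an idempotent $e$ such that $R/ReR$ lifts back into $R$ (Theorem~\ref{Grothsplit}, applied inductively along the filtration by number of cups/caps), and Khovanov's result via Quillen that $K_0(\k\S_n)\stackrel{\sim}{\to}K_0(AH_n)$, i.e.\ that the dots create no new classes (Theorem~\ref{starting}). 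Your "straightening with error terms" remark gestures at the basis theorem, but the basis theorem alone does not compute $K_0$ of these algebras; without the two results above the spanning claim is unproved.

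The faithfulness step is also not right as written. You claim the endomorphism rings of the indecomposable summands of $S_\lambda^\pm$ are $\cong\k$; in fact $\End_{\Kar(\Heis_k)}(H_n^+)\cong\Sym_n\otimes_\k\Sym$ is infinite-dimensional (bubbles and symmetric polynomials in the dots act nontrivially), and Krull--Schmidt for $\Kar(\Heis_k)$ is not available off the shelf since the relevant algebras are neither finite-dimensional nor complete local. The paper instead deduces $[X]=0\Rightarrow X=0$ from \emph{stable finiteness}, propagated through the same inductive $K_0$-splitting (the last clause of Theorem~\ref{Grothsplit}) and the elementary fact that an algebra finitely generated over its center is stably finite (Lemma~\ref{beerisgood}), applied to $AH_{n_1}\otimes AH_{n_2}\otimes\Sym$. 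Finally, for the identification $\gamma_k(s_\lambda^\pm)=[S_\lambda^\pm]$, your Jacobi--Trudi route could be made to work, but it is unnecessary: once one knows the restriction of $\gamma_k$ to each half is a ring map, the identification follows from the classical isomorphism $\Sym_\Z\cong K_0(\Kar(\SYM))$ pushed forward along $\SYM\to\Heis_k$.
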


This proves extended versions of \cite[Conjecture~1]{K} 
and \cite[Conjecture~4.5]{MS18}. The original conjectures in {\em
  loc.\ cit.} are concerned with the specialization $\Heis_k(\delta)$
of $\Heis_k$
obtained by
evaluating the (strictly central) bubble
$\begin{tikzpicture}[baseline=-1mm]
    \draw[<-] (0,0.2) arc(90:450:0.2);
    \draw (-0.2,0) node {$\dot$} node[anchor=east] {$\scriptstyle{k}$};
  \end{tikzpicture}=
\begin{tikzpicture}[baseline=-1mm]
    \draw[->] (0,0.2) arc(90:450:0.2);
    \draw (0.2,0) node {$\dot$} node[anchor=west] {$\scriptstyle{-k}$};
  \end{tikzpicture}$
at a scalar $\delta \in \k$; see \cite[Theorem 1.4]{B2}.
We will not discuss this specialization further here, but note that
our arguments can be carried out in $\Heis_k(\delta)$ in exactly
the same way as in $\Heis_k$.
Consequently, Theorem~\ref{t1} remains true when $\Heis_k$ is
replaced by $\Heis_k(\delta)$. The specialized version with $k=-1,\delta=0$ or
with $k < 0,\delta\in\Z$ proves
the original conjectures from \cite{K} and \cite{MS18}, respectively.

The main new ingredient needed to prove Theorem~\ref{t1} is to
show that $\gamma_k$ is {\em surjective}. We do this by combining the strategy
proposed by Khovanov in \cite[Section 5]{K} with one additional general result
about Grothendieck groups; see Theorem~\ref{Grothsplit}. This additional result is well known
(and easy to prove) in the setting of finite-dimensional algebras. However, we need it here for
algebras that are not finite-dimensional and, at this level of
generality, we actually could not find it explicitly in the
literature (but see \cite{D} for a related result).

We also prove the following theorem, which categorifies the relations
(\ref{upper}). 
An analogous result categorifying the commutation relations
between $h_m^+$ and $h_n^-$ was recorded in
\cite[Proposition 4.3]{MS18}, where it was used to construct the
homomorphism $\gamma_k$ in the first
place. In our proof of Theorem~\ref{t1} explained in Section~\ref{sp},
we give a new approach
to the construction of $\gamma_k$, thereby making our arguments
completely independent of {\em loc.\ cit.}.
We are then able to exploit Theorem~\ref{t1} to give a considerably simplified
proof of the
categorical relations; see Section~\ref{sfin}.

\begin{theorem}\label{t3}
In $\Kar(\Heis_k)$, there are distinguished isomorphisms 
\begin{align*}
H_m^+
\otimes H_n^+ &\cong H_n^+ \otimes H_m^+,&
H_m^+ \otimes E_n^-
 &\cong 
\bigoplus_{r=0}^{\min(m,n,k)} \bigoplus_{\lambda \in \mathcal P_{r,k}}
E_{n-r}^-
\otimes H_{m-r}^+\quad\text{if $k \geq 0$,}\\
E_m^-\otimes E_n^- &\cong E_n^-\otimes E_m^-,&
E_m^- \otimes H_n^+ &\cong
\bigoplus_{r=0}^{\min(m,n,-k)} \bigoplus_{\lambda \in \mathcal P_{r,-k}}
H_{n-r}^+
\otimes E_{m-r}^-\quad\text{if $k \leq 0$,}
\end{align*}
where $\mathcal P_{r,k}$ denotes the set of all partitions whose Young
diagram fits into an $r \times (k-r)$ rectangle.
\end{theorem}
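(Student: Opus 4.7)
My plan is to deduce each isomorphism in the statement directly from Theorem~\ref{t1} combined with the Krull--Schmidt property of $\Kar(\Heis_k)$, and then to realize these isomorphisms by distinguished morphisms in the graphical calculus.

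First I would verify equality of Grothendieck classes on both sides. By Theorem~\ref{t1} we have $[H_n^\pm] = \h_n^\pm$ and $[E_n^\pm] = \e_n^\pm$, so the asserted commutations translate into $\h_m^+ \h_n^+ = \h_n^+ \h_m^+$ and $\e_m^- \e_n^- = \e_n^- \e_m^-$, which are part of (\ref{upper}). For the mixed relation when $k\ge 0$, the class of the right-hand side is
\[
\sum_{r=0}^{\min(m,n)} |\mathcal P_{r,k}|\, \e_{n-r}^- \h_{m-r}^+ = \sum_{r=0}^{\min(m,n)} \binom{k}{r}\, \e_{n-r}^- \h_{m-r}^+ = \h_m^+ \e_n^-,
\]
using the classical count $|\mathcal P_{r,k}| = \binom{k}{r}$ and the last relation in (\ref{upper}); this matches $[H_m^+ \otimes E_n^-]$. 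The $k\le 0$ case is analogous, or follows by transporting through $\sigma_k$ or $\omega_k$.

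To upgrade equality of Grothendieck classes into an actual isomorphism I would invoke Krull--Schmidt in $\Kar(\Heis_k)$: under $\gamma_k$, the ring $\rh_k$ is a free $\Z$-module on classes of the form $[S_\lambda^- \otimes S_\mu^+]$, and the injectivity statement $[X]=0 \Rightarrow X=0$ from Theorem~\ref{t1} forces uniqueness of decomposition into indecomposables. Hence any two objects with equal Grothendieck class are isomorphic, which already proves the existence of abstract isomorphisms in all four cases.

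Finally, to realize these isomorphisms by \emph{distinguished} morphisms I would construct explicit maps in the string calculus. The commutation isomorphisms come from the obvious braiding permuting the $m$ strands past the $n$ strands by crossings, composed with the appropriate Young symmetrizers $\imath_{m+n}(e_{(m)}\otimes e_{(n)})$ (respectively $\imath_{m+n}(e_{(1^m)}\otimes e_{(1^n)})$). For the mixed relation with $k\ge 0$, I would construct, for each $r$ and each $\lambda \in \mathcal P_{r,k}$, an explicit morphism $E_{n-r}^- \otimes H_{m-r}^+ \to H_m^+ \otimes E_n^-$ built from $r$ nested cups and caps, decorated by clockwise bubbles whose degree data encode $\lambda$; assembling these over $\lambda$ and $r$ gives the candidate distinguished isomorphism. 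The main obstacle is choosing the bubble decorations so that the assembled map is manifestly invertible. Thanks to Theorem~\ref{t1} and Krull--Schmidt, however, this reduces to a finite-dimensional linear-algebra check: it suffices to show the map is nonzero on each indecomposable summand, for example by pairing with an explicit dual family of cap-and-cup morphisms, and then the matching Grothendieck class forces invertibility.
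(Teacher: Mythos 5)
Your first step (matching Grothendieck classes via Theorem~\ref{t1}, the identity $|\mathcal P_{r,k}|=\binom{k}{r}$ and the relations (\ref{upper})) is correct and is also how the paper closes its argument. But the way you propose to convert equal classes into an isomorphism contains a genuine gap. You invoke ``Krull--Schmidt in $\Kar(\Heis_k)$'' to conclude that any two objects with equal Grothendieck class are isomorphic. This is not available here: the morphism spaces of $\Heis_k$ are infinite-dimensional over $\k$ (free of finite rank over $\Sym$ by Theorem~\ref{basis}), endomorphism rings of indecomposables such as $\End_{\Heis_k}(\unit)\cong\Sym$ are not local, and the paper never establishes unique decomposition into indecomposables. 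All that Theorem~\ref{t1}/Theorem~\ref{fatherbrown} gives is stable finiteness, i.e.\ $[X]=0\Rightarrow X=0$. From $[P]=[Q]$ alone one only gets $P\oplus M\cong Q\oplus M$ for some $M$, and cancellation of $M$ does not follow from stable finiteness. What stable finiteness \emph{does} let you do is the following: if you have an explicit split epimorphism $Q\twoheadrightarrow P$, then $Q\cong P\oplus R$ with $[R]=[Q]-[P]=0$, hence $R=0$. So the explicit morphism is not an optional refinement to make the isomorphism ``distinguished''; it is logically required.

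That puts the entire weight of the proof on the step you defer as ``the main obstacle.'' The paper's actual work is precisely here: it defines $\theta_{m,n}:H_m^+\otimes E_n^-\to\bigoplus_{r,\lambda}E_{n-r}^-\otimes H_{m-r}^+$ as a column vector of thick cups decorated by the Schur polynomials $\jonschi_\lambda$, and proves via an induction in the thick calculus (Lemma~\ref{squishier} and Corollary~\ref{whatIreallywant}, which rest on (\ref{sideways}), dot-sliding and (\ref{nearlylunchtime})) that $\theta_{m,n}$ admits a one-sided inverse. Your proposed substitute --- ``show the map is nonzero on each indecomposable summand'' --- is not a sufficient condition for being split, and ``a finite-dimensional linear-algebra check'' is not available since the relevant Hom-spaces are infinite-dimensional over $\k$. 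Finally, a small point: the reduction of $k\le 0$ to $k\ge 0$ should go through the categorical symmetry $\Omega_k$ of (\ref{Omega}) (which swaps $H_n^+$ and $E_n^-$ up to transposing partitions), not the decategorified maps $\sigma_k$ or $\omega_k$.
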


The other key ingredient making this new approach possible
is a strict monoidal functor
\begin{equation}\label{finalversion}
\Delta_{l|m}:\Kar(\Heis_k) \rightarrow \Kar(\Heis_l \;\overline{\odot}\;\Heis_m)
\end{equation}
for $k=l+m$; see Theorem~\ref{comult}.
Here, $-\odot -$ denotes symmetric product of strict monoidal
categories (see Section~\ref{newsec} for the definition),
and
$\Heis_l \;\overline{\odot}\;\Heis_m$
is the localization of
$\Heis_l \odot \Heis_m$
at the morphism
\begin{equation}\label{sand}
\mathord{
\begin{tikzpicture}[baseline = -1mm]
 	\draw[->,redp] (0.18,-.4) to (0.18,.4);
	\draw[->,bluep] (-0.38,-.4) to (-0.38,.4);
     \node at (0.18,0) {$\red{\dot}$};
\end{tikzpicture}
}-
\mathord{
\begin{tikzpicture}[baseline = -1mm]
 	\draw[->,redp] (0.18,-.4) to (0.18,.4);
	\draw[->,bluep] (-0.38,-.4) to (-0.38,.4);
     \node at (-0.38,0) {$\blue{\dot}$};
\end{tikzpicture}}\:,
\end{equation}
where the left (blue) string comes from $\Heis_l$ and the
right (red) string comes from $\Heis_m$.
The following explains how $\Delta_{l|m}$ categorifies the
comultiplication $\delta_{l|m}$ from (\ref{talent}). 

\begin{theorem}\label{t2}
For any $k=l+m$, there is a commutative diagram
$$
\begin{diagram}
\node{\rh_k}\arrow[2]{s,l,A,J}{\gamma_k}\arrow{e,t}{\delta_{l|m}}\node{\rh_l\otimes_\Z\rh_m}\arrow{se,t,A,J}{\gamma_l\otimes\gamma_m}\\
\node[3]{\hspace{-14mm}K_0(\Kar(\Heis_l))\otimes_\Z
  K_0(\Kar(\Heis_m))}\arrow{sw,b}{\epsilon_{l|m}}\\
\node{K_0(\Kar(\Heis_k))}\arrow{e,b}{[\Delta_{l|m}]}\node{K_0(\Kar(\Heis_l\;\overline\odot\;\Heis_m)),}
\end{diagram}
$$
where
$\epsilon_{l|m}$ is the ring homomorphism induced by the canonical functors
from $\Heis_l$ and
$\Heis_m$ to $\Heis_l \;\overline{\odot}\;\Heis_m$.
\end{theorem}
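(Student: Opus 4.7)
The plan is to verify commutativity on a ring generating set of $\rh_k$. All five maps in the diagram are ring homomorphisms: $\gamma_k$ and $\gamma_l\otimes\gamma_m$ by Theorem~\ref{t1}, $\delta_{l|m}$ by (\ref{talent}), $\epsilon_{l|m}$ by functoriality of $K_0$ combined with strict monoidality of the canonical inclusions, and $[\Delta_{l|m}]$ because $\Delta_{l|m}$ is strict monoidal. Hence it suffices to verify agreement on the generators $\{h_n^+,e_n^-\}_{n\ge 0}$ of $\rh_k$ furnished by (\ref{upper}), together with their pivotal duals $\{h_n^-,e_n^+\}_{n\ge 0}$. Using $\gamma_k(h_n^\pm)=[H_n^\pm]$, $\gamma_k(e_n^\pm)=[E_n^\pm]$ from Theorem~\ref{t1} and the identities
\[
\delta_{l|m}(h_n^+)=\sum_{r=0}^n h_{n-r}^+\otimes h_r^+,\qquad \delta_{l|m}(e_n^-)=\sum_{r=0}^n e_{n-r}^-\otimes e_r^-
\]
read off from (\ref{comultiplication})--(\ref{talent}), the problem reduces to showing
\[
[\Delta_{l|m}(H_n^+)]=\sum_{r=0}^n\epsilon_{l|m}\!\bigl([H_{n-r}^+]\otimes[H_r^+]\bigr),\qquad [\Delta_{l|m}(E_n^-)]=\sum_{r=0}^n\epsilon_{l|m}\!\bigl([E_{n-r}^-]\otimes[E_r^-]\bigr)
\]
in $K_0(\Kar(\Heis_l\;\overline\odot\;\Heis_m))$; the $h_n^-,e_n^+$ cases then follow on applying the pivotal duality $*$ to both sides.

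\textbf{Splitting and symmetric powers.} The key categorical input will come from the construction of $\Delta_{l|m}$ in Theorem~\ref{comult}: in $\Kar(\Heis_l\;\overline\odot\;\Heis_m)$ there should be a splitting
\[
\Delta_{l|m}(\up)\;\cong\;\up_l\oplus\up_m,
\]
where $\up_l,\up_m$ denote the images of the two generating objects under the canonical functors into $\Heis_l\;\overline\odot\;\Heis_m$, the complementary idempotents on $\Delta_{l|m}(\up)$ being manufactured from the inverse of (\ref{sand}) — which is precisely why that morphism must be inverted in the localisation. Dually, $\Delta_{l|m}(\down)\cong\down_l\oplus\down_m$. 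Granted this, because strands of different colours commute past each other in the symmetric product, I would then deduce an isomorphism $\Delta_{l|m}(\up^{\otimes n})\cong(\up_l\oplus\up_m)^{\otimes n}$ under which the $\S_n$-action coming from $\Delta_{l|m}\circ\imath_n$ matches permutation of tensor factors. Projecting via $\imath_n(e_{(n)})$ onto the sign-trivial component then yields
\[
\Delta_{l|m}(H_n^+)\;\cong\;\Sym^n(\up_l\oplus\up_m)\;\cong\;\bigoplus_{r=0}^n\Sym^{n-r}(\up_l)\otimes\Sym^r(\up_m)\;=\;\bigoplus_{r=0}^n H_{n-r}^+\otimes H_r^+,
\]
giving the first identity upon passing to $K_0$. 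The second identity is handled identically using $\imath_n(e_{(1^n)})$ together with the parallel decomposition of an exterior power of a direct sum.

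\textbf{Main obstacle.} The principal technical task will be constructing the splitting $\Delta_{l|m}(\up)\cong\up_l\oplus\up_m$ — i.e.\ producing a pair of orthogonal complementary idempotents on $\Delta_{l|m}(\up)$ with images $\up_l$ and $\up_m$, manufactured from the inverse of (\ref{sand}) — and then checking that the induced $\S_n$-action on $(\up_l\oplus\up_m)^{\otimes n}$ really is the naive permutation action. Both are diagrammatic calculations in $\Heis_l\;\overline\odot\;\Heis_m$ using the symmetric product relations together with the invertibility of (\ref{sand}); once they are in place, the remainder of the argument is a formal consequence of Theorem~\ref{t1} and the standard decompositions of the symmetric and exterior powers of a direct sum.
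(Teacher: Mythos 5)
Your reduction to the generators is exactly the paper's argument: since all five maps are ring homomorphisms and $\rh_k$ is generated by $\{h_n^+,e_n^-\}_{n\geq 0}$ (by (\ref{upper})), commutativity need only be checked on those elements, and it then amounts to the isomorphisms $\Delta_{l|m}(H_n^+)\cong\bigoplus_r H_{n-r}^+\otimes H_r^+$ and $\Delta_{l|m}(E_n^-)\cong\bigoplus_r E_{n-r}^-\otimes E_r^-$. (Your extra step for $h_n^-,e_n^+$ is unnecessary, and invoking $*$ there is dubious anyway, since by Remark~\ref{altsss} the functor $\Delta_{l|m}$ does \emph{not} commute with $*$.) These isomorphisms are precisely (\ref{magic}), which is part of the \emph{statement} of Theorem~\ref{comult}; citing it finishes the proof, and that is all the paper does.

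The gap is in your attempted re-derivation of those isomorphisms. First, there is no splitting to "manufacture": $\Delta_{l|m}(\up)$ is \emph{defined} to be the object $\blue{\up}\oplus\red{\up}$ of the additive envelope, so the inverse of (\ref{sand}) plays no role at that step. More seriously, your key claim that the $\S_n$-action coming from $\Delta_{l|m}\circ\imath_n$ on $(\blue{\up}\oplus\red{\up})^{\otimes n}$ "matches permutation of tensor factors" is false. Look at (\ref{com1}): the image of the crossing is the symmetric braiding \emph{plus} four correction terms built from the dumbbell (the inverse of (\ref{sand})) --- in particular the diagonal blocks on the mixed components $\blue{\up}\otimes\red{\up}$ and $\red{\up}\otimes\blue{\up}$ are $\mp$ a dumbbell rather than zero. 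Consequently $\Delta_{l|m}(\imath_n(e_{(n)}))$ is not the projector onto the naive symmetric power, and the decomposition $\Sym^n(A\oplus B)\cong\bigoplus_r\Sym^{n-r}(A)\otimes\Sym^r(B)$ does not apply off the shelf. This is exactly the point the paper flags when it says the proof of Theorem~\ref{upgraded} is "rather non-trivial" in comparison with Theorem~\ref{trivial} (the undeformed $\SYM$ case, where your argument \emph{is} correct): one must exhibit explicit conjugating morphisms $u,v$ whose entries involve the rational factors $\prod_{i,j}(1\pm\varepsilon_{i,j}(\lambda)y_{i,j})$ (Lemma~\ref{government}), and verify $v\circ u=\sum_r\imath_{r,n}(e_{(r)}\otimes e_{(n-r)})$ via the combinatorial identity of Lemma~\ref{wow}. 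Without that input (or an explicit citation of (\ref{magic})), your proof is incomplete.
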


The categorical comultiplication $\Delta_{l|m}$ allows one to take tensor products
of Heisenberg module categories provided that the morphism (\ref{sand}) acts
invertibly (that is, there is no overlap in the spectrum of the red and blue dots). 
In Section~\ref{sbt}, we give another application of this principle,
namely, an efficient new proof of the basis
theorem for morphism spaces in $\Heis_k$ from \cite[Theorem 1.6]{B2} (where it is
proved by invoking results of \cite{K, MS18} when $k < 0$
and \cite{BCNR} when $k = 0$). The same general idea was first
used in 
\cite{Wunfurling}, and its formulation via categorical comultiplication as developed here has subsequently been applied to establish basis theorems for several other diagrammatic monoidal categories of a similar
nature, including Frobenius and quantum analogs of the Heisenberg
category; see \cite{qheis, Foundations, QFrobHeis}.

\vspace{1mm}
\noindent
{\em Organization.}
We begin in Section~\ref{sgg} by proving the key auxiliary result about Grothendieck rings, namely, that $K_0(R) \cong K_0(eRe) \oplus K_0(R / ReR)$ 
for a unital ring $R$ and an idempotent $e \in R$
assuming that the quotient map $R \rightarrow R / ReR$ splits in a suitable way. Section~\ref{newsec} introduces symmetric groups $\S_n$ for all $n \geq 0$ in the guise of the {\em symmetric category} $\SYM$, which is the free strict $\k$-linear symmetric monoidal category on one object. Then we reformulate a classical result of Frobenius showing that the Grothendieck ring of 
$\Kar(\SYM)$ is isomorphic to the ring $\SymZ$ of symmetric functions. This depends crucially on the assumption that $\k$ is of characteristic zero.
In fact, $\SymZ$ is a Hopf ring, and we also explain here an elementary but non-standard way to categorify the comultiplication on $\SymZ$ via a strict monoidal functor
\begin{equation}
\Delta:
\SYM \rightarrow \Add(\SYM\odot\SYM).
\end{equation}
The {\em degenerate affine Hecke algebra} $AH_n$ appears for the first time in Section~\ref{sdha}. 
This algebra is a filtered deformation of the smash product $\k[x_1,\dots,x_n] \# \S_n$ so that, 
by a result of Quillen, its Grothendieck ring
 may be identified with that of the group algebra $\k S_n$; again this requires the ground field to be of characteristic zero.
In fact, we work with a 
strict $\k$-linear monoidal $\AH$ obtained from  the symmetric category $\SYM$ by adjoining a new generator (the ``dot")
corresponding to the polynomial generators of $AH_n$,
and deduce that $K_0(\Kar(\AH)) \cong \SymZ$.
We have to work rather hard in this section to extend the definition of the monoidal functor $\Delta$ from $\SYM$ to $\AH$. In particular, to do this, it is already necessary to invert the morphism (\ref{sand}).
The {\em degenerate Heisenberg category} $\Heis_k$ is then obtained in Section~\ref{sdhc} by ``rigidifying" $\AH$ following the approach of \cite{B2}, and we are finally able to construct the functor (\ref{finalversion}) by some further explicit but remarkable computations with the defining relations. We use this functor to give our new proof of the basis theorem for morphism spaces in $\Heis_k$ in Section~\ref{sbt}. Finally all of the required ingredients are in place, and we are ready to prove the main results. The proofs of Theorems~\ref{t1} and \ref{t2} appear at the end of Section~\ref{sp}, then Theorem~\ref{t3} is proved at the end of Section~\ref{sfin}. To establish the latter result, in the final section, we also introduce some additional ``thick calculus" which is of independent interest.

\section{A general result about Grothendieck groups}\label{sgg}

In this section, until the final paragraph, all rings and modules are assumed to be unital.
For a ring $R$, we let $K_0(R)$ denote the split Grothendieck group of
the category $R\pmd$ of finitely generated projective left
$R$-modules.
By definition (e.g., see \cite[Definition 1.1.5]{Ros94}), this is the group
completion of the commutative monoid consisting of
isomorphism
classes of finitely generated projectives with respect to the operation $+$
induced by taking direct sums of modules.
We write $[P]$ for the image of the isomorphism class of $P \in R\pmd$
in $K_0(R)$.
According to the definition of group completion, any element of $K_0(R)$
can be written in the form
$[P] - [P']$ for $P,P' \in R\pmd$.
Furthermore $[P] - [P'] = 0$ in $K_0(R)$ if and only if $P \oplus Q \cong P' \oplus Q$ for some $Q \in R\pmd$.
Since $Q$ is finitely generated and projective, it is a direct summand
of a free module of finite rank.  In other words, there exists $Q' \in
R\pmd$ and $n \geq 0$ such that $Q \oplus Q' \cong R^n$.  Hence:
\begin{equation} \label{slap}
  [P] - [P'] = 0 \text{ in } K_0(R)
  P \oplus R^n \cong P' \oplus R^n \text{ for some } n\geq 0.
\end{equation}
The ring $R$ is {\em stably finite} 
if $AB = 1 \Rightarrow BA = 1$ for all matrices $A, B \in M_n(R)$ and all $n \geq 1$.
This is equivalent to the property 
$P \oplus R^n
\cong R^n
\Rightarrow P = 0$ for all $P \in R\pmd$, i.e.,
$[P] = 0\Rightarrow P=0$.

\begin{lemma}\label{beerisgood}
If $R$ is finitely generated as a module over its
center then $R$ is stably finite.
\end{lemma}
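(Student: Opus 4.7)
The plan is to use the characterization recorded just before the lemma: $R$ is stably finite if and only if every $P \in R\pmd$ satisfying $P \oplus R^n \cong R^n$ for some $n\geq 0$ is zero. So assume $P \oplus R^n \cong R^n$. Such an isomorphism, composed with the projection onto the second summand, yields a surjective $R$-linear endomorphism $\pi\colon R^n \twoheadrightarrow R^n$ whose kernel is (isomorphic to) $P$. My goal is to show $\pi$ is also injective.

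The hypothesis is exactly what lets us reduce this to a commutative-algebra fact. Setting $Z := Z(R)$, the assumption gives that $R$, and hence $R^n$, is a finitely generated $Z$-module. The map $\pi$ is $R$-linear and therefore certainly $Z$-linear. At this point I would invoke Vasconcelos's theorem: any surjective endomorphism of a finitely generated module over a commutative ring is automatically injective. (The proof is the standard Cayley--Hamilton / determinant trick applied to $R^n$ viewed as a module over $Z[x]$ with $x$ acting as $\pi$; crucially, no Noetherian hypothesis on $Z$ is needed.) Applied to $\pi$, this forces $\ker \pi = 0$, hence $P = 0$, which is what we wanted.

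The argument is quite short, so there is no real obstacle; the only thing worth emphasizing is that Vasconcelos's theorem is used in its Noetherian-free form, because no Noetherian assumption has been placed on $Z(R)$ in the hypothesis. An alternative phrasing avoiding the named theorem would be to check the equivalent matrix statement $AB = I \Rightarrow BA = I$ for $A,B \in M_n(R)$ by regarding $A$ and $B$ as $Z$-linear endomorphisms of the finitely generated $Z$-module $R^n$ and again applying the determinant trick; but the module-theoretic formulation above is the cleanest route.
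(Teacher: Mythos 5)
Your argument is correct, but it is not the route the paper takes. The paper argues by contradiction: assuming $P\oplus R^n\cong R^n$ with $P\neq 0$, it notes that $P$ is finitely generated over the center $Z$, uses the Nakayama lemma to find a maximal ideal $\mathfrak{m}$ of $Z$ with $P/\mathfrak{m}P\neq 0$, and then reduces the isomorphism modulo $\mathfrak{m}$ to get $P/\mathfrak{m}P\oplus(R/\mathfrak{m}R)^n\cong(R/\mathfrak{m}R)^n$ as finite-dimensional $Z/\mathfrak{m}$-vector spaces, which is impossible by counting dimensions. You instead convert the splitting into a surjective $Z$-linear endomorphism of the finitely generated $Z$-module $R^n$ with kernel $P$ and invoke Vasconcelos's theorem (surjective endomorphisms of finitely generated modules over a commutative ring are injective, with no Noetherian hypothesis). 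Both proofs are ultimately determinant-trick/Nakayama arguments and both are short; yours avoids choosing a maximal ideal and reducing to a field, at the cost of citing a slightly less standard (though well-known) result, while the paper's version uses only the classical Nakayama lemma plus linear algebra over the residue field. One small point worth making explicit in your write-up: the kernel of your surjection $\pi=\mathrm{pr}_2\circ\theta^{-1}$ is $\theta(P\oplus 0)$, which is indeed isomorphic to $P$, so injectivity of $\pi$ does give $P=0$ as claimed.
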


\begin{proof}
Suppose that $P \oplus R^n \cong R^n$ for some non-zero $P \in R\pmd$.
Since $P$ is finitely generated over the center $Z$ of $R$,
the Nakayama lemma (\cite[Cor. 4.8]{Eisenbud}) implies that $J(Z)P \ne P$, where $J(Z)$ is the Jacobson radical of $Z$.  It follows that the quotient $P/\mathfrak{m} P$ is non-zero for some maximal ideal 
$\mathfrak{m}$ of $Z$. 
Then we have that 
 $P/\mathfrak{m}P\oplus
(R/\mathfrak{m}R)^n\cong (R/\mathfrak{m}R)^n$ as $R /
\mathfrak{m}R$-modules, hence, as $Z / \mathfrak{m}$-vector spaces.
This is clearly impossible by dimension considerations.
\end{proof}

Suppose $R$ and $S$ are rings, and $M$ is an $(S,R)$-bimodule that is
finitely generated and projective as a left
$S$-module.  Then we have the induced functor
\[
F:  R\pmd \to S\pmd,\quad P \mapsto M \otimes_R P,
\]
which induces a homomorphism of Abelian groups
$[F]:  K_0(R) \to K_0(S)$.
The main result in this section is as follows. Note in \cite[Theorem 2.2(3)]{D} one finds a similar split short exact sequence in $K$-theory, but this is proved under different hypotheses.
 
\begin{theorem} \label{Grothsplit}
  Suppose $R$ is a ring and $e \in R$ is an idempotent.  Let $S := R/ReR$
  and suppose that there exists a unital ring homomorphism $\sigma \colon S
  \to R$ such that $\pi \circ \sigma = \operatorname{id}_S$, where $\pi \colon R \to S$ is
  the quotient map.
Then there is a split short exact sequence of Abelian groups
\begin{equation}\label{theseq}
    0\longrightarrow K_0(eRe) \stackrel{\phi}{\longrightarrow} K_0(R) \stackrel{\psi}{\longrightarrow} K_0(S) \longrightarrow 0,
\end{equation}
where $\phi([P]) := [Re \otimes_{eRe} P]$ and $\psi([Q]) :=
[S\otimes_R Q]$.
Moreover, $R$ is stably finite if and only if both $eRe$ and $S$
are stably finite.
\end{theorem}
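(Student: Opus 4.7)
The plan is to construct the splitting $\tau$ from $\sigma$, verify that $\psi \circ \tau = \mathrm{id}_{K_0(S)}$ and $\psi \circ \phi = 0$, then show $\phi$ is an isomorphism onto $\ker(\psi)$; the stable finiteness claim will then follow from the resulting split exact sequence via module-level arguments.

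First, define $\tau : K_0(S) \to K_0(R)$ by $\tau([N]) := [R \otimes_S N]$, viewing $R$ as an $(R,S)$-bimodule with right $S$-action given by $\sigma$. This is well-defined because $R$ is free of rank one as a left $R$-module, so $R \otimes_S N$ lies in $R\pmd$ whenever $N$ does. The identity $\pi \circ \sigma = \mathrm{id}_S$ gives $S \otimes_R R \cong S$ as $(S,S)$-bimodules, whence $\psi \circ \tau = \mathrm{id}$. Dually, $\psi \circ \phi = 0$ because $S \otimes_R Re = Se$ and $\pi(e) = 0$ in $S$ (since $e \in ReR = \ker(\pi)$). Consequently $K_0(R) = \ker(\psi) \oplus \mathrm{im}(\tau)$, with $\mathrm{im}(\phi) \subseteq \ker(\psi)$.

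The heart of the argument is to show $\phi$ induces an isomorphism $K_0(eRe) \cong \ker(\psi)$. For surjectivity, given $Q = fR^n$ for some idempotent $f \in M_n(R)$, a direct tensor computation identifies $\tau\psi[Q]$ with $[\sigma(\pi(f))R^n]$, so $[Q] - \tau\psi[Q] = [fR^n] - [\sigma(\pi(f))R^n]$ is a difference of two classes cut out by idempotents satisfying $f \equiv \sigma(\pi(f)) \pmod{M_n(ReR)}$. I will express this difference as $\phi$ of an explicit class in $K_0(eRe)$ built from a factorization of $f - \sigma(\pi(f))$ through $E := eI_n \in M_n(R)$. The hardest step will be injectivity of $\phi$: the naive retraction $[Q] \mapsto [eQ]$ is ill-defined because $eQ$ need not be finitely generated over $eRe$, as triangular matrix examples like $R = \bigl(\begin{smallmatrix}k & k[x] \\ 0 & k[x]\end{smallmatrix}\bigr)$ demonstrate. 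Instead I will establish the structural lemma that every $Q \in R\pmd$ with $S \otimes_R Q = 0$ is isomorphic to $Re \otimes_{eRe} P$ for some $P \in eRe\pmd$, obtained by using $\sigma$ to canonically factor the defining idempotent of $Q$ through $E$; injectivity of $\phi$ is then immediate.

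For the stable finiteness equivalence, the forward direction follows directly from the split sequence: if $R$ is stably finite, then $[P] = 0$ in $K_0(eRe)$ forces $Re \otimes_{eRe} P = 0$, whence $P = e(Re \otimes_{eRe} P) = 0$, and analogously $S$ is stably finite via $\tau$. For the converse, any $Q \in R\pmd$ with $Q \oplus R^n \cong R^n$ satisfies $S \otimes_R Q = 0$ by stable finiteness of $S$; the structural lemma then gives $Q \cong Re \otimes_{eRe} P$ with $P \in eRe\pmd$, so injectivity of $\phi$ together with $[Q] = 0$ in $K_0(R)$ forces $[P] = 0$ in $K_0(eRe)$, and stable finiteness of $eRe$ yields $P = 0$, hence $Q = 0$.
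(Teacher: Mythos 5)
Your skeleton matches the paper's in outline (the splitting $\tau$ via $R_\sigma$, the verification $\psi\circ\tau=\operatorname{id}$ and $\psi\circ\phi=0$, a structural lemma asserting that $P\in R\pmd$ with $S\otimes_RP=0$ is induced from $eRe\pmd$, and the same endgame for stable finiteness). However, the two steps you flag as the heart of the argument each contain a genuine gap.

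First, injectivity of $\phi$ is \emph{not} immediate from the structural lemma. That lemma shows the functor $Re\otimes_{eRe}-$ is injective on isomorphism classes within its essential image (since $e(Re\otimes_{eRe}P)\cong P$), but the hypothesis $\phi([P]-[P'])=0$ only gives $Re\otimes_{eRe}P\oplus R^n\cong Re\otimes_{eRe}P'\oplus R^n$ for some $n$, and the free summands $R^n$ are not induced from $eRe$. You cannot cancel them, and you cannot hit the isomorphism with $e$, because $eR^n$ need not be finitely generated projective over $eRe$ --- exactly the pathology you yourself observe when rejecting the naive retraction $[Q]\mapsto[eQ]$. The whole difficulty of injectivity is this stabilization, and the paper's proof (its Lemma~\ref{phiing}) spends a nontrivial matrix argument on it: after normalizing the $R^n$-block of the isomorphism to $D=I_n-\sum_{k}A_keB_k$, it enlarges the isomorphism by $m$ copies of $(Re)^n$ via an explicit $(m+2)\times(m+2)$ block matrix that is invertible and, after row/column operations, decouples the $R^n$ summand entirely; only then can one multiply by $e$ to get $P\oplus(eRe)^{mn}\cong P'\oplus(eRe)^{mn}$. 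Nothing in your proposal substitutes for this step.

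Second, the surjectivity step is asserted rather than argued. Reducing to $[fR^n]-[\sigma(\pi(f))R^n]$ with $f\equiv\sigma(\pi(f))\pmod{M_n(ReR)}$ is correct, but an element of $M_n(ReR)$ only admits a factorization as a \emph{sum} $\sum_iA_iEB_i$, and it is not clear how such a sum yields a finitely generated projective $eRe$-module whose class under $\phi$ realizes the difference; making "idempotents congruent mod $M_n(ReR)$ have difference of classes in $\operatorname{im}\phi$" precise is essentially the excision isomorphism $K_0(R,ReR)\cong K_0(eRe)$, which is a theorem, not a computation. The paper instead argues module-theoretically: using $\sigma$ it splits ${}_\sigma P\twoheadrightarrow S_\pi\otimes_RP$ as $S$-modules, builds from the splitting a surjection $R_\sigma\otimes_S(S_\pi\otimes_RP)\oplus(Re)^n\twoheadrightarrow P$, splits it by projectivity of $P$, and applies the structural lemma to the kernel (which \emph{is} killed by $S_\pi\otimes_R-$) to conclude $[P]-\tau\psi[P]\in\operatorname{im}\phi$. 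So the structural lemma is indeed the key, but it enters through surjectivity via this kernel identification, not through injectivity; you would need to supply both the matrix argument for injectivity and this module-level construction (or a genuine excision argument) for surjectivity to complete the proof.
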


The proof will be carried out in the remainder of the section via a
series of lemmas.
We begin with some elementary remarks. First, the map $\phi$
is well defined since the $(R, eRe)$-bimodule $Re$ is finitely
generated and projective as a left $R$-module.
Similarly, the map $\psi$ is well defined since the $(S,R)$-bimodule
$S$
is finitely generated and projective as a left $S$-module.
We may denote this bimodule also by $S_\pi$ to make it clear that the
right $R$-module structure is defined via the homomorphism
$\pi:R \rightarrow S$. Similarly, we have the $(R,S)$-bimodule
$R_\sigma$, which is the left regular $R$-module $R$ with right action of $S$
defined by $rs := r \sigma(s)$.
Note that
\begin{equation}\label{drwho}
S_\pi \otimes_R R_\sigma \cong S_{\pi\circ\sigma} = S
\end{equation}
as an $(S,S)$-bimodule.

\begin{lemma}\label{psisurj}
  The map $\psi$ from (\ref{theseq}) is a split surjection.
\end{lemma}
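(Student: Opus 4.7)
The plan is to exhibit an explicit splitting $\tau: K_0(S) \to K_0(R)$ of $\psi$ using the homomorphism $\sigma$, which is precisely what makes this part of the theorem easy (even though the rest of the theorem requires more work).

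First, I would note that the bimodule $R_\sigma$ defined just before the lemma is finitely generated and projective as a left $R$-module, since its underlying left $R$-module is simply the regular module $R$. Therefore the functor
\[
G: S\pmd \to R\pmd,\qquad P\mapsto R_\sigma \otimes_S P
\]
is well defined: it sends $S$ to $R$, hence $S^n$ to $R^n$, and thus preserves direct summands of free modules of finite rank. This yields a group homomorphism
\[
\tau := [G]: K_0(S)\longrightarrow K_0(R),\qquad [P]\mapsto [R_\sigma\otimes_S P].
\]

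Next I would check that $\psi\circ\tau = \operatorname{id}_{K_0(S)}$. For any $P\in S\pmd$,
\[
\psi(\tau([P])) = [S_\pi \otimes_R R_\sigma \otimes_S P] = [S_{\pi\circ\sigma}\otimes_S P] = [S\otimes_S P] = [P],
\]
where the second equality is the bimodule isomorphism (\ref{drwho}) and the third uses $\pi\circ\sigma=\operatorname{id}_S$. This proves $\tau$ splits $\psi$, and in particular $\psi$ is surjective.

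There is essentially no obstacle here — the existence of the ring-theoretic section $\sigma$ immediately produces a bimodule section at the level of $K_0$. The real work of the theorem (exactness in the middle of the sequence (\ref{theseq}), injectivity of $\phi$, and the stable finiteness statement) will come in the subsequent lemmas; this lemma is just the initial step.
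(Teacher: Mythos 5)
Your proof is correct and matches the paper's argument exactly: both define the section $[P]\mapsto [R_\sigma\otimes_S P]$, note that $R_\sigma$ is finitely generated projective as a left $R$-module, and conclude via the bimodule isomorphism $S_\pi\otimes_R R_\sigma\cong S$ from (\ref{drwho}). Nothing is missing.
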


\begin{proof}
Since $R_\sigma$ is finitely generated and projective as a left $R$-module,
we get a well-defined map
$\theta:K_0(S) \rightarrow K_0(R)$, $[P] \mapsto [R_\sigma\otimes_S P]$.
Then the identity (\ref{drwho}) implies that $\psi\circ\theta =
\operatorname{id}$.
\end{proof}
  
\begin{lemma} \label{phiing}
  The map $\phi$ is injective.
\end{lemma}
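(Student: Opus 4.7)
Suppose $\phi([P])=\phi([P'])$ for some $P, P' \in eRe\pmd$. By the stable-isomorphism criterion \eqref{slap}, there exists $n\ge 0$ with $F(P)\oplus R^n \cong F(P')\oplus R^n$ as left $R$-modules, where $F:=Re\otimes_{eRe}(-)$; the goal is to deduce $[P]=[P']$ in $K_0(eRe)$.

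The central functorial input will be that $F$ is fully faithful and admits the ``left-multiplication by $e$'' functor $G:=\Hom_R(Re,-)=e\cdot(-)\colon R\text{-Mod}\to eRe\text{-Mod}$ as a one-sided inverse on objects, that is $G\circ F=\operatorname{id}_{eRe\text{-Mod}}$ (since $eRe\otimes_{eRe}P=P$). The functor $G$ is exact because $Re$ is a finitely generated projective left $R$-module via the Peirce splitting $R=Re\oplus R(1-e)$. I would exploit this in two reduction steps. First, using $R^n=(Re)^n\oplus(R(1-e))^n=F((eRe)^n)\oplus(R(1-e))^n$, absorb the $F((eRe)^n)$ piece on each side of the given isomorphism into the tensor factor, replacing $P$, $P'$ by $Q:=P\oplus(eRe)^n$, $Q':=P'\oplus(eRe)^n$. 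This reduces the problem to establishing $[Q]=[Q']$ from an isomorphism $F(Q)\oplus(R(1-e))^n\cong F(Q')\oplus(R(1-e))^n$. Second, applying $G$ yields an $eRe$-module isomorphism
\[
Q\oplus (eR(1-e))^n \;\cong\; Q'\oplus (eR(1-e))^n.
\]

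The main obstacle is that $(eR(1-e))^n$ need not be a finitely generated projective $eRe$-module, so one cannot directly invoke \eqref{slap} for $eRe$. This is precisely where the hypothesis on $\sigma\colon S\to R$ must enter. My plan is to construct an explicit retraction $\rho\colon K_0(R)\to K_0(eRe)$ satisfying $\rho\circ\phi=\operatorname{id}$. Using the section $\theta\colon K_0(S)\to K_0(R)$ of $\psi$ from Lemma~\ref{psisurj}, normalize any class $[M]\in K_0(R)$ to $[M]-\theta(\psi([M]))\in\ker\psi$; this leaves $\image\phi$ pointwise fixed because $\psi\circ\phi=0$ (indeed $S\otimes_R Re=0$, since $\pi(e)=0$). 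The hard part will then be to verify that every class in $\ker\psi$ admits a representative by an $R$-module $M$ satisfying $ReR\cdot M=M$, for which $eM$ is automatically finitely generated projective over $eRe$, and that the assignment $\rho([M]):=[eM]$ (applied to such representatives after normalization) is well-defined, independent of the choice of representative. Once this retraction is in hand, the identity $\rho\circ\phi=\operatorname{id}_{K_0(eRe)}$ is immediate from $G\circ F=\operatorname{id}$, and injectivity of $\phi$ follows.
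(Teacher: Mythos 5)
You have correctly isolated the real obstacle: after applying $e\cdot(-)$ to an isomorphism $F(P)\oplus R^n\cong F(P')\oplus R^n$, the stray summand $(eR(1-e))^n$ (equivalently, the non-$eRe$-projective part of $eR^n$) blocks a direct appeal to \eqref{slap} over $eRe$. Unfortunately, the proposed fix does not overcome this obstacle; it restates it. The well-definedness of your retraction $\rho([M]):=[eM]$ on classes in $K_0(R)$ is, after unwinding \eqref{slap}, exactly the assertion that $Re\otimes_{eRe}V\oplus R^n\cong Re\otimes_{eRe}V'\oplus R^n$ forces $[V]=[V']$ in $K_0(eRe)$ --- which is the injectivity of $\phi$ itself. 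Deferring this as ``the hard part'' with no mechanism for it leaves the argument circular. (A secondary point: a class in $\ker\psi$ is a \emph{difference} $[M]-[M']$, so ``a representative by an $R$-module $M$'' needs reformulating; and the claim that every such class is represented by modules with $ReR\cdot M=M$ is the separate middle-exactness statement, Lemma~\ref{midexact}, whose proof in the paper is itself nontrivial.)

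What is actually needed, and what the paper supplies, is a way to \emph{use} the specific structure of the hypothesis $[F(P)]=[F(P')]$: writing the isomorphism $\theta$ as an invertible block matrix $\left[\begin{smallmatrix}A&B\\ C&D\end{smallmatrix}\right]$ and normalizing so that $\pi(D)=I_n$, one gets $D=I_n-\sum_{k=1}^m A_keB_k$, i.e.\ $D$ differs from the identity by an element of $M_n(ReR)$. The paper then builds an explicit larger invertible matrix $X$ over $F(P)\oplus R^n\oplus(Re)^n\oplus\cdots\oplus(Re)^n$ whose off-diagonal entries carry the factors $A_ke$ and $eB_k$, and shows by elementary row and column operations that $X$ is equivalent to a matrix with the $R^n$ block split off as an identity, yielding an isomorphism $F(P)\oplus(Re)^{mn}\cong F(P')\oplus(Re)^{mn}$. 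Since $e\cdot Re=eRe$ is free of rank one over $eRe$, multiplying by $e$ now gives $P\oplus(eRe)^{mn}\cong P'\oplus(eRe)^{mn}$ and hence $[P]=[P']$. Your write-up never exploits the finite-sum form of elements of $ReR$, and without some device of this kind the free summand cannot be traded for copies of $Re$.
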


\begin{proof}
  As noted above, any element in $K_0(eRe)$ can be written in the form $[P] - [P']$ for some $P, P' \in eRe\pmd$.  Suppose $[P] - [P'] \in \ker(\phi)$.  Then we have that $[Re \otimes_{eRe} P] - [Re \otimes_{eRe} P'] = 0$, so by \eqref{slap} there exists $n \in \N$ such that there is an isomorphism
  \[
    \theta:Re \otimes_{eRe} P \oplus R^n
    \rightarrow Re \otimes_{eRe} P'  \oplus R^n.
  \]
  Writing maps on the right, $\theta$ can  be represented by right multiplication by an invertible $2\times 2$ matrix
  $
  \left[
  \begin{smallmatrix}
    A & B \\
    C & D
  \end{smallmatrix}
  \right]
  $ for $A:Re\otimes_{eRe}P\rightarrow Re\otimes_{eRe} P'$,  a row vector $B:Re\otimes_{eRe} P \rightarrow R^n$,
  a column vector $C:R^n \rightarrow Re\otimes_{eRe} P'$ and an $n\times n$ matrix
  $D \in M_n(R)$. The image $\pi(D)$ in $M_{n}(S)$ is invertible, so, without loss of generality, we can assume $\pi(D)$ is the identity matrix $I_n$. Then we have that $D=I_n-\sum_{k=1}^m A_k e B_k$ for some $m\geq 1$ and $A_k,B_k \in M_n(R)$.  
  Consider the homomorphism
  \[
    \theta' \colon Re \otimes_{eRe} P \oplus R^n \oplus (Re)^n \oplus\cdots\oplus (Re)^n
    \to  Re \otimes_{eRe} P' \oplus R^n\oplus (Re)^{ n}\oplus\cdots\oplus (Re)^n\] 
    (where there are $m$ summands $(Re)^n$ on each side)
    defined by right multiplication by the matrix
  \[
X:=    \begin{bmatrix}
      A & B & 0 & 0 & \cdots & 0\\
      C & I_n & A_1e & A_2e& \cdots & A_m e\\
      0 & eB_1 & e I_n & 0 & \cdots & 0\\
      0 & eB_2 & 0 & e I_n & \cdots &0\\
      \vdots &\vdots & \vdots & \vdots & \ddots & \vdots\\
      0 & eB_m & 0 & 0 &\cdots& e I_n
    \end{bmatrix}.
  \]
  By some obvious elementary row operations, the matrix $X$ can be transformed into 
   the invertible matrix
  \[
    \begin{bmatrix}
      A & B & 0 & 0 & \cdots & 0 \\
      C & D & 0 & 0 & \cdots & 0 \\
      0 & eB_1 & e I_n & 0 & \cdots & 0\\
      0 & eB_2 & 0 & e I_n & \cdots &0\\
      \vdots &\vdots & \vdots & \vdots & \ddots & \vdots\\
      0 & eB_m & 0 & 0 &\cdots& e I_n
    \end{bmatrix}.
  \]
  It follows that the matrix $X$ is invertible. On the other hand, by some other elementary row and column operations, the matrix $X$ can be transformed into a matrix of the form
   \[
    \begin{bmatrix}
      Y_{1,1} & 0 & Y_{1,2} & Y_{1,3} & \cdots & Y_{1,m+1} \\
      0 & I_n & 0 & 0 & \cdots & 0 \\
      Y_{2,1} & 0 & Y_{2,2} & Y_{2,3} & \cdots & Y_{2,m+1}\\
      Y_{3,1} & 0 & Y_{3,2} & Y_{3,3} & \cdots &Y_{3,m+1}\\
      \vdots &\vdots & \vdots & \vdots & \ddots & \vdots\\
      Y_{m+1,1} & 0 & Y_{m+1,2} & Y_{m+1,3} &\cdots& Y_{m+1,m+1}
    \end{bmatrix}.
  \]
  This produces an invertible  matrix $Y = (Y_{i,j})_{i,j=1,\dots,m+1}$ such that right multiplication by $Y$ defines an isomorphism
    \[
    \theta''\colon Re \otimes_{eRe} P \oplus (Re)^n\oplus\cdots\oplus (Re)^n
    \to Re \otimes_{eRe} P' \oplus (Re)^n\oplus\cdots\oplus (Re)^n.
  \]
  Finally, we restrict $\theta''$ to $eRe\otimes_{eRe} P \oplus (eRe)^{n}\oplus\cdots\oplus (eRe)^n$,
  noting that $eRe \otimes_{eRe} P \cong P$ and $eRe\otimes_{eRe} P' \cong P'$,
  to obtain an isomorphism of $eRe$-modules $P \oplus (eRe)^{mn} \cong P' \oplus (eRe)^{mn}$.
  Hence, $[P]-[P'] = 0$ in $K_0(eRe)$ by \eqref{slap}.
\end{proof}

\begin{lemma}\label{bacomplex}
 We have that $\psi\circ \phi = 0$.
\end{lemma}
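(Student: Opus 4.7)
The plan is to prove the stronger statement that the $(S, eRe)$-bimodule $S \otimes_R Re$ is already zero; this immediately gives $\psi \circ \phi = 0$ since every generator $[P]$ of $K_0(eRe)$ is sent to $[S \otimes_R Re \otimes_{eRe} P] = 0$ by associativity of the tensor product.

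The key observation is simply that $e \in ReR$ (for instance $e = 1 \cdot e \cdot 1$), so $\pi(e) = 0$ in $S$. Given any simple tensor $s \otimes xe \in S \otimes_R Re$ with $x \in R$, I would slide $x$ across the tensor symbol using the $R$-balancing to write $s \otimes xe = s \pi(x) \otimes e$, and then use $e = e \cdot e$ together with the balancing once more to obtain
\[
s \pi(x) \otimes e = s \pi(x) \otimes e \cdot e = s \pi(x) \pi(e) \otimes e = 0.
\]
Hence $S \otimes_R Re = 0$, as required.

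There is essentially no obstacle here: the computation is a formal manipulation with tensor products, relying only on the defining property of $S$ as the quotient by the two-sided ideal $ReR$ and on the fact that $e$ is an idempotent. Note also that the splitting homomorphism $\sigma$ plays no role in this lemma; it will instead be needed to establish exactness at $K_0(R)$ in the next step.
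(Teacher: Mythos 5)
Your proof is correct and follows essentially the same route as the paper: both arguments reduce to showing that the bimodule $S_\pi \otimes_R Re$ vanishes because $\pi(e)=0$, the paper via the canonical isomorphism $M\otimes_R Re \cong Me$ and you by the equivalent hand computation on simple tensors.
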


\begin{proof}
For any right $R$-module $M$, the multiplication map is an isomorphism
$M \otimes_R Re \cong M e$.
Applying this to $M = S_\pi$, we see that $S_\pi \otimes_R Re \cong (S_\pi) e$, which is
zero as $\pi(e) = 0$.
The map $\psi\circ\phi$ is defined by tensoring with this bimodule.
\end{proof}

\begin{lemma} \label{quite}
  If $P \in R\pmd$ and $S_\pi \otimes_R P= 0$, then $P \cong Re
  \otimes_{eRe} V$ for some $V \in eRe\pmd$.
\end{lemma}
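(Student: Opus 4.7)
The natural candidate is $V := eP$, regarded as a left $eRe$-module via the identification $eP \cong \Hom_R(Re,P)$, and the natural map
\[
\mu \colon Re \otimes_{eRe} eP \longrightarrow P, \qquad re \otimes ep \mapsto rep
\]
(which is the counit of the adjunction between $Re \otimes_{eRe}(-)$ and $\Hom_R(Re,-) \cong e(-)$) is the natural candidate for the required isomorphism. The plan is to prove this by realizing $P$ explicitly as a direct summand of some $(Re)^N$, after which the conclusion becomes essentially formal.

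The hypothesis $S_\pi\otimes_R P = 0$ translates to $P = (ReR)\cdot P$. Because $P$ is finitely generated over $R$, each member of a finite generating set may be written in the form $\sum_i r_i e s_i p_i$ with $r_i,s_i\in R$ and $p_i\in P$; the elements $v_i := e s_i p_i$ all lie in $eP$ and satisfy $ev_i=v_i$. Collecting them, we produce finitely many $v_1,\dots,v_N\in eP$ which generate $P$ as a left $R$-module, so we have a surjective left $R$-module map
\[
\varphi \colon (Re)^N \twoheadrightarrow P,\qquad (r_1e,\dots,r_Ne) \mapsto \sum_i r_i v_i.
\]
Projectivity of $P$ then lets us split $\varphi$, exhibiting $P$ as a direct summand of $(Re)^N$; writing this summand as the image of an idempotent $g \in \End_R((Re)^N) \cong M_N(eRe)$, we have $P \cong (Re)^N g$.

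Applying the exact functor $e(-) = \Hom_R(Re,-)$, which commutes with direct sums and takes $Re$ to $eRe$, we conclude that $V = eP \cong (eRe)^N g$ is a direct summand of $(eRe)^N$, hence $V \in eRe\pmd$. Meanwhile, the obvious map $Re\otimes_{eRe}(eRe)^N \to (Re)^N$ is an isomorphism compatible with the right action of $g$, so $Re\otimes_{eRe} V \cong (Re)^N g \cong P$, and this isomorphism is easily checked to coincide with $\mu$. The only step of any substance is the first one, where the hypothesis on $S_\pi\otimes_R P$ is used to upgrade a generating set of $P$ to one lying inside $eP$; everything afterwards is a formality about the adjunction $(Re\otimes_{eRe}(-),\Hom_R(Re,-))$, whose counit is automatically an isomorphism on any module lying in the essential image of the left adjoint.
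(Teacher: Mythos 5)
Your proof is correct, and it takes a genuinely different (and arguably more streamlined) route than the paper's. The paper also sets $V:=eP$ and uses the hypothesis in the form $ReP=P$ to see that $\mu$ is surjective, but it then splits $\mu$ itself: projectivity of $P$ gives a section $\tau$ of $\mu$, and the paper argues that $\tau$ restricted to $eP$ is forced to be the inverse of $\mu|_{eRe\otimes_{eRe}V}$, so that $e\otimes V\subseteq\image\tau$ and hence $\tau$ is surjective, whence an isomorphism. Finite generation of $V$ (again from $P=ReP$) and projectivity of $V$ (by a separate lifting argument through the functor $Re\otimes_{eRe}-$) are then proved as two further independent steps. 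You instead use the hypothesis once, at the level of generators, to exhibit $P$ as a direct summand of $(Re)^N$ cut out by an idempotent $g\in M_N(eRe)$; after that, all three conclusions --- that $V=eP\cong(eRe)^Ng$ lies in $eRe\pmd$ and that the counit $\mu$ is an isomorphism --- drop out formally by applying $e(-)$ and $Re\otimes_{eRe}(-)$ to the summand decomposition, since the counit is an isomorphism on $(Re)^N$ and a retract of an isomorphism is an isomorphism. What your version buys is economy and transparency: it makes visible that the lemma is an instance of the standard equivalence between direct summands of $(Re)^N$ and $eRe\pmd$, and it handles finite generation and projectivity of $V$ in one stroke. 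What the paper's version buys is that it stays entirely at the level of module homomorphisms, never introducing the matrix idempotent $g$, at the cost of three separate verifications.
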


\begin{proof}
  Suppose $P\in R\pmd$ and $S_\pi \otimes_R P = 0$.  Let $V := eP$, which is naturally an $eRe$-module.    Consider the homomorphism of $R$-modules
  \[
    \mu \colon Re \otimes_{eRe} V \to P,\quad
    ae \otimes v \mapsto aev.
  \]
Since $0 = S_\pi\otimes_R P = (R / ReR) \otimes_R P \cong P/ ReP$,
it follows that $ReP = P$.
Hence, $\mu$ is surjective.
Since $P$ is projective as a left $R$-module, the map $\mu$ splits,
so we have a homomorphism of $R$-modules $\tau \colon P
\to Re \otimes_{eRe} V$ such that $\mu \circ\tau = \operatorname{id}_P$.
Restricting, we have
  \[
    V = eP
    \xrightarrow{\tau} eRe \otimes_{eRe} V
    \xrightarrow[\cong]{\mu} V.
  \]
  In other words, $\tau|_V$ splits the isomorphism $\mu|_{eRe
    \otimes_{eRe} V}$ and hence must be its inverse.  Thus $e \otimes
  V \subseteq \image \tau$. It follows that $\tau$ is surjective, hence, an isomorphism.
We have now shown that $P \cong Re \otimes_{eRe} V$ as $R$-modules.
It remains to show that $V$ is finitely generated and projective.

Since $P \in R\pmd$, we can choose elements $p_1,\dots,p_m$ that generate $P$ as an $R$-module.  As noted above, we have $P = ReP$.  Hence, for each $i=1,\dotsc,m$, we can write
  \[
    p_i = \sum_{j=1}^{n_i} a_{i,j} e q_{i,j}
  \]
for some $n_i \geq 0$, $a_{i,j}\in R$ and
$q_{i,j} \in P$.
The elements $\{e q_{i,j}\:|\:i=1,\dots,m,
  j=1,\dots,n_i\}$
generate $V$ as an $eRe$-module.  So $V$ is finitely generated.

  To see that $V$ is projective, suppose we have a surjective homomorphism of $eRe$-modules $\theta \colon U \twoheadrightarrow V$.  Then we have an induced surjective homomorphism of $R$-modules
  \[
    \operatorname{id} \otimes \theta \colon Re \otimes_{eRe} U \twoheadrightarrow Re \otimes_{eRe} V \cong P.
  \]
  Since $P$ is projective, this map splits.  So we have a homomorphism
  of $R$-modules $$
\xi \colon Re \otimes_{eRe} V \to Re \otimes_{eRe}
  U$$
such that $(\operatorname{id} \otimes \theta) \circ \xi =
  \operatorname{id}_{Re \otimes_{eRe} V}$.  From this, we see that
the restriction $\xi|_{eRe\otimes_{eRe}V}$ splits the restriction
$(\operatorname{id} \otimes\theta)|_{eRe\otimes_{eRe} U}:
eRe\otimes_{eRe} U \rightarrow eRe\otimes_{eRe} V$.
Under the natural isomorphisms $eRe \otimes_{eRe} U\cong U$ and $eRe \otimes_{eRe} V
\cong
V$, the map
$(\operatorname{id} \otimes\theta)|_{eRe\otimes_{eRe} U}$
corresponds to $\theta$.  So $\theta$ splits.
\end{proof}

\begin{lemma}\label{jonfounditeasiertostatethisseparately}
Suppose $P \in R\pmd$
and let
 $Q := S_\pi\otimes_R P$.
There exists $V \in eRe \pmd$ and $n \geq 0$ such
that
$R_\sigma\otimes_S Q \oplus
(Re)^n
\cong P \oplus Re\otimes_{eRe} V$
as $R$-modules.
\end{lemma}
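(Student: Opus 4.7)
The plan is to construct an explicit split surjection of left $R$-modules
\begin{equation*}
\Psi \colon R_\sigma \otimes_S Q \oplus (Re)^n \twoheadrightarrow P
\end{equation*}
whose kernel $K$ will, by Lemma~\ref{quite}, automatically have the form $Re \otimes_{eRe} V$. Since $P \in R\pmd$, the module $Q = S_\pi \otimes_R P$ is projective as a left $S$-module. Regarding the short exact sequence $0 \to ReP \to P \to Q \to 0$ as one of left $S$-modules via $\sigma$, the projectivity of $Q$ supplies an $S$-linear section $\iota \colon Q \to P$. Using $\iota$, define an $R$-linear map
\begin{equation*}
\tilde\iota \colon R_\sigma \otimes_S Q \to P, \qquad r \otimes q \mapsto r \iota(q),
\end{equation*}
whose well-definedness on the $R_\sigma$-$S$-$Q$ balance is exactly the $S$-linearity of $\iota$ (with $S$ acting on $P$ via $\sigma$).

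Next I build $\Psi$ by absorbing the cokernel $C := P / \tilde\iota(R_\sigma \otimes_S Q)$ into an $(Re)^n$ summand. Because $\iota$ is a section, every $p \in P$ lies in $\iota(Q) + ReP$, so $C = ReC$. The Nakayama-type argument already used in Lemma~\ref{quite} then shows that $eC$ is a finitely generated $eRe$-module and that any $eRe$-generators $v_1, \ldots, v_n$ of $eC$ also generate $C$ as an $R$-module. Using projectivity of $(Re)^n$, the resulting surjection $(Re)^n \twoheadrightarrow C$, $(r_i e)_i \mapsto \sum r_i v_i$, lifts to a map $\rho \colon (Re)^n \to P$, and I take $\Psi := (\tilde\iota, \rho)$. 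Then $\Psi$ is surjective, and because $P$ is projective, the short exact sequence $0 \to K \to R_\sigma \otimes_S Q \oplus (Re)^n \xrightarrow{\Psi} P \to 0$ splits, yielding $R_\sigma \otimes_S Q \oplus (Re)^n \cong P \oplus K$ with $K \in R\pmd$.

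To apply Lemma~\ref{quite} to $K$, it remains to verify that $S_\pi \otimes_R K = 0$. Applying $S_\pi \otimes_R -$ to the short exact sequence and using $\operatorname{Tor}_1^R(S_\pi, P) = 0$ (from projectivity of $P$) gives another short exact sequence, whose middle and right terms I compute explicitly: by \eqref{drwho} one has $S_\pi \otimes_R (R_\sigma \otimes_S Q) \cong Q$, while $S_\pi \otimes_R Re = 0$ because $s \otimes e = s \otimes e \cdot e = s\pi(e) \otimes e = 0$ using $\pi(e) = 0$. Under these identifications, $S_\pi \otimes_R \tilde\iota$ becomes the identity on $Q$ (using $\overline{\iota(q)} = q$) while $S_\pi \otimes_R \rho$ vanishes, so $S_\pi \otimes_R \Psi$ is an isomorphism and $S_\pi \otimes_R K = 0$. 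Lemma~\ref{quite} then produces $V := eK \in eRe\pmd$ with $K \cong Re \otimes_{eRe} V$, completing the proof. The one delicate point is the non-canonical choice of $\iota$ (and hence of $\tilde\iota$); once this choice is made, the use of an $(Re)^n$ summand to absorb the cokernel and the long exact Tor sequence force everything else.
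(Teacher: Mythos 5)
Your proposal is correct and follows essentially the same route as the paper: both use the $S$-linear splitting of $P \twoheadrightarrow Q$ (yours called $\iota$, the paper's $\tau$, inducing the same $R$-linear map $R_\sigma\otimes_S Q \to P$), absorb the remaining part of $P$ via a map from $(Re)^n$ built from the identity $P = \image + ReP$, split the resulting surjection using projectivity of $P$, check that $S_\pi\otimes_R$ kills the kernel, and invoke Lemma~\ref{quite}. The only cosmetic differences are that the paper obtains its map via Frobenius reciprocity rather than directly, and applies $S_\pi\otimes_R-$ to the already-split sequence instead of citing Tor-vanishing.
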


\begin{proof}
Since tensor is left adjoint to hom, we have a natural isomorphism
  \begin{equation} \label{Frobrep}
    \Hom_R(R_\sigma\otimes_S Q,P) \cong \Hom_S(Q,\Hom_R(R_\sigma, P)).
  \end{equation}
Moreover, $\Hom_R(R_\sigma, P) \cong {_\sigma}P$, meaning the left
$R$-module $P$ viewed as an $S$-module via
the map $\sigma \colon S \to R$.
Since $Q=(R / ReR) \otimes_R P \cong P / ReP$,
we have a short exact sequence
  \[
    0 \longrightarrow  {_\sigma} ReP \longrightarrow {_\sigma} P \longrightarrow Q \longrightarrow 0.
  \]
Since $Q$ is projective as an $S$-module, we have a splitting $\tau
\colon Q \to {_\sigma} P$.
Let $\nu \colon R_\sigma\otimes_S Q \to P$ be the $R$-module
homomorphism corresponding to $\tau$ under
\eqref{Frobrep}, i.e., $\nu(a \otimes q) = a \tau(q)$.

As $S$-modules, we have ${_\sigma}P \cong{_\sigma}ReP \oplus \image \tau$.  Thus, since $\image \nu \supseteq \image \tau$, we have $P = (\image \nu) + ReP$.  Let $p_1,\dotsc,p_m$ generate $P$ as an $R$-module and, for $i=1,\dotsc,m$, write
  \[
    p_i = \nu(u_i) + \sum_{j=1}^n a_{i,j} q_{j},
\]
for some $n \geq 0,\ u_i \in
    R_\sigma\otimes_S Q,\ a_{i,j} \in Re$ and $q_{j} \in eP$.
  Then the map
  \[
    \theta \colon R_\sigma\otimes_S Q \oplus (Re)^n \to P,\quad
    (u,b_1,\dotsc,b_n) \mapsto \nu(u) + \sum_{j=1}^n b_j q_j,
  \]
  is a surjective $R$-module homomorphism.  Since $P$ is projective as an $R$-module, this map splits.  So we have
  \[
    R_\sigma\otimes_S Q \oplus (Re)^n \cong P \oplus (\ker \theta).
  \]
When we apply the functor $S_\pi\otimes_R -$ to the split short exact sequence
$$
0 \longrightarrow \ker \theta \longrightarrow R_\sigma\otimes_S Q \oplus
(Re)^n \stackrel{\theta}{\longrightarrow} P \longrightarrow 0,
$$
we obtain a short exact sequence
$$
0 \longrightarrow S_\pi\otimes_R \ker \theta \longrightarrow S_\pi\otimes_R R_\sigma\otimes_S Q \stackrel{\operatorname{id}_S\otimes\theta}{\longrightarrow} Q \longrightarrow 0.
$$
The composite of $\operatorname{id}_S\otimes\theta$
and the isomorphism $Q \stackrel{\sim}{\rightarrow} S_\pi\otimes_R
R_\sigma \otimes_S Q$ from (\ref{drwho}) is the identity
$\operatorname{id}_Q$,
hence, $\operatorname{id}_S\otimes\theta$ is an isomorphism.
This implies that $S_\pi\otimes_R \ker \theta = 0$.
Finally, we apply Lemma~\ref{quite} to $\ker \theta \in R\pmd$ to
deduce that
$\ker \theta \cong Re \otimes_{eRe} V$ for some $V \in eRe\pmd$.
\end{proof}

\begin{lemma} \label{midexact}
  We have $\ker \psi \subseteq \image \phi$.
\end{lemma}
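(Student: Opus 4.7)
The plan is to start from an arbitrary element of $\ker\psi$, written in the standard form $[P]-[P']$ with $P,P'\in R\pmd$, and produce an explicit preimage under $\phi$ using Lemma~\ref{jonfounditeasiertostatethisseparately}. Set $Q:=S_\pi\otimes_R P$ and $Q':=S_\pi\otimes_R P'$, so that the hypothesis $\psi([P]-[P'])=0$ means $[Q]=[Q']$ in $K_0(S)$. By the criterion \eqref{slap} applied in $S$, there exists $m\geq 0$ with $Q\oplus S^m\cong Q'\oplus S^m$. Applying the exact functor $R_\sigma\otimes_S-$ and using $R_\sigma\otimes_S S\cong R$, we obtain $R_\sigma\otimes_S Q\oplus R^m\cong R_\sigma\otimes_S Q'\oplus R^m$, so $[R_\sigma\otimes_S Q]=[R_\sigma\otimes_S Q']$ in $K_0(R)$.

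Next, I would apply Lemma~\ref{jonfounditeasiertostatethisseparately} separately to $P$ and $P'$, obtaining $V,V'\in eRe\pmd$ and $n,n'\geq 0$ with
\[
R_\sigma\otimes_S Q\oplus (Re)^n\cong P\oplus Re\otimes_{eRe}V,
\qquad
R_\sigma\otimes_S Q'\oplus (Re)^{n'}\cong P'\oplus Re\otimes_{eRe}V'.
\]
Passing to $K_0(R)$ and rearranging gives
\[
[P]=[R_\sigma\otimes_S Q]+n[Re]-\phi([V]),
\qquad
[P']=[R_\sigma\otimes_S Q']+n'[Re]-\phi([V']).
\]

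Subtracting and using the equality $[R_\sigma\otimes_S Q]=[R_\sigma\otimes_S Q']$ established in the first step yields
\[
[P]-[P']=(n-n')[Re]+\phi([V']-[V]).
\]
Finally I would observe that $[Re]$ itself lies in $\image\phi$: indeed $Re\otimes_{eRe}eRe\cong Re$ as left $R$-modules, so $[Re]=\phi([eRe])$. Therefore
\[
[P]-[P']=\phi\bigl((n-n')[eRe]+[V']-[V]\bigr)\in\image\phi,
\]
as required. The only nontrivial ingredient is Lemma~\ref{jonfounditeasiertostatethisseparately}, which has already been proved; the present argument is then purely bookkeeping in the Grothendieck groups, so no real obstacle remains.
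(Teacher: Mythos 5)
Your proof is correct and follows essentially the same route as the paper: apply Lemma~\ref{jonfounditeasiertostatethisseparately} to $P$ and $P'$, use that $[R_\sigma\otimes_S Q]=[R_\sigma\otimes_S Q']$ in $K_0(R)$, and conclude by bookkeeping in the Grothendieck group. The only (harmless) cosmetic differences are that the paper stabilizes $P,P'$ by $R^n$ up front so that $Q\cong Q'$ on the nose, whereas you tensor the stable isomorphism with $R_\sigma$ afterwards, and you make explicit the observation $[Re]=\phi([eRe])$ that the paper leaves implicit.
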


\begin{proof}
Consider an arbitrary element $[P]-[P'] \in \ker \psi$, where $P, P'
\in R\pmd$.
In $K_0(S)$, we have that $[Q]-[Q'] = 0$ where
$Q := S_\pi \otimes_R P$ and
   $Q':= S_\pi \otimes_R P'$.
By \eqref{slap}, we can assume
  (replacing $P$ and $P'$ by $P\oplus R^n$ and $P' \oplus R^n$ for
  some $n \geq 0$) that $Q \cong Q'$ as $S$-modules.
By the second isomorphism from (\ref{drwho}), we have that
$R_\sigma \otimes_S Q \cong R_{\sigma\circ\pi}\otimes_R P$ and
$R_\sigma\otimes_S Q' \cong R_{\sigma\circ\pi}\otimes_R P'$.
Applying Lemma~\ref{jonfounditeasiertostatethisseparately} twice, we
get $V, V' \in eRe\pmd$ and $n,n'\geq 0$ such that
$$
(Re)^n \oplus R_\sigma\otimes_S Q \cong P \oplus Re\otimes_{eRe} V,
\qquad
(Re)^{n'} \oplus R_\sigma\otimes_S Q'\cong P' \oplus Re\otimes_{eRe} V'.
$$
Since $R_\sigma\otimes_S Q \cong R_\sigma\otimes_S Q'$ as
$R$-modules, we deduce that
$$
[P]-[P'] = (n-n')[Re] -[Re\otimes_{eRe} V] + [Re\otimes_{eRe} V'],
$$
which belongs to
$\image \phi$.
\end{proof}

\begin{proof}[Proof of Theorem~\ref{Grothsplit}]
The fact that (\ref{theseq}) is split exact follows from
Lemmas~\ref{psisurj}, \ref{phiing}, \ref{bacomplex} and
\ref{midexact}.
For the final part, suppose first that $eRe$ and $S$ are both stably finite.
Take $P \in R\pmd$ with $[P] = 0$. Then $[S_\pi\otimes_R P] = 0$ which
implies that 
$S_\pi \otimes_R P  = 0$.
Applying Lemma~\ref{quite}, we deduce that $P \cong Re \otimes_{eRe} V$ for some
$V \in eRe\pmd$. As $[Re\otimes_{eRe} V] = 0$, Lemma~\ref{phiing} 
now gives that
$[V] = 0$. Hence, $V = 0$, so $P=0$ too.

Conversely, suppose that $R$ is stably finite.
Take $V \in eRe\pmd$ with $[V] = 0$. Then $[Re\otimes_{eRe} V] = 0$
which implies that $Re \otimes_{eRe} V = 0$. Then multiply by the
idempotent $e$ to get that $V \cong eRe\otimes_{eRe} V = 0$.
Finally, take $Q \in S\pmd$ with $[Q] = 0$. Then $[S_\pi \otimes_R Q]
= 0$ which implies that $S_\pi \otimes_S Q = 0$. 
Hence, $Q \cong R_\sigma\otimes_R S_\pi \otimes_S Q = 0$. 
\end{proof}

We are going to be working in the remainder of the article 
with (usually monoidal) $\k$-linear categories instead of
rings.  
The data of a $\k$-linear category $\mathcal A$ 
is the same as the data of a {\em locally unital algebra}, i.e., an
associative (but not necessarily unital) $\k$-algebra $A$ equipped with a
system of mutually orthogonal idempotents $\{1_X\:|\:X \in
\mathbb{A}\}$
such that
\begin{equation}\label{dictionary}
A = \bigoplus_{X,Y \in \mathbb{A}} 1_Y A 1_X.
\end{equation}
Under this identification,
$\mathbb{A}$ is the object set of $\mathcal A$,
the idempotent
$1_X$ is the identity endomorphism of object $X$,
$1_Y A 1_X := \Hom_{\mathcal A}(X,Y)$,  and multiplication in $A$ is
induced by composition in $\mathcal A$.
By a module over such a locally unital algebra, we mean a left module $V$
as usual such that $V = \bigoplus_{X \in \mathbb{A}} 1_X V$.
This is
just the same data as a $\k$-linear functor from $\mathcal A$ to
vector spaces. 
Let $A\pmd$ be the category of finitely generated projective
$A$-modules.
Then the Yoneda lemma implies that there is a {\em contravariant}
equivalence of categories
\begin{equation}\label{yon}
\Kar(\mathcal A) \rightarrow A\pmd
\end{equation}
sending an object $X \in \mathcal
A$ to the left ideal $A 1_X$, and a morphism
$f:X\rightarrow Y$ to the homomorphism $A 1_Y \rightarrow A 1_X$
defined by right multiplication.
We get induced a canonical isomorphism
\begin{equation}\label{yoneda}
K_0(\Kar(\mathcal A)) \cong K_0(A),
\end{equation}
where $K_0(A)$ denotes the split Grothendieck group of $A\pmd$.
Providing $A$ is actually unital,
i.e., $\mathcal A$ has only finitely many non-zero objects, 
Theorem~\ref{Grothsplit} can then 
be applied in this situation.

\section{Categorification of symmetric functions}\label{newsec}

It is well known that the ring $\SymZ$ of symmetric functions is
categorified by the representations of the symmetric groups $\S_n$ for all
$n$. In this section, 
we are going to reformulate this classical result in terms of 
monoidal categories. This will give us the opportunity to introduce
 language which will be essential later on.

Let $\SYM$ be the free strict $\k$-linear symmetric monoidal category
generated by one object.
This has a very simple monoidal presentation in terms of
the string calculus for morphisms in strict
monoidal categories; see e.g. \cite[Chapter 2]{TV}. We
represent the horizontal
composition $f \otimes g$
(resp., vertical composition $f \circ g$)
of morphisms $f$ and $g$ 
diagrammatically by drawing $f$ to the left of $g$ (resp., drawing $f$ above
$g$). We denote the unit object by $\unit$
and its identity endomorphism by $1_\unit$.
Then, $\SYM$ is the strict $\k$-linear monoidal category generated by
one object
$\up$ and one morphism
$\mathord{
\begin{tikzpicture}[baseline = -.6mm]
	\draw[->] (0.16,-.16) to (-0.16,.16);
	\draw[->] (-0.16,-.16) to (0.16,.16);
\end{tikzpicture}
}:\up\otimes\up \rightarrow \up\otimes \up
$
subject to the 
relations
\begin{align}\label{symmetric}
\mathord{
\begin{tikzpicture}[baseline = -1mm]
	\draw[->] (0.28,0) to[out=90,in=-90] (-0.28,.6);
	\draw[->] (-0.28,0) to[out=90,in=-90] (0.28,.6);
	\draw[-] (0.28,-.6) to[out=90,in=-90] (-0.28,0);
	\draw[-] (-0.28,-.6) to[out=90,in=-90] (0.28,0);
\end{tikzpicture}
}&=
\mathord{
\begin{tikzpicture}[baseline = -1mm]
	\draw[->] (0.18,-.6) to (0.18,.6);
	\draw[->] (-0.18,-.6) to (-0.18,.6);
\end{tikzpicture}
}\:,
\qquad
\mathord{
\begin{tikzpicture}[baseline = -1mm]
	\draw[<-] (0.45,.6) to (-0.45,-.6);
	\draw[->] (0.45,-.6) to (-0.45,.6);
        \draw[-] (0,-.6) to[out=90,in=-90] (-.45,0);
        \draw[->] (-0.45,0) to[out=90,in=-90] (0,0.6);
\end{tikzpicture}
}
=
\mathord{
\begin{tikzpicture}[baseline = -1mm]
	\draw[<-] (0.45,.6) to (-0.45,-.6);
	\draw[->] (0.45,-.6) to (-0.45,.6);
        \draw[-] (0,-.6) to[out=90,in=-90] (.45,0);
        \draw[->] (0.45,0) to[out=90,in=-90] (0,0.6);
\end{tikzpicture}
}\:.
\end{align}
The objects of $\SYM$ are the tensor powers
$\up^{\otimes n}$ 
of the generating object for $n \in \N$.
There are no non-zero morphisms between $\up^{\otimes m}$ and
$\up^{\otimes n}$ for $m \neq n$.
Moreover, there is an algebra isomorphism
\begin{equation}\label{twangy}
\imath_n:
\k \S_n \stackrel{\sim}{\rightarrow}
\End_{\SYM}(\up^{\otimes n})
\end{equation}
sending the $i$th basic transposition $s_i$ to the crossing of the
$i$th and $(i+1)$th strings (remembering that we number strings
$1,2,\dots$ from right to left).
Thus $\SYM$ assembles the group algebras of all the symmetric groups
into one
convenient package. 

Now we can use the equivalence (\ref{yon}) and the isomorphism
(\ref{yoneda}) to translate the well-known representation
theory of symmetric groups into statements about $\SYM$.
Since we are in characteristic zero, Maschke's theorem
implies that the additive Karoubi envelope
$\Kar(\SYM)$ is a semisimple Abelian category. 
For $\lambda \in \mathcal P$ with $|\la|=n$,
the Specht module $S(\lambda) = (\k \S_n) e_\lambda$ 
corresponds to the indecomposable object
$S_\lambda := \left(\up^{\otimes n}, \imath_n(e_\lambda)\right) \in
\Kar(\SYM)$. 
We set $H_n := S_{(n)}$ and $E_n := S_{(1^n)}$ for short.
Then we see that the classes $\left\{[S_\lambda]\:\big|\:\lambda \in \mathcal
  P\right\}$ give 
a basis for $K_0(\Kar(\SYM))$ as a free $\Z$-module. Moreover,
since taking tensor products of idempotents in $\Kar(\SYM)$
corresponds to the induction product at the level of $\k
\S_n$-modules,
the Littlewood-Richardson rule implies that there is a ring isomorphism
\begin{equation}\label{bargamma}
\gamma:\SymZ \stackrel{\sim}{\rightarrow} K_0(\Kar(\SYM)),
\qquad
s_\lambda \mapsto [S_\lambda],\quad
h_n \mapsto [H_n],\quad
e_n \mapsto [E_n].
\end{equation}
Thus $\SYM$ categorifies the ring of symmetric functions.

In the remainder of the section, we are going to explain how to categorify the comultiplication  (\ref{comultiplication}) on
$\SymZ$.
The usual way to do this is by considering the restriction functors from $\S_n$ to $\S_r
\times \S_{n-r}$ for all $0 \leq r \leq n$.
We are going to formulate the result instead in terms of
a monoidal functor on $\SYM$. 

Given strict $\k$-linear monoidal categories $\mathcal C$ and $\mathcal D$,  we can form their free product $\mathcal C\circledast \mathcal D$ as a strict $\k$-linear monoidal category. This can be defined by a universal property: the category of $\k$-linear monoidal functors $\mathcal C\circledast \mathcal D\to \mathcal{B}$ for any other strict $\k$-linear monoidal category $\mathcal{B}$ is the same as the category of pairs of $\k$-linear monoidal functors $\mathcal C\to \mathcal{B}$ and $\mathcal D\to \mathcal{B}$.  When $\mathcal C$ and $\mathcal D$ are themselves defined by generators and relations, the free product of $\mathcal C$ and $\mathcal D$ may be constructed simply as the strict $\k$-linear monoidal category defined by taking the disjoint union of the given generators and relations of $\mathcal C$ and $\mathcal D$.
The {\em symmetric product}
$\mathcal C \odot \mathcal D$ is the strict $\k$-linear monoidal
category obtained from $\mathcal C \circledast \mathcal D$ by
adjoining isomorphisms $\sigma_{X,Y}:X \otimes Y \stackrel{\sim}{\rightarrow} Y \otimes
X$ 
such that $\sigma_{Y,X} = \sigma_{X,Y}^{-1}$
for each pair of objects $X \in \mathcal C$ and $Y \in \mathcal D$,
subject also to the relations
\begin{align*}
\sigma_{X_1 \otimes X_2, Y} &= (\sigma_{X_1,Y} \otimes 1_{X_2}) \circ
  (1_{X_1} \otimes \sigma_{X_2,Y}),&
\sigma_{X_2,Y} \circ (f \otimes 1_Y)  &= (1_Y \otimes f) \circ
                                        \sigma_{X_1,Y},\\
\sigma_{X, Y_1 \otimes Y_2} &= (1_{Y_1} \otimes \sigma_{X,Y_2}) \circ
(\sigma_{X, Y_1} \otimes 1_{Y_2}),&
\sigma_{X,Y_2} \circ (1_X \otimes g) &= (g \otimes 1_X)\circ \sigma_{X,Y_1}
\end{align*}
for all $X, X_1,X_2 \in\mathcal C, Y, Y_1,Y_2 \in \mathcal D$ and
$f\in \Hom_{\mathcal C}(X_1,X_2), g \in \Hom_{\mathcal
  D}(Y_1,Y_2)$.

The symmetric product $\SYM\odot\SYM$
of two copies of $\SYM$ is
the free strict $\k$-linear symmetric monoidal category generated by two
objects. Diagrammatically it is convenient to use 
different colors,
denoting the
symmetric product instead by $\blue{\SYM}\odot\red{\SYM}$ and using the color
blue (resp., red) for objects\ifmonochrome\footnote{For readers working in monochrome, ``blue" will appear as a thin dotted line and ``red" will appear as a thick
dashed line.}\fi  and morphisms in the first (resp., second)
copy of $\SYM$. 
Morphisms may then be represented by linear combinations of string diagrams
colored both blue and red. In these diagrams,
as well as the one-color crossings that are the generating morphisms
of
$\blue{\SYM}$ and $\red{\SYM}$,
we have the additional two-color crossings
\begin{equation}\label{tcc}
\sigma_{\smallblueup, \smallredup} = 
\mathord{
\begin{tikzpicture}[baseline = -.6mm]
	\draw[->,redp] (0.28,-.3) to (-0.28,.3);
	\draw[->,bluep] (-0.28,-.3) to (0.28,.3);
\end{tikzpicture}
}: \blueup\otimes\redup\rightarrow
\redup\otimes\blueup,\qquad
\sigma_{\smallredup, \smallblueup} =
\mathord{
\begin{tikzpicture}[baseline = -.6mm]
	\draw[->,bluep] (0.28,-.3) to (-0.28,.3);
	\draw[->,redp] (-0.28,-.3) to (0.28,.3);
\end{tikzpicture}
}: \redup\otimes\blueup\rightarrow \blueup\otimes\redup,
\end{equation}
which are mutual inverses. The definition of symmetric product gives
braid-like relations allowing all one-color crossings to be commuted across
strings of the other color, for example:
\begin{align}
\mathord{
\begin{tikzpicture}[baseline = -1mm]
	\draw[<-,redp] (0.45,.6) to (-0.45,-.6);
	\draw[->,bluep] (0.45,-.6) to (-0.45,.6);
        \draw[-,redp] (0,-.6) to[out=90,in=-90] (-.45,0);
        \draw[->,redp] (-0.45,0) to[out=90,in=-90] (0,0.6);
\end{tikzpicture}
}
&=
\mathord{
\begin{tikzpicture}[baseline = -1mm]
	\draw[<-,redp] (0.45,.6) to (-0.45,-.6);
	\draw[->,bluep] (0.45,-.6) to (-0.45,.6);
        \draw[-,redp] (0,-.6) to[out=90,in=-90] (.45,0);
        \draw[->,redp] (0.45,0) to[out=90,in=-90] (0,0.6);
\end{tikzpicture}
}\:,&
\mathord{
\begin{tikzpicture}[baseline = -1mm]
	\draw[<-,bluep] (0.45,.6) to (-0.45,-.6);
	\draw[->,bluep] (0.45,-.6) to (-0.45,.6);
        \draw[-,redp] (0,-.6) to[out=90,in=-90] (-.45,0);
        \draw[->,redp] (-0.45,0) to[out=90,in=-90] (0,0.6);
\end{tikzpicture}
}
&=
\mathord{
\begin{tikzpicture}[baseline = -1mm]
	\draw[<-,bluep] (0.45,.6) to (-0.45,-.6);
	\draw[->,bluep] (0.45,-.6) to (-0.45,.6);
        \draw[-,redp] (0,-.6) to[out=90,in=-90] (.45,0);
        \draw[->,redp] (0.45,0) to[out=90,in=-90] (0,0.6);
\end{tikzpicture}
}\:.\label{braidlike}
\end{align}

For $0 \leq r \leq n$ let $\mathcal{P}_{r,n}$ denote the set of size
$\binom{n}{r}$ consisting of 
tuples $\lambda =
(\lambda_1,\dots,\lambda_r) \in \Z^r$ such that $n-r \geq \lambda_1 \geq \cdots \geq \lambda_r
\geq 0$. 
Let
$\min_{r,n}$ (resp., $\max_{r,n}$) be the element $\lambda \in \mathcal{P}_{r,n}$
with $\lambda_1=\cdots=\lambda_r=0$ (resp., $\lambda_1=\cdots=\lambda_r=n-r$).
For any $\lambda \in \mathcal{P}_{r,n}$, we let
\begin{equation}
\up^{\otimes \lambda} := \blueup^{\otimes (n-r-\lambda_1)} \otimes \redup
\otimes \blueup^{\otimes (\lambda_1-\lambda_{2})} \otimes \redup \otimes
\cdots
\otimes \redup \otimes \blueup^{\otimes \lambda_r} \in \blue{\SYM}\odot\red{\SYM};
\end{equation}
in particular, 
$\up^{\otimes \min_{r,n}} = \blueup^{\otimes (n-r)}
\otimes \redup^{\otimes r}$
and $\up^{\otimes \max_{r,n}} = 
\redup^{\otimes r}\otimes \blueup^{\otimes (n-r)}$.
In this way, $\mathcal{P}_{r,n}$ labels
the objects of $\blue{\SYM}\odot\red{\SYM}$ obtained by tensoring $r$ 
generators $\redup$ and $(n-r)$ generators $\blueup$ in
some order.
We denote
the identity endomorphism of $\up^{\otimes \lambda}$ simply by
$1_\lambda$. There is also a unique isomorphism
\begin{equation}\label{babysig}
\sigma_\lambda:   \up^{\otimes \lambda}
\stackrel{\sim}{\rightarrow}
\up^{\otimes \min_{r,n}}\end{equation}
whose
diagram only involves crossings of the form
$\begin{tikzpicture}[baseline = -1mm]
	\draw[->,bluep] (0.18,-.2) to (-0.18,.2);
	\draw[->,redp] (-0.18,-.2) to (0.18,.2);
\end{tikzpicture}\:$; in particular,
$\sigma_{\min_{r,n}} = 1_{\min_{r,n}}$.
To make sense of these definitions,
one can represent an element of
$\mathcal{P}_{r,n}$
by a Young diagram 
with $\lambda_i$ boxes on its $i$th row 
drawn inside an $r \times (n-r)$-rectangle. Then
 $\up^{\otimes \lambda}$ may be seen
by walking southwest along the boundary of the diagram,
recording $\blueup$ for a horizontal step and $\redup$ for a vertical one; 
for example, $(3,3,2,0,0) \in \mathcal P_{5,9}$ is
$$
\lambda = \begin{tikzpicture}[baseline=-6mm]
\draw[-] (0,0) to (.6,0);
\draw[-] (.8,0) to (.8,-1) to (0,-1);
\draw[-] (0,-.6) to (0,0);
\draw[-] (0,-.2) to (.6,-0.2);
\draw[-] (0,-.4) to (.6,-0.4);
\draw[-] (0,-.6) to (.4,-0.6);
\draw[-] (.2,-.6) to (.2,0);
\draw[-] (.4,-.6) to (.4,0);
\draw[-] (.6,-.4) to (.6,0);
\draw[-,thick,blue] (.59,0) to (.807,0);
\draw[-,thick,red] (0.6,-.4) to (.6,0.0138);
\draw[-,thick,blue] (.4,-.4) to (.6138,-.4);
\draw[-,thick,red] (0.4,-.3862) to (.4,-.6);
\draw[-,thick,blue] (.4138,-.6) to (0,-.6);
\draw[-,thick,red] (0,-1.009) to (0,-.5862);
\end{tikzpicture}\:,
\qquad 
\up^{\otimes \lambda} = \blueup\otimes
\redup\otimes\redup\otimes
\blueup\otimes\redup\otimes\blueup\otimes\blueup\otimes\redup\otimes\redup,
\qquad
\sigma_\lambda = 
\begin{tikzpicture}[baseline=4mm]
\draw[<-,bluep] (0,1) to (0,0);
\draw[<-,bluep] (0.25,1) to (0.75,0);
\draw[<-,bluep] (.5,1) to (1.25,0);
\draw[<-,bluep] (.75,1) to (1.5,0);
\draw[<-,redp] (1,1) to (0.25,0);
\draw[<-,redp] (1.25,1) to (0.5,0);
\draw[<-,redp] (1.5,1) to (1,0);
\draw[<-,redp] (1.75,1) to (1.75,0);
\draw[<-,redp] (2,1) to (2,0);
\end{tikzpicture}\:.
$$

We will often identify the group algebra $\k \S_r \otimes_\k \k
\S_{n-r}$ of $\S_r \times \S_{n-r}$ with a
subalgebra of $\k
\S_n$ so that $s_i \otimes 1 \leftrightarrow s_i$ and
$1 \otimes s_j \leftrightarrow s_{r+j}$.
There is an algebra isomorphism
\begin{equation}\label{doublei}
\imath_{r,n}:\k \S_r \otimes_\k \k \S_{n-r} \stackrel{\sim}{\rightarrow}
\End_{\blue{\SYM}\odot\red{\SYM}}\left(\blueup^{\otimes (n-r)} \otimes
\redup^{\otimes r}\right)
\end{equation}
sending $s_i = s_i\otimes 1$ to the crossing of the $i$th
and $(i+1)$th red strings
and $s_{r+j} = 1\otimes s_j$ to the crossing of
the $j$th and $(j+1)$th blue strings.
Combining this isomorphism with the elements 
$\left
\{\sigma^{-1}_\lambda \circ \sigma_\mu\:\big|\lambda,\mu
\in \mathcal P_{r,n}\right\}$, which
give the matrix units, we see that
\begin{equation}\label{matrix}
\End_{\Add(\blue{\SYM}\odot\red{\SYM})}\left((\blueup\oplus\redup)^{\otimes
  n}\right)
\cong \bigoplus_{r=0}^n \operatorname{Mat}_{\binom{n}{r}}\left(\k \S_{r}
\otimes_\k \k \S_{n-r}\right).
\end{equation}
Using (\ref{yon})--(\ref{yoneda}) too, we conclude 
that
$K_0(\Kar(\blue{\SYM}\odot\red{\SYM})) \cong \SymZ \otimes_{\Z} \SymZ$. An explicit isomorphism
is given by the composition
\begin{equation*}
\SymZ\otimes_{\Z}\SymZ\stackrel{\gamma\otimes\gamma}{\longrightarrow}
K_0(\Kar(\blue{\SYM}))\otimes_{\Z}K_0(\Kar(\red{\SYM}))
\stackrel{\epsilon}{\longrightarrow}
K_0(\Kar(\blue{\SYM}\odot\red{\SYM}))
\end{equation*}
where the second map $\epsilon$ is induced by the inclusions of
$\blue{\SYM}$ and $\red{\SYM}$ into $\blue{\SYM}\odot\red{\SYM}$.

Now we are ready to define a strict $\k$-linear monoidal functor
\begin{equation}\label{monoo}
\Delta:
\SYM \rightarrow \Add(\blue{\SYM}\odot\red{\SYM})
\end{equation}
that sends the generating object $\up$ to $\blueup\oplus\redup$,
and is defined on 
the generating morphism by
\begin{equation}\label{toom}
\mathord{
\begin{tikzpicture}[baseline = -.6mm]
	\draw[->] (0.28,-.3) to (-0.28,.3);
	\draw[->] (-0.28,-.3) to (0.28,.3);
\end{tikzpicture}
}\mapsto
\mathord{
\begin{tikzpicture}[baseline = -.6mm]
	\draw[->,bluep] (0.28,-.3) to (-0.28,.3);
	\draw[->,bluep] (-0.28,-.3) to (0.28,.3);
\end{tikzpicture}
}+
\mathord{
\begin{tikzpicture}[baseline = -.6mm]
	\draw[->,redp] (0.28,-.3) to (-0.28,.3);
	\draw[->,redp] (-0.28,-.3) to (0.28,.3);
\end{tikzpicture}
}
+
\mathord{
\begin{tikzpicture}[baseline = -.6mm]
	\draw[->,redp] (0.28,-.3) to (-0.28,.3);
	\draw[->,bluep] (-0.28,-.3) to (0.28,.3);
\end{tikzpicture}
}+
\mathord{
\begin{tikzpicture}[baseline = -.6mm]
	\draw[->,bluep] (0.28,-.3) to (-0.28,.3);
	\draw[->,redp] (-0.28,-.3) to (0.28,.3);
\end{tikzpicture}
}.
\end{equation}
The right-hand side of this, which is a $4\times
4$ matrix in
$\End_{\Add(\blue{\SYM}\odot\red{\SYM})}(\blueup\otimes\blueup\oplus\blueup\otimes\redup\oplus\redup\otimes\blueup\oplus\redup\otimes\redup)$,
is the morphism defining the
symmetric braiding on
the object $\blueup\oplus\redup$ of $\Add(\blue{\SYM}\odot\red{\SYM})$
with respect to its canonical symmetric monoidal structure as the additive
envelope of the symmetric monoidal category $\blue{\SYM}\odot\red{\SYM}$.
The fact that $\Delta$ is well defined is immediate from the universal
property of $\SYM$ as the free symmetric monoidal category on one
object; alternatively, one can directly verify that the defining
relations (\ref{symmetric}) are satisfied.
To compute $\Delta$ 
on a more general diagram $D$, one just
has to sum over all diagrams obtained from $D$ by
coloring the strings red or blue in all possible ways.

\begin{remark}\label{alts}
Similarly, there is a monoidal functor
$\SYM
\rightarrow
\Add\left(
\blue{\SYM} \odot
\red{\SYM}\odot
\green{\SYM}\right)$
to the triple symmetric product
which sends $\up$ to $\blueup\oplus\redup\oplus\greenup$.
Identifying
$\blue{\SYM}\odot
\red{\SYM}\odot \green{\SYM}$ with
$(\blue{\SYM} \odot \red{\SYM})\odot \green{\SYM}$
and
$\blue{\SYM} \odot (\red{\SYM}\odot \green{\SYM})$,
this agrees with both of the compositions
$\left(\Delta\odot\operatorname{Id}\right)
\circ
\Delta$
and
$\left(\operatorname{Id} \odot\Delta\right) \circ
\Delta$. In other words, the categorical
comultiplication is coassociative.
\end{remark}

The functor $\Delta$ extends to a
monoidal functor
$\Delta:\Kar(\SYM)\rightarrow \Kar(\blue{\SYM}\odot\red{\SYM})$,
which in turn
induces 
$[\Delta]:K_0(\Kar(\SYM))\rightarrow
K_0(\Kar(\blue{\SYM}\odot\red{\SYM}))$. Note that $[\Delta]$ is
automatically a ring homomorphism; the analogous statement in the more
traditional approach via restriction functors requires an application
of the Mackey theorem at this point.
We claim moreover that 
\begin{equation}\label{shower}
\begin{diagram}
\node{\SymZ}\arrow{s,l,A,J}{\gamma}\arrow{e,t}{\delta}\node{\SymZ\otimes_\Z\SymZ\:}\arrow{s,r,A,J}{\epsilon\circ\gamma\otimes\gamma}\\
\node{K_0(\Kar(\SYM))}\arrow{e,b}{[\Delta]}\node{K_0(\Kar(\blue{\SYM}
  \odot \red{\SYM}))}
\end{diagram}
\end{equation}
 commutes, i.e.,
$\Delta$ categorifies the comultiplication $\delta$ on $\SymZ$.
This is a consequence of the following theorem, bearing in mind that the
complete symmetric functions $h_n$ generate $\SymZ$.

\begin{theorem}\label{trivial}
For each $n \geq 0$, we have that 
\begin{align}
\Delta(H_n) &\cong \bigoplus_{r=0}^n \blue{H_{n-r}}
\otimes \red{H_r},&
\Delta(E_n) &\cong \bigoplus_{r=0}^n \blue{E_{n-r}}
\otimes \red{E_r}.
\end{align}
\end{theorem}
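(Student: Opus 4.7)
The plan is to construct explicit mutually inverse morphisms between $\Delta(H_n)$ and $\bigoplus_{r=0}^n\blue{H_{n-r}}\otimes\red{H_r}$ in $\Kar(\blue{\SYM}\odot\red{\SYM})$, using an orbit-stabilizer decomposition for the induced $\S_n$-action on $(\blue{\up}\oplus\red{\up})^{\otimes n}$. First I would decompose $(\blue{\up}\oplus\red{\up})^{\otimes n}$ in $\Add(\blue{\SYM}\odot\red{\SYM})$ into $2^n$ summands $\up_c:=\up_{c(1)}\otimes\cdots\otimes\up_{c(n)}$ indexed by color patterns $c:\{1,\ldots,n\}\to\{\text{blue},\text{red}\}$; the $\binom{n}{r}$ summands with $r$ reds are all identified with the canonical model $\blue{\up}^{\otimes(n-r)}\otimes\red{\up}^{\otimes r}$ via the two-color mixing isomorphisms of (\ref{babysig}), recovering the matrix decomposition (\ref{matrix}). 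By the construction of $\Delta$ on the generating crossing and the universal property of $\SYM$ as the free symmetric monoidal category on one object, the composite $\Delta\circ\imath_n$ coincides with the canonical $\S_n$-action on this $n$-fold tensor power from the symmetric monoidal structure on $\Add(\blue{\SYM}\odot\red{\SYM})$. This action permutes color summands via $w\cdot c = c\circ w^{-1}$, yielding one orbit for each $r\in\{0,\ldots,n\}$; the stabilizer of the canonical pattern $c_r$ having reds at positions $1,\ldots,r$ is $\S_{n-r}\times\S_r$, acting on $\up_{c_r}$ as the image of $\imath_{r,n}$ from (\ref{doublei}).

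Writing $p:=\Delta(\imath_n(e_{(n)}))$ and $q_r:=\imath_{r,n}(e_{(n-r)}\otimes e_{(r)})$ for the relevant idempotents, and letting $\iota_r,\pi_r$ denote the inclusion and projection for the canonical summand $\up_{c_r}$, I would set
\[
\alpha_r := p\circ\iota_r\circ q_r,\qquad \beta_r := \binom{n}{r}\, q_r\circ\pi_r\circ p.
\]
The key orbit-stabilizer identity is $\pi_r\circ p\circ\iota_r = \tfrac{1}{\binom{n}{r}}q_r$: only $w\in\S_n$ preserving $c_r$ contribute to this composite, and those form the stabilizer $\S_{n-r}\times\S_r$ of order $(n-r)!\,r!$, producing the factor $\tfrac{(n-r)!\,r!}{n!}=\tfrac{1}{\binom{n}{r}}$. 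This immediately gives $\beta_r\circ\alpha_r=q_r$, and the cross terms $\beta_s\circ\alpha_r$ for $r\ne s$ vanish because $\pi_s\circ\Delta(\imath_n(w))\circ\iota_r=0$ for every $w\in\S_n$ (distinct orbits are disjoint). A matrix calculation in the blocks of (\ref{matrix}) then verifies $\sum_r\alpha_r\circ\beta_r = p$, giving the first isomorphism of the theorem.

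The argument for $E_n$ is identical with $e_{(n)}$ replaced by the sign idempotent $e_{(1^n)}=\tfrac{1}{n!}\sum_{w}\operatorname{sgn}(w)\,w$; since $\operatorname{sgn}$ restricts on $\S_{n-r}\times\S_r$ to the product of signs on the two factors, the stabilizer-averaging yields $\imath_{r,n}(e_{(1^{n-r})}\otimes e_{(1^r)})$, whose image is $\blue{E_{n-r}}\otimes\red{E_r}$. The main technical obstacle I expect is the careful bookkeeping of the two-color mixing isomorphisms $\sigma_\lambda$ from (\ref{babysig}): since the categorical $\S_n$-action shuffles summands through nontrivial braidings rather than merely permuting indices, one must use the braid-like relations (\ref{braidlike}) to verify that the restriction of the action to the stabilizer factors through $\imath_{r,n}$, after which the orbit-stabilizer identity and the matrix verification for $\sum_r\alpha_r\circ\beta_r=p$ are routine.
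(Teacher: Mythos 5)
Your proposal is correct and is essentially the paper's own proof: unwinding your definitions, $\alpha_r=p\circ\iota_r\circ q_r$ and $\beta_r=\binom{n}{r}q_r\circ\pi_r\circ p$ are exactly the $r$-components of the morphisms $u$ and $v$ that the paper writes out explicitly, and your orbit--stabilizer identity is the paper's computation of $1_\mu\circ\Delta(\imath_n(e_{(n)}))\circ 1_\lambda$ (whose off-diagonal cases, with $\mu\ne\lambda$, are what you need for the verification of $\sum_r\alpha_r\circ\beta_r=p$). The only quibble is notational: with the paper's convention that the first tensor factor of $\imath_{r,n}$ is $\k\S_r$ acting on the red strings, your $q_r$ should read $\imath_{r,n}(e_{(r)}\otimes e_{(n-r)})$.
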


\begin{proof}
For the isomorphism involving $H_n$,
it suffices to show that 
the idempotents 
$\Delta(\imath_n(e_{(n)}))$ and 
$\sum_{r=0}^n
\imath_{r,n}(e_{(r)}\otimes e_{(n-r)})$ which
define
the objects $\Delta(H_n)$ and 
$\bigoplus_{r=0}^n \blue{H_{n-r}}
\otimes \red{H_r}$ 
are conjugate.
Thus, we need to construct morphisms $u$ and $v$ in
$\Kar(\blue{\SYM}\odot\red{\SYM})$
such that $
u \circ v = 
\Delta(\imath_n(e_{(n)}))$ and 
$v \circ u = \sum_{r=0}^n
\imath_{r,n}(e_{(r)}\otimes e_{(n-r)})$.
To do this, notice 
for any $\lambda,\mu \in \mathcal P_{r,n}$
that 
$$
1_\mu \circ \Delta(\imath_n(e_{(n)})) \circ 1_\lambda =
\binom{\,n\,}{\,r\,}^{-1}\sigma_\mu^{-1} \circ 
\imath_{r,n}(e_{(r)}\otimes e_{(n-r)}) \circ \sigma_\lambda.
$$
It follows that $\Delta(\imath_n(e_{(n)})) = u \circ v$ where
\begin{align*}
u &:=\sum_{r=0}^n \binom{\,n\,}{\,r\,}^{-1}\sum_{\mu \in \mathcal P_{r,n}}
    \sigma_\mu^{-1} \circ 
\imath_{r,n}(e_{(r)}\otimes e_{(n-r)}),&
v &:=\sum_{r=0}^n \sum_{\lambda \in \mathcal P_{r,n}}
         \imath_{r,n}(e_{(r)}\otimes e_{(n-r)})\circ \sigma_\lambda.
\end{align*}
Finally it is easy to check that 
$v \circ u = \sum_{r=0}^n \imath_{r,n}(e_{(r)}\otimes e_{(n-r)})$.

To establish the isomorphism involving $E_n$,
one needs to show instead that there are morphisms $u$ and $v$ such that
$u \circ v = 
\Delta(\imath_n(e_{(1^n)}))$ and 
$v \circ u = \sum_{r=0}^n
\imath_{r,n}(e_{(1^r)}\otimes e_{(1^{n-r})})$.
These are given by similar formulae to the above,
replacing $e_{(m)}$ by $e_{(1^m)}$ and
$\sigma_\nu$ 
by $(-1)^{|\nu|}\sigma_\nu$ everywhere.
\end{proof}

\section{The degenerate affine Hecke category}\label{sdha}

The {\em degenerate affine Hecke algebra} $\dH_n$ is the
vector space $\k[x_1,\dots,x_n] \otimes_\k \k \S_n$
viewed as an associative algebra with
multiplication defined so that $\k[x_1,\dots,x_n]$ and $\k \S_n$
are subalgebras, and in addition 
$s_i f = s_i(f) \:s_i + \partial_i(f)$
for $f \in \k[x_1,\dots,x_n]$ and $i=1,\dots,n-1$,
where $\partial_i$ is the {\em Demazure operator}
\begin{equation}\label{demaz}
\partial_i(f) := \frac{f - s_i(f)}{x_{i+1}-x_i}.
\end{equation}
Also recall that the {\em center} of $AH_n$ is the subalgebra  $\Sym_n
:= \k[x_1,\dots,x_n]^{\S_n}$ 
of symmetric  polynomials; see e.g. \cite[Theorem 3.3.1]{Kbook}. The algebra $AH_n$ is finitely generated as a $\Sym_n$-module.
The following theorem was proved by Khovanov in \cite{K}; its proof uses the assumption that $\k$ is of characteristic zero in an essential way.

\begin{theorem}\label{starting}
The inclusion $\k \S_n \hookrightarrow AH_n$ induces an isomorphism
$K_0(\k \S_n) \stackrel{\sim}{\rightarrow} K_0(AH_n)$.
More generally, the same assertion holds 
when $\k \S_n$ is replaced with $\k \S_{n_1}\otimes_\k\cdots\otimes_\k
\k \S_{n_r}$ and $AH_n$ is replaced with $AH_{n_1}\otimes_\k \cdots\otimes_\k
AH_{n_r} \otimes_\k B$ for any $n_1,\dots,n_r \geq 0$ and any
polynomial algebra $B$ (possibly of infinite rank).
\end{theorem}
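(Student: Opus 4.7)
The plan is to prove the basic case ($r=1$, $B=\k$) first, then bootstrap to the general tensor-product setting. For the basic case, I put an ascending filtration on $AH_n$ by declaring $\deg(x_i) = 1$ and $\deg(s_i) = 0$; the relation \eqref{oven} respects this filtration since $\partial_i(f)$ has strictly smaller polynomial degree than $f$. The degree-zero piece is $F_0 AH_n = \k\S_n$, and the associated graded $\operatorname{gr} AH_n$ is the skew group algebra $\k[x_1,\ldots,x_n]\rtimes\k\S_n$, an $\N$-graded algebra whose degree-zero part $\k\S_n$ is semisimple. The characteristic-zero assumption enters essentially through this semisimplicity.

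The strategy is to show $K_0(\k\S_n) \stackrel{\sim}{\rightarrow} K_0(\operatorname{gr} AH_n) \stackrel{\sim}{\rightarrow} K_0(AH_n)$ in two steps. For the first isomorphism, every finitely generated graded projective $P$ over $\operatorname{gr} AH_n$ admits a decomposition $P \cong \operatorname{gr} AH_n \otimes_{\k\S_n} (P / (\operatorname{gr} AH_n)^{>0} P)$ by graded Nakayama, with the right-hand factor a f.g.\ projective $\k\S_n$-module by semisimplicity; one then argues that every ungraded f.g.\ projective is isomorphic to a graded one, so graded and ungraded split Grothendieck groups agree. For the second isomorphism, given a primitive idempotent $\bar e \in \operatorname{gr} AH_n$ of finite filtration degree, one lifts to a primitive idempotent $e \in AH_n$ with principal symbol $\bar e$ by successively correcting higher-filtration error terms and using the semisimplicity of $\k\S_n$ to split off each correction; this produces a bijection on isomorphism classes of indecomposable projectives.

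For the generalized assertion, I use the analogous filtration on $AH_{n_1}\otimes\cdots\otimes AH_{n_r}\otimes B$, placing the generators of $B$ in positive filtration degree. The degree-zero piece is $\k\S_{n_1}\otimes\cdots\otimes\k\S_{n_r}$, still semisimple, so the two-step argument goes through essentially unchanged. When $B$ has infinite rank, any single f.g.\ projective involves only finitely many generators of $B$, so the argument localizes to a finite-rank subalgebra. The main obstacle is the lifting step: although $(\operatorname{gr} AH_n)^{>0}$ is not nilpotent and so idempotent lifting is not automatic, every primitive idempotent of $AH_n$ can be normalized to lie in bounded filtration degree, making the correction process terminate in finitely many steps. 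The whole argument breaks down in positive characteristic precisely because the required splittings fail when $\k\S_n$ is no longer semisimple.
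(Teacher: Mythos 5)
Your structural setup is exactly the one the paper uses: the filtration with $\deg x_i=1$, $\deg s_i=0$, the identification of the associated graded with the smash product $\k[x_1,\dots,x_n]\#\S_n$, and the observation that everything hinges on the semisimplicity of the degree-zero part $\k\S_n$ in characteristic zero. But the paper does not then prove the $K_0$-comparison by hand: it quotes Khovanov, whose argument rests on Quillen's Theorem~7 --- a d\'evissage/resolution-theorem statement comparing $G$-theory of a filtered ring with that of its degree-zero subring, combined with finite global dimension to pass back to $K_0$ of projectives. Your attempt to replace that citation with an elementary two-step argument has genuine gaps in both steps.

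In the first step, "one then argues that every ungraded f.g.\ projective is isomorphic to a graded one" buries the entire difficulty. Graded Nakayama does show that every \emph{graded} f.g.\ projective over $\k[x_1,\dots,x_n]\#\S_n$ is extended from $\k\S_n$, but the passage from arbitrary projectives to graded ones is the equivariant Serre problem; what is actually true (and suffices for $K_0$) is Bass--Haboush stable triviality of $G$-vector bundles on a representation, or equivalently Quillen's homotopy invariance of $G$-theory plus regularity --- a real theorem, not a formal reduction. In the second step, lifting idempotents from $\operatorname{gr}AH_n$ to $AH_n$ by successive corrections does not terminate: the filtration is exhaustive but unbounded and its "positive part" is neither nilpotent nor complete, so the correction process produces an infinite series with no meaning in $AH_n$. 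Your proposed fix --- that primitive idempotents of $AH_n$ can be normalized into bounded filtration degree --- is unjustified and is in any case about idempotents of $AH_n$ rather than about making the lift from $\operatorname{gr}AH_n$ converge; note also that if $e\in AH_n$ is an idempotent of filtration degree $d>0$ then its top symbol satisfies $\sigma_d(e)^2=\sigma_{2d}(e^2)=0$, so the symbol of an idempotent is not an idempotent and the claimed bijection on indecomposable projectives is not even well defined. To repair the proof you should either invoke Quillen's theorem as the paper does, or argue via $G_0$: both $AH_n$ and $\k\S_n$ have finite global dimension, so $K_0=G_0$ for each, and the filtered-to-graded comparison is legitimate at the level of $G_0$ because every f.g.\ module (not just projectives) admits a good filtration.
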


\begin{proof}
This is explained in \cite[Section 5.2]{K};
see especially \cite[(40)]{K}\footnote{This is (44) in the preprint version available at \url{https://arxiv.org/abs/1009.3295v1}.}.
The argument in {\em
  loc.\ cit.} depends ultimately
on a result of Quillen \cite[Theorem 7]{Q}. In order to be able to
apply Quillen's result, one needs to know that the degenerate affine Hecke algebra $AH_n$
is a filtered deformation of the smash product $\k[x_1,\dots,x_n] \#
\S_n$, which is a positively graded algebra with 
degree zero component given by the semisimple algebra $\k \S_n$.
When $B$ is of infinite rank, one needs to know also that taking 
$K_0$ commutes with direct limits \cite[Theorem 1.2.5]{Ros94}.
\end{proof}

The first part of this theorem implies that one can categorify the ring $\SymZ$ using the algebra $AH_n$ in
place of $\k \S_n$. Of course, we are going to translate this into the language of monoidal categories.
Let $\mathcal{AH}$ be the strict $\k$-linear monoidal category
obtained from the category $\SYM$ from the previous section by
adjoining an additional generating morphism 
$\mathord{
\begin{tikzpicture}[baseline = -1mm]
	\draw[->] (0.08,-.2) to (0.08,.2);
      \node at (0.08,0) {$\dot$};
\end{tikzpicture}
}
:\up\rightarrow\up
$
subject to the additional relations
\begin{align}\label{heckea}
\mathord{
\begin{tikzpicture}[baseline = -1mm]
	\draw[<-] (0.25,.3) to (-0.25,-.3);
	\draw[->] (0.25,-.3) to (-0.25,.3);
     \node at (-0.12,-0.145) {$\dot$};
\end{tikzpicture}
}
&=
\mathord{
\begin{tikzpicture}[baseline = -1mm]
	\draw[<-] (0.25,.3) to (-0.25,-.3);
	\draw[->] (0.25,-.3) to (-0.25,.3);
     \node at (0.12,0.135) {$\dot$};
\end{tikzpicture}}
+\:\mathord{
\begin{tikzpicture}[baseline = -1mm]
 	\draw[->] (0.08,-.3) to (0.08,.3);
	\draw[->] (-0.28,-.3) to (-0.28,.3);
\end{tikzpicture}
}
\:,&
\mathord{
\begin{tikzpicture}[baseline = -1mm]
	\draw[<-] (0.25,.3) to (-0.25,-.3);
	\draw[->] (0.25,-.3) to (-0.25,.3);
     \node at (-0.12,0.135) {$\dot$};
\end{tikzpicture}
}
&=
\mathord{
\begin{tikzpicture}[baseline = -1mm]
	\draw[<-] (0.25,.3) to (-0.25,-.3);
	\draw[->] (0.25,-.3) to (-0.25,.3);
     \node at (0.12,-0.145) {$\dot$};
\end{tikzpicture}}
+\:\mathord{
\begin{tikzpicture}[baseline = -1mm]
 	\draw[->] (0.08,-.3) to (0.08,.3);
	\draw[->] (-0.28,-.3) to (-0.28,.3);
\end{tikzpicture}
}
\:.
\end{align}
In fact, in the presence of the quadratic relation in $\SYM$, the two relations in (\ref{heckea}) are equivalent.
We denote the $a$th power of
$\mathord{
\begin{tikzpicture}[baseline = -1.3mm]
	\draw[->] (0.08,-.2) to (0.08,.2);
      \node at (0.08,0) {$\dot$};
\end{tikzpicture}
}$
under vertical composition
by
labeling the dot with the multiplicity $a \in \N$.
Just like for $\SYM$, there are no non-zero morphisms between $\up^{\otimes m}$ and
$\up^{\otimes n}$ for $m \neq n$. Moreover, replacing (\ref{twangy}),
there is an algebra
isomorphism
\begin{equation}
\imath_n:AH_n \stackrel{\sim}{\rightarrow} \End_{\AH}(\up^{\otimes n})
\end{equation}
sending $s_i$ to the crossing of the $i$th and $(i+1)$th strings and
$x_j$ to the dot on the $j$th string. Using
(\ref{yon})--(\ref{yoneda}) and Theorem~\ref{starting}, we deduce that the
canonical monoidal embedding $\SYM \rightarrow \AH$ induces a ring
isomorphism
$K_0(\Kar(\SYM)) \stackrel{\sim}{\rightarrow} K_0(\Kar(\AH))$.
Thus, we can reformulate (\ref{bargamma}): there is a ring isomorphism
\begin{equation}\label{gamma}
\gamma:\SymZ\stackrel{\sim}{\rightarrow}
K_0(\Kar(\AH)),\qquad
s_\lambda \mapsto [S_\lambda],\quad h_n \mapsto [H_n], \quad e_n \mapsto [E_n],
\end{equation}
viewing $S_\lambda, H_n$ and $E_n$ now as objects of $\Kar(\AH)$.

The next obvious 
question is whether the monoidal functor $\Delta$
from (\ref{monoo}) can be upgraded from $\SYM$ to $\AH$ too.
To do this, it turns out that we need to localize.

Consider the symmetric product $\blue{\AH}\odot\red{\AH}$. This is
generated by the objects and morphisms from two copies of
$\AH$, one drawn in blue and the other in red, plus the additional
two-color crossings as in (\ref{tcc}). As well as (\ref{braidlike}),
dots of one color commute across strings of the other:
\begin{align}\label{dotlike}
\mathord{
\begin{tikzpicture}[baseline = -1mm]
	\draw[<-,redp] (0.25,.3) to (-0.25,-.3);
	\draw[->,bluep] (0.25,-.3) to (-0.25,.3);
     \node at (-0.12,0.135) {$\blue{\dot}$};
\end{tikzpicture}
}
&=
\mathord{
\begin{tikzpicture}[baseline = -1mm]
	\draw[<-,redp] (0.25,.3) to (-0.25,-.3);
	\draw[->,bluep] (0.25,-.3) to (-0.25,.3);
     \node at (0.12,-0.135) {$\blue{\dot}$};
\end{tikzpicture}
}\,&
\mathord{
\begin{tikzpicture}[baseline = -1mm]
	\draw[<-,redp] (0.25,.3) to (-0.25,-.3);
	\draw[->,bluep] (0.25,-.3) to (-0.25,.3);
     \node at (0.12,0.135) {$\red{\dot}$};
\end{tikzpicture}
}
&=
\mathord{
\begin{tikzpicture}[baseline = -1mm]
	\draw[<-,redp] (0.25,.3) to (-0.25,-.3);
	\draw[->,bluep] (0.25,-.3) to (-0.25,.3);
     \node at (-0.12,-0.135) {$\red{\dot}$};
\end{tikzpicture}
}\:.
\end{align}
Given a diagram $D$ representing a morphism in
$\blue{\AH}\odot\red{\AH}$
and two generic
points in this diagram, one on a red string and the other on a blue
string, we will denote the morphism represented by
($D$ with an extra
dot at the red point)
$-$ ($D$ with an extra dot at the blue point)
by joining the two points with
a dotted line;
this line may pass
willy nilly through
other strings in the diagram as needed. For example:
\begin{align}\label{dashit}
\mathord{
\begin{tikzpicture}[baseline = -1mm]
 	\draw[->,redp] (0.18,-.4) to (0.18,.4);
	\draw[->,bluep] (-0.38,-.4) to (-0.38,.4);
	\draw[-,dotted] (-0.38,0.01) to (0.18,0.01);
     \node at (0.18,0) {$\dot$};
     \node at (-0.38,0) {$\dot$};
\end{tikzpicture}
}&= \:\;
\mathord{
\begin{tikzpicture}[baseline = -1mm]
 	\draw[->,redp] (0.18,-.4) to (0.18,.4);
	\draw[->,bluep] (-0.38,-.4) to (-0.38,.4);
     \node at (0.18,0) {$\red{\dot}$};
\end{tikzpicture}
}-
\mathord{
\begin{tikzpicture}[baseline = -1mm]
 	\draw[->,redp] (0.18,-.4) to (0.18,.4);
	\draw[->,bluep] (-0.38,-.4) to (-0.38,.4);
     \node at (-0.38,0) {$\blue{\dot}$};
\end{tikzpicture}}
\:,&
\mathord{
\begin{tikzpicture}[baseline = -1mm]
 	\draw[->,bluep] (0.18,-.4) to (0.18,.4);
	\draw[->,redp] (-0.38,-.4) to (-0.38,.4);
	\draw[-,dotted] (-0.38,0.01) to (0.18,0.01);
     \node at (0.18,0) {$\dot$};
     \node at (-0.38,0) {$\dot$};
\end{tikzpicture}
}&=
\mathord{
\begin{tikzpicture}[baseline = -1mm]
 	\draw[->,bluep] (0.18,-.4) to (0.18,.4);
	\draw[->,redp] (-0.38,-.4) to (-0.38,.4);
     \node at (-0.38,0) {$\red{\dot}$};
\end{tikzpicture}
}\:\;-\;\:
\mathord{
\begin{tikzpicture}[baseline = -1mm]
 	\draw[->,bluep] (0.18,-.4) to (0.18,.4);
	\draw[->,redp] (-0.38,-.4) to (-0.38,.4);
     \node at (0.18,0) {$\blue{\dot}$};
\end{tikzpicture}}=
\mathord{
\begin{tikzpicture}[baseline = -1mm]
	\draw[->,redp] (0.28,0) to[out=90,in=-90] (-0.28,.6);
	\draw[->,bluep] (-0.28,0) to[out=90,in=-90] (0.28,.6);
	\draw[-,bluep] (0.28,-.6) to[out=90,in=-90] (-0.28,0);
	\draw[-,redp] (-0.28,-.6) to[out=90,in=-90] (0.28,0);
	\draw[-,dotted] (-0.28,0.01) to (0.28,0.01);
     \node at (-0.28,0) {$\dot$};
     \node at (0.28,0) {$\dot$};
\end{tikzpicture}
}\:.
\end{align}
Let
$\blue{\AH} \;\overline{\odot}\; \red{\AH}$ be the strict
$\k$-linear monoidal category obtained from
$\blue{\AH}\odot\red{\AH}$ by localizing at
$\mathord{
\begin{tikzpicture}[baseline = -1mm]
 	\draw[->,redp] (0.18,-.2) to (0.18,.25);
	\draw[->,bluep] (-0.38,-.2) to (-0.38,.25);
	\draw[-,dotted] (-0.38,0.01) to (0.18,0.01);
     \node at (0.18,0) {$\dot$};
     \node at (-0.38,0.01) {$\dot$};
\end{tikzpicture}}
$.
This means that we adjoin a two-sided inverse to this morphism, which
we denote as a dumbbell
\begin{equation}\label{dumb}
\mathord{
\begin{tikzpicture}[baseline = -1mm]
 	\draw[->,redp] (0.18,-.4) to (0.18,.4);
	\draw[->,bluep] (-0.38,-.4) to (-0.38,.4);
	\draw[-] (-0.38,0.01) to (0.18,0.01);
     \node at (0.18,0) {$\dot$};
     \node at (-0.38,0) {$\dot$};
\end{tikzpicture}
}:=
\left(\mathord{
\begin{tikzpicture}[baseline = -1mm]
 	\draw[->,redp] (0.18,-.4) to (0.18,.4);
	\draw[->,bluep] (-0.38,-.4) to (-0.38,.4);
	\draw[-,dotted] (-0.38,0.01) to (0.18,0.01);
     \node at (0.18,0) {$\dot$};
     \node at (-0.38,0) {$\dot$};
\end{tikzpicture}
}\right)^{-1}.
\end{equation}
By the commuting relations, the morphism
$\mathord{
\begin{tikzpicture}[baseline = -1mm]
 	\draw[->,bluep] (0.18,-.2) to (0.18,.25);
	\draw[->,redp] (-0.38,-.2) to (-0.38,.25);
	\draw[-,dotted] (-0.38,0.01) to (0.18,0.01);
     \node at (0.18,0) {$\dot$};
     \node at (-0.38,0.01) {$\dot$};
\end{tikzpicture}}
$ is also invertible
in
$\blue{\AH} \;\overline{\odot}\; \red{\AH}$,
with two-sided inverse
\begin{equation*}
\mathord{
\begin{tikzpicture}[baseline = -1mm]
 	\draw[->,bluep] (0.18,-.4) to (0.18,.4);
	\draw[->,redp] (-0.38,-.4) to (-0.38,.4);
	\draw[-] (-0.38,0.01) to (0.18,0.01);
     \node at (0.18,0) {$\dot$};
     \node at (-0.38,0) {$\dot$};
\end{tikzpicture}
}:=\left(\mathord{
\begin{tikzpicture}[baseline = -1mm]
 	\draw[->,bluep] (0.18,-.4) to (0.18,.4);
	\draw[->,redp] (-0.38,-.4) to (-0.38,.4);
	\draw[-,dotted] (-0.38,0.01) to (0.18,0.01);
     \node at (0.18,0) {$\dot$};
     \node at (-0.38,0) {$\dot$};
\end{tikzpicture}
}\right)^{-1}=
\mathord{
\begin{tikzpicture}[baseline = -1mm]
	\draw[->,redp] (0.28,0) to[out=90,in=-90] (-0.28,.6);
	\draw[->,bluep] (-0.28,0) to[out=90,in=-90] (0.28,.6);
	\draw[-,bluep] (0.28,-.6) to[out=90,in=-90] (-0.28,0);
	\draw[-,redp] (-0.28,-.6) to[out=90,in=-90] (0.28,0);
	\draw[-] (-0.28,0.01) to (0.28,0.01);
     \node at (0.28,0) {$\dot$};
     \node at (-0.28,0) {$\dot$};
\end{tikzpicture}
}\:.
\end{equation*}
We can also introduce more general
dumbbells that cross over other strings:
let
\begin{align*}
\mathord{
\begin{tikzpicture}[baseline = -1mm]
 	\draw[->,redp] (0.38,-.4) to (0.38,.4);
\draw[-,thick,red] (-.1,-.4) to (-.1,.4);
\draw[-,thick,dashed,blue] (-.1,-.4) to (-.1,.4);
	\draw[->,bluep] (-0.58,-.4) to (-0.58,.4);
	\draw[-] (-0.58,0.01) to (0.38,0.01);
     \node at (0.38,0) {$\dot$};
\node at (-.1,-.5) {$\scriptstyle X$};
     \node at (-0.58,0) {$\dot$};
\end{tikzpicture}
}&:= \left(
\mathord{
\begin{tikzpicture}[baseline = -1mm]
 	\draw[->,redp] (0.38,-.4) to (0.38,.4);
\draw[-,thick,red] (-.1,-.4) to (-.1,.4);
\draw[-,thick,dashed,blue] (-.1,-.4) to (-.1,.4);
	\draw[->,bluep] (-0.58,-.4) to (-0.58,.4);
	\draw[-,dotted] (-0.58,0.01) to (0.38,0.01);
     \node at (0.38,0) {$\dot$};
\node at (-.1,-.5) {$\scriptstyle X$};
     \node at (-0.58,0) {$\dot$};
\end{tikzpicture}
}\right)^{-1}\
,&
\mathord{
\begin{tikzpicture}[baseline = -1mm]
 	\draw[->,bluep] (0.38,-.4) to (0.38,.4);
\draw[-,thick,red] (-.1,-.4) to (-.1,.4);
\draw[-,thick,dashed,blue] (-.1,-.4) to (-.1,.4);
	\draw[->,redp] (-0.58,-.4) to (-0.58,.4);
	\draw[-] (-0.58,0.01) to (0.38,0.01);
     \node at (0.38,0) {$\dot$};
\node at (-.1,-.5) {$\scriptstyle X$};
     \node at (-0.58,0) {$\dot$};
\end{tikzpicture}
}&:= \left(
\mathord{
\begin{tikzpicture}[baseline = -1mm]
 	\draw[->,bluep] (0.38,-.4) to (0.38,.4);
\draw[-,thick,red] (-.1,-.4) to (-.1,.4);
\draw[-,thick,dashed,blue] (-.1,-.4) to (-.1,.4);
	\draw[->,redp] (-0.58,-.4) to (-0.58,.4);
	\draw[-,dotted] (-0.58,0.01) to (0.38,0.01);
     \node at (0.38,0) {$\dot$};
\node at (-.1,-.5) {$\scriptstyle X$};
     \node at (-0.58,0) {$\dot$};
\end{tikzpicture}
}\right)^{-1}
\end{align*}
for any object $X \in \blue{\AH}\;\overline{\odot}\;\red{\AH}$, where the two-colored vertical line represents $1_X$.
To see that this makes sense, one needs to prove that this morphism is indeed
invertible; this follows easily from the
commuting relations. For example, if $X
= \blueup\otimes\redup\otimes\blueup$ then
$$
\mathord{
\begin{tikzpicture}[baseline = -1mm]
 	\draw[->,redp] (0.86,-.4) to (0.86,.4);
 	\draw[->,bluep] (0.38,-.4) to (0.38,.4);
\draw[->,redp] (-.1,-.4) to (-.1,.4);
	\draw[->,bluep] (-0.58,-.4) to (-0.58,.4);
	\draw[->,bluep] (-1.06,-.4) to (-1.06,.4);
	\draw[-,dotted] (-1.06,0.01) to (0.86,0.01);
     \node at (0.86,0) {$\dot$};
     \node at (-1.06,0) {$\dot$};
\end{tikzpicture}
}
=
\mathord{
\begin{tikzpicture}[baseline = -1mm]
 	\draw[-,redp] (0.86,-.8) to[out=90,in=-90] (-0.1,0);
 	\draw[->,redp] (-0.1,-0) to[out=90,in=-90] (0.86,.8);
 	\draw[-,bluep] (0.38,-.8) to[out=90,in=-90] (0.86,0);
 	\draw[->,bluep] (0.86,0) to[out=90,in=-90] (0.38,.8);
\draw[-,redp] (-.1,-.8) to [out=90,in=-90] (-1.06,0);
\draw[->,redp] (-1.06,0) to[out=90,in=-90] (-.1,.8);
	\draw[->,bluep] (0.38,0) to[out=90,in=-90] (-0.58,.8);
	\draw[-,bluep] (-0.58,-.8) to[out=90,in=-90] (0.38,0);
	\draw[->,bluep] (-.58,0) to [out=90,in=-90] (-1.06,.8);
	\draw[-,bluep] (-1.06,-.8) to [in=-90,out=90] (-.58,0);
	\draw[-,dotted] (-.58,0.01) to (-0.1,0.01);
     \node at (-0.58,0) {$\dot$};
     \node at (-.1,0) {$\dot$};
\end{tikzpicture}
}
\qquad\text{with inverse}\qquad
\mathord{
\begin{tikzpicture}[baseline = -1mm]
 	\draw[->,redp] (0.86,-.4) to (0.86,.4);
 	\draw[->,bluep] (0.38,-.4) to (0.38,.4);
\draw[->,redp] (-.1,-.4) to (-.1,.4);
	\draw[->,bluep] (-0.58,-.4) to (-0.58,.4);
	\draw[->,bluep] (-1.06,-.4) to (-1.06,.4);
	\draw[-] (-1.06,0.01) to (0.86,0.01);
     \node at (0.86,0) {$\dot$};
     \node at (-1.06,0) {$\dot$};
\end{tikzpicture}
}
=
\mathord{
\begin{tikzpicture}[baseline = -1mm]
 	\draw[-,redp] (0.86,-.8) to[out=90,in=-90] (-0.1,0);
 	\draw[->,redp] (-0.1,-0) to[out=90,in=-90] (0.86,.8);
 	\draw[-,bluep] (0.38,-.8) to[out=90,in=-90] (0.86,0);
 	\draw[->,bluep] (0.86,0) to[out=90,in=-90] (0.38,.8);
\draw[-,redp] (-.1,-.8) to [out=90,in=-90] (-1.06,0);
\draw[->,redp] (-1.06,0) to[out=90,in=-90] (-.1,.8);
	\draw[->,bluep] (0.38,0) to[out=90,in=-90] (-0.58,.8);
	\draw[-,bluep] (-0.58,-.8) to[out=90,in=-90] (0.38,0);
	\draw[->,bluep] (-.58,0) to [out=90,in=-90] (-1.06,.8);
	\draw[-,bluep] (-1.06,-.8) to [in=-90,out=90] (-.58,0);
	\draw[-] (-.58,0.01) to (-0.1,0.01);
     \node at (-0.58,0) {$\dot$};
     \node at (-.1,0) {$\dot$};
\end{tikzpicture}
}\:.
$$
Note also that $\blue{\AH}\;\overline{\odot}\;\red{\AH}$ has a
monoidal involution
\begin{equation}\label{Sigma}
\eta:
\blue{\AH}\;\overline{\odot}\;\red{\AH}
\rightarrow \blue{\AH}\;\overline{\odot}\;\red{\AH}
\end{equation}
which is defined on diagrams by switching the colors blue and red
then multiplying by $(-1)^z$ where $z$ is the total
number of dumbbells in the picture.

There are several other useful relations in
$\blue{\AH}\;\overline{\odot}\;\red{\AH}$.
Composing the definition (\ref{dashit}) on the top with the dumbbell, we get
that
\begin{equation*}
\mathord{
\begin{tikzpicture}[baseline = -1mm]
 	\draw[->,redp] (0.18,-.4) to (0.18,.4);
	\draw[->,bluep] (-0.38,-.4) to (-0.38,.4);
	\draw[-] (-0.38,.11) to (0.18,.11);
     \node at (0.18,.1) {$\dot$};
     \node at (-0.38,.1) {$\dot$};
     \node at (0.18,-.15) {$\red{\dot}$};
\end{tikzpicture}
}
=
\mathord{
\begin{tikzpicture}[baseline = -1mm]
 	\draw[->,redp] (0.18,-.4) to (0.18,.4);
	\draw[->,bluep] (-0.38,-.4) to (-0.38,.4);
	\draw[-] (-0.38,0.11) to (0.18,0.11);
     \node at (0.18,0.1) {$\dot$};
     \node at (-0.38,0.1) {$\dot$};
     \node at (-0.38,-.15) {$\blue{\dot}$};
\end{tikzpicture}
}
+\;
\mathord{
\begin{tikzpicture}[baseline = -1mm]
 	\draw[->,redp] (0.18,-.4) to (0.18,.4);
	\draw[->,bluep] (-0.38,-.4) to (-0.38,.4);
\end{tikzpicture}}
\:,
\end{equation*}
which gives a way to teleport dots across dumbbells modulo a correction term.
More generally:
\begin{equation}\label{teleporting}
\mathord{
\begin{tikzpicture}[baseline = -1mm]
 	\draw[->,redp] (0.18,-.4) to (0.18,.4);
	\draw[->,bluep] (-0.38,-.4) to (-0.38,.4);
	\draw[-] (-0.38,.11) to (0.18,.11);
     \node at (0.18,.1) {$\dot$};
     \node at (-0.38,.1) {$\dot$};
     \node at (0.18,-.15) {$\red{\dot}$};
     \node at (0.34,-.15) {$\scriptstyle\red{a}$};
\end{tikzpicture}
}
=
\mathord{
\begin{tikzpicture}[baseline = -1mm]
 	\draw[->,redp] (0.18,-.4) to (0.18,.4);
	\draw[->,bluep] (-0.38,-.4) to (-0.38,.4);
	\draw[-] (-0.38,0.11) to (0.18,0.11);
     \node at (0.18,0.1) {$\dot$};
     \node at (-0.38,0.1) {$\dot$};
     \node at (-0.38,-.15) {$\blue{\dot}$};
     \node at (-0.54,-.15) {$\scriptstyle\blue{a}$};
\end{tikzpicture}
}
+\sum_{\substack{b,c \geq 0 \\ b+c=a-1}}
\mathord{
\begin{tikzpicture}[baseline = -1mm]
 	\draw[->,redp] (0.18,-.4) to (0.18,.4);
	\draw[->,bluep] (-0.38,-.4) to (-0.38,.4);
     \node at (-0.38,0) {$\blue{\dot}$};
     \node at (-0.54,0) {$\scriptstyle\blue{b}$};
     \node at (0.18,0) {$\red{\dot}$};
     \node at (0.34,0) {$\scriptstyle\red{c}$};
\end{tikzpicture}}
\:.
\end{equation}
Dots commute with dumbbells:
\begin{align*}
\mathord{
\begin{tikzpicture}[baseline = -1mm]
 	\draw[->,redp] (0.18,-.4) to (0.18,.4);
	\draw[->,bluep] (-0.38,-.4) to (-0.38,.4);
	\draw[-] (-0.38,.11) to (0.18,.11);
     \node at (0.18,.1) {$\dot$};
     \node at (-0.38,.1) {$\dot$};
     \node at (0.18,-.15) {$\red{\dot}$};
\end{tikzpicture}
}
&=\mathord{
\begin{tikzpicture}[baseline = -1mm]
 	\draw[->,redp] (0.18,-.4) to (0.18,.4);
	\draw[->,bluep] (-0.38,-.4) to (-0.38,.4);
	\draw[-] (-0.38,-.14) to (0.18,-.14);
     \node at (0.18,-.15) {$\dot$};
     \node at (-0.38,-.15) {$\dot$};
     \node at (0.18,.1) {$\red{\dot}$};
\end{tikzpicture}
}\:,
&
\mathord{
\begin{tikzpicture}[baseline = -1mm]
 	\draw[->,redp] (0.18,-.4) to (0.18,.4);
	\draw[->,bluep] (-0.38,-.4) to (-0.38,.4);
	\draw[-] (-0.38,0.11) to (0.18,0.11);
     \node at (0.18,0.1) {$\dot$};
     \node at (-0.38,0.1) {$\dot$};
     \node at (-0.38,-.15) {$\blue{\dot}$};
\end{tikzpicture}
}
&=
\mathord{
\begin{tikzpicture}[baseline = -1mm]
 	\draw[->,redp] (0.18,-.4) to (0.18,.4);
	\draw[->,bluep] (-0.38,-.4) to (-0.38,.4);
	\draw[-] (-0.38,-0.14) to (0.18,-0.14);
     \node at (0.18,-0.15) {$\dot$};
     \node at (-0.38,-0.15) {$\dot$};
     \node at (-0.38,.1) {$\blue{\dot}$};
\end{tikzpicture}
}\:.
\end{align*}
To see this, compose on top and bottom with $\mathord{
\begin{tikzpicture}[baseline = -1mm]
 	\draw[->,redp] (0.18,-.2) to (0.18,.25);
	\draw[->,bluep] (-0.38,-.2) to (-0.38,.25);
	\draw[-,dotted] (-0.38,0.01) to (0.18,0.01);
     \node at (0.18,0) {$\dot$};
     \node at (-0.38,0.01) {$\dot$};
\end{tikzpicture}}
$.
Similarly, different dumbbells commute with each other.
Also, dumbbells commute past two-color crossings:
\begin{align*}
\mathord{
\begin{tikzpicture}[baseline = 0]
	\draw[->,bluep] (0.28,-.3) to (-0.28,.4);
	\draw[->,redp] (-0.28,-.3) to (0.28,.4);
	\draw[-] (-0.15,-.14) to (0.15,-.14);
     \node at (0.15,-.15) {$\dot$};
     \node at (-0.15,-.15) {$\dot$};
\end{tikzpicture}
}
&=
\mathord{
\begin{tikzpicture}[baseline = 0]
	\draw[->,bluep] (0.28,-.3) to (-0.28,.4);
	\draw[->,redp] (-0.28,-.3) to (0.28,.4);
	\draw[-] (-0.13,.21) to (0.13,.21);
     \node at (0.13,.2) {$\dot$};
     \node at (-0.13,.2) {$\dot$};
\end{tikzpicture}
}\:,&
\mathord{
\begin{tikzpicture}[baseline = 0]
	\draw[->,redp] (-0.74,-.3) to (-0.28,.4);
	\draw[->,bluep] (-0.28,-.3) to (-0.74,.4);
	\draw[->,redp] (0.18,-.3) to (0.18,.4);
	\draw[-] (-0.62,.21) to (0.18,.21);
     \node at (-0.62,.2) {$\dot$};
     \node at (0.18,.2) {$\dot$};
\end{tikzpicture}
}
&=
\mathord{
\begin{tikzpicture}[baseline = 0]
	\draw[->,redp] (-0.74,-.3) to (-0.28,.4);
	\draw[->,bluep] (-0.28,-.3) to (-0.74,.4);
	\draw[->,redp] (0.18,-.3) to (0.18,.4);
	\draw[-] (-0.42,-.09) to (0.18,-.09);
     \node at (-0.42,-.1) {$\dot$};
     \node at (0.18,-.1) {$\dot$};
\end{tikzpicture}
}\:,&
\mathord{
\begin{tikzpicture}[baseline = 0]
	\draw[->,bluep] (-0.74,-.3) to (-0.28,.4);
	\draw[->,redp] (-0.28,-.3) to (-0.74,.4);
	\draw[->,redp] (0.18,-.3) to (0.18,.4);
	\draw[-] (-0.4,.21) to (0.18,.21);
     \node at (-0.4,.2) {$\dot$};
     \node at (0.18,.2) {$\dot$};
\end{tikzpicture}
}
&=
\mathord{
\begin{tikzpicture}[baseline = 0]
	\draw[->,bluep] (-0.74,-.3) to (-0.28,.4);
	\draw[->,redp] (-0.28,-.3) to (-0.74,.4);
	\draw[->,redp] (0.18,-.3) to (0.18,.4);
	\draw[-] (-0.6,-.09) to (0.18,-.09);
     \node at (-0.6,-.1) {$\dot$};
     \node at (0.18,-.1) {$\dot$};
\end{tikzpicture}
}\:.
\end{align*}
For one-color crossings, we have the following more complicated
commutation relations:
\begin{align}
\mathord{
\begin{tikzpicture}[baseline = 0]
	\draw[->,bluep] (-0.74,-.3) to (-0.28,.4);
	\draw[->,bluep] (-0.28,-.3) to (-0.74,.4);
	\draw[->,redp] (0.18,-.3) to (0.18,.4);
	\draw[-] (-0.62,.21) to (0.18,.21);
     \node at (-0.62,.2) {$\dot$};
     \node at (0.18,.2) {$\dot$};
\end{tikzpicture}
}
&=
\mathord{
\begin{tikzpicture}[baseline = 0]
	\draw[->,bluep] (-0.74,-.3) to (-0.28,.4);
	\draw[->,bluep] (-0.28,-.3) to (-0.74,.4);
	\draw[->,redp] (0.18,-.3) to (0.18,.4);
	\draw[-] (-0.42,-.09) to (0.18,-.09);
     \node at (-0.42,-.1) {$\dot$};
     \node at (0.18,-.1) {$\dot$};
\end{tikzpicture}
}+
\mathord{
\begin{tikzpicture}[baseline = 0]
	\draw[->,bluep] (-0.74,-.3) to (-0.74,.4);
	\draw[->,bluep] (-0.28,-.3) to (-0.28,.4);
	\draw[->,redp] (0.18,-.3) to (0.18,.4);
	\draw[-] (-0.74,.11) to (0.18,.11);
     \node at (-0.74,.1) {$\dot$};
     \node at (0.18,.1) {$\dot$};
	\draw[-] (-0.28,-.09) to (0.18,-.09);
     \node at (-0.28,-.1) {$\dot$};
     \node at (0.18,-.1) {$\dot$};
\end{tikzpicture}
}
\:,&
\mathord{
\begin{tikzpicture}[baseline = 0]
	\draw[->,bluep] (-0.74,-.3) to (-0.28,.4);
	\draw[->,bluep] (-0.28,-.3) to (-0.74,.4);
	\draw[->,redp] (0.18,-.3) to (0.18,.4);
	\draw[-] (-0.6,-.09) to (0.18,-.09);
     \node at (-0.6,-.1) {$\dot$};
     \node at (0.18,-.1) {$\dot$};
\end{tikzpicture}
}
&=
\mathord{
\begin{tikzpicture}[baseline = 0]
	\draw[->,bluep] (-0.74,-.3) to (-0.28,.4);
	\draw[->,bluep] (-0.28,-.3) to (-0.74,.4);
	\draw[->,redp] (0.18,-.3) to (0.18,.4);
	\draw[-] (-0.4,.21) to (0.18,.21);
     \node at (-0.4,.2) {$\dot$};
     \node at (0.18,.2) {$\dot$};
\end{tikzpicture}
}+
\mathord{
\begin{tikzpicture}[baseline = 0]
	\draw[->,bluep] (-0.74,-.3) to (-0.74,.4);
	\draw[->,bluep] (-0.28,-.3) to (-0.28,.4);
	\draw[->,redp] (0.18,-.3) to (0.18,.4);
	\draw[-] (-0.74,-.09) to (0.18,-.09);
     \node at (-0.74,-.1) {$\dot$};
     \node at (0.18,-.1) {$\dot$};
	\draw[-] (-0.28,.11) to (0.18,.11);
     \node at (-0.28,.1) {$\dot$};
     \node at (0.18,.1) {$\dot$};
\end{tikzpicture}
}
\:.\label{c1}
\end{align}
For example, to prove the first one of these,
one just needs to compose on the top with
$\mathord{
\begin{tikzpicture}[baseline = -1mm]
 	\draw[->,redp] (0.28,-.2) to (0.28,.25);
	\draw[->,bluep] (-0.48,-.2) to (-0.48,.25);
	\draw[->,bluep] (-0.1,-.2) to (-0.1,.25);
	\draw[-,dotted] (-0.48,0.01) to (0.28,0.01);
     \node at (0.28,0) {$\dot$};
     \node at (-0.48,0.01) {$\dot$};
\end{tikzpicture}}
$
and on the bottom with
$\mathord{
\begin{tikzpicture}[baseline = -1mm]
 	\draw[->,redp] (0.28,-.2) to (0.28,.25);
	\draw[->,bluep] (-0.48,-.2) to (-0.48,.25);
	\draw[->,bluep] (-0.1,-.2) to (-0.1,.25);
	\draw[-,dotted] (-0.1,0.01) to (0.28,0.01);
     \node at (0.28,0) {$\dot$};
     \node at (-0.1,0.01) {$\dot$};
\end{tikzpicture}}
$, then apply
the dot-sliding relation from (\ref{heckea}).
Let us also record the mirror images of the last set of relations
under $\eta$:
\begin{align}
\mathord{
\begin{tikzpicture}[baseline = 0]
	\draw[->,redp] (-0.74,-.3) to (-0.28,.4);
	\draw[->,redp] (-0.28,-.3) to (-0.74,.4);
	\draw[->,bluep] (0.18,-.3) to (0.18,.4);
	\draw[-] (-0.62,.21) to (0.18,.21);
     \node at (-0.62,.2) {$\dot$};
     \node at (0.18,.2) {$\dot$};
\end{tikzpicture}
}
&=
\mathord{
\begin{tikzpicture}[baseline = 0]
	\draw[->,redp] (-0.74,-.3) to (-0.28,.4);
	\draw[->,redp] (-0.28,-.3) to (-0.74,.4);
	\draw[->,bluep] (0.18,-.3) to (0.18,.4);
	\draw[-] (-0.42,-.09) to (0.18,-.09);
     \node at (-0.42,-.1) {$\dot$};
     \node at (0.18,-.1) {$\dot$};
\end{tikzpicture}
}-
\mathord{
\begin{tikzpicture}[baseline = 0]
	\draw[->,redp] (-0.74,-.3) to (-0.74,.4);
	\draw[->,redp] (-0.28,-.3) to (-0.28,.4);
	\draw[->,bluep] (0.18,-.3) to (0.18,.4);
	\draw[-] (-0.74,.11) to (0.18,.11);
     \node at (-0.74,.1) {$\dot$};
     \node at (0.18,.1) {$\dot$};
	\draw[-] (-0.28,-.09) to (0.18,-.09);
     \node at (-0.28,-.1) {$\dot$};
     \node at (0.18,-.1) {$\dot$};
\end{tikzpicture}
}
\:,&
\mathord{
\begin{tikzpicture}[baseline = 0]
	\draw[->,redp] (-0.74,-.3) to (-0.28,.4);
	\draw[->,redp] (-0.28,-.3) to (-0.74,.4);
	\draw[->,bluep] (0.18,-.3) to (0.18,.4);
	\draw[-] (-0.6,-.09) to (0.18,-.09);
     \node at (-0.6,-.1) {$\dot$};
     \node at (0.18,-.1) {$\dot$};
\end{tikzpicture}
}
&=
\mathord{
\begin{tikzpicture}[baseline = 0]
	\draw[->,redp] (-0.74,-.3) to (-0.28,.4);
	\draw[->,redp] (-0.28,-.3) to (-0.74,.4);
	\draw[->,bluep] (0.18,-.3) to (0.18,.4);
	\draw[-] (-0.4,.21) to (0.18,.21);
     \node at (-0.4,.2) {$\dot$};
     \node at (0.18,.2) {$\dot$};
\end{tikzpicture}
}-
\mathord{
\begin{tikzpicture}[baseline = 0]
	\draw[->,redp] (-0.74,-.3) to (-0.74,.4);
	\draw[->,redp] (-0.28,-.3) to (-0.28,.4);
	\draw[->,bluep] (0.18,-.3) to (0.18,.4);
	\draw[-] (-0.74,-.09) to (0.18,-.09);
     \node at (-0.74,-.1) {$\dot$};
     \node at (0.18,-.1) {$\dot$};
	\draw[-] (-0.28,.11) to (0.18,.11);
     \node at (-0.28,.1) {$\dot$};
     \node at (0.18,.1) {$\dot$};
\end{tikzpicture}
}
\:.\label{c2}
\end{align}
We have done this in order to stress that the signs are different when
the colors are this way around!
Note also in (\ref{c1})--(\ref{c2}) that the vertical string on the right
hand side could also be drawn on the left hand side; the resulting
relations also hold thanks to the commuting relations.

\begin{theorem}\label{comultah}
There is a strict $\k$-linear monoidal functor
$\Delta:
\AH \rightarrow
\Add\left(\blue{\AH} \;\overline{\odot}\; \red{\AH}\right)$
such that $\up \mapsto \blueup \oplus \redup$ and
\begin{align}\label{com0old}
\mathord{
\begin{tikzpicture}[baseline = -.6mm]
	\draw[->] (0.08,-.3) to (0.08,.3);
      \node at (0.08,0) {$\dot$};
\end{tikzpicture}
}
&\mapsto
\mathord{
\begin{tikzpicture}[baseline = -.6mm]
	\draw[->,bluep] (0.08,-.3) to (0.08,.3);
      \node at (0.08,0) {$\color{blue}\dot$};
\end{tikzpicture}
}
+
\mathord{
\begin{tikzpicture}[baseline = -.6mm]
	\draw[->,redp] (0.08,-.3) to (0.08,.3);
      \node at (0.08,0) {$\color{red}\dot$};
\end{tikzpicture}
}\:,
&\mathord{
\begin{tikzpicture}[baseline = -.6mm]
	\draw[->] (0.28,-.3) to (-0.28,.3);
	\draw[->] (-0.28,-.3) to (0.28,.3);
\end{tikzpicture}
}&\mapsto
\mathord{
\begin{tikzpicture}[baseline = -.6mm]
	\draw[->,bluep] (0.28,-.3) to (-0.28,.3);
	\draw[->,bluep] (-0.28,-.3) to (0.28,.3);
\end{tikzpicture}
}+
\mathord{
\begin{tikzpicture}[baseline = -.6mm]
	\draw[->,redp] (0.28,-.3) to (-0.28,.3);
	\draw[->,redp] (-0.28,-.3) to (0.28,.3);
\end{tikzpicture}
}
+
\mathord{
\begin{tikzpicture}[baseline = -.6mm]
	\draw[->,redp] (0.28,-.3) to (-0.28,.3);
	\draw[->,bluep] (-0.28,-.3) to (0.28,.3);
\end{tikzpicture}
}+
\mathord{
\begin{tikzpicture}[baseline = -.6mm]
	\draw[->,bluep] (0.28,-.3) to (-0.28,.3);
	\draw[->,redp] (-0.28,-.3) to (0.28,.3);
\end{tikzpicture}
}-
\mathord{
\begin{tikzpicture}[baseline = -.6mm]
	\draw[->,redp] (0.28,-.3) to (-0.28,.3);
	\draw[->,bluep] (-0.28,-.3) to (0.28,.3);
	\draw[-] (-0.15,-.16) to (0.15,-.16);
     \node at (0.15,-.17) {$\dot$};
     \node at (-0.15,-.17) {$\dot$};
\end{tikzpicture}
}
+
\mathord{
\begin{tikzpicture}[baseline = -.6mm]
	\draw[->,bluep] (0.28,-.3) to (-0.28,.3);
	\draw[->,redp] (-0.28,-.3) to (0.28,.3);
	\draw[-] (-0.15,-.16) to (0.15,-.16);
     \node at (0.15,-.17) {$\dot$};
     \node at (-0.15,-.17) {$\dot$};
\end{tikzpicture}
}
-\mathord{
\begin{tikzpicture}[baseline =-.6mm]
 	\draw[->,redp] (0.2,-.3) to (0.2,.3);
	\draw[->,bluep] (-0.2,-.3) to (-0.2,.3);
	\draw[-] (-0.2,0.01) to (0.2,0.01);
     \node at (0.2,0.0) {$\dot$};
     \node at (-0.2,0.0) {$\dot$};
\end{tikzpicture}
}+
\mathord{
\begin{tikzpicture}[baseline = -0.6mm]
 	\draw[->,bluep] (0.2,-.3) to (0.2,.3);
	\draw[->,redp] (-0.2,-.3) to (-0.2,.3);
	\draw[-] (-0.2,0.01) to (0.2,0.01);
     \node at (0.2,0.0) {$\dot$};
     \node at (-0.2,0.0) {$\dot$};
\end{tikzpicture}
}\:.
\end{align}
In addition, we have that $\Delta = \eta \circ \Delta$
(extending $\eta$ to the additive
envelope
in the obvious way).
\end{theorem}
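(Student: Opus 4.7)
The plan is to define $\Delta$ on objects by $\up \mapsto \blue{\up}\oplus\red{\up}$ and on the generating dot and crossing by the formulas in (\ref{com0old}), then verify that the defining relations of $\AH$, namely (\ref{symmetric}) and (\ref{heckea}), are preserved. Since $(\blue{\up}\oplus\red{\up})^{\otimes n}$ decomposes into direct summands indexed by color patterns $c\in\{b,r\}^n$, the image of any diagram is a matrix of morphisms in $\blue{\AH}\;\overline{\odot}\;\red{\AH}$ indexed by these patterns at source and target, so each relation reduces to a finite list of blockwise checks.

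I would first verify the double-crossing relation (the first equation of (\ref{symmetric})). On the monochromatic color patterns the statement reduces to the double-crossing relation in a single copy of $\AH$. On the mixed patterns the plain two-color crossings from (\ref{braidlike}) are mutual inverses, and the remaining dumbbell-corrected contributions pair up to give zero after applying the one-color commutation rules (\ref{c1}) and (\ref{c2}) together with the fact that dumbbells commute past two-color crossings. The same blockwise strategy handles the Yang--Baxter braid relation, which I expect to be the main obstacle: there are eight color patterns for three strings, the purely monochromatic ones reduce to the braid relation in a single copy of $\AH$, and the genuinely mixed patterns require a careful term-by-term analysis of the interplay between the plain crossings, the dumbbell-corrected crossings, and the commutation rules. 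The identities (\ref{teleporting}), (\ref{c1}), and (\ref{c2}) are designed precisely so that the mismatches cancel pairwise.

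Next I would verify the dot-crossing relations (\ref{heckea}). On monochromatic color patterns these reduce directly to the corresponding relations in a single copy of $\AH$. On a mixed color pattern, sliding a dot through $\Delta(X)$ produces contributions from the plain two-color crossings --- which commute with dots of the opposite color by (\ref{dotlike}) --- and from the dumbbell-corrected terms in (\ref{com0old}); the extra identity summand on the right-hand side of (\ref{heckea}) is produced by applying (\ref{teleporting}) with $a=1$ to the dumbbell contributions. Finally, the invariance $\Delta=\eta\circ\Delta$ is by inspection: $\eta$ swaps the color of each term and multiplies by $(-1)^z$ where $z$ is the number of dumbbells, and the eight terms on the right-hand side of the second formula in (\ref{com0old}) come in color-swapped pairs whose signs are precisely arranged so that this action fixes $\Delta$ on each generator, as required by (\ref{Sigma}).
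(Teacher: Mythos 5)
Your proposal matches the paper's proof essentially verbatim: the paper also checks the relations (\ref{symmetric}) and (\ref{heckea}) entrywise on the matrix decomposition indexed by color patterns, reducing each to a short list of identities verified via the commuting relations, with (\ref{teleporting}) supplying the extra identity summand in the dot-sliding check and (\ref{c1})--(\ref{c2}) entering only in the lengthy $8\times 8$ braid-relation computation. The one small imprecision is that (\ref{c1})--(\ref{c2}) are not actually needed for the two-strand quadratic relation (whose mixed blocks involve no one-color crossings, only dumbbells and two-color crossings), but this does not affect the correctness of the argument.
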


\begin{remark}\label{altss}
This categorical comultiplication is coassociative like in Remark~\ref{alts}.
\end{remark}

\begin{proof}[Proof of Theorem~\ref{comultah}]
We just need to check that the
defining relations from (\ref{symmetric}) and (\ref{heckea}) are satisfied in
$\blue{\AH} \;\overline{\odot}\; \red{\AH}$. 
For the quadratic relation, the image of the crossing squared is
\begin{align*}
&\mathord{
\begin{tikzpicture}[baseline = -1mm]
	\draw[->,bluep] (-.2,-.4) to [out=90,in=-90] (0.2,0) to[out=90,in=-90] (-0.2,.4);
	\draw[->,bluep] (.2,-.4) to [out=90,in=-90] (-0.2,0) to[out=90,in=-90] (0.2,.4);
\end{tikzpicture}
}
\:+\:
\mathord{
\begin{tikzpicture}[baseline = -1mm]
	\draw[->,redp] (-.2,-.4) to [out=90,in=-90] (0.2,0) to[out=90,in=-90] (-0.2,.4);
	\draw[->,bluep] (.2,-.4) to [out=90,in=-90] (-0.2,0) to[out=90,in=-90] (0.2,.4);
\end{tikzpicture}
}
+
\mathord{
\begin{tikzpicture}[baseline = -1mm]
	\draw[->,redp] (-.2,-.4) to [out=90,in=-90] (0.2,0) to[out=90,in=-90] (-0.2,.4);
	\draw[->,bluep] (.2,-.4) to [out=90,in=-90] (-0.2,0) to[out=90,in=-90] (0.2,.4);
\draw[-] (.17,-.31) to (-.17,-.31);
      \node at (-.17,-0.32) {$\dot$};
   \node at (.17,-0.32) {$\dot$};
\end{tikzpicture}
}
-
\mathord{
\begin{tikzpicture}[baseline = -1mm]
	\draw[->,redp] (0.2,-.4) to (-0.2,.4);
	\draw[->,bluep] (-0.2,-.4) to (0.2,.4);
\draw[-] (.09,-.18) to (-.09,-.18);
      \node at (-.09,-0.19) {$\dot$};
   \node at (.09,-0.19) {$\dot$};
\end{tikzpicture}}
-
\mathord{
\begin{tikzpicture}[baseline = -1mm]
	\draw[->,redp] (-.2,-.4) to [out=90,in=-90] (0.2,0) to[out=90,in=-90] (-0.2,.4);
	\draw[->,bluep] (.2,-.4) to [out=90,in=-90] (-0.2,0) to[out=90,in=-90] (0.2,.4);
\draw[-] (.17,.09) to (-.17,.09);
      \node at (-.17,0.08) {$\dot$};
   \node at (.17,0.08) {$\dot$};
\end{tikzpicture}
}
-
\mathord{
\begin{tikzpicture}[baseline = -1mm]
	\draw[->,redp] (-.2,-.4) to [out=90,in=-90] (0.2,0) to[out=90,in=-90] (-0.2,.4);
	\draw[->,bluep] (.2,-.4) to [out=90,in=-90] (-0.2,0) to[out=90,in=-90] (0.2,.4);
\draw[-] (.17,-.31) to (-.17,-.31);
      \node at (-.17,-0.32) {$\dot$};
   \node at (.17,-0.32) {$\dot$};
\draw[-] (.17,.09) to (-.17,.09);
      \node at (-.17,0.08) {$\dot$};
   \node at (.17,0.08) {$\dot$};
\end{tikzpicture}
}
+
\mathord{
\begin{tikzpicture}[baseline = -1mm]
	\draw[->,redp] (0.2,-.4) to (-0.2,.4);
	\draw[->,bluep] (-0.2,-.4) to (0.2,.4);
\draw[-] (.15,-.31) to (-.15,-.31);
      \node at (-.15,-0.32) {$\dot$};
   \node at (.15,-0.32) {$\dot$};
\draw[-] (.09,-.18) to (-.09,-.18);
      \node at (-.09,-0.19) {$\dot$};
   \node at (.09,-0.19) {$\dot$};
\end{tikzpicture}}
+
\mathord{
\begin{tikzpicture}[baseline = -1mm]
	\draw[->,redp] (0.2,-.4) to (-0.2,.4);
	\draw[->,bluep] (-0.2,-.4) to (0.2,.4);
\draw[-] (.09,.18) to (-.15,.18);
      \node at (-.09,0.17) {$\dot$};
   \node at (.09,0.17) {$\dot$};
\end{tikzpicture}}
-
\mathord{
\begin{tikzpicture}[baseline = -1mm]
	\draw[->,redp] (0.2,-.4) to (-0.2,.4);
	\draw[->,bluep] (-0.2,-.4) to (0.2,.4);
\draw[-] (.09,.18) to (-.09,.18);
      \node at (-.09,0.17) {$\dot$};
   \node at (.09,0.17) {$\dot$};
\draw[-] (.09,-.18) to (-.09,-.18);
      \node at (-.09,-0.19) {$\dot$};
   \node at (.09,-0.19) {$\dot$};
\end{tikzpicture}}
+\mathord{
\begin{tikzpicture}[baseline = -1mm]
	\draw[->,bluep] (0.2,-.4) to (0.2,.4);
	\draw[->,redp] (-0.2,-.4) to (-0.2,.4);
\draw[-] (.2,.18) to (-.2,.18);
      \node at (-.2,0.17) {$\dot$};
   \node at (.2,0.17) {$\dot$};
\draw[-] (.2,-.18) to (-.2,-.18);
      \node at (-.2,-0.19) {$\dot$};
   \node at (.2,-0.19) {$\dot$};
\end{tikzpicture}
}\\
+\:&\mathord{
\begin{tikzpicture}[baseline = -1mm]
	\draw[->,redp] (-.2,-.4) to [out=90,in=-90] (0.2,0) to[out=90,in=-90] (-0.2,.4);
	\draw[->,redp] (.2,-.4) to [out=90,in=-90] (-0.2,0) to[out=90,in=-90] (0.2,.4);
\end{tikzpicture}
}
\:+\:
\mathord{
\begin{tikzpicture}[baseline = -1mm]
	\draw[->,bluep] (-.2,-.4) to [out=90,in=-90] (0.2,0) to[out=90,in=-90] (-0.2,.4);
	\draw[->,redp] (.2,-.4) to [out=90,in=-90] (-0.2,0) to[out=90,in=-90] (0.2,.4);
\end{tikzpicture}
}
-
\mathord{
\begin{tikzpicture}[baseline = -1mm]
	\draw[->,bluep] (-.2,-.4) to [out=90,in=-90] (0.2,0) to[out=90,in=-90] (-0.2,.4);
	\draw[->,redp] (.2,-.4) to [out=90,in=-90] (-0.2,0) to[out=90,in=-90] (0.2,.4);
\draw[-] (.17,-.31) to (-.17,-.31);
      \node at (-.17,-0.32) {$\dot$};
   \node at (.17,-0.32) {$\dot$};
\end{tikzpicture}
}
+
\mathord{
\begin{tikzpicture}[baseline = -1mm]
	\draw[->,bluep] (0.2,-.4) to (-0.2,.4);
	\draw[->,redp] (-0.2,-.4) to (0.2,.4);
\draw[-] (.09,-.18) to (-.09,-.18);
      \node at (-.09,-0.19) {$\dot$};
   \node at (.09,-0.19) {$\dot$};
\end{tikzpicture}}
+
\mathord{
\begin{tikzpicture}[baseline = -1mm]
	\draw[->,bluep] (-.2,-.4) to [out=90,in=-90] (0.2,0) to[out=90,in=-90] (-0.2,.4);
	\draw[->,redp] (.2,-.4) to [out=90,in=-90] (-0.2,0) to[out=90,in=-90] (0.2,.4);
\draw[-] (.17,.09) to (-.17,.09);
      \node at (-.17,0.08) {$\dot$};
   \node at (.17,0.08) {$\dot$};
\end{tikzpicture}
}
-
\mathord{
\begin{tikzpicture}[baseline = -1mm]
	\draw[->,bluep] (-.2,-.4) to [out=90,in=-90] (0.2,0) to[out=90,in=-90] (-0.2,.4);
	\draw[->,redp] (.2,-.4) to [out=90,in=-90] (-0.2,0) to[out=90,in=-90] (0.2,.4);
\draw[-] (.17,-.31) to (-.17,-.31);
      \node at (-.17,-0.32) {$\dot$};
   \node at (.17,-0.32) {$\dot$};
\draw[-] (.17,.09) to (-.17,.09);
      \node at (-.17,0.08) {$\dot$};
   \node at (.17,0.08) {$\dot$};
\end{tikzpicture}
}
+
\mathord{
\begin{tikzpicture}[baseline = -1mm]
	\draw[->,bluep] (0.2,-.4) to (-0.2,.4);
	\draw[->,redp] (-0.2,-.4) to (0.2,.4);
\draw[-] (.15,-.31) to (-.15,-.31);
      \node at (-.15,-0.32) {$\dot$};
   \node at (.15,-0.32) {$\dot$};
\draw[-] (.09,-.18) to (-.09,-.18);
      \node at (-.09,-0.19) {$\dot$};
   \node at (.09,-0.19) {$\dot$};
\end{tikzpicture}}
-
\mathord{
\begin{tikzpicture}[baseline = -1mm]
	\draw[->,bluep] (0.2,-.4) to (-0.2,.4);
	\draw[->,redp] (-0.2,-.4) to (0.2,.4);
\draw[-] (.09,.18) to (-.15,.18);
      \node at (-.09,0.17) {$\dot$};
   \node at (.09,0.17) {$\dot$};
\end{tikzpicture}}
-
\mathord{
\begin{tikzpicture}[baseline = -1mm]
	\draw[->,bluep] (0.2,-.4) to (-0.2,.4);
	\draw[->,redp] (-0.2,-.4) to (0.2,.4);
\draw[-] (.09,.18) to (-.09,.18);
      \node at (-.09,0.17) {$\dot$};
   \node at (.09,0.17) {$\dot$};
\draw[-] (.09,-.18) to (-.09,-.18);
      \node at (-.09,-0.19) {$\dot$};
   \node at (.09,-0.19) {$\dot$};
\end{tikzpicture}}
+\mathord{
\begin{tikzpicture}[baseline = -1mm]
	\draw[->,redp] (0.2,-.4) to (0.2,.4);
	\draw[->,bluep] (-0.2,-.4) to (-0.2,.4);
\draw[-] (.2,.18) to (-.2,.18);
      \node at (-.2,0.17) {$\dot$};
   \node at (.2,0.17) {$\dot$};
\draw[-] (.2,-.18) to (-.2,-.18);
      \node at (-.2,-0.19) {$\dot$};
   \node at (.2,-0.19) {$\dot$};
\end{tikzpicture}
}\:.
\end{align*}
This expression is a shorthand for a $4 \times 4$ matrix.
We must show that it equals the $4 \times 4$ identity matrix
$\:\mathord{
\begin{tikzpicture}[baseline = 0]
	\draw[->,bluep] (0.1,-.1) to (0.1,.3);
	\draw[->,bluep] (-0.2,-.1) to (-0.2,.3);
\end{tikzpicture}
}\;+\;
\mathord{
\begin{tikzpicture}[baseline = 0]
	\draw[->,bluep] (0.1,-.1) to (0.1,.3);
	\draw[->,redp] (-0.2,-.1) to (-0.2,.3);
\end{tikzpicture}
}\;+\;
\mathord{
\begin{tikzpicture}[baseline = 0]
	\draw[->,redp] (0.1,-.1) to (0.1,.3);
	\draw[->,bluep] (-0.2,-.1) to (-0.2,.3);
\end{tikzpicture}
}\;+\;
\mathord{
\begin{tikzpicture}[baseline = 0]
	\draw[->,redp] (0.1,-.1) to (0.1,.3);
	\draw[->,redp] (-0.2,-.1) to (-0.2,.3);
\end{tikzpicture}
}\:$.
Looking at
the 16 individual matrix entries (most of which are zero), the proof
reduces to verifying the following three identities
\begin{align*}
\mathord{
\begin{tikzpicture}[baseline = -1mm]
	\draw[->,bluep] (-.2,-.4) to [out=90,in=-90] (0.2,0) to[out=90,in=-90] (-0.2,.4);
	\draw[->,bluep] (.2,-.4) to [out=90,in=-90] (-0.2,0) to[out=90,in=-90] (0.2,.4);
\end{tikzpicture}
}&=
\mathord{
\begin{tikzpicture}[baseline = -1mm]
	\draw[->,bluep] (-.2,-.4) to (-0.2,.4);
	\draw[->,bluep] (.2,-.4) to (0.2,.4);
\end{tikzpicture}
}\:,&
\mathord{
\begin{tikzpicture}[baseline = -1mm]
	\draw[->,redp] (-.2,-.4) to [out=90,in=-90] (0.2,0) to[out=90,in=-90] (-0.2,.4);
	\draw[->,bluep] (.2,-.4) to [out=90,in=-90] (-0.2,0) to[out=90,in=-90] (0.2,.4);
\end{tikzpicture}
}
+
\mathord{
\begin{tikzpicture}[baseline = -1mm]
	\draw[->,redp] (-.2,-.4) to [out=90,in=-90] (0.2,0) to[out=90,in=-90] (-0.2,.4);
	\draw[->,bluep] (.2,-.4) to [out=90,in=-90] (-0.2,0) to[out=90,in=-90] (0.2,.4);
\draw[-] (.17,-.31) to (-.17,-.31);
      \node at (-.17,-0.32) {$\dot$};
   \node at (.17,-0.32) {$\dot$};
\end{tikzpicture}
}
-
\mathord{
\begin{tikzpicture}[baseline = -1mm]
	\draw[->,redp] (-.2,-.4) to [out=90,in=-90] (0.2,0) to[out=90,in=-90] (-0.2,.4);
	\draw[->,bluep] (.2,-.4) to [out=90,in=-90] (-0.2,0) to[out=90,in=-90] (0.2,.4);
\draw[-] (.17,.09) to (-.17,.09);
      \node at (-.17,0.08) {$\dot$};
   \node at (.17,0.08) {$\dot$};
\end{tikzpicture}
}
-
\mathord{
\begin{tikzpicture}[baseline = -1mm]
	\draw[->,redp] (-.2,-.4) to [out=90,in=-90] (0.2,0) to[out=90,in=-90] (-0.2,.4);
	\draw[->,bluep] (.2,-.4) to [out=90,in=-90] (-0.2,0) to[out=90,in=-90] (0.2,.4);
\draw[-] (.17,-.31) to (-.17,-.31);
      \node at (-.17,-0.32) {$\dot$};
   \node at (.17,-0.32) {$\dot$};
\draw[-] (.17,.09) to (-.17,.09);
      \node at (-.17,0.08) {$\dot$};
   \node at (.17,0.08) {$\dot$};
\end{tikzpicture}
}
+\mathord{
\begin{tikzpicture}[baseline = -1mm]
	\draw[->,bluep] (0.2,-.4) to (0.2,.4);
	\draw[->,redp] (-0.2,-.4) to (-0.2,.4);
\draw[-] (.2,.18) to (-.2,.18);
      \node at (-.2,0.17) {$\dot$};
   \node at (.2,0.17) {$\dot$};
\draw[-] (.2,-.18) to (-.2,-.18);
      \node at (-.2,-0.19) {$\dot$};
   \node at (.2,-0.19) {$\dot$};
\end{tikzpicture}
}&=\mathord{
\begin{tikzpicture}[baseline = -1mm]
	\draw[->,redp] (-.2,-.4) to (-0.2,.4);
	\draw[->,bluep] (.2,-.4) to (0.2,.4);
\end{tikzpicture}
}
\:,&
-
\mathord{
\begin{tikzpicture}[baseline = -1mm]
	\draw[->,redp] (0.2,-.4) to (-0.2,.4);
	\draw[->,bluep] (-0.2,-.4) to (0.2,.4);
\draw[-] (.09,-.18) to (-.09,-.18);
      \node at (-.09,-0.19) {$\dot$};
   \node at (.09,-0.19) {$\dot$};
\end{tikzpicture}}
+
\mathord{
\begin{tikzpicture}[baseline = -1mm]
	\draw[->,redp] (0.2,-.4) to (-0.2,.4);
	\draw[->,bluep] (-0.2,-.4) to (0.2,.4);
\draw[-] (.15,-.31) to (-.15,-.31);
      \node at (-.15,-0.32) {$\dot$};
   \node at (.15,-0.32) {$\dot$};
\draw[-] (.09,-.18) to (-.09,-.18);
      \node at (-.09,-0.19) {$\dot$};
   \node at (.09,-0.19) {$\dot$};
\end{tikzpicture}}
+
\mathord{
\begin{tikzpicture}[baseline = -1mm]
	\draw[->,redp] (0.2,-.4) to (-0.2,.4);
	\draw[->,bluep] (-0.2,-.4) to (0.2,.4);
\draw[-] (.09,.18) to (-.15,.18);
      \node at (-.09,0.17) {$\dot$};
   \node at (.09,0.17) {$\dot$};
\end{tikzpicture}}
-
\mathord{
\begin{tikzpicture}[baseline = -1mm]
	\draw[->,redp] (0.2,-.4) to (-0.2,.4);
	\draw[->,bluep] (-0.2,-.4) to (0.2,.4);
\draw[-] (.09,.18) to (-.09,.18);
      \node at (-.09,0.17) {$\dot$};
   \node at (.09,0.17) {$\dot$};
\draw[-] (.09,-.18) to (-.09,-.18);
      \node at (-.09,-0.19) {$\dot$};
   \node at (.09,-0.19) {$\dot$};
\end{tikzpicture}}
=0
\end{align*}
together with the mirror images of these identities under $\eta$.
All are obviously true by commuting relations.
For the dot sliding relation (\ref{heckea}),
one computes the entries of the $4\times 4$ matrices involved to see that
the proof reduces to
checking the following
\begin{align*}
\mathord{
\begin{tikzpicture}[baseline = -1mm]
	\draw[<-,bluep] (0.25,.3) to (-0.25,-.3);
	\draw[->,bluep] (0.25,-.3) to (-0.25,.3);
     \node at (-0.12,-0.145) {$\blue{\dot}$};
\end{tikzpicture}
}
=
\mathord{
\begin{tikzpicture}[baseline = -1mm]
	\draw[<-,bluep] (0.25,.3) to (-0.25,-.3);
	\draw[->,bluep] (0.25,-.3) to (-0.25,.3);
     \node at (0.12,0.135) {$\blue{\dot}$};
\end{tikzpicture}}
+\:\mathord{
\begin{tikzpicture}[baseline = -1mm]
 	\draw[->,bluep] (0.08,-.3) to (0.08,.3);
	\draw[->,bluep] (-0.28,-.3) to (-0.28,.3);
\end{tikzpicture}
}\:,\qquad
\mathord{
\begin{tikzpicture}[baseline = -1mm]
	\draw[<-,bluep] (0.25,.3) to (-0.25,-.3);
	\draw[->,redp] (0.25,-.3) to (-0.25,.3);
     \node at (-0.12,-0.145) {$\blue{\dot}$};
\end{tikzpicture}
}
-
\mathord{
\begin{tikzpicture}[baseline = -1mm]
	\draw[<-,bluep] (0.25,.3) to (-0.25,-.3);
	\draw[->,redp] (0.25,-.3) to (-0.25,.3);
	\draw[-] (-0.12,-.135) to (0.12,-.135);
     \node at (-0.12,-0.145) {$\dot$};
     \node at (0.12,-0.145) {$\dot$};
     \node at (0.12,0.135) {$\blue{\dot}$};
\end{tikzpicture}
}
=
\mathord{
\begin{tikzpicture}[baseline = -1mm]
	\draw[<-,bluep] (0.25,.3) to (-0.25,-.3);
	\draw[->,redp] (0.25,-.3) to (-0.25,.3);
     \node at (0.12,0.135) {$\blue{\dot}$};
\end{tikzpicture}
}
-\mathord{
\begin{tikzpicture}[baseline = -1mm]
	\draw[<-,bluep] (0.25,.3) to (-0.25,-.3);
	\draw[->,redp] (0.25,-.3) to (-0.25,.3);
	\draw[-] (-0.12,.155) to (0.12,.155);
     \node at (-0.12,0.145) {$\dot$};
     \node at (0.12,0.145) {$\dot$};
     \node at (-0.12,-0.145) {$\blue{\dot}$};
\end{tikzpicture}
}
\:,
\qquad
\mathord{
\begin{tikzpicture}[baseline = -1mm]
 	\draw[->,bluep] (0.08,-.3) to (0.08,.3);
	\draw[->,redp] (-0.28,-.3) to (-0.28,.3);
	\draw[-] (-0.28,.06) to (0.08,.06);
     \node at (-0.28,0.05) {$\dot$};
     \node at (0.08,0.05) {$\dot$};
     \node at (-0.28,-0.16) {$\red{\dot}$};
\end{tikzpicture}
}
=\:
\mathord{
\begin{tikzpicture}[baseline = -1mm]
 	\draw[->,bluep] (0.08,-.3) to (0.08,.3);
	\draw[->,redp] (-0.28,-.3) to (-0.28,.3);
	\draw[-] (-0.28,-.15) to (0.08,-.15);
     \node at (-0.28,-0.16) {$\dot$};
     \node at (0.08,-0.16) {$\dot$};
     \node at (0.08,0.05) {$\blue{\dot}$};
\end{tikzpicture}
}
+\:
\mathord{
\begin{tikzpicture}[baseline = -1mm]
 	\draw[->,bluep] (0.08,-.3) to (0.08,.3);
	\draw[->,redp] (-0.28,-.3) to (-0.28,.3);
\end{tikzpicture}
}
\:,
\end{align*}
together with their mirror images under $\eta$.
Again these are all clear; use (\ref{teleporting}) for the last one.
The braid relation may be checked by a similar sort of
calculation although this is quite lengthy since it involves $8
\times 8$ matrices; this is where one needs (\ref{c1})--(\ref{c2}).
\end{proof}

The goal now is to show that the new version of the functor
$\Delta$ also categorifies $\delta$ by 
establishing an analog of (\ref{shower}).
The canonical functors 
$\blue{\AH} \rightarrow 
\blue{\AH}
 \;\overline{\odot}\; \red{\AH}$
and
$\red{\AH} \rightarrow 
\blue{\AH}
  \;\overline{\odot}\; \red{\AH}$ induce a ring homomorphism
$\epsilon:K_0(\Kar(\blue{\AH})) \otimes_{\Z} K_0(\Kar(\red{\AH}))
\rightarrow 
K_0(\Kar(\blue{\AH}
  \;\overline{\odot}\; \red{\AH}))$.
We claim that
\begin{equation}\label{shower2}
\begin{diagram}
\node{\SymZ}\arrow{s,l,A,J}{\gamma}\arrow{e,t}{\delta}\node{\SymZ\otimes_\Z\SymZ\:}\arrow{s,r}{\epsilon\circ\gamma\otimes\gamma}\\
\node{K_0(\Kar(\AH))}\arrow{e,b}{[\Delta]}\node{K_0(\Kar(\blue{\AH}
  \;\overline{\odot}\; \red{\AH}))}
\end{diagram}
\end{equation}
commutes. This follows from the next theorem.

\begin{theorem}\label{upgraded}
For each $n \geq 0$, we have that 
\begin{align}
\Delta(H_n) &\cong \bigoplus_{r=0}^n \blue{H_{n-r}}
\otimes \red{H_r},&
\Delta(E_n) &\cong \bigoplus_{r=0}^n \blue{E_{n-r}}
\otimes \red{E_r}.
\end{align}
\end{theorem}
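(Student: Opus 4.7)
The strategy is to follow the approach of Theorem~\ref{trivial} closely. There, the key input was the identity
\[
1_\mu \circ \Delta(\imath_n(e_{(n)})) \circ 1_\lambda = \binom{n}{r}^{-1}\sigma_\mu^{-1} \circ \imath_{r,n}(e_{(r)}\otimes e_{(n-r)}) \circ \sigma_\lambda
\]
for each $\lambda,\mu \in \mathcal{P}_{r,n}$, from which explicit morphisms $u, v$ were built satisfying $u \circ v = \Delta(\imath_n(e_{(n)}))$ and $v \circ u = \sum_r \imath_{r,n}(e_{(r)} \otimes e_{(n-r)})$ in $\Kar(\blue{\SYM}\odot\red{\SYM})$. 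In the present setting, this identity fails verbatim, because the new $\Delta$ sends a crossing to a combination involving two-color crossings together with dumbbell correction terms; see~\eqref{com0old}. A direct computation for $n=2,\,r=1$, for example, shows that the block $1_{br}\circ\Delta(\imath_2(e_{(2)}))\circ 1_{rb}$ equals $\tfrac{1}{2}\sigma_{rb}(1+D)$ rather than simply $\tfrac{1}{2}\sigma_{rb}$, where $D$ is an appropriate dumbbell morphism (as defined in~\eqref{dumb}); similar discrepancies involving $(1\pm D)$ factors appear in the other blocks.

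The plan is therefore to replace each $\sigma_\lambda$ by a modified isomorphism $\tau_\lambda\colon \obj^{\otimes\lambda}\stackrel{\sim}{\rightarrow}\obj^{\otimes\min_{r,n}}$ obtained by composing $\sigma_\lambda$ with a suitable product of dumbbells and their inverses. Since every dumbbell is invertible in the localized category $\blue{\AH}\;\overline{\odot}\;\red{\AH}$ — which is precisely what localization was introduced for — the resulting $\tau_\lambda$ is again an isomorphism. With the right choice of $\tau_\lambda$'s, the goal is then to verify the modified identity
\[
1_\mu \circ \Delta(\imath_n(e_{(n)})) \circ 1_\lambda = \binom{n}{r}^{-1}\tau_\mu^{-1} \circ \imath_{r,n}(e_{(r)}\otimes e_{(n-r)}) \circ \tau_\lambda,
\]
and then to repeat the construction of $u$ and $v$ from the proof of Theorem~\ref{trivial} essentially verbatim, with $\sigma$ replaced throughout by $\tau$. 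The $E_n$ assertion follows from the same argument after replacing the symmetrizer $e_{(m)}$ by the antisymmetrizer $e_{(1^m)}$ and inserting signs $(-1)^{|\nu|}$ in the $\tau_\nu$, exactly mirroring the $E_n$ case of Theorem~\ref{trivial}.

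The main obstacle is identifying the correct $\tau_\lambda$ and establishing the modified identity. The natural route is to expand $\Delta(\imath_n(e_{(n)})) = \tfrac{1}{n!}\sum_{w\in\S_n}\Delta(\imath_n(w))$, to reduce each summand to a normal form using the teleporting and commuting relations~\eqref{teleporting}--\eqref{c2}, and then to recognize the resulting sum as the matrix built from $\tau_\mu^{-1}\tau_\lambda$. The delicate combinatorial bookkeeping is to determine which of the dumbbell correction terms produced by expanding $\Delta$ of each transposition survive after averaging over $\S_n$ together with $e_{(r)}\otimes e_{(n-r)}$; this is where the $\S_n$-invariance of $e_{(n)}$ enters in an essential way. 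Once the modified identity is in hand, the final verification that $u\circ v = \Delta(\imath_n(e_{(n)}))$ and $v\circ u = \sum_r \imath_{r,n}(e_{(r)}\otimes e_{(n-r)})$ is immediate.
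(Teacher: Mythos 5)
You have correctly diagnosed that the identity underlying Theorem~\ref{trivial} fails here and that dumbbell corrections appear, but the proposed repair cannot work: there is \emph{no} choice of invertible morphisms $\tau_\lambda$ for which
$1_\mu \circ \Delta(\imath_n(e_{(n)})) \circ 1_\lambda = \binom{n}{r}^{-1}\tau_\mu^{-1}\circ\imath_{r,n}(e_{(r)}\otimes e_{(n-r)})\circ\tau_\lambda$.
The diagonal blocks already rule this out: your identity would force $1_\lambda\circ\Delta(\imath_n(e_{(n)}))\circ 1_\lambda$ to be conjugate to $\binom{n}{r}^{-1}\imath_{r,n}(e_{(r)}\otimes e_{(n-r)})$, yet for $n=2$, $r=1$ (where $e_{(1)}\otimes e_{(1)}=1$) the two diagonal entries of the weight-one block of $\Delta(\imath_2(e_{(2)}))$ are $\tfrac12(1\mp D)$ with $D$ the dumbbell, and $\tfrac12(1\mp D)$ is not conjugate to $\tfrac12$. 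The correct formula (Lemma~\ref{government} of the paper) is
\begin{equation*}
1_\mu \circ \Delta(\imath_n(e_{(n)})) \circ 1_\lambda \;=\; \binom{n}{r}^{-1}\,\sigma_\mu^{-1}\circ\imath_{r,n}\big(e_{(r)}\otimes e_{(n-r)}\big)\circ\imath_{r,n}(\Pi_\lambda)\circ\sigma_\lambda,
\qquad \Pi_\lambda:=\!\!\prod_{\substack{1\le i\le r\\ 1\le j\le n-r}}\!\!\big(1+\varepsilon_{i,j}(\lambda)y_{i,j}\big),
\end{equation*}
with a correction depending only on the \emph{source} $\lambda$ (and, for the $E_n$ statement, only on the target $\mu$). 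This asymmetry is incompatible with a two-sided factorization: writing $\tau_\nu=C_\nu\circ\sigma_\nu$, you would need $e\,\Pi_\lambda=C_\mu^{-1}eC_\lambda$ for all $\mu,\lambda$ simultaneously, which forces $e\,\Pi_\lambda\, e$ to be independent of $\lambda$ --- false, as the $n=2$ computation shows ($\Pi_\lambda=1\mp y_{1,1}$).

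Because of this, the final step of your outline is not ``immediate.'' The paper keeps $u$ exactly as in Theorem~\ref{trivial} but loads the entire correction $\imath_{r,n}(\Pi_\lambda)$ into $v$; then $u\circ v=\Delta(\imath_n(e_{(n)}))$ is Lemma~\ref{government}, while $v\circ u=\sum_r\imath_{r,n}(e_{(r)}\otimes e_{(n-r)})$ requires, after invoking \eqref{numbers} to rewrite $e\,\Pi_\lambda\, e$ via the $\bull$-action, the nontrivial identity
$\sum_{\pi\in\S_r\times\S_{n-r}}\pi\bull\big(\sum_{\lambda\in\mathcal P_{r,n}}\Pi_\lambda\big)=n!$
(Lemma~\ref{wow}), proved by an induction involving formal power series in $\k\llbracket x_1^{-1},\dots,x_r^{-1},x_{r+1},\dots,x_n\rrbracket$. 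That the $\lambda$-dependent corrections, summed over all $\lambda$ and symmetrized, collapse to a scalar is the real content of the theorem, and your outline has no mechanism to produce it. (There is also a technical point hidden in your ``delicate bookkeeping'': the inductive proof of Lemma~\ref{government} at one stage cancels a non-invertible factor of the form $(1-D)_j$, which is only legitimate because the relevant morphism space is free over the localized polynomial ring $\k[x_1,\dots,x_n]_{z_{r,n}}$.)
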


In comparison to
Theorem~\ref{trivial}, the proof of Theorem~\ref{upgraded} is rather
non-trivial,
 and it will occupy the remainder of the
section.
We will need the isomorphisms $\sigma_\lambda\:(\lambda
\in \mathcal P_{r,n})$ from 
(\ref{babysig}), viewed now as morphisms in $\blue{\AH}\;\overline{\odot}\;\red{\AH}$.
Let us also identify $AH_r \otimes_\k AH_{n-r}$ with a subalgebra of $AH_n$
so that
 $s_i \otimes 1 \leftrightarrow s_i,
x_i \otimes 1 \leftrightarrow x_i, 1 \otimes s_j \leftrightarrow
s_{r+j}$ and $1 \otimes x_j \leftrightarrow x_{r+j}$.
Let 
$AH_r \;\overline{\otimes}_\k\; AH_{n-r}$ be the Ore localization of $AH_r
\otimes_\k AH_{n-r}$ at the central element
\begin{equation}
z_{r,n} := \prod_{i=1}^r \prod_{j=1}^{n-r} (x_i - x_{r+j}).
\end{equation}
Generalizing (\ref{doublei}), there is an algebra isomorphism
\begin{equation}
\imath_{r,n}:AH_r \;\overline{\otimes}_\k\; AH_{n-r}
\rightarrow \End_{\blue{\AH}\;\overline{\odot}\;\red{\AH}}(\blueup^{\otimes (n-r)} \otimes \redup^{\otimes r})
\end{equation}
sending $s_i = s_i \otimes 1$ and $s_{r+j} = 1\otimes s_j$ to the same diagrams as before, and $x_i = x_i \otimes
1$ and $x_{r+j}=1\otimes x_j$ to dots on the $i$th red string or $j$th
blue string, respectively.
To see this, we just observe that the analogous isomorphism before
localizing is obvious; then it follows for the localized versions too
since 
all dumbbells make sense
in $AH_r \;\overline{\otimes}_\k\; AH_{n-r}$, and conversely the image of $z_{r,n}$ is invertible in 
the endomorphism algebra.
Just like in (\ref{matrix}), we then get that
\begin{equation}\label{anothermatrix}
\End_{\Add(\blue{\AH}\;\overline{\odot}\;\red{\AH})}\left((\blueup\oplus\redup)^{\otimes n}\right) \cong 
\bigoplus_{r=0}^n \operatorname{Mat}_{\binom{n}{r}}\left(AH_r
 \; \overline{\otimes}_\k\; AH_{n-r}\right).
\end{equation}
For $\lambda \in \mathcal P_{r,n}$ and $1 \leq i \leq r, 1 \leq j \leq
r-n$, we let 
\begin{equation}\label{grayarea}
\varepsilon_{i,j}(\lambda) := \left\{\begin{array}{rl}
1&\text{if $j \leq \lambda_i$}\\
-1&\text{if $j > \lambda_i$.}
\end{array}\right.
\end{equation}
Thus it is 1 or $-1$ according to whether $(i,j)$ is inside or outside
of the Young diagram of $\lambda$.
Also let
\begin{equation}\label{regret}
y_{i,j} := 
(x_{r+1-i} - x_{r+j})^{-1} \in AH_r \;\overline{\otimes}_{\k}\; AH_{n-r}.
\end{equation}
Numbering strings $1,\dots,n$ from right to left as usual, $\imath_{r,n}(y_{i,j})$ is the
dumbbell between the $(r+1-i)$th and
$(r+j)$th strings; alternatively, numbering strings from the center (with
red to the right and blue to the left)
it joins the $i$th red
string to the $j$th blue string.
The key observation needed to prove Theorem~\ref{upgraded} is as follows.

\begin{lemma}\label{government}
For $0 \leq r \leq n$ and $\lambda,\mu \in \mathcal P_{r,n}$, we have
that
\begin{align*}
 1_\mu \circ \Delta(\imath_n(e_{(n)})) \circ 1_\lambda &= \binom{\,n\,}{\,r\,}^{-1}
\sigma_\mu^{-1} \circ \imath_{r,n}\left(e_{(r)} \otimes
  e_{(n-r)}\right)\circ\imath_{r,n}\bigg(\!\prod_{\substack{1 \leq i \leq r\\1 \leq j
    \leq n-r}}\!
\left(1 + \varepsilon_{i,j}(\lambda)y_{i,j}\right)\bigg)
\circ \sigma_\lambda,\\
 1_\mu \circ \Delta(\imath_n(e_{(1^n)})) \circ 1_\lambda &= 
(-1)^{|\lambda|+|\mu|}\:\binom{\,n\,}{\,r\,}^{-1}\!\!\!\!\sigma_\mu^{-1}\circ \imath_{r,n}\bigg(\!\!\prod_{\substack{1 \leq i \leq r\\1 \leq j
    \leq n-r}}\!\!
\left(1 - \varepsilon_{i,j}(\mu)y_{i,j}\right)
\bigg)\circ\imath_{r,n}\left(e_{(1^r)} \otimes
  e_{(1^{n-r})}\right)
\circ \sigma_\lambda.
\end{align*}
\end{lemma}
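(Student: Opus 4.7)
I propose a direct diagrammatic calculation, focusing on the symmetric identity; the antisymmetric one follows by an analogous argument with extra sign-tracking. Write $\imath_n(e_{(n)}) = \frac{1}{n!}\sum_{w \in \S_n} \imath_n(w)$, so that $\Delta(\imath_n(e_{(n)})) = \frac{1}{n!}\sum_w \Delta(\imath_n(w))$. For each $w$, fix a reduced expression $w = s_{i_1}\cdots s_{i_\ell}$. By Theorem~\ref{comultah}, each $\Delta(\imath_n(s_{i_k}))$ is a sum of eight terms (four pure crossings of various colors, two opposite-color swaps decorated with a dumbbell, and two vertical two-color pieces with a dumbbell), so that $\Delta(\imath_n(w))$ expands as a large sum of colored diagrams with dumbbell decorations.

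Now extract the $(\mu,\lambda)$ matrix entry by composing with $1_\lambda$ below and $1_\mu$ above; this picks out only those terms whose bottom coloring is $\lambda$ and top coloring is $\mu$. Conjugating each surviving diagram by $\sigma_\mu^{-1}$ on top and $\sigma_\lambda$ on bottom brings it into $\operatorname{End}(\blue{\up}^{\otimes(n-r)} \otimes \red{\up}^{\otimes r}) \cong AH_r \;\overline{\otimes}_\k\; AH_{n-r}$ via \eqref{anothermatrix}. Using the commutation relations of Section~\ref{sdha}, in particular the teleporting relation \eqref{teleporting}, the commuting of dumbbells past dots and past other dumbbells, and the one-color crossing identities \eqref{c1}--\eqref{c2}, push every red-blue dumbbell into a canonical position just above $\sigma_\lambda$ and assemble all remaining single-color content into $\imath_{r,n}$ applied to an element of $AH_r \otimes_\k AH_{n-r}$.

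Summing over $w \in \S_n$ and the eight-way choices at each crossing then amounts to two independent tasks. First, the single-color permutation content sums to $\binom{n}{r}^{-1}\imath_{r,n}(e_{(r)} \otimes e_{(n-r)})$, using $r!(n-r)!/n! = 1/\binom{n}{r}$ together with the fact that $e_{(r)} \otimes e_{(n-r)}$ is the uniform average over $\S_r \times \S_{n-r}$. Second, the dumbbell content factors across distinct pairs $(i,j)$ of red and blue strings (because dumbbells on disjoint pairs commute and commute with the single-color content), and for each fixed pair $(i,j)$ the sum over whether or not to insert a dumbbell $y_{i,j}$ at that position produces the factor $1 + \varepsilon_{i,j}(\lambda)\, y_{i,j}$. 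The sign $\varepsilon_{i,j}(\lambda)$ emerges naturally: the boxes $(i,j)$ inside the Young diagram of $\lambda$ are precisely those pairs where the $i$-th red string actually crosses the $j$-th blue string inside $\sigma_\lambda$, and the sign assigned by Theorem~\ref{comultah} at that crossing yields exactly $\varepsilon_{i,j}(\lambda) = +1$ in that case and $-1$ otherwise.

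The main obstacle is the combinatorial verification that the dumbbell contributions, summed over all words $w$ and all eight-way choices at each crossing, really assemble into the claimed product $\prod_{i,j}(1 + \varepsilon_{i,j}(\lambda)\, y_{i,j})$ independently of the order of these insertion choices. This reduces to a telescoping argument on each red-blue pair $(i,j)$ using \eqref{heckea}, \eqref{teleporting}, and \eqref{c1}--\eqref{c2}. For the antisymmetric case, the sign $\operatorname{sgn}(w)$ appearing in $e_{(1^n)}$ produces the overall factor $(-1)^{|\lambda|+|\mu|}$ (since $\sigma_\lambda$ and $\sigma_\mu$ contain $|\lambda|$ and $|\mu|$ two-color crossings respectively), converts the idempotent into $e_{(1^r)} \otimes e_{(1^{n-r})}$, and (by virtue of the sign-sensitive dot-sliding relations \eqref{heckea}) places the dumbbell correction factor above rather than below the idempotent, with each $+\varepsilon_{i,j}(\lambda)$ replaced by $-\varepsilon_{i,j}(\mu)$.
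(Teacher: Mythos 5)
Your proposal takes a genuinely different route from the paper (which proves the $n=2$ case explicitly and then inducts on $|\mu|-|\lambda|$, adding or removing one box of the Young diagram at a time), but as written it has a real gap: the step you yourself flag as ``the main obstacle'' is the entire content of the lemma, and it is asserted rather than proved. Concretely, after expanding $e_{(n)}=\frac{1}{n!}\sum_w w$ and replacing each crossing in a reduced word by the eight terms of (\ref{com0old}), you claim that (a) the single-color content and the dumbbell content separate, and (b) the dumbbell content factors over pairs $(i,j)$ into $\prod(1+\varepsilon_{i,j}(\lambda)y_{i,j})$ sitting on the correct side of the idempotent. Neither is clear: the relations (\ref{c1})--(\ref{c2}) show that moving a dumbbell past a one-color crossing produces correction terms with \emph{two} dumbbells, and (\ref{teleporting}) produces correction terms with \emph{no} dumbbell, so the number of dumbbells in a given term is not preserved under the rewriting you need, and the ``telescoping argument on each red-blue pair'' would have to control an uncontrolled proliferation of such cross-terms across all reduced words of all $w\in\S_n$ simultaneously. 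You also need to explain why the correction factor attaches to $\lambda$ (below the idempotent) in the symmetric case but to $\mu$ (above it) in the antisymmetric case; your appeal to the sign-sensitivity of (\ref{heckea}) gestures at this but does not establish it.

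The paper avoids all of this by only ever changing one two-color crossing at a time: in the base case ($\mu$ minimal, $\lambda$ maximal) only the longest coset representative survives and every crossing in it is forced to be of the single type $\begin{tikzpicture}[baseline = -.6mm]\draw[->,thin,blue] (0.2,-.25) to (-0.2,.25);\draw[thin,->,red] (-0.2,-.25) to (0.2,.25);\end{tikzpicture}+\begin{tikzpicture}[baseline = -.6mm]\draw[->,thin,blue] (0.2,-.25) to (-0.2,.25);\draw[thin,->,red] (-0.2,-.25) to (0.2,.25);\draw[-] (-0.12,-.16) to (0.12,-.16);\node at (0.12,-.17) {$\dot$};\node at (-0.12,-.17) {$\dot$};\end{tikzpicture}$, which gives the product $\prod(1+y_{i,j})$ on the nose; the induction step then uses $e_{(n)}=\frac12(1+s_j)e_{(n)}$ and a cancellation that is only legitimate because the morphism space is free over the integral domain $\k[x_1,\dots,x_n]_{z_{r,n}}$ by (\ref{anothermatrix}). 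If you want to pursue your direct expansion, you would at minimum need to prove the factorization claim by an induction of comparable strength, at which point you have essentially reconstructed the paper's argument. As it stands, the proposal is a plausible plan, not a proof.
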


\begin{proof}
Note that $\Delta(\imath_2(e_{(2)}))$
and
$\Delta(\imath_2(e_{(1^2)}))$
are equal to
\begin{align*}
&\textstyle\frac{1}{2}\left(\:
\begin{tikzpicture}[baseline = -0.6mm]
 	\draw[->,bluep] (0.16,-.3) to (0.16,.3);
	\draw[->,bluep] (-0.16,-.3) to (-0.16,.3);
\end{tikzpicture}
+\begin{tikzpicture}[baseline = -.6mm]
	\draw[->,bluep] (0.2,-.3) to (-0.2,.3);
	\draw[->,bluep] (-0.2,-.3) to (0.2,.3);
\end{tikzpicture}\:
\right)
+
\frac{1}{2}\left(\:
\begin{tikzpicture}[baseline = -0.6mm]
 	\draw[->,redp] (0.16,-.3) to (0.16,.3);
	\draw[->,redp] (-0.16,-.3) to (-0.16,.3);
\end{tikzpicture}
+\begin{tikzpicture}[baseline = -.6mm]
	\draw[->,redp] (0.2,-.3) to (-0.2,.3);
	\draw[->,redp] (-0.2,-.3) to (0.2,.3);
\end{tikzpicture}\:
\right)
+
\frac{1}{2}\left(\:
\begin{tikzpicture}[baseline = -0.6mm]
 	\draw[->,redp] (0.16,-.3) to (0.16,.3);
	\draw[->,bluep] (-0.16,-.3) to (-0.16,.3);
\end{tikzpicture}
+\begin{tikzpicture}[baseline = -.6mm]
	\draw[->,redp] (0.2,-.3) to (-0.2,.3);
	\draw[->,bluep] (-0.2,-.3) to (0.2,.3);
\end{tikzpicture}\:
\right)
\circ \left(\: \begin{tikzpicture}[baseline = -0.6mm]
 	\draw[->,redp] (0.16,-.3) to (0.16,.3);
	\draw[->,bluep] (-0.16,-.3) to (-0.16,.3);
\end{tikzpicture}
-\!
\begin{tikzpicture}[baseline = -0.6mm]
 	\draw[->,redp] (0.16,-.3) to (0.16,.3);
	\draw[->,bluep] (-0.16,-.3) to (-0.16,.3);
	\draw[-] (-0.16,0.01) to (0.16,0.01);
     \node at (0.16,0.0) {$\dot$};
     \node at (-0.16,0.0) {$\dot$};
\end{tikzpicture}
\right)
+
\frac{1}{2}\left(\:
\begin{tikzpicture}[baseline = -0.6mm]
 	\draw[->,bluep] (0.16,-.3) to (0.16,.3);
	\draw[->,redp] (-0.16,-.3) to (-0.16,.3);
\end{tikzpicture}
+\begin{tikzpicture}[baseline = -.6mm]
	\draw[->,bluep] (0.2,-.3) to (-0.2,.3);
	\draw[->,redp] (-0.2,-.3) to (0.2,.3);
\end{tikzpicture}\:
\right)
\circ
\left(
\: \begin{tikzpicture}[baseline = -0.6mm]
 	\draw[->,bluep] (0.16,-.3) to (0.16,.3);
	\draw[->,redp] (-0.16,-.3) to (-0.16,.3);
\end{tikzpicture}
+\!
\begin{tikzpicture}[baseline = -0.6mm]
 	\draw[->,bluep] (0.16,-.3) to (0.16,.3);
	\draw[->,redp] (-0.16,-.3) to (-0.16,.3);
	\draw[-] (-0.16,0.01) to (0.16,0.01);
     \node at (0.16,0.0) {$\dot$};
     \node at (-0.16,0.0) {$\dot$};
\end{tikzpicture}
\right),\\
&\textstyle\frac{1}{2}\left(\:
\begin{tikzpicture}[baseline = -0.6mm]
 	\draw[->,bluep] (0.16,-.3) to (0.16,.3);
	\draw[->,bluep] (-0.16,-.3) to (-0.16,.3);
\end{tikzpicture}-\begin{tikzpicture}[baseline = -.6mm]
	\draw[->,bluep] (0.2,-.3) to (-0.2,.3);
	\draw[->,bluep] (-0.2,-.3) to (0.2,.3);
\end{tikzpicture}\:
\right)
+
\frac{1}{2}\left(\:
\begin{tikzpicture}[baseline = -0.6mm]
 	\draw[->,redp] (0.16,-.3) to (0.16,.3);
	\draw[->,redp] (-0.16,-.3) to (-0.16,.3);
\end{tikzpicture}
-\begin{tikzpicture}[baseline = -.6mm]
	\draw[->,redp] (0.2,-.3) to (-0.2,.3);
	\draw[->,redp] (-0.2,-.3) to (0.2,.3);
\end{tikzpicture}\:
\right)
+
\frac{1}{2}
\left(\: \begin{tikzpicture}[baseline = -0.6mm]
 	\draw[->,redp] (0.16,-.3) to (0.16,.3);
	\draw[->,bluep] (-0.16,-.3) to (-0.16,.3);
\end{tikzpicture}
+\!
\begin{tikzpicture}[baseline = -0.6mm]
 	\draw[->,redp] (0.16,-.3) to (0.16,.3);
	\draw[->,bluep] (-0.16,-.3) to (-0.16,.3);
	\draw[-] (-0.16,0.01) to (0.16,0.01);
     \node at (0.16,0.0) {$\dot$};
     \node at (-0.16,0.0) {$\dot$};
\end{tikzpicture}
\right)
\circ
\left(\:
\begin{tikzpicture}[baseline = -0.6mm]
 	\draw[->,redp] (0.16,-.3) to (0.16,.3);
	\draw[->,bluep] (-0.16,-.3) to (-0.16,.3);
\end{tikzpicture}
-\begin{tikzpicture}[baseline = -.6mm]
	\draw[->,bluep] (0.2,-.3) to (-0.2,.3);
	\draw[->,redp] (-0.2,-.3) to (0.2,.3);
\end{tikzpicture}
\:
\right)
+
\frac{1}{2}
\left(
\: \begin{tikzpicture}[baseline = -0.6mm]
 	\draw[->,bluep] (0.16,-.3) to (0.16,.3);
	\draw[->,redp] (-0.16,-.3) to (-0.16,.3);
\end{tikzpicture}
-\!
\begin{tikzpicture}[baseline = -0.6mm]
 	\draw[->,bluep] (0.16,-.3) to (0.16,.3);
	\draw[->,redp] (-0.16,-.3) to (-0.16,.3);
	\draw[-] (-0.16,0.01) to (0.16,0.01);
     \node at (0.16,0.0) {$\dot$};
     \node at (-0.16,0.0) {$\dot$};
\end{tikzpicture}
\right)
\circ
\left(\:
\begin{tikzpicture}[baseline = -0.6mm]
 	\draw[->,bluep] (0.16,-.3) to (0.16,.3);
	\draw[->,redp] (-0.16,-.3) to (-0.16,.3);
\end{tikzpicture}
-\begin{tikzpicture}[baseline = -.6mm]
	\draw[->,redp] (0.2,-.3) to (-0.2,.3);
	\draw[->,bluep] (-0.2,-.3) to (0.2,.3);
\end{tikzpicture}\:
\right),
\end{align*}
respectively.
The lemma in the case $n=2$ follows from these formulae.
For the general case, we proceed by induction on $|\mu|-|\lambda|$. We just explain the proof for the first formula, since the second is similar.

In the base case when $\mu = \min_{r,n}$ (so $
1_\mu = \sigma_\mu=\blueup^{\otimes (n-r)}\otimes\redup^{\otimes r}$) 
and $\lambda = \max_{r,n}$ (so
$1_\lambda = \redup^{\otimes r}\otimes\blueup^{\otimes (n-r)}$),
we have that
$$
e_{(n)} =
\frac{1}{n!} 
\sum_{\tau \in \S_r \times \S_{n-r}} \sum_{\sigma \in D}
\tau \sigma
$$ where 
$D$ denotes the set of minimal length
$\S_r \times \S_{n-r} \backslash \S_n$-coset representatives.
For $\tau \in \S_r \times \S_{n-r}$, we have that $1_\mu \circ \Delta(\imath_n(\tau)) = \sigma_\mu^{-1}
\circ \imath_{r,n}(\tau) \circ 1_\mu$.
Thus, we see that
$$
1_\mu \circ \Delta(\imath_n(e_{(n)})) \circ 1_\lambda =\binom{\,n\,}{\,r\,}^{-1}
\sum_{\sigma \in D}
\sigma_\mu^{-1} \circ \imath_{r,n}(e_{(r)}\otimes e_{(n-r)}) \circ 1_\mu \circ
\Delta(\imath_n(\sigma)) \circ 1_\lambda.
$$
Since $\lambda$ is maximal,
the term $1_\mu \circ \Delta(\imath_n(\sigma))\circ 1_\lambda$ here can only be
non-zero when $\sigma$ is the longest coset representative.
Moreover, when computing
$\Delta(\imath_n(\sigma))$,
we must replace each crossing 
$\begin{tikzpicture}[baseline = -.6mm]
	\draw[->] (0.2,-.25) to (-0.2,.25);
	\draw[->] (-0.2,-.25) to (0.2,.25);
\end{tikzpicture}$
in a reduced word for $\imath_n(\sigma)$
with
$\mathord{
\begin{tikzpicture}[baseline = -.6mm]
	\draw[->,bluep] (0.2,-.25) to (-0.2,.25);
	\draw[->,redp] (-0.2,-.25) to (0.2,.25);
\end{tikzpicture}
}+
\mathord{
\begin{tikzpicture}[baseline = -.6mm]
	\draw[->,bluep] (0.2,-.25) to (-0.2,.25);
	\draw[->,redp] (-0.2,-.25) to (0.2,.25);
	\draw[-] (-0.12,-.16) to (0.12,-.16);
     \node at (0.12,-.17) {$\dot$};
     \node at (-0.12,-.17) {$\dot$};
\end{tikzpicture}
}$, i.e., the terms from the expression in (\ref{com0old}) 
that are colored
$\blueup\redup$ at the top and
$\redup\blueup$ at the bottom.
We conclude for this longest $\sigma$ that
$$
1_\mu \circ \Delta(\imath_n(\sigma)) \circ 1_\lambda
=
\imath_{r,n}\bigg(\prod_{\substack{1 \leq i \leq r\\1 \leq j
    \leq n-r}}
\left(1 + y_{i,j}\right)\bigg)
\circ \sigma_\lambda.
$$
Since $\varepsilon_{i,j}(\lambda) = 1$ for all $i$ and $j$, this
checks the base case.

For the induction step, take $\mu,\lambda \in \mathcal P_{r,n}$ 
such that either $\mu$ is not minimal or $\lambda$ is not maximal,
and consider 
$X := 1_\mu \circ \Delta(\imath_n(e_{(n)})) \circ
1_\lambda$.
If $\mu$ is not minimal, we let $\nu \in \mathcal P_{r,n}$ be obtained from $\mu$ by removing a
box. Let $j$ be the unique index such that that $\sigma_\mu^{-1} = 
\left(\:\begin{tikzpicture}[baseline = -.6mm]
	\draw[->,redp] (0.2,-.25) to (-0.2,.25);
	\draw[->,bluep] (-0.2,-.25) to (0.2,.25);
\end{tikzpicture}\:\right)_j
\circ \sigma_\nu^{-1}$,
where the subscript indicates we are applying the crossing to the
$j$th and $(j+1)$th strings.
The induction hypothesis gives us a formula for
$Y := 1_\nu \circ
\Delta(\imath_n(e_{(n)})) \circ 1_\lambda$, reducing the problem to 
showing that
$X =
\left(\:\begin{tikzpicture}[baseline = -.6mm]
	\draw[->,redp] (0.2,-.25) to (-0.2,.25);
	\draw[->,bluep] (-0.2,-.25) to (0.2,.25);
\end{tikzpicture}\:\right)_j
\circ Y$.
To see this, 
we apply $1_\mu \circ \Delta(\imath_n(-))\circ
1_\lambda$ to the identity 
$e_{(n)}=\frac{1}{2}(1+s_{j}) e_{(n)}$ to deduce that
$$
\textstyle 
X = \frac{1}{2}
\left(
\: \begin{tikzpicture}[baseline = -0.6mm]
 	\draw[->,bluep] (0.16,-.3) to (0.16,.3);
	\draw[->,redp] (-0.16,-.3) to (-0.16,.3);
\end{tikzpicture}
+\!
\begin{tikzpicture}[baseline = -0.6mm]
 	\draw[->,bluep] (0.16,-.3) to (0.16,.3);
	\draw[->,redp] (-0.16,-.3) to (-0.16,.3);
	\draw[-] (-0.16,0.01) to (0.16,0.01);
     \node at (0.16,0.0) {$\dot$};
     \node at (-0.16,0.0) {$\dot$};
\end{tikzpicture}
\right)_j
\circ X+ \frac{1}{2}
\left(\:\begin{tikzpicture}[baseline = -.6mm]
	\draw[->,redp] (0.2,-.3) to (-0.2,.3);
	\draw[->,bluep] (-0.2,-.3) to (0.2,.3);
\end{tikzpicture}-
\begin{tikzpicture}[baseline = -.6mm]
	\draw[->,redp] (0.2,-.3) to (-0.2,.3);
	\draw[->,bluep] (-0.2,-.3) to (0.2,.3);
	\draw[-] (-0.11,-.14) to (0.11,-.14);
     \node at (0.11,-.15) {$\dot$};
     \node at (-0.11,-.15) {$\dot$};
\end{tikzpicture}
\right)_j
\circ Y.
$$
Hence,
$$
\left(
\: \begin{tikzpicture}[baseline = -0.6mm]
 	\draw[->,bluep] (0.16,-.3) to (0.16,.3);
	\draw[->,redp] (-0.16,-.3) to (-0.16,.3);
\end{tikzpicture}
-\!
\begin{tikzpicture}[baseline = -0.6mm]
 	\draw[->,bluep] (0.16,-.3) to (0.16,.3);
	\draw[->,redp] (-0.16,-.3) to (-0.16,.3);
	\draw[-] (-0.16,0.01) to (0.16,0.01);
     \node at (0.16,0.0) {$\dot$};
     \node at (-0.16,0.0) {$\dot$};
\end{tikzpicture}
\right)_j
\circ X
=
\left(
\: \begin{tikzpicture}[baseline = -0.6mm]
 	\draw[->,bluep] (0.16,-.3) to (0.16,.3);
	\draw[->,redp] (-0.16,-.3) to (-0.16,.3);
\end{tikzpicture}
-\!
\begin{tikzpicture}[baseline = -0.6mm]
 	\draw[->,bluep] (0.16,-.3) to (0.16,.3);
	\draw[->,redp] (-0.16,-.3) to (-0.16,.3);
	\draw[-] (-0.16,0.01) to (0.16,0.01);
     \node at (0.16,0.0) {$\dot$};
     \node at (-0.16,0.0) {$\dot$};
\end{tikzpicture}
\right)_j
\circ
\left(\:
\begin{tikzpicture}[baseline = -.6mm]
	\draw[->,redp] (0.2,-.3) to (-0.2,.3);
	\draw[->,bluep] (-0.2,-.3) to (0.2,.3);
\end{tikzpicture}\:\right)_j
\circ Y.
$$
In view of the isomorphism (\ref{anothermatrix}), this morphism space is free as a module over the integral domain $\k[x_1,\dots,x_n]_{z_{r,n}}$, so
it is permissible to
cancel the first term, and this
gives the desired formula.
Instead, if $\lambda \in \mathcal P_{r,n}$ is not maximal,
we let $\kappa$ be obtained from $\lambda$ by adding a box,
and define $j$ so that $\sigma_\lambda = \sigma_\kappa \circ \left(\:\begin{tikzpicture}[baseline = -.6mm]
	\draw[->,redp] (0.2,-.25) to (-0.2,.25);
	\draw[->,bluep] (-0.2,-.25) to (0.2,.25);
\end{tikzpicture}\:\right)_j$.
Let $Z := 1_\mu \circ
\Delta(\imath_n(e_{(n)})) \circ 1_\kappa$.
Then we need to show that
$$
X\circ
\left(\:
\: \begin{tikzpicture}[baseline = -0.6mm]
 	\draw[->,redp] (0.16,-.3) to (0.16,.3);
	\draw[->,bluep] (-0.16,-.3) to (-0.16,.3);
\end{tikzpicture}
+\!
\begin{tikzpicture}[baseline = -0.6mm]
 	\draw[->,redp] (0.16,-.3) to (0.16,.3);
	\draw[->,bluep] (-0.16,-.3) to (-0.16,.3);
	\draw[-] (-0.16,0.01) to (0.16,0.01);
     \node at (0.16,0.0) {$\dot$};
     \node at (-0.16,0.0) {$\dot$};
\end{tikzpicture}
\right)_j
 = 
Z
\circ 
\left(\:\begin{tikzpicture}[baseline = -.6mm]
	\draw[->,redp] (0.2,-.25) to (-0.2,.25);
	\draw[->,bluep] (-0.2,-.25) to (0.2,.25);
\end{tikzpicture}\:\right)_j
\circ \left(\:
\: \begin{tikzpicture}[baseline = -0.6mm]
 	\draw[->,redp] (0.16,-.3) to (0.16,.3);
	\draw[->,bluep] (-0.16,-.3) to (-0.16,.3);
\end{tikzpicture}
-\!
\begin{tikzpicture}[baseline = -0.6mm]
 	\draw[->,redp] (0.16,-.3) to (0.16,.3);
	\draw[->,bluep] (-0.16,-.3) to (-0.16,.3);
	\draw[-] (-0.16,0.01) to (0.16,0.01);
     \node at (0.16,0.0) {$\dot$};
     \node at (-0.16,0.0) {$\dot$};
\end{tikzpicture}
\right)_j
,
$$
which follows by applying
$1_\mu \circ \Delta(\imath_n(-))\circ
1_\lambda$ to the identity 
$e_{(n)}=e_{(n)}\frac{1}{2}(1+s_{j})$.
\end{proof}

From the defining relations, one sees that $s_i f e_{(n)} = (s_i \bull f)
e_{(n)}$ and $s_i f e_{(1^n)} = -(s_i \star f) e_{(1^n)}$,
where 
\begin{equation}\label{dotact}
s_i \bull f := s_i(f) + \partial_i(f),
\qquad
s_i \star f := s_i(f) - \partial_i(f).
\end{equation} 
Transporting the left action of $AH_n$  on $AH_n e_{(n)}$
through the linear isomorphism
$\k[x_1,\dots,x_n] \stackrel{\sim}{\rightarrow} AH_n e_{(n)},
f \mapsto f e_{(n)},$
we deduce that
$\k[x_1,\dots,x_n]$
is a left $AH_n$-module  with
$\k[x_1,\dots,x_n]$ acting by left multiplication and
$\S_n$ acting by 
$\bull$.
By degree considerations, the space of
$\S_n$-fixed points with respect to the action $\bull$ is the same as the fixed
points with respect to the usual action, i.e., we recover the
subalgebra $\Sym_n$ of $\k[x_1,\dots,x_n]$.
This shows that the {\em spherical subalgebra} $e_{(n)} \dH_n e_{(n)}$
of $\dH_n$ is 
$\Sym_n$.
Moreover, for any $f \in \k[x_1,\dots,x_n]$, we have that
\begin{equation}\label{numbers}
e_{(n)} f e_{(n)} = \frac{1}{n!} \sum_{\pi \in \S_n} e_{(n)} \pi f e_{(n)} =
e_{(n)} \left( \frac{1}{n!}\sum_{\pi \in \S_n} \pi \bull f \right) e_{(n)}.
\end{equation}
Similarly, one sees that $e_{(1^n)} \dH_n e_{(1^n)} = \Sym_n$ and
\begin{equation}\label{numbers2}
e_{(1^n)} f e_{(1^n)} = 
e_{(1^n)} \left( \frac{1}{n!}\sum_{\pi \in \S_n} \pi \star f \right) e_{(1^n)}.
\end{equation}
The $\bull$ and $\star$ actions extend to actions on $\k(x_1,\dots,x_n)$,
with the simple transpositions 
satisfying the same formulae (\ref{dotact}).

\begin{lemma}\label{wow}
For $0 \leq r \leq n$, we have that
\begin{align*}
\sum_{\pi \in \S_r \times \S_{n-r}} \!\!\!\pi \bull \bigg(\sum_{\lambda
  \in \mathcal P_{r,n}} \prod_{\substack{1 \leq i \leq r \\ 1 \leq j
    \leq n-r}}
\left(1 + \varepsilon_{i,j}(\lambda)y_{i,j}\right)\bigg)&
=n!= \sum_{\pi \in \S_r \times \S_{n-r}} \!\!\!\pi \star \bigg(\sum_{\mu
  \in \mathcal P_{r,n}} \prod_{\substack{1 \leq i \leq r \\ 1 \leq j
    \leq n-r}}
\left(1 - \varepsilon_{i,j}(\mu)y_{i,j}\right)\bigg).
\end{align*}
\end{lemma}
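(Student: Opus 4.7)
The plan is to prove the first identity (involving $\bull$); the second follows by an entirely parallel argument, interchanging $e_{(n)}$ with $e_{(1^n)}$ and $\bull$ with $\star$, with the sign $(-1)^{|\lambda|+|\mu|}$ (appearing in the second formula of Lemma~\ref{government}) tracked through the calculation.

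First I would expand the product as a sum over subsets to obtain
\[
F := \sum_{\lambda \in \mathcal P_{r,n}} \prod_{(i,j)}(1+\varepsilon_{i,j}(\lambda)y_{i,j}) = \binom{n}{r} + \sum_{\emptyset\neq S\subseteq R} a_S \prod_{(i,j)\in S} y_{i,j},
\]
where $R = \{1,\dots,r\}\times\{1,\dots,n-r\}$ and $a_S := \sum_{\lambda \in \mathcal P_{r,n}}\prod_{(i,j) \in S}\varepsilon_{i,j}(\lambda) \in \Z$. The constant term has coefficient $a_\emptyset = |\mathcal P_{r,n}| = \binom{n}{r}$ and contributes precisely $r!(n-r)!\binom{n}{r} = n!$ after $\bull$-averaging over $\S_r \times \S_{n-r}$. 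Hence the lemma reduces to proving that the sum of the non-constant terms averages to zero.

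To establish this vanishing, my main strategy is induction on $n$, with trivial base cases $n \leq 1$ (and more generally $r \in \{0,n\}$, when $R = \emptyset$). For the inductive step, I would fix a corner cell $c$ of the rectangle (say $c = (r,1)$, associated with $y_{r,1} = (x_1 - x_{r+1})^{-1}$) and partition $\mathcal P_{r,n}$ into the subfamilies $\{\lambda : c \in \lambda\}$ and $\{\lambda : c \notin \lambda\}$. Each subfamily is in bijection with the set of partitions fitting into a rectangle of strictly smaller area, and the corresponding factor $(1 \pm y_{r,1})$ can be pulled outside the inner sum. Combining this recursive decomposition with the action of the simple transposition $s_r$ adjacent to $c$ (controlled by the Demazure operator $\partial_r$, whose effect on products of $y_{i,j}$'s is governed by the commutation relations analogous to (\ref{c1})--(\ref{c2})) should reduce the identity to its analogues for $\mathcal P_{r-1,n-1}$ and $\mathcal P_{r,n-1}$.

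The main obstacle is that applying $\partial_i$ to rational functions in the $y_{i,j}$'s produces higher-order corrections (e.g.\ quadratic terms such as $y_{i,j}^2$ or cross-terms in several $y$'s), and one must verify that these combine cleanly with the inductive contributions; the combinatorial bookkeeping is the main technical hurdle. An alternative and potentially cleaner approach is to reinterpret the averaging identity algebraically via (\ref{numbers})--(\ref{numbers2}) as the spherical-subalgebra relation
\[
(e_{(r)} \otimes e_{(n-r)}) \cdot F \cdot (e_{(r)} \otimes e_{(n-r)}) = \binom{n}{r}\, (e_{(r)} \otimes e_{(n-r)})
\]
inside the Ore localisation $AH_r \;\overline{\otimes}_\k\; AH_{n-r}$, which may be more amenable to direct verification using only the defining relations of the degenerate affine Hecke algebra.
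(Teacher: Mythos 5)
Your setup is sound and your decomposition is essentially the one the paper uses: the constant term $\binom{n}{r}$ does account for the whole of $n!$, and the induction on $n$ via the Pascal recursion $\mathcal P_{r,n}\cong\mathcal P_{r-1,n-1}\sqcup\mathcal P_{r,n-1}$ (you split on whether the cell $(r,1)$ lies in $\lambda$; the paper splits on whether $\lambda_1=n-r$, pulling out a full first row or a fully empty last column) is the right skeleton. However, there is a genuine gap at the crux. The inductive step does \emph{not} simply ``reduce the identity to its analogues for $\mathcal P_{r-1,n-1}$ and $\mathcal P_{r,n-1}$'': after applying the induction hypothesis to each subfamily, one is left averaging the pulled-out boundary factor over the remaining coset representatives of $\S_n/\S_{n-1}$, and the two resulting residues,
\[
X=(n-1)!\,(1+s_{r-1}+\cdots+s_1\cdots s_{r-1})\bull\prod_{j=1}^{n-r}\bigl(1+(x_r-x_{r+j})^{-1}\bigr),\qquad
Y=(n-1)!\,(1+s_{n-1}+\cdots+s_{r+1}\cdots s_{n-1})\bull\prod_{i=1}^{r}\bigl(1-(x_i-x_n)^{-1}\bigr),
\]
do not vanish individually; only their non-constant parts cancel against each other. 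Proving $X_++Y_+=0$ is the actual content of the lemma, and the paper does it by embedding everything in $\k\llbracket x_1^{-1},\dots,x_r^{-1},x_{r+1},\dots,x_n\rrbracket$, expanding both products as explicit power series, and computing the $\bull$-action of the two coset sums on the monomials $x_r^{-m-1}$ and $x_n^{m}$ so as to exhibit $-X_+=Y_+$ term by term. This is precisely the step you defer as ``the main technical hurdle,'' and it is not routine bookkeeping of Demazure corrections but a specific cross-cancellation between the two halves of the decomposition; without it the induction does not close.

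Two smaller points. Your proposed alternative of rewriting the claim as $(e_{(r)}\otimes e_{(n-r)})\,F\,(e_{(r)}\otimes e_{(n-r)})=\binom{n}{r}(e_{(r)}\otimes e_{(n-r)})$ is just the statement of the lemma transported through (\ref{numbers}); it is not an independent route to a proof. And the second ($\star$) identity is obtained in the paper from the first in one line via the automorphism $x_i\mapsto -x_i$ of $\k(x_1,\dots,x_n)$, which negates every $y_{i,j}$ and intertwines $\bull$ with $\star$ --- cleaner than rerunning the whole argument with signs tracked through.
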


\begin{proof}
We just explain the proof of the first equality;  the second then
follows by considering the automorphism $x_i \mapsto -x_i$ of
$\k(x_1,\dots,x_n)$.
Proceed by induction on $n$. For the induction step, we partition
$\mathcal P_{r,n}$ as $A \sqcup B$ as suggested by the diagram:
$$
A\leftrightarrow
\begin{tikzpicture}[baseline=-5mm]
\draw[-] (0,0) to (1,0) to (1,-.8) to (0,-.8) to (0,0);
\draw[-,dashed] (0,-.1) to (1,-.1);
\draw[-] (.2,-.8) to (.2,-.6) to (.6,-.6) to (.6,-.3) to (1,-.3);
      \node at (0.3,-0.35) {$+$};
      \node at (0.82,-0.56) {$-$};
\end{tikzpicture}\:,\qquad
B\leftrightarrow
\begin{tikzpicture}[baseline=-5mm]
\draw[-] (0,0) to (1,0) to (1,-.8) to (0,-.8) to (0,0);
\draw[-,dashed] (.9,0) to (.9,-.8);
\draw[-] (.2,-.8) to (.2,-.6) to (.5,-.6) to (.5,-.3) to (.7,-.3) to (.7,0);
      \node at (0.25,-0.3) {$+$};
      \node at (0.7,-0.56) {$-$};
\end{tikzpicture}\:.
$$
 Thus, $A$ consists of $\lambda \in \mathcal P_{r,n}$ such
that $\lambda_1 = n-r$, and $B$ consists of $\lambda \in
\mathcal P_{r,n}$ such that $\lambda_1 < n-r$.
The expression we are trying to compute then splits as a sum $X  + Y$
where for $X$ we take the second sum just over $\lambda \in A$ and for
$Y$ we take it over $\lambda \in B$.
Using the induction hypothesis plus the observation that 
$\{1,s_{m-1},s_{m-2}s_{m-1},\dots,s_1,\dots,s_{m-1}\}$ is a set of
$\S_m / \S_{m-1}$-coset representatives, we see that
\begin{align*}
X&=
  (n-1)! (1+s_{r-1}+\cdots+s_1\cdots s_{r-1}) \bull
\prod_{j=1}^{n-r} \left(1+y_{1,j}\right),\\
Y&= 
(n-1)!
(1+s_{n-1}+\cdots+s_{r+1}\cdots s_{n-1})\bull
\prod_{i=1}^r \left(1-y_{i,n-r}\right).
\end{align*}
It remains to show that $X+Y=n!$.

From \eqref{regret} and \eqref{c1}--\eqref{c2}, we obtain the following identities
for $1 \leq i \leq r, 1 \leq j \leq n-r$:
\begin{align} \label{shred1}
s_{r+1-q} y_{i,j}&=\left\{\begin{array}{ll}
y_{i+1,j} s_{r+1-q}-y_{i+1,j}y_{i,j}&\text{if $i+1=q\leq r$,}\\
y_{i,j} s_{r+1-q} &\text{if $i+1 < q \leq r$;}
\end{array}\right.\\
s_{r+q}y_{i,j}
&=\left\{\begin{array}{ll}
y_{i,j-1} s_{r+q}+y_{i,j-1}y_{i,j}&\text{if $1 \leq q=j-1$,}\\
y_{i,j} s_{r+q} &\text{if $1 \leq q < j-1$.}
\end{array}\right.\label{shred2}
\end{align}
For $m \geq 1$, let $C_m$ be the set of sequences
$\left((i_1,j_1),\dots,(i_m,j_m)\right) \in
\left(\{1,\dots,r\}\times\{1,\dots,n-r\}\right)^{m}$
such that either $i_q > i_{q+1}, j_q = j_{q+1}$ or $i_q=i_{q+1},j_q <
j_{q+1}$ for each $q=1,\dots,m-1$.
Such a sequence may be visualized as a ``hook'' drawn inside the $r
\times (n-r)$ rectangle, e.g., if $r=4,n=9$ then 
$((4,1),(4,2), (2,2), (2,4)) \in C_4$ is
$\begin{tikzpicture}[anchorbase]
\draw[-] (0,0.2) to (1,0.2) to (1,-.6) to (0,-.6) to (0,0.2);
\draw[-](.1,-.5) to (.3,-.5) to (.3,-.1) to (.7,-.1);
      \node at (0.1,-0.5) {$\bullet$};
      \node at (0.3,-0.5) {$\bullet$};
      \node at (0.3,-0.1) {$\bullet$};
      \node at (0.7,-0.1) {$\bullet$};
\end{tikzpicture}
$\:.
Using \eqref{shred1} and induction on $i=1,\dots,r$, one shows that
$$
s_{r+1-i} \cdots s_{r-2} s_{r-1} \bull \prod_{j=1}^{n-r}
\left(1+y_{1,j}\right)
=
1-\sum_{m \geq 1}
\sum_{\substack{\left((i_1,j_1),\dots,(i_m,j_m)\right)\in C_m\\i_1 =
    i}}
(-1)^{|\{i_1,\dots,i_m\}|} y_{i_1,j_1}\cdots y_{i_m,j_m}.
$$
Hence:
\begin{equation}\label{eggs1}
X= r(n-1)!
-(n-1)!\sum_{m \geq 1}
\sum_{\left((i_1,j_1),\dots,(i_m,j_m)\right)\in C_m}
(-1)^{|\{i_1,\dots,i_m\}|} y_{i_1,j_1}\cdots y_{i_m,j_m}.
\end{equation}
Similarly, using \eqref{shred2} and induction on $j=n-r,\dots,1$,
one shows that
$$
s_{r+j} \dots s_{n-2} s_{n-1}\bull 
\prod_{i=1}^r \left(1-y_{i,n-r}\right)
=
1+\sum_{m \geq 1}
\sum_{\substack{\left((i_1,j_1),\dots,(i_m,j_m)\right)\in C_m\\j_1 =
    j}}
(-1)^{|\{i_1,\dots,i_m\}|} y_{i_1,j_1}\cdots y_{i_m,j_m}.
$$
Hence:
\begin{equation}\label{eggs2}
Y = (n-r)(n-1)!+(n-1)!
\sum_{m \geq 1}
\sum_{\left((i_1,j_1),\dots,(i_m,j_m)\right)\in C_m}
(-1)^{|\{i_1,\dots,i_m\}|} y_{i_1,j_1}\cdots y_{i_m,j_m}.
\end{equation}
Adding the identities (\ref{eggs1}) and (\ref{eggs2}) gives that $X+Y = n!$.
\end{proof}

For later reference, let us also discuss the space $e_{(1^n)} AH_n e_{(n)}$.
For $\lambda =(\lambda_1,\dots,\lambda_n)\in \N^n$, 
let $x^\lambda := x_1^{\lambda_1}\cdots x_n^{\lambda_n}$ and
$A_\lambda := \sum_{\pi \in \S_n} (-1)^{\ell(\pi)} \pi(x^\lambda)$.
Setting $\rho := (n-1,\dots,1,0) \in \N^n$, the symmetric polynomial
\begin{equation}\label{chidef}
\jonschi_\lambda := A_{\lambda+\rho}\big / A_\rho \in \Sym_n
\end{equation}
is the usual Schur polynomial in $n$ variables
when $\lambda_1 \geq \cdots \geq \lambda_n$; on the other hand, it is zero if
$\lambda+\rho$ has a repeated entry.
We have that $e_{(1^n)} (\ker \partial_i) e_{(n)} = 0$, hence,
$e_{(1^n)} s_i(f) e_{(n)} = - e_{(1^n)} f e_{(n)}$.
Since $\k[x_1,\dots,x_n] = \left(\ker\partial_1+\cdots+\ker\partial_{n-1}\right) \oplus
\Sym_n x^\rho$,
we deduce that $e_{(1^n)} AH_n e_{(n)}$ is a free $\Sym_n$-module
generated by $e_{(1^n)} x^\rho e_{(n)}$.
Moreover,
\begin{equation}\label{thickcapsneedthis}
e_{(1^n)} x^{\lambda} e_{(n)} =
\jonschi_{\lambda-\rho} e_{(1^n)} x^\rho
e_{(n)}
= e_{(1^n)} x^\rho
e_{(n)}
\jonschi_{\lambda-\rho}
\end{equation}
for any $\lambda \in \N^n$.
Similar statements hold when $e_{(n)}$ and $e_{(1^n)}$ are
interchanged.

\begin{proof}[Proof of Theorem~\ref{upgraded}]
Consider first the statement about $H_n$.
Exactly like in the proof of Theorem~\ref{trivial}, we need to
construct
morphisms $u$ and $v$ in 
$\Kar(\blue{\AH}\;\overline{\odot}\;\red{\AH})$
such that $
u \circ v = 
\Delta(\imath_n(e_{(n)}))$ and 
$v \circ u = \sum_{r=0}^n
\imath_{r,n}(e_{(r)}\otimes e_{(n-r)})$.
We set
\begin{align*}
u &:=\sum_{r=0}^n 
\binom{\,n\,}{\,r\,}^{-1} 
\sum_{\mu \in \mathcal P_{r,n}}
    \sigma_\mu^{-1} \circ 
\imath_{r,n}(e_{(r)}\otimes e_{(n-r)}),\\
v &:=
\sum_{r=0}^n 
         \sum_{\lambda \in \mathcal P_{r,n}}
\imath_{r,n}\left(e_{(r)}\otimes e_{(n-r)}\right)\circ
\imath_{r,n}
\bigg(\prod_{\substack{1 \leq i \leq r \\ 1 \leq j \leq
  n-r}}\left(1+\varepsilon_{i,j}(\lambda) y_{i,j}\right)\bigg) \circ\sigma_\lambda.
\end{align*}
Lemma~\ref{government} implies that
$u \circ v = \Delta(\imath_n(e_{(n)}))$.
Also
\begin{align*}
v \circ u &= \!
\sum_{r=0}^n \binom{\,n\,}{\,r\,}^{-1}
            \imath_{r,n}
\left(e_{(r)}\otimes e_{(n-r)}\right)\circ
\imath_{r,n}\bigg(\sum_{\lambda \in \mathcal P_{r,n}}
\prod_{\substack{1 \leq i \leq r \\ 1 \leq j \leq
  n-r}}\left(1+\varepsilon_{i,j}(\lambda) y_{i,j}\right)
  \bigg)
\circ \imath_{r,n}\left(e_{(r)}\otimes e_{(n-r)}\right).
\end{align*}
Using the analog of
(\ref{numbers}) for $AH_r \otimes AH_{n-r}$, this equals
$$
\sum_{r=0}^n 
            \imath_{r,n}\left(
e_{(r)}\otimes e_{(n-r)}\right)
\circ \frac{1}{n!}
\imath_{r,n}\bigg(\sum_{\pi \in \S_r \times \S_{n-r}}
\pi \bull 
\bigg(\sum_{\lambda \in \mathcal P_{r,n}}
\prod_{\substack{1 \leq i \leq r \\ 1 \leq j \leq
  n-r}}\left(1+\varepsilon_{i,j}(\lambda) y_{i,j}\right) \bigg)\bigg)
\circ \imath_{r,n}\left(e_{(r)}\otimes e_{(n-r)}\right).
$$
Then we use Lemma~\ref{wow} to see that this equals
the required
$\sum_{r=0}^n \imath_{r,n}(e_{(r)}\otimes e_{(n-r)})$.

For the statement about $E_n$,
we need
morphisms $u$ and $v$
such that $
u \circ v = 
\Delta(\imath_n(e_{(1^n)}))$ and 
$v \circ u = \sum_{r=0}^n
\imath_{r,n}(e_{(1^r)}\otimes e_{(1^{n-r})})$.
One takes
\begin{align*}
u &:=\sum_{r=0}^n \binom{\,n\,}{\,r\,}^{-1} 
\sum_{\mu \in \mathcal P_{r,n}}
 (-1)^{|\mu|}   \sigma_\mu^{-1} \circ 
\imath_{r,n}
\bigg(\prod_{\substack{1 \leq i \leq r \\ 1 \leq j \leq
  n-r}}\left(1-\varepsilon_{i,j}(\mu) y_{i,j}\right)\bigg) \circ
\imath_{r,n}(e_{(1^r)}\otimes e_{(1^{n-r})}),\\
v &:=
\sum_{r=0}^n 
         \sum_{\lambda \in \mathcal P_{r,n}}
(-1)^{|\lambda|}
\imath_{r,n}\left(e_{(1^r)}\otimes e_{(1^{n-r})}\right)\circ
\sigma_\lambda.
\end{align*}
The proof then proceeds like in the previous paragraph, using
(\ref{numbers2}) instead of (\ref{numbers}).
\end{proof}

\section{The degenerate Heisenberg category}\label{sdhc}

Although for us $\k$ is a field of characteristic zero, the following definition
makes sense for $\k$ that is any commutative ring. Moreover, all of the results recorded in this section are valid for any $\k$, including the definition of the categorical comultiplication in Theorem~\ref{comult} (but excluding (\ref{magic}) since $n!$ needs to be invertible for the underlying idempotents to be defined).

\begin{definition}[{\cite[Theorem 1.2]{B2}}]
\label{maindef}
The {\em (degenerate)  Heisenberg category}
$\Heis_k$
of central charge $k \in \Z$
is the strict $\k$-linear monoidal category
generated by objects
$\up$ and $\down$
and
morphisms
\begin{align*}
\mathord{
\begin{tikzpicture}[baseline = 0]
	\draw[->] (0.08,-.3) to (0.08,.4);
      \node at (0.08,0.05) {$\dot$};
\end{tikzpicture}
}
&:\up\rightarrow\up,
&\mathord{
\begin{tikzpicture}[baseline = 1mm]
	\draw[<-] (0.4,0.4) to[out=-90, in=0] (0.1,0);
	\draw[-] (0.1,0) to[out = 180, in = -90] (-0.2,0.4);
\end{tikzpicture}
}&:\unit\rightarrow\down\otimes\up
\:,
&\mathord{
\begin{tikzpicture}[baseline = 1mm]
	\draw[<-] (0.4,0) to[out=90, in=0] (0.1,0.4);
	\draw[-] (0.1,0.4) to[out = 180, in = 90] (-0.2,0);
\end{tikzpicture}
}&:\up\otimes\down\rightarrow\unit\:,\\
\mathord{
\begin{tikzpicture}[baseline = 0]
	\draw[->] (0.28,-.3) to (-0.28,.4);
	\draw[->] (-0.28,-.3) to (0.28,.4);
\end{tikzpicture}
}&:\up\otimes\up \rightarrow \up\otimes \up
\:,&
\mathord{
\begin{tikzpicture}[baseline = 1mm]
	\draw[-] (0.4,0.4) to[out=-90, in=0] (0.1,0);
	\draw[->] (0.1,0) to[out = 180, in = -90] (-0.2,0.4);
\end{tikzpicture}
}&:\unit\rightarrow\up\otimes \down
\:,
&
\mathord{
\begin{tikzpicture}[baseline = 1mm]
	\draw[-] (0.4,0) to[out=90, in=0] (0.1,0.4);
	\draw[->] (0.1,0.4) to[out = 180, in = 90] (-0.2,0);
\end{tikzpicture}
}&:\down\otimes\up\rightarrow\unit
\end{align*}
subject to certain relations.
To record these, define the sideways crossings
\begin{align}\label{rotate}
\mathord{
\begin{tikzpicture}[baseline = 0]
	\draw[<-] (0.28,-.3) to (-0.28,.4);
	\draw[->] (-0.28,-.3) to (0.28,.4);
\end{tikzpicture}
}
&:=
\mathord{
\begin{tikzpicture}[baseline = 0]
	\draw[->] (0.3,-.5) to (-0.3,.5);
	\draw[-] (-0.2,-.2) to (0.2,.3);
        \draw[-] (0.2,.3) to[out=50,in=180] (0.5,.5);
        \draw[->] (0.5,.5) to[out=0,in=90] (0.9,-.5);
        \draw[-] (-0.2,-.2) to[out=230,in=0] (-0.6,-.5);
        \draw[-] (-0.6,-.5) to[out=180,in=-90] (-0.9,.5);
\end{tikzpicture}
}\:,&
\mathord{
\begin{tikzpicture}[baseline = 0]
	\draw[->] (0.28,-.3) to (-0.28,.4);
	\draw[<-] (-0.28,-.3) to (0.28,.4);
\end{tikzpicture}
}
&:=
\mathord{
\begin{tikzpicture}[baseline = 0]
	\draw[<-] (0.3,.5) to (-0.3,-.5);
	\draw[-] (-0.2,.2) to (0.2,-.3);
        \draw[-] (0.2,-.3) to[out=130,in=180] (0.5,-.5);
        \draw[-] (0.5,-.5) to[out=0,in=270] (0.9,.5);
        \draw[-] (-0.2,.2) to[out=130,in=0] (-0.6,.5);
        \draw[->] (-0.6,.5) to[out=180,in=-270] (-0.9,-.5);
\end{tikzpicture}
}\:,
\end{align}
and
introduce
the fake bubbles for $a \leq k$ or $a \leq -k$, respectively, by setting
\begin{align}\label{f1}
\mathord{
\begin{tikzpicture}[baseline = 1.25mm]
  \draw[->] (0.2,0.2) to[out=90,in=0] (0,.4);
  \draw[-] (0,0.4) to[out=180,in=90] (-.2,0.2);
\draw[-] (-.2,0.2) to[out=-90,in=180] (0,0);
  \draw[-] (0,0) to[out=0,in=-90] (0.2,0.2);
   \node at (0.2,0.2) {$\dot$};
   \node at (0.7,0.2) {$\scriptstyle{a-k-1}$};
\end{tikzpicture}}
&:=
\det\left(
\mathord{
\begin{tikzpicture}[baseline = 1.25mm]
  \draw[<-] (0,0.4) to[out=180,in=90] (-.2,0.2);
  \draw[-] (0.2,0.2) to[out=90,in=0] (0,.4);
 \draw[-] (-.2,0.2) to[out=-90,in=180] (0,0);
  \draw[-] (0,0) to[out=0,in=-90] (0.2,0.2);
   \node at (-0.2,0.2) {$\dot$};
   \node at (-0.65,0.2) {$\scriptstyle{i-j+k}$};
\end{tikzpicture}
}\:
\right)_{i,j=1,\dots,a},
&\mathord{
\begin{tikzpicture}[baseline = 1.25mm]
  \draw[<-] (0,0.4) to[out=180,in=90] (-.2,0.2);
  \draw[-] (0.2,0.2) to[out=90,in=0] (0,.4);
 \draw[-] (-.2,0.2) to[out=-90,in=180] (0,0);
  \draw[-] (0,0) to[out=0,in=-90] (0.2,0.2);
   \node at (-0.2,0.2) {$\dot$};
   \node at (-0.7,0.2) {$\scriptstyle{a+k-1}$};
\end{tikzpicture}
}&:=
-\det\left(-\,\mathord{
\begin{tikzpicture}[baseline = 1.25mm]
  \draw[->] (0.2,0.2) to[out=90,in=0] (0,.4);
  \draw[-] (0,0.4) to[out=180,in=90] (-.2,.2);
\draw[-] (-.2,0.2) to[out=-90,in=180] (0,0);
  \draw[-] (0,0) to[out=0,in=-90] (0.2,0.2);
   \node at (0.2,0.2) {$\dot$};
   \node at (0.65,0.2) {$\scriptstyle{i-j-k}$};
\end{tikzpicture}
}\right)_{i,j=1,\dots,a},
\end{align}
interpreting the determinants as $\delta_{a,0}$ in case $a \leq 0$. 
Then the relations are as follows:
\begin{align}
\mathord{
\begin{tikzpicture}[baseline = -1mm]
	\draw[->] (0.28,0) to[out=90,in=-90] (-0.28,.6);
	\draw[->] (-0.28,0) to[out=90,in=-90] (0.28,.6);
	\draw[-] (0.28,-.6) to[out=90,in=-90] (-0.28,0);
	\draw[-] (-0.28,-.6) to[out=90,in=-90] (0.28,0);
\end{tikzpicture}
}&=
\mathord{
\begin{tikzpicture}[baseline = -1mm]
	\draw[->] (0.18,-.6) to (0.18,.6);
	\draw[->] (-0.18,-.6) to (-0.18,.6);
\end{tikzpicture}
}\:,
\qquad\mathord{
\begin{tikzpicture}[baseline = -1mm]
	\draw[<-] (0.45,.6) to (-0.45,-.6);
	\draw[->] (0.45,-.6) to (-0.45,.6);
        \draw[-] (0,-.6) to[out=90,in=-90] (-.45,0);
        \draw[->] (-0.45,0) to[out=90,in=-90] (0,0.6);
\end{tikzpicture}
}
=
\mathord{
\begin{tikzpicture}[baseline = -1mm]
	\draw[<-] (0.45,.6) to (-0.45,-.6);
	\draw[->] (0.45,-.6) to (-0.45,.6);
        \draw[-] (0,-.6) to[out=90,in=-90] (.45,0);
        \draw[->] (0.45,0) to[out=90,in=-90] (0,0.6);
\end{tikzpicture}
}\:,&
\mathord{
\begin{tikzpicture}[baseline = -1mm]
	\draw[<-] (0.25,.3) to (-0.25,-.3);
	\draw[->] (0.25,-.3) to (-0.25,.3);
 \node at (-0.12,-0.145) {$\dot$};
\end{tikzpicture}
}
&=
\mathord{
\begin{tikzpicture}[baseline = -1mm]
	\draw[<-] (0.25,.3) to (-0.25,-.3);
	\draw[->] (0.25,-.3) to (-0.25,.3);
     \node at (0.12,0.135) {$\dot$};
\end{tikzpicture}}
+\:\mathord{
\begin{tikzpicture}[baseline = -1mm]
 	\draw[->] (0.08,-.3) to (0.08,.3);
	\draw[->] (-0.28,-.3) to (-0.28,.3);
\end{tikzpicture}
}
\:,\label{hecke}\\
\label{rightadj}
\mathord{
\begin{tikzpicture}[baseline = 0]
  \draw[->] (0.3,0) to (0.3,.4);
	\draw[-] (0.3,0) to[out=-90, in=0] (0.1,-0.4);
	\draw[-] (0.1,-0.4) to[out = 180, in = -90] (-0.1,0);
	\draw[-] (-0.1,0) to[out=90, in=0] (-0.3,0.4);
	\draw[-] (-0.3,0.4) to[out = 180, in =90] (-0.5,0);
  \draw[-] (-0.5,0) to (-0.5,-.4);
\end{tikzpicture}
}
&=
\mathord{\begin{tikzpicture}[baseline=0]
  \draw[->] (0,-0.4) to (0,.4);
\end{tikzpicture}
}\:,
&\mathord{
\begin{tikzpicture}[baseline = 0]
  \draw[->] (0.3,0) to (0.3,-.4);
	\draw[-] (0.3,0) to[out=90, in=0] (0.1,0.4);
	\draw[-] (0.1,0.4) to[out = 180, in = 90] (-0.1,0);
	\draw[-] (-0.1,0) to[out=-90, in=0] (-0.3,-0.4);
	\draw[-] (-0.3,-0.4) to[out = 180, in =-90] (-0.5,0);
  \draw[-] (-0.5,0) to (-0.5,.4);
\end{tikzpicture}
}
&=
\mathord{\begin{tikzpicture}[baseline=0]
  \draw[<-] (0,-0.4) to (0,.4);
\end{tikzpicture}
}\:,\\
\label{bubbles}
\mathord{
\begin{tikzpicture}[baseline = 1.25mm]
  \draw[<-] (0,0.4) to[out=180,in=90] (-.2,0.2);
  \draw[-] (0.2,0.2) to[out=90,in=0] (0,.4);
 \draw[-] (-.2,0.2) to[out=-90,in=180] (0,0);
  \draw[-] (0,0) to[out=0,in=-90] (0.2,0.2);
   \node at (-0.2,0.2) {$\dot$};
   \node at (-0.7,0.2) {$\scriptstyle{a+k-1}$};
\end{tikzpicture}
}&=-\delta_{a,0} 1_\unit
\text{ if $-k < a \leq 0$,}&\mathord{
\begin{tikzpicture}[baseline = 1.25mm]
  \draw[->] (0.2,0.2) to[out=90,in=0] (0,.4);
  \draw[-] (0,0.4) to[out=180,in=90] (-.2,0.2);
\draw[-] (-.2,0.2) to[out=-90,in=180] (0,0);
  \draw[-] (0,0) to[out=0,in=-90] (0.2,0.2);
   \node at (0.2,0.2) {$\dot$};
   \node at (0.7,0.2) {$\scriptstyle{a-k-1}$};
\end{tikzpicture}
}&=
\delta_{a,0} 1_\unit
\text{ if $k < a \leq 0$,}\\
\label{curls}
\mathord{
\begin{tikzpicture}[baseline = -0.5mm]
	\draw[<-] (0,0.6) to (0,0.3);
	\draw[-] (0,0.3) to [out=-90,in=180] (.3,-0.2);
	\draw[-] (0.3,-0.2) to [out=0,in=-90](.5,0);
	\draw[-] (0.5,0) to [out=90,in=0](.3,0.2);
	\draw[-] (0.3,.2) to [out=180,in=90](0,-0.3);
	\draw[-] (0,-0.3) to (0,-0.6);
\end{tikzpicture}
}&=\delta_{k,0}
\:\mathord{
\begin{tikzpicture}[baseline = -0.5mm]
	\draw[<-] (0,0.6) to (0,-0.6);
\end{tikzpicture}
}
\:\text{ if $k \geq 0$,}
&\mathord{
\begin{tikzpicture}[baseline = -0.5mm]
	\draw[<-] (0,0.6) to (0,0.3);
	\draw[-] (0,0.3) to [out=-90,in=0] (-.3,-0.2);
	\draw[-] (-0.3,-0.2) to [out=180,in=-90](-.5,0);
	\draw[-] (-0.5,0) to [out=90,in=180](-.3,0.2);
	\draw[-] (-0.3,.2) to [out=0,in=90](0,-0.3);
	\draw[-] (0,-0.3) to (0,-0.6);
\end{tikzpicture}
}&=
\delta_{k,0}
\:\mathord{
\begin{tikzpicture}[baseline = -0.5mm]
	\draw[<-] (0,0.6) to (0,-0.6);
\end{tikzpicture}
}
\:\text{ if $k \leq 0$},
\\
\mathord{
\begin{tikzpicture}[baseline = 0mm]
	\draw[-] (0.28,0) to[out=90,in=-90] (-0.28,.6);
	\draw[->] (-0.28,0) to[out=90,in=-90] (0.28,.6);
	\draw[-] (0.28,-.6) to[out=90,in=-90] (-0.28,0);
	\draw[<-] (-0.28,-.6) to[out=90,in=-90] (0.28,0);
\end{tikzpicture}
}
&=
\mathord{
\begin{tikzpicture}[baseline = 0]
	\draw[->] (0.08,-.6) to (0.08,.6);
	\draw[<-] (-0.28,-.6) to (-0.28,.6);
\end{tikzpicture}
}
+\sum_{a,b \geq 0}
\mathord{
\begin{tikzpicture}[baseline = 1mm]
  \draw[<-] (0,0.4) to[out=180,in=90] (-.2,0.2);
  \draw[-] (0.2,0.2) to[out=90,in=0] (0,.4);
 \draw[-] (-.2,0.2) to[out=-90,in=180] (0,0);
  \draw[-] (0,0) to[out=0,in=-90] (0.2,0.2);
   \node at (-0.2,0.2) {$\dot$};
   \node at (-.78,0.2) {$\scriptstyle{-a-b-2}$};
\end{tikzpicture}
}
\mathord{
\begin{tikzpicture}[baseline=-.5mm]
	\draw[<-] (0.3,0.6) to[out=-90, in=0] (0,.1);
	\draw[-] (0,.1) to[out = 180, in = -90] (-0.3,0.6);
      \node at (0.44,-0.3) {$\scriptstyle{b}$};
	\draw[-] (0.3,-.6) to[out=90, in=0] (0,-0.1);
	\draw[->] (0,-0.1) to[out = 180, in = 90] (-0.3,-.6);
      \node at (0.27,-0.3) {$\dot$};
   \node at (0.27,0.3) {$\dot$};
   \node at (.43,.3) {$\scriptstyle{a}$};
\end{tikzpicture}}\:,&
\mathord{
\begin{tikzpicture}[baseline = 0mm]
	\draw[->] (0.28,0) to[out=90,in=-90] (-0.28,.6);
	\draw[-] (-0.28,0) to[out=90,in=-90] (0.28,.6);
	\draw[<-] (0.28,-.6) to[out=90,in=-90] (-0.28,0);
	\draw[-] (-0.28,-.6) to[out=90,in=-90] (0.28,0);
\end{tikzpicture}
}
&=\mathord{
\begin{tikzpicture}[baseline = 0]
	\draw[<-] (0.08,-.6) to (0.08,.6);
	\draw[->] (-0.28,-.6) to (-0.28,.6);
\end{tikzpicture}
}
+\sum_{a,b \geq 0}
\mathord{
\begin{tikzpicture}[baseline=-0.9mm]
	\draw[-] (0.3,0.6) to[out=-90, in=0] (0,0.1);
	\draw[->] (0,0.1) to[out = 180, in = -90] (-0.3,0.6);
      \node at (-0.4,0.3) {$\scriptstyle{a}$};
      \node at (-0.25,0.3) {$\dot$};
	\draw[<-] (0.3,-.6) to[out=90, in=0] (0,-0.1);
	\draw[-] (0,-0.1) to[out = 180, in = 90] (-0.3,-.6);
   \node at (-0.27,-0.4) {$\dot$};
   \node at (-.45,-.35) {$\scriptstyle{b}$};
\end{tikzpicture}}
\mathord{
\begin{tikzpicture}[baseline = 1mm]
  \draw[->] (0.2,0.2) to[out=90,in=0] (0,.4);
  \draw[-] (0,0.4) to[out=180,in=90] (-.2,0.2);
\draw[-] (-.2,0.2) to[out=-90,in=180] (0,0);
  \draw[-] (0,0) to[out=0,in=-90] (0.2,0.2);
   \node at (0.2,0.2) {$\dot$};
   \node at (.72,0.2) {$\scriptstyle{-a-b-2}$};
\end{tikzpicture}
}
\:.\label{sideways}
\end{align}
\end{definition}

We note that the Heisenberg category introduced originally by Khovanov in \cite{K} may be obtained from $\Heis_{-1}$ by 
evaluating the bubble
$\begin{tikzpicture}[baseline = 1.25mm]
  \draw[->] (0.2,0.2) to[out=90,in=0] (0,.4);
  \draw[-] (0,0.4) to[out=180,in=90] (-.2,0.2);
\draw[-] (-.2,0.2) to[out=-90,in=180] (0,0);
  \draw[-] (0,0) to[out=0,in=-90] (0.2,0.2);
   \node at (0.2,0.2) {$\dot$};
\end{tikzpicture}$ at zero; see the discussion after the statement of Theorem~\ref{t1} in the introduction.

As explained in the proof of \cite[Theorem 1.2]{B2}, the defining
relations of $\Heis_k$ imply that
the
following
is an isomorphism
in $\Add(\Heis_k)$:
\begin{align}
\label{invrel}
\left\{\quad
\begin{array}{ll}
\left[
\begin{array}{r}
\mathord{
\begin{tikzpicture}[baseline = 0]
	\draw[<-] (0.28,-.3) to (-0.28,.4);
	\draw[->] (-0.28,-.3) to (0.28,.4);
   \end{tikzpicture}
}\\
\mathord{
\begin{tikzpicture}[baseline = 1mm]
	\node at (0,.6){$\phantom.$};
\draw[<-] (0.4,0) to[out=90, in=0] (0.1,0.4);
	\draw[-] (0.1,0.4) to[out = 180, in = 90] (-0.2,0);
\end{tikzpicture}
}\\
\mathord{
\begin{tikzpicture}[baseline = 1mm]
	\node at (0,.6){$\phantom.$};
	\draw[<-] (0.4,0) to[out=90, in=0] (0.1,0.4);
	\draw[-] (0.1,0.4) to[out = 180, in = 90] (-0.2,0);
      \node at (-0.15,0.2) {$\dot$};
\end{tikzpicture}
}\\
\vdots\:\:\:\:\\
\!\!\!\!\mathord{
\begin{tikzpicture}[baseline = 1mm]
	\node at (0,.5){$\phantom.$};
	\draw[<-] (0.4,0) to[out=90, in=0] (0.1,0.4);
	\draw[-] (0.1,0.4) to[out = 180, in = 90] (-0.2,0);
     \node at (-0.5,0.2) {$\scriptstyle{k-1}$};
      \node at (-0.15,0.2) {$\dot$};
\end{tikzpicture}
}\:\end{array}
\right]
:
\up \otimes \down\stackrel{\sim}{\rightarrow}
\down \otimes \up \oplus \unit^{\oplus k}
&\text{if $k \geq
  0$},\\\\
\left[\:
\mathord{
\begin{tikzpicture}[baseline = 0]
	\draw[<-] (0.28,-.3) to (-0.28,.4);
	\draw[->] (-0.28,-.3) to (0.28,.4);
\end{tikzpicture}
}\:\:\:
\mathord{
\begin{tikzpicture}[baseline = -0.9mm]
	\draw[<-] (0.4,0.2) to[out=-90, in=0] (0.1,-.2);
	\draw[-] (0.1,-.2) to[out = 180, in = -90] (-0.2,0.2);
\end{tikzpicture}
}
\:\:\:
\mathord{
\begin{tikzpicture}[baseline = -0.9mm]
	\draw[<-] (0.4,0.2) to[out=-90, in=0] (0.1,-.2);
	\draw[-] (0.1,-.2) to[out = 180, in = -90] (-0.2,0.2);
      \node at (0.38,0) {$\dot$};
\end{tikzpicture}
}
\:\:\:\cdots
\:\:\:
\mathord{
\begin{tikzpicture}[baseline = -0.9mm]
	\draw[<-] (0.4,0.2) to[out=-90, in=0] (0.1,-.2);
	\draw[-] (0.1,-.2) to[out = 180, in = -90] (-0.2,0.2);
     \node at (0.83,0) {$\scriptstyle{-k-1}$};
      \node at (0.38,0) {$\dot$};
\end{tikzpicture}
}
\right]
:\up \otimes \down \oplus
\unit^{\oplus (-k)}
\stackrel{\sim}{\rightarrow}
 \down \otimes  \up&\text{if $k \leq
  0$}.\end{array}\right.
\end{align}
In fact, as in \cite[Definition 1.1]{B2}, $\Heis_k$ can be defined
equivalently as the strict $\k$-linear monoidal category generated by the morphisms
$\mathord{
\begin{tikzpicture}[baseline = 0]
	\draw[->] (0.08,-.1) to (0.08,.3);
      \node at (0.08,0.07) {$\dot$};
\end{tikzpicture}
}$,
$
\mathord{\begin{tikzpicture}[baseline = 0]
	\draw[->] (0.18,-.1) to (-0.18,.3);
	\draw[->] (-0.18,-.1) to (0.18,.3);
\end{tikzpicture}}\;$,
$\:\mathord{
\begin{tikzpicture}[baseline = .5mm]
	\draw[<-] (0.35,0.3) to[out=-90, in=0] (0.1,0);
	\draw[-] (0.1,0) to[out = 180, in = -90] (-0.15,0.3);
\end{tikzpicture}
}\:$
and $\:\mathord{
\begin{tikzpicture}[baseline = .5mm]
	\draw[<-] (0.35,0) to[out=90, in=0] (0.1,0.3);
	\draw[-] (0.1,0.3) to[out = 180, in = 90] (-0.15,0);
\end{tikzpicture}
}\:$ subject just to the relations (\ref{hecke})--(\ref{rightadj}) plus the requirement that the morphism
(\ref{invrel}) is invertible (where the rightward
crossing is defined as in (\ref{rotate})).
In the category defined in this way, there are then unique
morphisms
$\:\mathord{
\begin{tikzpicture}[baseline = .5mm]
	\draw[-] (0.35,0.3) to[out=-90, in=0] (0.1,0);
	\draw[->] (0.1,0) to[out = 180, in = -90] (-0.15,0.3);
\end{tikzpicture}
}\:$
and $\:\mathord{
\begin{tikzpicture}[baseline = .5mm]
	\draw[-] (0.35,0) to[out=90, in=0] (0.1,0.3);
	\draw[->] (0.1,0.3) to[out = 180, in = 90] (-0.15,0);
\end{tikzpicture}
}\:$
such that the other relations
from Definition~\ref{maindef} hold:

\begin{lemma}\label{lateaddition}
Suppose that $\mathcal C$ is a strict $\k$-linear monoidal category
containing objects
$\up$ and $\down$
and
morphisms
$\mathord{
\begin{tikzpicture}[baseline = 0]
	\draw[->] (0.08,-.1) to (0.08,.3);
      \node at (0.08,0.07) {$\dot$};
\end{tikzpicture}
}$,
$
\mathord{\begin{tikzpicture}[baseline = 0]
	\draw[->] (0.18,-.1) to (-0.18,.3);
	\draw[->] (-0.18,-.1) to (0.18,.3);
\end{tikzpicture}}\;$,
$\:\mathord{
\begin{tikzpicture}[baseline = .5mm]
	\draw[<-] (0.35,0.3) to[out=-90, in=0] (0.1,0);
	\draw[-] (0.1,0) to[out = 180, in = -90] (-0.15,0.3);
\end{tikzpicture}
}\:$
and $\:\mathord{
\begin{tikzpicture}[baseline = .5mm]
	\draw[<-] (0.35,0) to[out=90, in=0] (0.1,0.3);
	\draw[-] (0.1,0.3) to[out = 180, in = 90] (-0.15,0);
\end{tikzpicture}
}\:$
 satisfying (\ref{hecke})--(\ref{rightadj}).
If $\mathcal C$ contains morphisms
$\:\mathord{
\begin{tikzpicture}[baseline = .5mm]
	\draw[-] (0.35,0.3) to[out=-90, in=0] (0.1,0);
	\draw[->] (0.1,0) to[out = 180, in = -90] (-0.15,0.3);
\end{tikzpicture}
}\:$
and $\:\mathord{
\begin{tikzpicture}[baseline = .5mm]
	\draw[-] (0.35,0) to[out=90, in=0] (0.1,0.3);
	\draw[->] (0.1,0.3) to[out = 180, in = 90] (-0.15,0);
\end{tikzpicture}
}\:$
satisfying
(\ref{bubbles})--(\ref{sideways})
(for the sideways crossings and the
negatively dotted bubbles defined via
(\ref{rotate})--(\ref{f1})) then these two morphisms are uniquely determined.
\end{lemma}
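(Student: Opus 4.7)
The plan is to show that if two pairs of morphisms $(\eta_1', \epsilon_1')$ and $(\eta_2', \epsilon_2')$ (playing the role of the leftcap and leftcup) both satisfy (\ref{bubbles})--(\ref{sideways}) in $\mathcal{C}$, then necessarily $\eta_1' = \eta_2'$ and $\epsilon_1' = \epsilon_2'$. The approach is to exhibit enough constraints from the relations so that both morphisms can be explicitly recovered from the upward data and the right adjunction morphisms.

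The first step is to invoke the consequence, noted in the excerpt, that the relations force the matrix morphism in (\ref{invrel}) to be an isomorphism in $\Add(\mathcal{C})$. Denote this morphism by $M$; its entries are the sideways crossing (which, via the definition (\ref{rotate}), depends on both $\eta'$ and $\epsilon'$ together with the upward crossing and right cup/cap), and the dotted leftcups, which are compositions of $\epsilon'$ with powers of the dot on the $\up$ strand. Crucially, knowing $M$ is equivalent to knowing the pair $(\eta', \epsilon')$: the undotted entry of $M$ is $\epsilon'$ itself, while the leftcap $\eta'$ can be recovered from the sideways-crossing entry of $M$ by applying the right adjunction (\ref{rightadj}) to reverse the composition defining the sideways crossing in (\ref{rotate}). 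Hence it suffices to show that $M$ is uniquely determined.

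The second step is to use relation (\ref{sideways}) (together with its mirror image) as the main constraint on $M$. It expresses the sideways crossing as the identity on $\down \otimes \up$ plus a sum of terms of the form (fake bubble) times (dotted right cup/cap cluster). By (\ref{f1}), the fake bubbles appearing are explicit determinants of real bubbles. The real bubbles of low dot count are pinned down by (\ref{bubbles}), and the curl relations (\ref{curls}) add further constraints. I would argue by induction on the dot count that these constraints, combined with the very equation (\ref{sideways}) itself (which equates one expression built from $(\eta_i', \epsilon_i')$ to another), force both the full collection of bubbles and the entries of $M$ to be uniquely determined; the two candidate pairs therefore produce the same $M$, and hence coincide.

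The main obstacle is executing this inductive bookkeeping. The relations (\ref{bubbles}) only pin down finitely many bubble values explicitly ($k$ counterclockwise bubbles when $k \geq 0$, or $-k$ clockwise bubbles when $k \leq 0$, and none when $k=0$), so the remaining bubbles must be determined by combining (\ref{f1}), (\ref{curls}), and the equations furnished by (\ref{sideways}) as one varies the dot decorations. Carefully tracking the interplay between the ``fake'' bubbles (defined as determinants of ``real'' bubbles of the opposite orientation) and the real bubbles (of appropriate orientation and dot count), and verifying that the formally infinite sum in (\ref{sideways}) reduces to a finite expression after composing with any fixed morphism, is the substantive combinatorial work of the proof; once this is in place, uniqueness of $(\eta', \epsilon')$ follows by the formal reduction of step one.
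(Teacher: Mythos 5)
Your overall instinct---that everything should be routed through the invertibility of (\ref{invrel})---points in the right direction, but Step 1 rests on a misreading of that morphism, and this derails the rest of the argument. The matrix in (\ref{invrel}) is built from the rightward crossing (which by the first equation of (\ref{rotate}) is a composite of the upward crossing with the rightward cup $\unit\to\down\otimes\up$ and cap $\up\otimes\down\to\unit$, i.e.\ the generators retained in the hypotheses of the lemma) together with dotted rightward caps (if $k\geq 0$) or rightward cups (if $k\leq 0$). It does not involve the two morphisms $\eta',\epsilon'$ to be determined at all; in particular its ``undotted entry'' is not $\epsilon'$, and knowing this morphism $M$ by itself tells you nothing about $(\eta',\epsilon')$. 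The new morphisms enter only through the \emph{inverse} of $M$.

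Once this is corrected, the proof is essentially immediate and none of the bookkeeping in your Step 2 is needed. The relations (\ref{bubbles})--(\ref{sideways}) say precisely that the explicit row (resp.\ column) vector whose entries are the leftward crossing from (\ref{rotate}) and suitable dotted leftward cups/caps (with fake-bubble coefficients) is a two-sided inverse of $M$. Since $M$ is independent of the choice of $(\eta',\epsilon')$, and two-sided inverses are unique, that vector is uniquely determined; one then reads off $\epsilon'$ and $\eta'$ from its entries, using (\ref{rightadj}) to close off the leftward crossing with the rightward cup and cap. This is the argument the paper imports from the penultimate paragraph of the proof of Theorem 1.2 of \cite{B2}. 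By contrast, your proposed induction on dot multiplicities to ``determine the bubbles and the entries of $M$'' is aimed at the wrong object (either $M$, which is determined by definition, or its inverse, whose uniqueness needs no induction) and is in any case left unexecuted---you flag it yourself as the substantive open step. The worry about the infinite sum in (\ref{sideways}) is also moot, since (\ref{bubbles}) and (\ref{f1}) force all but finitely many of the negatively dotted bubbles to vanish.
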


\begin{proof}
This follows by the argument from the penultimate paragraph of the proof of
\cite[Theorem 1.2]{B2}.
\end{proof}

We will need various other relations in $\Heis_k$,
most of which are derived in \cite[Theorem 1.3]{B2}.
The relation (\ref{rightadj}) means that $\down$ is a right dual to $\up$.
It is also a left dual since the following relations hold:
\begin{align}\label{leftadj}
\mathord{
\begin{tikzpicture}[baseline = 0]
  \draw[-] (0.3,0) to (0.3,-.4);
	\draw[-] (0.3,0) to[out=90, in=0] (0.1,0.4);
	\draw[-] (0.1,0.4) to[out = 180, in = 90] (-0.1,0);
	\draw[-] (-0.1,0) to[out=-90, in=0] (-0.3,-0.4);
	\draw[-] (-0.3,-0.4) to[out = 180, in =-90] (-0.5,0);
  \draw[->] (-0.5,0) to (-0.5,.4);
\end{tikzpicture}
}
&=
\mathord{\begin{tikzpicture}[baseline=0]
  \draw[->] (0,-0.4) to (0,.4);
\end{tikzpicture}
}\:,
&\mathord{
\begin{tikzpicture}[baseline = 0]
  \draw[-] (0.3,0) to (0.3,.4);
	\draw[-] (0.3,0) to[out=-90, in=0] (0.1,-0.4);
	\draw[-] (0.1,-0.4) to[out = 180, in = -90] (-0.1,0);
	\draw[-] (-0.1,0) to[out=90, in=0] (-0.3,0.4);
	\draw[-] (-0.3,0.4) to[out = 180, in =90] (-0.5,0);
  \draw[->] (-0.5,0) to (-0.5,-.4);
\end{tikzpicture}
}
&=
\mathord{\begin{tikzpicture}[baseline=0]
  \draw[<-] (0,-0.4) to (0,.4);
\end{tikzpicture}
}\:.
\end{align}
This means that $\Heis_k$ is {\em rigid}.
Moreover, it is {\em strictly pivotal}: 
rotating diagrams through $180^\circ$ defines a 
strict $\k$-linear
monoidal isomorphism
\begin{equation}
*:\Heis_k \rightarrow \left(\left(\Heis_k\right)^{\op}\right)^{\rev},
\end{equation}
where $\op$ (resp., $\rev$) denotes the monoidal category with the same horizontal
composition and the opposite vertical composition (resp., the reversed
horizontal composition and the same vertical composition).
This follows due to the relations
\begin{align}
\mathord{
\begin{tikzpicture}[baseline = 0]
	\draw[<-] (0.08,-.3) to (0.08,.4);
      \node at (0.08,0.05) {$\dot$};
\end{tikzpicture}
}&:=
\mathord{
\begin{tikzpicture}[baseline = 0]
  \draw[->] (0.3,0) to (0.3,-.4);
	\draw[-] (0.3,0) to[out=90, in=0] (0.1,0.4);
	\draw[-] (0.1,0.4) to[out = 180, in = 90] (-0.1,0);
	\draw[-] (-0.1,0) to[out=-90, in=0] (-0.3,-0.4);
	\draw[-] (-0.3,-0.4) to[out = 180, in =-90] (-0.5,0);
  \draw[-] (-0.5,0) to (-0.5,.4);
   \node at (-0.1,0) {$\dot$};
\end{tikzpicture}
}=\:
\mathord{
\begin{tikzpicture}[baseline = 0]
  \draw[-] (0.3,0) to (0.3,.4);
	\draw[-] (0.3,0) to[out=-90, in=0] (0.1,-0.4);
	\draw[-] (0.1,-0.4) to[out = 180, in = -90] (-0.1,0);
	\draw[-] (-0.1,0) to[out=90, in=0] (-0.3,0.4);
	\draw[-] (-0.3,0.4) to[out = 180, in =90] (-0.5,0);
  \draw[->] (-0.5,0) to (-0.5,-.4);
   \node at (-0.1,0) {$\dot$};
\end{tikzpicture}
}\:,\\
\mathord{
\begin{tikzpicture}[baseline = 0]
	\draw[<-] (0.28,-.3) to (-0.28,.4);
	\draw[<-] (-0.28,-.3) to (0.28,.4);
\end{tikzpicture}
}
&:=
\mathord{
\begin{tikzpicture}[baseline = 0]
\draw[->] (1.3,.4) to (1.3,-1.2);
\draw[-] (-1.3,-.4) to (-1.3,1.2);
\draw[-] (.5,1.1) to [out=0,in=90] (1.3,.4);
\draw[-] (-.35,.4) to [out=90,in=180] (.5,1.1);
\draw[-] (-.5,-1.1) to [out=180,in=-90] (-1.3,-.4);
\draw[-] (.35,-.4) to [out=-90,in=0] (-.5,-1.1);
\draw[-] (.35,-.4) to [out=90,in=-90] (-.35,.4);
        \draw[-] (-0.35,-.5) to[out=0,in=180] (0.35,.5);
        \draw[->] (0.35,.5) to[out=0,in=90] (0.8,-1.2);
        \draw[-] (-0.35,-.5) to[out=180,in=-90] (-0.8,1.2);
\end{tikzpicture}
}
=\mathord{
\begin{tikzpicture}[baseline = 0]
	\draw[<-] (0.3,-.5) to (-0.3,.5);
	\draw[-] (-0.2,-.2) to (0.2,.3);
        \draw[-] (0.2,.3) to[out=50,in=180] (0.5,.5);
        \draw[->] (0.5,.5) to[out=0,in=90] (0.9,-.5);
        \draw[-] (-0.2,-.2) to[out=230,in=0] (-0.6,-.5);
        \draw[-] (-0.6,-.5) to[out=180,in=-90] (-0.9,.5);
\end{tikzpicture}
}=
\mathord{
\begin{tikzpicture}[baseline = 0]
	\draw[->] (0.3,.5) to (-0.3,-.5);
	\draw[-] (-0.2,.2) to (0.2,-.3);
        \draw[-] (0.2,-.3) to[out=130,in=180] (0.5,-.5);
        \draw[-] (0.5,-.5) to[out=0,in=270] (0.9,.5);
        \draw[-] (-0.2,.2) to[out=130,in=0] (-0.6,.5);
        \draw[->] (-0.6,.5) to[out=180,in=-270] (-0.9,-.5);
\end{tikzpicture}
}=
\mathord{
\begin{tikzpicture}[baseline = 0]
\draw[->] (-1.3,.4) to (-1.3,-1.2);
\draw[-] (1.3,-.4) to (1.3,1.2);
\draw[-] (-.5,1.1) to [out=180,in=90] (-1.3,.4);
\draw[-] (.35,.4) to [out=90,in=0] (-.5,1.1);
\draw[-] (.5,-1.1) to [out=0,in=-90] (1.3,-.4);
\draw[-] (-.35,-.4) to [out=-90,in=180] (.5,-1.1);
\draw[-] (-.35,-.4) to [out=90,in=-90] (.35,.4);
        \draw[-] (0.35,-.5) to[out=180,in=0] (-0.35,.5);
        \draw[->] (-0.35,.5) to[out=180,in=90] (-0.8,-1.2);
        \draw[-] (0.35,-.5) to[out=0,in=-90] (0.8,1.2);
\end{tikzpicture}
}\:.
\end{align}
Informally, these relations mean that dots and crossings slide over
cups and caps.
Applying $*$ to the relations (\ref{hecke}) and (\ref{curls})
gives
\begin{align}\label{symmetricalt}
\mathord{
\begin{tikzpicture}[baseline = -1mm]
	\draw[-] (0.28,0) to[out=90,in=-90] (-0.28,.6);
	\draw[-] (-0.28,0) to[out=90,in=-90] (0.28,.6);
	\draw[<-] (0.28,-.6) to[out=90,in=-90] (-0.28,0);
	\draw[<-] (-0.28,-.6) to[out=90,in=-90] (0.28,0);
\end{tikzpicture}
}&=
\mathord{
\begin{tikzpicture}[baseline = -1mm]
	\draw[<-] (0.18,-.6) to (0.18,.6);
	\draw[<-] (-0.18,-.6) to (-0.18,.6);
\end{tikzpicture}
}\:,\qquad\quad
\mathord{
\begin{tikzpicture}[baseline = -1mm]
	\draw[->] (0.45,.6) to (-0.45,-.6);
	\draw[<-] (0.45,-.6) to (-0.45,.6);
        \draw[<-] (0,-.6) to[out=90,in=-90] (-.45,0);
        \draw[-] (-0.45,0) to[out=90,in=-90] (0,0.6);
\end{tikzpicture}
}
=
\mathord{
\begin{tikzpicture}[baseline = -1mm]
	\draw[->] (0.45,.6) to (-0.45,-.6);
	\draw[<-] (0.45,-.6) to (-0.45,.6);
        \draw[<-] (0,-.6) to[out=90,in=-90] (.45,0);
        \draw[-] (0.45,0) to[out=90,in=-90] (0,0.6);
\end{tikzpicture}
}\:,&
\mathord{
\begin{tikzpicture}[baseline = -1mm]
	\draw[->] (0.25,.3) to (-0.25,-.3);
	\draw[<-] (0.25,-.3) to (-0.25,.3);
     \node at (0.12,0.135) {$\dot$};
\end{tikzpicture}
}
&=
\mathord{
\begin{tikzpicture}[baseline = -1mm]
	\draw[->] (0.25,.3) to (-0.25,-.3);
	\draw[<-] (0.25,-.3) to (-0.25,.3);
 \node at (-0.12,-0.145) {$\dot$};
\end{tikzpicture}}
+\:\mathord{
\begin{tikzpicture}[baseline = -1mm]
 	\draw[<-] (0.08,-.3) to (0.08,.3);
	\draw[<-] (-0.28,-.3) to (-0.28,.3);
\end{tikzpicture}
}
\:,\\
\mathord{
\begin{tikzpicture}[baseline = -0.5mm]
	\draw[-] (0,0.6) to (0,0.3);
	\draw[-] (0,0.3) to [out=-90,in=180] (.3,-0.2);
	\draw[-] (0.3,-0.2) to [out=0,in=-90](.5,0);
	\draw[-] (0.5,0) to [out=90,in=0](.3,0.2);
	\draw[-] (0.3,.2) to [out=180,in=90](0,-0.3);
	\draw[->] (0,-0.3) to (0,-0.6);
\end{tikzpicture}
}&=\delta_{k,0}
\:\mathord{
\begin{tikzpicture}[baseline = -0.5mm]
	\draw[<-] (0,0.6) to (0,-0.6);
\end{tikzpicture}
}
\:\text{ if $k \leq 0$,}
&\mathord{
\begin{tikzpicture}[baseline = -0.5mm]
	\draw[-] (0,0.6) to (0,0.3);
	\draw[-] (0,0.3) to [out=-90,in=0] (-.3,-0.2);
	\draw[-] (-0.3,-0.2) to [out=180,in=-90](-.5,0);
	\draw[-] (-0.5,0) to [out=90,in=180](-.3,0.2);
	\draw[-] (-0.3,.2) to [out=0,in=90](0,-0.3);
	\draw[->] (0,-0.3) to (0,-0.6);
\end{tikzpicture}
}&=
\delta_{k,0}
\:\mathord{
\begin{tikzpicture}[baseline = -0.5mm]
	\draw[<-] (0,0.6) to (0,-0.6);
\end{tikzpicture}
}
\:\text{ if $k \geq 0$}.\label{curls2}
\end{align}
There is another useful symmetry
\begin{equation}\label{Omega}
\Omega_k:\Heis_{k} \stackrel{\sim}{\rightarrow} \left(\Heis_{-k}\right)^{\operatorname{op}},
\end{equation}
which
sends a morphism in
$\Heis_{k}$ represented by some string diagram to the morphism
in $\Heis_{-k}$ obtained
by reflecting this diagram in a horizontal axis then multiplying
by $(-1)^{x+y}$,
where $x$
is the total number of crossings and $y$ is the total number of
leftward cups and caps in the diagram (including ones in fake
bubbles);
see \cite[Lemma 2.1]{B2}.

\begin{remark}\label{spicyramen}
Using $*$ and $\Omega_k$, one can deduce several more
equivalent presentations for $\Heis_k$. For example, it may be defined
by the same generating objects and morphisms as in
Definition~\ref{maindef}
subject to the relations (\ref{hecke}), (\ref{leftadj}),
(\ref{bubbles}), (\ref{curls}) and (\ref{sideways}); i.e., we have
traded the right adjunction relation (\ref{rightadj}) for the left
adjunction relation
(\ref{leftadj}).
Alternatively, one could replace the generating morphisms given by the
upward dot and crossing with the downward dot and crossing, taking the
relations (\ref{symmetricalt}), 
 (\ref{leftadj}),
(\ref{bubbles}), (\ref{curls2}) and (\ref{sideways}), 
where the sideways
crossings are obtained by rotating the downward one in an analogous
way to (\ref{rotate}).
There are also alternative versions of both of these presentations based on
an ``inversion relation'' along the
lines of the presentation explained after (\ref{invrel}).
\end{remark}

Since the relations (\ref{symmetric}) and (\ref{heckea}) hold in
$\Heis_k$,
there is a
strict $\k$-linear monoidal functor $\imath:\AH \rightarrow \Heis_k$
sending diagrams in $\AH$ to the same diagrams viewed instead as
morphisms in $\Heis_k$; this is actually an inclusion thanks to the basis theorem established in
Theorem~\ref{basis} below, but we will not use this fact here.
In particular, this means that there is an algebra homomorphism
\begin{equation}\label{im}
\imath_n:\dH_n
\rightarrow \End_{\Heis_k}(\up^{\otimes n})
\end{equation}
sending
$s_i$ to the crossing of the $i$th and $(i+1)$th strings, and $x_j$ to
the dot on the $j$th string.
Using (\ref{symmetricalt}), one sees also that there is an algebra homomorphism
\begin{equation}\label{jm}
\jmath_n:\dH_n
\rightarrow \End_{\Heis_k}(\down^{\otimes n})
\end{equation}
sending
$-s_i$ to the crossing of the $i$th and $(i+1)$th strings, and $x_j$ to
the dot on the $j$th string.
Note $\imath_n$ and $\jmath_n$ are related by the formula $\jmath_n = \Omega_k \circ \imath_n
\circ \tau$ 
where 
$\tau:\dH_n \rightarrow \dH_n$ is the antiautomorphism which is the
identity on each of the generators $s_i$ and $x_j$.

The bubbles (both genuine and fake) satisfy the
{\em infinite Grassmannian relations}:
\begin{align}
\mathord{
\begin{tikzpicture}[baseline = 1.25mm]
  \draw[->] (0.2,0.2) to[out=90,in=0] (0,.4);
  \draw[-] (0,0.4) to[out=180,in=90] (-.2,0.2);
\draw[-] (-.2,0.2) to[out=-90,in=180] (0,0);
  \draw[-] (0,0) to[out=0,in=-90] (0.2,0.2);
   \node at (0.2,0.2) {$\dot$};
   \node at (0.4,0.2) {$\scriptstyle{a}$};
\end{tikzpicture}
}&=\delta_{a,-k-1}1_\unit\text{ if $a < -k$,}\:\:
\mathord{
\begin{tikzpicture}[baseline = 1.25mm]
  \draw[<-] (0,0.4) to[out=180,in=90] (-.2,0.2);
  \draw[-] (0.2,0.2) to[out=90,in=0] (0,.4);
 \draw[-] (-.2,0.2) to[out=-90,in=180] (0,0);
  \draw[-] (0,0) to[out=0,in=-90] (0.2,0.2);
   \node at (-0.2,0.2) {$\dot$};
   \node at (-0.4,0.2) {$\scriptstyle{a}$};
\end{tikzpicture}
}=
-\delta_{a,k-1}1_\unit\text{ if $a < k$,}
&
\sum_{b\in \Z}
\begin{array}{l}
\mathord{
\begin{tikzpicture}[baseline = 1.25mm]
  \draw[<-] (0,0.4) to[out=180,in=90] (-.2,0.2);
  \draw[-] (0.2,0.2) to[out=90,in=0] (0,.4);
 \draw[-] (-.2,0.2) to[out=-90,in=180] (0,0);
  \draw[-] (0,0) to[out=0,in=-90] (0.2,0.2);
   \node at (-0.2,0.2) {$\dot$};
   \node at (-.38,0.2) {$\scriptstyle{b}$};
\end{tikzpicture}
}\\
\mathord{
\begin{tikzpicture}[baseline = 1.25mm]
  \draw[->] (0.2,0.2) to[out=90,in=0] (0,.4);
  \draw[-] (0,0.4) to[out=180,in=90] (-.2,0.2);
\draw[-] (-.2,0.2) to[out=-90,in=180] (0,0);
  \draw[-] (0,0) to[out=0,in=-90] (0.2,0.2);
   \node at (0.2,0.2) {$\dot$};
   \node at (.7,0.2) {$\scriptstyle{a-b-2}$};
\end{tikzpicture}
}\end{array}\!\!\!
&=-\delta_{a,0} 1_\unit,\label{ig}
\end{align}
for any $a \in \Z$.
For an indeterminate $w$, let
\begin{align}\label{igq}
\anticlockleft\,(w) &:= \sum_{n\in\Z}
\mathord{
\begin{tikzpicture}[baseline = 1.25mm]
  \draw[-] (0,0.4) to[out=180,in=90] (-.2,0.2);
  \draw[->] (0.2,0.2) to[out=90,in=0] (0,.4);
 \draw[-] (-.2,0.2) to[out=-90,in=180] (0,0);
  \draw[-] (0,0) to[out=0,in=-90] (0.2,0.2);
   \node at (0.2,0.2) {$\dot$};
   \node at (0.4,0.2) {$\scriptstyle{n}$};
\end{tikzpicture}
}\: w^{-n-1} \in w^k 1_\unit + w^{k-1}\End_{\Heis_k}(\unit)\llbracket w^{-1}\rrbracket,\\
\clockright\,(w)&:= -\sum_{n \in\Z}
\mathord{
\begin{tikzpicture}[baseline = 1.25mm]
  \draw[<-] (0,0.4) to[out=180,in=90] (-.2,0.2);
  \draw[-] (0.2,0.2) to[out=90,in=0] (0,.4);
 \draw[-] (-.2,0.2) to[out=-90,in=180] (0,0);
  \draw[-] (0,0) to[out=0,in=-90] (0.2,0.2);
   \node at (-0.2,0.2) {$\dot$};
   \node at (-0.4,0.2) {$\scriptstyle{n}$};
\end{tikzpicture}
} \:w^{-n-1}\in w^{-k} 1_\unit +w^{-k-1}\End_{\Heis_k}(\unit)\llbracket w^{-1}\rrbracket.\label{igp}
\end{align}
Then the infinite Grassmannian relation implies that
\begin{equation}\label{igproper}
\anticlockleft\,(w)\; \clockright\,(w) = 1_\unit.
\end{equation}
Up to the choice of normalization, this is the well-known identity from \cite[(I.2.6)]{Mac}
relating elementary and complete symmetric functions.
It follows that there is a
well-defined algebra homomorphism
\begin{align}\label{beta}
\beta:\Sym &\rightarrow \End_{\Heis_k}(\unit),
&\e_n&\mapsto
\mathord{

}
\:.
\end{align*}
We will appeal to these sorts of relation without further mention.
Finally,
there are two more useful symmetries
\begin{align}
\eta:
\blue{\Heis_l}\;\overline{\odot}\;\red{\Heis_m}
&\stackrel{\sim}{\rightarrow}
\blue{\Heis_m}\;\overline{\odot}\;\red{\Heis_l},\\
\Omega_{\blue{l}|\red{m}}:
\blue{\Heis_{l}}\;\overline{\odot}\;\red{\Heis_{m}}&\stackrel{\sim}{\rightarrow}
\left(\blue{\Heis_{-l}}\;\overline{\odot}\;\red{\Heis_{-m}}\right)^{\operatorname{op}}.
\end{align}
The first of these is
defined on diagrams by switching the colors blue and red
then multiplying by $(-1)^z$ where $z$ is the total
number of dumbbells in the picture;
it interchanges the internal bubbles in (\ref{odd1}) with the
ones in (\ref{odd2}).
The second takes a diagram to its mirror image in a horizontal axis
multiplied by
$(-1)^{x+y}$ where $x$ is the number of one-colored crossings and $y$
is the number of leftward cups and caps (including ones in fake and
internal bubbles). The only additional thing that needs to be used to
see that this is well defined beyond what was already checked for
(\ref{Omega}) is that
$\mathord{
\begin{tikzpicture}[baseline = -1mm]
 	\draw[<-,redp] (0.18,-.25) to (0.18,.2);
	\draw[<-,bluep] (-0.38,-.25) to (-0.38,.2);
	\draw[-,dotted] (-0.38,0.01) to (0.18,0.01);
     \node at (0.18,0) {$\dot$};
     \node at (-0.38,0.01) {$\dot$};
\end{tikzpicture}}
$
is invertible.
All of the symmetries $*$, $\eta$ and $\Omega_{\blue{l}|\red{m}}$
extend canonically to the Karoubi envelope.

\begin{theorem}\label{comult}
For $k = \blue{l}+\red{m}$
as above, there is a unique strict $\k$-linear monoidal functor
$$\Delta_{\blue{l}|\red{m}}:
\Heis_k \rightarrow
\Add\left(\blue{\Heis_l} \;\overline{\odot}\; \red{\Heis_m}\right)$$
such that $\up \mapsto \blueup \oplus \redup$,
$\down \mapsto \bluedown\oplus\reddown$, and on morphisms
\begin{align}\label{com0}
\mathord{
\begin{tikzpicture}[baseline = -.6mm]
	\draw[->] (0.08,-.3) to (0.08,.3);
      \node at (0.08,0) {$\dot$};
\end{tikzpicture}
}
&\mapsto
\mathord{
\begin{tikzpicture}[baseline = -.6mm]
	\draw[->,bluep] (0.08,-.3) to (0.08,.3);
      \node at (0.08,0) {$\color{blue}\dot$};
\end{tikzpicture}
}
+
\mathord{
\begin{tikzpicture}[baseline = -.6mm]
	\draw[->,redp] (0.08,-.3) to (0.08,.3);
      \node at (0.08,0) {$\color{red}\dot$};
\end{tikzpicture}
}\:,
\qquad\quad
\mathord{
\begin{tikzpicture}[baseline = 1mm]
	\draw[<-] (0.4,0) to[out=90, in=0] (0.1,0.4);
	\draw[-] (0.1,0.4) to[out = 180, in = 90] (-0.2,0);
\end{tikzpicture}
}\mapsto
\mathord{
\begin{tikzpicture}[baseline = 1mm]
	\draw[<-,bluep] (0.4,0) to[out=90, in=0] (0.1,0.4);
	\draw[-,bluep] (0.1,0.4) to[out = 180, in = 90] (-0.2,0);
\end{tikzpicture}
}+
\mathord{
\begin{tikzpicture}[baseline = 1mm]
	\draw[<-,redp] (0.4,0) to[out=90, in=0] (0.1,0.4);
	\draw[-,redp] (0.1,0.4) to[out = 180, in = 90] (-0.2,0);
\end{tikzpicture}
}\:,
\qquad\qquad
\mathord{
\begin{tikzpicture}[baseline = 1mm]
	\draw[<-] (0.4,0.4) to[out=-90, in=0] (0.1,0);
	\draw[-] (0.1,0) to[out = 180, in = -90] (-0.2,0.4);
\end{tikzpicture}
}
\mapsto
\mathord{
\begin{tikzpicture}[baseline = 1mm]
	\draw[<-,bluep] (0.4,0.4) to[out=-90, in=0] (0.1,0);
	\draw[-,bluep] (0.1,0) to[out = 180, in = -90] (-0.2,0.4);
\end{tikzpicture}
}+\mathord{
\begin{tikzpicture}[baseline = 1mm]
	\draw[<-,redp] (0.4,0.4) to[out=-90, in=0] (0.1,0);
	\draw[-,redp] (0.1,0) to[out = 180, in = -90] (-0.2,0.4);
\end{tikzpicture}
}\:,
\\\label{com1}
\mathord{
\begin{tikzpicture}[baseline = -.6mm]
	\draw[->] (0.28,-.3) to (-0.28,.3);
	\draw[->] (-0.28,-.3) to (0.28,.3);
\end{tikzpicture}
}&\mapsto
\mathord{
\begin{tikzpicture}[baseline = -.6mm]
	\draw[->,bluep] (0.28,-.3) to (-0.28,.3);
	\draw[->,bluep] (-0.28,-.3) to (0.28,.3);
\end{tikzpicture}
}+
\mathord{
\begin{tikzpicture}[baseline = -.6mm]
	\draw[->,redp] (0.28,-.3) to (-0.28,.3);
	\draw[->,redp] (-0.28,-.3) to (0.28,.3);
\end{tikzpicture}
}
+
\mathord{
\begin{tikzpicture}[baseline = -.6mm]
	\draw[->,redp] (0.28,-.3) to (-0.28,.3);
	\draw[->,bluep] (-0.28,-.3) to (0.28,.3);
\end{tikzpicture}
}+
\mathord{
\begin{tikzpicture}[baseline = -.6mm]
	\draw[->,bluep] (0.28,-.3) to (-0.28,.3);
	\draw[->,redp] (-0.28,-.3) to (0.28,.3);
\end{tikzpicture}
}-
\mathord{
\begin{tikzpicture}[baseline = -.6mm]
	\draw[->,redp] (0.28,-.3) to (-0.28,.3);
	\draw[->,bluep] (-0.28,-.3) to (0.28,.3);
	\draw[-] (-0.15,-.16) to (0.15,-.16);
     \node at (0.15,-.17) {$\dot$};
     \node at (-0.15,-.17) {$\dot$};
\end{tikzpicture}
}
+
\mathord{
\begin{tikzpicture}[baseline = -.6mm]
	\draw[->,bluep] (0.28,-.3) to (-0.28,.3);
	\draw[->,redp] (-0.28,-.3) to (0.28,.3);
	\draw[-] (-0.15,-.16) to (0.15,-.16);
     \node at (0.15,-.17) {$\dot$};
     \node at (-0.15,-.17) {$\dot$};
\end{tikzpicture}
}
-\mathord{
\begin{tikzpicture}[baseline =-.6mm]
 	\draw[->,redp] (0.2,-.3) to (0.2,.3);
	\draw[->,bluep] (-0.2,-.3) to (-0.2,.3);
	\draw[-] (-0.2,0.01) to (0.2,0.01);
     \node at (0.2,0.0) {$\dot$};
     \node at (-0.2,0.0) {$\dot$};
\end{tikzpicture}
}+
\mathord{
\begin{tikzpicture}[baseline = -0.6mm]
 	\draw[->,bluep] (0.2,-.3) to (0.2,.3);
	\draw[->,redp] (-0.2,-.3) to (-0.2,.3);
	\draw[-] (-0.2,0.01) to (0.2,0.01);
     \node at (0.2,0.0) {$\dot$};
     \node at (-0.2,0.0) {$\dot$};
\end{tikzpicture}
}\:.
\end{align}
We have that 
$\eta \circ \Delta_{\blue{l}|\red{m}} =
\Delta_{\blue{m}|\red{l}}$.
Moreover, $\Delta_{\blue{l}|\red{m}}$ satisfies the following for all $a \in \Z$:
\begin{align}\label{com4}
\mathord{
\begin{tikzpicture}[baseline = 1mm]
	\draw[-] (0.4,0) to[out=90, in=0] (0.1,0.4);
	\draw[->] (0.1,0.4) to[out = 180, in = 90] (-0.2,0);
\end{tikzpicture}
}&\mapsto
\mathord{
\begin{tikzpicture}[baseline = 1mm]
	\draw[-,bluep] (0.4,-.2) to[out=90, in=0] (0.1,0.6);
	\draw[->,bluep] (0.1,0.6) to[out = 180, in = 90] (-0.2,-.2);
     \node at (0.4,.3) {$\anticlockleftred$};
\end{tikzpicture}
}+
\mathord{
\begin{tikzpicture}[baseline = 1mm]
	\draw[-,redp] (0.4,-.2) to[out=90, in=0] (0.1,0.6);
	\draw[->,redp] (0.1,0.6) to[out = 180, in = 90] (-0.2,-.2);
     \node at (0.4,.3) {$\anticlockleftblue$};
\end{tikzpicture}
}\:,
&\mathord{
\begin{tikzpicture}[baseline = 1mm]
	\draw[-] (0.4,0.4) to[out=-90, in=0] (0.1,0);
	\draw[->] (0.1,0) to[out = 180, in = -90] (-0.2,0.4);
\end{tikzpicture}
}&\mapsto
-\mathord{
\begin{tikzpicture}[baseline = 1mm]
	\draw[-,bluep] (0.4,0.6) to[out=-90, in=0] (0.1,-0.2);
	\draw[->,bluep] (0.1,-0.2) to[out = 180, in = -90] (-0.2,0.6);
     \node at (-0.2,.2) {$\clockrightred$};
\end{tikzpicture}
}-\mathord{
\begin{tikzpicture}[baseline = 1mm]
	\draw[-,redp] (0.4,0.6) to[out=-90, in=0] (0.1,-0.2);
	\draw[->,redp] (0.1,-0.2) to[out = 180, in = -90] (-0.2,0.6);
     \node at (-0.2,.2) {$\clockrightblue$};
\end{tikzpicture}
}\:,\\\label{com5}
\mathord{\begin{tikzpicture}[baseline=-1mm]
\node at (0.04,0.05) {$\anticlockleft\,{\scriptstyle a}$};
\node at (.1,0) {$\dot$};
\end{tikzpicture}}
&\mapsto
\sum_{b \in \Z}
\begin{array}{l}
\mathord{
\begin{tikzpicture}[baseline = 1.25mm]
  \draw[-,bluep] (0,0.4) to[out=180,in=90] (-.2,0.2);
  \draw[->,bluep] (0.2,0.2) to[out=90,in=0] (0,.4);
 \draw[-,bluep] (-.2,0.2) to[out=-90,in=180] (0,0);
  \draw[-,bluep] (0,0) to[out=0,in=-90] (0.2,0.2);
   \node at (-0.2,0.2) {$\color{blue}\dot$};
   \node at (-.38,0.2) {$\color{blue}\scriptstyle{b}$};
\end{tikzpicture}
}\\
\mathord{
\begin{tikzpicture}[baseline = 1.25mm]
  \draw[->,redp] (0.2,0.2) to[out=90,in=0] (0,.4);
  \draw[-,redp] (0,0.4) to[out=180,in=90] (-.2,0.2);
\draw[-,redp] (-.2,0.2) to[out=-90,in=180] (0,0);
  \draw[-,redp] (0,0) to[out=0,in=-90] (0.2,0.2);
   \node at (0.2,0.2) {$\color{red}\dot$};
   \node at (.65,0.2) {$\color{red}\scriptstyle{a-b-1}$};
\end{tikzpicture}
}\end{array},&
\mathord{\begin{tikzpicture}[baseline=-1mm]
\node at (-0.04,.05) {${\scriptstyle a}\,\clockright$};
\node at (-.1,0) {$\dot$};
\end{tikzpicture}}
&\mapsto
-\sum_{b \in \Z}
\begin{array}{l}
\mathord{
\begin{tikzpicture}[baseline = 1.25mm]
  \draw[<-,bluep] (0,0.4) to[out=180,in=90] (-.2,0.2);
  \draw[-,bluep] (0.2,0.2) to[out=90,in=0] (0,.4);
 \draw[-,bluep] (-.2,0.2) to[out=-90,in=180] (0,0);
  \draw[-,bluep] (0,0) to[out=0,in=-90] (0.2,0.2);
   \node at (-0.2,0.2) {$\color{blue}\dot$};
   \node at (-.38,0.2) {$\color{blue}\scriptstyle{b}$};
\end{tikzpicture}
}\\
\mathord{
\begin{tikzpicture}[baseline = 1.25mm]
  \draw[-,redp] (0.2,0.2) to[out=90,in=0] (0,.4);
  \draw[<-,redp] (0,0.4) to[out=180,in=90] (-.2,0.2);
\draw[-,redp] (-.2,0.2) to[out=-90,in=180] (0,0);
  \draw[-,redp] (0,0) to[out=0,in=-90] (0.2,0.2);
   \node at (0.2,0.2) {$\color{red}\dot$};
   \node at (.65,0.2) {$\color{red}\scriptstyle{a-b-1}$};
\end{tikzpicture}
}\end{array}.
\end{align}
Finally, extending
 $\Delta_{\blue{l}|\red{m}}$ to the
Karoubi envelopes in the canonical way, 
we have that 
\begin{align}\label{magic}
\Delta_{\blue{l}|\red{m}}(H_n^{\pm}) &\cong \bigoplus_{r=0}^n
{\color{blue}H_{n-r}^{\pm}} \otimes {\color{red}H_{r}^{\pm}},&
\Delta_{\blue{l}|\red{m}}(E_n^{\pm}) &\cong \bigoplus_{r=0}^n
{\color{blue}E_{n-r}^{\pm}} \otimes {\color{red}E_{r}^{\pm}}.
\end{align}
\end{theorem}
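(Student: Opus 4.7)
The approach is to use the inversion-relation form of the presentation of $\Heis_k$ discussed around Lemma~\ref{lateaddition} and Remark~\ref{spicyramen}: $\Heis_k$ may be presented as the strict $\k$-linear monoidal category generated by the upward dot, upward crossing, leftward cup $\unit\to\down\otimes\up$ and rightward cap $\up\otimes\down\to\unit$ (with the rightward cup determined by right adjunction), subject to the relations~(\ref{hecke}) and~(\ref{rightadj}) plus the invertibility of the morphism in~(\ref{invrel}). With this presentation in hand, exhibiting $\Delta_{l|m}$ reduces to specifying its value on four generators and verifying these conditions on the images.

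First I would define $\Delta_{l|m}$ on objects by $\up\mapsto\blue\up\oplus\red\up$ and $\down\mapsto\blue\down\oplus\red\down$, and on the four chosen generators by the formulas~(\ref{com0})--(\ref{com1}); uniqueness of the extension is then automatic. The relations~(\ref{hecke}) follow from Theorem~\ref{comultah}, because the formula for the crossing in~(\ref{com1}) agrees with~(\ref{com0old}) and no cups or caps enter these relations. The right adjunction~(\ref{rightadj}) splits along colors: both rightward cup and rightward cap are sent to the sum of blue and red analogues (the cup image being forced by adjunction), so the zigzag identities collapse into the right adjunctions internal to $\Heis_l$ and $\Heis_m$ separately, with no surviving mixed-color contributions.

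The main technical obstacle is verifying that the image of~(\ref{invrel}) is invertible in $\Add(\Heis_l\;\overline{\odot}\;\Heis_m)$. For $k=l+m\geq 0$, the object $\up\otimes\down$ maps to a four-term direct sum according to color, while $\down\otimes\up\oplus\unit^{\oplus k}$ maps to the analogous four-term sum together with $\unit^{\oplus l}\oplus\unit^{\oplus m}$. The plan is to exhibit an explicit two-sided inverse in block form: the pure-blue and pure-red blocks are handled using the inversion relations internal to $\Heis_l$ and $\Heis_m$ respectively; the mixed-color blocks $\blue\up\otimes\red\down$ and $\red\up\otimes\blue\down$ are matched with $\red\down\otimes\blue\up$ and $\blue\down\otimes\red\up$ using the two-color crossings~(\ref{tcc}), which are mutual inverses by~(\ref{braidlike}), and then corrected using the inverse dumbbells~(\ref{dumb}) so as to absorb the extra crossing-with-dumbbell terms introduced in~(\ref{com1}). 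The $k\leq 0$ and mixed-sign cases are handled by the same combinatorics using the analogous inversion relation obtained by symmetry from $\Omega_k$ of~(\ref{Omega}); in every case the essential new ingredient is the invertibility of the dumbbells in $\Heis_l\;\overline{\odot}\;\Heis_m$.

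Once $\Delta_{l|m}$ is constructed, the remaining assertions follow more routinely. Formulas~(\ref{com4}) for the leftward cap and rightward cup images are obtained via the uniqueness statement in Lemma~\ref{lateaddition}, after checking that the proposed right-hand sides satisfy the requisite adjunction and sideways-crossing identities~(\ref{sideways}); the internal bubble definitions~(\ref{odd1})--(\ref{odd2}) are designed precisely so that this holds. The bubble formula~(\ref{com5}) then follows by applying $\Delta_{l|m}$ to the generating series~(\ref{igq})--(\ref{igp}) and equating coefficients, using~(\ref{igproper}). The symmetry $\eta\circ\Delta_{l|m}=\Delta_{m|l}$ is immediate on the four generators, the $(-1)^z$ factor in $\eta$ matching the minus signs on the dumbbell terms in~(\ref{com1}). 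Finally, for~(\ref{magic}), note that the restriction of $\Delta_{l|m}$ to the subcategory of $\Heis_k$ generated by $\up$ factors through the inclusion $\AH\hookrightarrow\Heis_k$ and agrees, after the inclusion $\AH\;\overline{\odot}\;\AH\to\Heis_l\;\overline{\odot}\;\Heis_m$, with the functor $\Delta$ of Theorem~\ref{comultah}; hence Theorem~\ref{upgraded} yields the isomorphisms for $H_n^+$ and $E_n^+$ immediately, and the $H_n^-,E_n^-$ cases follow by applying the pivotal duality $*$ (equivalently $\Omega_k$), which intertwines $\Delta_{l|m}$ with its downward analogue by the symmetry of the generator formulas.
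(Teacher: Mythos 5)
Your overall architecture --- reduce to the inversion presentation, check (\ref{hecke}) via Theorem~\ref{comultah}, check (\ref{rightadj}) color by color, recover (\ref{com4}) from Lemma~\ref{lateaddition}, get (\ref{com5}) from generating functions and (\ref{igproper}), and get (\ref{magic}) from Theorem~\ref{upgraded} plus duality --- is close in spirit to the paper's, which also invokes Lemma~\ref{lateaddition} and treats (\ref{hecke}), (\ref{rightadj}), (\ref{com5}) and (\ref{magic}) exactly as you describe. The problem is that the step you present as a routine block computation, the invertibility of the image of (\ref{invrel}), is where essentially all of the content of the theorem lives, and your sketch of it fails as written. Restricted to the mixed summands, the image of the rightward crossing is anti-diagonal with entries of the form $\tau\circ(1\pm d)$, where $\tau$ is a two-color sideways crossing and $d$ is a dumbbell; so you must invert $1\pm d$, i.e.\ $\bigl((\text{red dot})-(\text{blue dot})\pm 1\bigr)\cdot d$. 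Knowing $d$ is invertible does not help: the localization only inverts $(\text{red dot})-(\text{blue dot})$, and $(\text{red dot})-(\text{blue dot})\pm 1$ is \emph{not} invertible in $\blue{\AH}\;\overline{\odot}\;\red{\AH}$, nor on a pair of like-oriented strings in $\blue{\Heis_l}\;\overline{\odot}\;\red{\Heis_m}$. Its invertibility on the relevant mixed-orientation object $\blue{\up}\otimes\red{\down}$ is a genuinely nontrivial consequence of the curl and infinite Grassmannian relations --- this is precisely what the internal bubbles (\ref{odd1})--(\ref{odd2}) and Lemmas~\ref{l1} and \ref{l3b} (equivalently, the third displayed identity in the paper's verification of (\ref{sideways})) are for. ``Correcting with inverse dumbbells'' cannot produce this inverse. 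Moreover, your pure-color blocks are not decoupled: the image of the rightward crossing contains cap-then-cup terms with dumbbells sending $\blue{\up}\otimes\blue{\down}$ into $\red{\down}\otimes\red{\up}$ and vice versa, so that part of the matrix also needs a real argument rather than the internal inversion relations alone.

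The hard work is also hidden in your derivation of (\ref{com4}): Lemma~\ref{lateaddition} gives uniqueness only \emph{provided} the proposed images of the leftward cap and rightward cup satisfy (\ref{bubbles})--(\ref{sideways}), and verifying those relations for the right-hand sides of (\ref{com4}) is the bulk of the paper's proof (Lemmas~\ref{l1}--\ref{l3b} plus a lengthy matrix check); asserting that the internal bubbles are ``designed precisely so that this holds'' defers the argument rather than supplying it. (If you instead extract $\Delta_{\blue{l}|\red{m}}$ of the leftward cap as an entry of the inverse of the image of (\ref{invrel}), you must first compute that inverse explicitly, which is the same work.) A smaller point: justifying the minus-sign cases of (\ref{magic}) by saying $*$ or $\Omega_k$ ``intertwines $\Delta_{\blue{l}|\red{m}}$ with its downward analogue'' contradicts Remark~\ref{altsss}, which notes that $\Delta_{\blue{l}|\red{m}}$ commutes with neither; the correct and easy fix is that any monoidal functor between rigid categories preserves duals up to isomorphism, which is all that (\ref{magic}) requires.
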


\begin{remark}\label{altsss}
As in Remarks~\ref{alts} and \ref{altss}, the categorical comultiplication
$\Delta_{\blue{l}|\red{m}}$ is coassociative in the appropriate sense.
It also seems worth pointing out that $\Delta_{\blue{l}|\red{m}}$ does {\em
  not} commute either with the duality $*$ or the involution
$\Omega$.
In fact, either of the monoidal functors $* \circ \Delta_{\blue{l}|\red{m}}
\circ *$ or
$\Omega_{\blue{-l}|\red{-m}}
\circ \Delta_{\blue{-l}|\red{-m}} \circ  \Omega_k$ could be used as different
(but equally natural) choices for the categorical comultiplication map.
Yet another possibility would be to define $\Delta_{\blue{l}|\red{m}}$
in the same way as in (\ref{com0})--(\ref{com1}) on the upward dot and crossing,
but to adopt the following on cups and caps
\begin{align*}
\mathord{
\begin{tikzpicture}[baseline = 1mm]
	\draw[<-] (0.4,0) to[out=90, in=0] (0.1,0.4);
	\draw[-] (0.1,0.4) to[out = 180, in = 90] (-0.2,0);
\end{tikzpicture}
}&\mapsto
\mathord{
\begin{tikzpicture}[baseline = 1mm]
	\draw[<-,bluep] (0.4,0) to[out=90, in=0] (0.1,0.4);
	\draw[-,bluep] (0.1,0.4) to[out = 180, in = 90] (-0.2,0);
\end{tikzpicture}
}-
\mathord{
\begin{tikzpicture}[baseline = 1mm]
	\draw[<-,redp] (0.4,-.2) to[out=90, in=0] (0.1,0.6);
	\draw[-,redp] (0.1,0.6) to[out = 180, in = 90] (-0.2,-.2);
     \node at (-0.2,.3) {$\clockrightblue$};
\end{tikzpicture}
}\:,&
\mathord{
\begin{tikzpicture}[baseline = 1mm]
	\draw[<-] (0.4,0.4) to[out=-90, in=0] (0.1,0);
	\draw[-] (0.1,0) to[out = 180, in = -90] (-0.2,0.4);
\end{tikzpicture}
}
&\mapsto
\mathord{
\begin{tikzpicture}[baseline = 1mm]
	\draw[<-,bluep] (0.4,0.4) to[out=-90, in=0] (0.1,0);
	\draw[-,bluep] (0.1,0) to[out = 180, in = -90] (-0.2,0.4);
\end{tikzpicture}
}+\mathord{
\begin{tikzpicture}[baseline = 1mm]
	\draw[<-,redp] (0.4,0.6) to[out=-90, in=0] (0.1,-0.2);
	\draw[-,redp] (0.1,-0.2) to[out = 180, in = -90] (-0.2,0.6);
     \node at (0.4,.2) {$\anticlockleftblue$};
\end{tikzpicture}
}\:,\\
 \mathord{
\begin{tikzpicture}[baseline = 1mm]
	\draw[-] (0.4,0) to[out=90, in=0] (0.1,0.4);
	\draw[->] (0.1,0.4) to[out = 180, in = 90] (-0.2,0);
\end{tikzpicture}
}&\mapsto
\mathord{
\begin{tikzpicture}[baseline = 1mm]
	\draw[-,bluep] (0.4,-.2) to[out=90, in=0] (0.1,0.6);
	\draw[->,bluep] (0.1,0.6) to[out = 180, in = 90] (-0.2,-.2);
     \node at (0.4,.3) {$\anticlockleftred$};
\end{tikzpicture}
}+
\mathord{
\begin{tikzpicture}[baseline = 1mm]
	\draw[-,redp] (0.4,0) to[out=90, in=0] (0.1,0.4);
	\draw[->,redp] (0.1,0.4) to[out = 180, in = 90] (-0.2,0);
\end{tikzpicture}
}\:,
&\mathord{
\begin{tikzpicture}[baseline = 1mm]
	\draw[-] (0.4,0.4) to[out=-90, in=0] (0.1,0);
	\draw[->] (0.1,0) to[out = 180, in = -90] (-0.2,0.4);
\end{tikzpicture}
}&\mapsto
-\mathord{
\begin{tikzpicture}[baseline = 1mm]
	\draw[-,bluep] (0.4,0.6) to[out=-90, in=0] (0.1,-0.2);
	\draw[->,bluep] (0.1,-0.2) to[out = 180, in = -90] (-0.2,0.6);
     \node at (-0.2,.2) {$\clockrightred$};
\end{tikzpicture}
}+\mathord{
\begin{tikzpicture}[baseline = 1mm]
	\draw[-,redp] (0.4,0.4) to[out=-90, in=0] (0.1,0);
	\draw[->,redp] (0.1,0) to[out = 180, in = -90] (-0.2,0.4);
\end{tikzpicture}
}\:.
\end{align*}
This no longer has the property that $\eta \circ \Delta_{\blue{l}|\red{m}} =
\Delta_{\blue{m}|\red{l}}$, but instead
$\eta \circ \Delta_{\blue{l}|\red{m}} \circ * =
* \circ \Delta_{\blue{m}|\red{l}}$.
\end{remark}

The proof of Theorem~\ref{comult} will be explained at the end of the
section.
The main work is to verify that all of the
defining relations from Definition~\ref{maindef} are satisfied in
$\blue{\Heis_l} \;\overline{\odot}\; \red{\Heis_m}$. To prepare for
this, we first establish a series of
lemmas.

\begin{lemma}\label{l1}
We have that
$
\mathord{

}
\end{align*}
which is equal to the right hand side by (\ref{odd2}) once again.
\end{proof}

\begin{proof}[Proof of Theorem~\ref{comult}]
Once $\Delta_{\blue{l}|\red{m}}$ has been constructed, 
the part about $\eta$ is obvious. Also (\ref{magic}) for the sign $+$
follows from Theorem~\ref{upgraded}, noting that the formulae in
(\ref{com0old}) are the same as here; then for the sign
$-$ it follows by taking right duals (using the rightward cups and caps).

In the remainder of the proof, we are going
to use the presentation from Definition~\ref{maindef} to establish the
existence of $\Delta_{\blue{l}|\red{m}}$.
Thus we define $\Delta_{\blue{l}|\red{m}}$ on the generating morphisms from that definition
by the formulae (\ref{com0})--(\ref{com4}), and
must check that
the images of the defining relations
(\ref{hecke})--(\ref{sideways})
all hold in
$\Add(\blue{\Heis_l}\;\overline{\odot}\;\red{\Heis_m})$.
Moreover we will show that $\Delta_{\blue{l}|\red{m}}$ satisfies (\ref{com5}).
In view of Lemma~\ref{lateaddition}, this is enough to prove the theorem as stated.
We already checked the relations (\ref{hecke}) in the proof of Theorem~\ref{comultah}.
Also the check of the relation (\ref{rightadj}) is quite trivial since all
of the matrices involved are diagonal.

Next we check (\ref{com5}).
Assume that $k \geq 0$ and
consider the clockwise bubble
$\mathord{
\begin{tikzpicture}[baseline = 1.25mm]
  \draw[<-] (0,0.4) to[out=180,in=90] (-.2,0.2);
  \draw[-] (0.2,0.2) to[out=90,in=0] (0,.4);
 \draw[-] (-.2,0.2) to[out=-90,in=180] (0,0);
  \draw[-] (0,0) to[out=0,in=-90] (0.2,0.2);
   \node at (-0.2,0.2) {$\dot$};
   \node at (-0.38,0.2) {$\scriptstyle{a}$};
\end{tikzpicture}
}$.
If it is a fake bubble, i.e., $a < 0$, it is a
scalar (usually zero) by the definition (\ref{f1})
and the assumption on $k$. Hence, it is
quite trivial to see that
(\ref{com5}) is satisfied.
When $a \geq 0$, the image of
$\mathord{
\begin{tikzpicture}[baseline = 1.25mm]
  \draw[<-] (0,0.4) to[out=180,in=90] (-.2,0.2);
  \draw[-] (0.2,0.2) to[out=90,in=0] (0,.4);
 \draw[-] (-.2,0.2) to[out=-90,in=180] (0,0);
  \draw[-] (0,0) to[out=0,in=-90] (0.2,0.2);
   \node at (-0.2,0.2) {$\dot$};
   \node at (-0.38,0.2) {$\scriptstyle{a}$};
\end{tikzpicture}
}$ under $\Delta_{\blue{l}|\red{m}}$ is
$
-\displaystyle\mathord{
\begin{tikzpicture}[baseline=-1mm]
\draw[-,bluep] (0,-0.2) to[out=180,in=-90] (-.2,0);
\draw[-,bluep] (-0.2,0) to[out=90,in=180] (0,0.2);
\draw[->,bluep] (0,0.2) to[out=0,in=90] (0.2,0);
\draw[-,bluep] (0.2,0) to[out=-90,in=0] (0,-0.2);
     \node at (-.17,-.13) {$\smallclockred$};
     \node at (-.17,.12) {$\color{blue}\dot$};
     \node at (-.34,.13) {$\color{blue}\scriptstyle{a}$};
\end{tikzpicture}}
\:-\displaystyle\mathord{
\begin{tikzpicture}[baseline=-1mm]
\draw[-,redp] (0,-0.2) to[out=180,in=-90] (-.2,0);
\draw[-,redp] (-0.2,0) to[out=90,in=180] (0,0.2);
\draw[->,redp] (0,0.2) to[out=0,in=90] (0.2,0);
\draw[-,redp] (0.2,0) to[out=-90,in=0] (0,-0.2);
     \node at (-.17,-.13) {$\smallclockblue$};
     \node at (-.17,.12) {$\color{red}\dot$};
     \node at (-.34,.13) {$\color{red}\scriptstyle{a}$};
\end{tikzpicture}}
\:$,
which is indeed
equal to
$-\sum_{b \in \Z}
\mathord{
\begin{tikzpicture}[baseline = 1.25mm]
  \draw[<-,bluep] (0,0.4) to[out=180,in=90] (-.2,0.2);
  \draw[-,bluep] (0.2,0.2) to[out=90,in=0] (0,.4);
 \draw[-,bluep] (-.2,0.2) to[out=-90,in=180] (0,0);
  \draw[-,bluep] (0,0) to[out=0,in=-90] (0.2,0.2);
   \node at (-0.2,0.2) {$\color{blue}\dot$};
   \node at (-.38,0.2) {$\color{blue}\scriptstyle{b}$};
\end{tikzpicture}
}\:\:\:
\mathord{
\begin{tikzpicture}[baseline = 1.25mm]
  \draw[-,redp] (0.2,0.2) to[out=90,in=0] (0,.4);
  \draw[<-,redp] (0,0.4) to[out=180,in=90] (-.2,0.2);
\draw[-,redp] (-.2,0.2) to[out=-90,in=180] (0,0);
  \draw[-,redp] (0,0) to[out=0,in=-90] (0.2,0.2);
   \node at (0.2,0.2) {$\color{red}\dot$};
   \node at (.65,0.2) {$\color{red}\scriptstyle{a-b-1}$};
\end{tikzpicture}
}
$ by Lemma~\ref{l2}.
Now consider (\ref{com5}) for the counterclockwise bubble (still
assuming $k \geq 0$).
Define the generating functions
$\anticlockleftblue\,(w)$ and $\clockrightblue\,(w)$ (resp., $\anticlockleftred\,(w)$ and
$\clockrightred\,(w)$)
 in the same way as (\ref{igq})--(\ref{igp}) but using blue (resp., red) bubbles
in place of black ones.
We have proved already that \begin{equation}\label{rowing1}
\Delta_{\blue{l}|\red{m}}\left(\clockright\,(w)\right) = \clockrightblue\,(w)\;\clockrightred\,(w).
\end{equation}
Passing to the inverses of these formal power series and using
(\ref{igproper})
shows that\begin{equation}\label{rowing2}
\Delta_{\blue{l}|\red{m}}\left(\anticlockleft\,(w)\right)
= \anticlockleftblue\,(w)  \;\anticlockleftred\,(w).
\end{equation}
Equating coefficients yields the desired relation for the
counterclockwise bubble.
This completes the proof of (\ref{com5}) when $k \geq 0$.
A similar argument works when $k \leq 0$ too: one starts
off by considering the relation for the counterclockwise bubble,
using the infinite Grassmannian relation to deduce the one for
the clockwise bubble at the end. On the way, one needs to use the
relation
obtained by applying
the symmetry $\Omega_{\blue{l}|\red{m}}$ to Lemma~\ref{l2}.

The relation (\ref{bubbles}) follows easily from (\ref{com5}) using
the first two equalities in (\ref{ig}) for the
blue
and red bubbles.

Moving on to (\ref{curls}), we first consider the right curl, so $k
\geq 0$. Applying $\Delta_{\blue{l}|\red{m}}$ to the relation reveals that
we must show
$$
-
\mathord{
\begin{tikzpicture}[baseline = -1mm]
  \draw[->,bluep] (-0.45,-.4) to (-0.45,-.3)
        to[in=180,out=90] (-.2,.2) to[out=0,in=90] (0,0.05)
        to[out=-90,in=0] (-.2,-.2) to [out=180,in=-90] (-0.45,.3) to (-0.45,.4);
   \node at (0,0) {$\smallclockred$};
\end{tikzpicture}
}
+
\mathord{\begin{tikzpicture}[baseline=-1mm]
\draw[->,bluep] (-0.6,-0.4) to (-.6,0.4);
\draw[-,redp] (0,-0.2) to[out=180,in=-90] (-.2,0);
\draw[->,redp] (-0.2,0) to[out=90,in=180] (0,0.2);
\draw[-,redp] (0,0.2) to[out=0,in=90] (0.2,0);
\draw[-,redp] (0.2,0) to[out=-90,in=0] (0,-0.2);
     \node at (.2,0) {$\smallclockblue$};
	\draw[-] (-.2,0.01) to (-0.6,0.01);
     \node at (-0.2,0) {$\dot$};
     \node at (-0.6,0) {$\dot$};
\end{tikzpicture}}
-
\mathord{
\begin{tikzpicture}[baseline = -1mm]
  \draw[->,redp] (-0.45,-.4) to (-0.45,-.3)
        to[in=180,out=90] (-.2,.2) to[out=0,in=90] (0,0.05)
        to[out=-90,in=0] (-.2,-.2) to [out=180,in=-90] (-0.45,.3) to (-0.45,.4);
   \node at (0,0) {$\smallclockblue$};
\end{tikzpicture}
}
-
\mathord{\begin{tikzpicture}[baseline=-1mm]
\draw[->,redp] (-0.6,-0.4) to (-.6,0.4);
\draw[-,bluep] (0,-0.2) to[out=180,in=-90] (-.2,0);
\draw[->,bluep] (-0.2,0) to[out=90,in=180] (0,0.2);
\draw[-,bluep] (0,0.2) to[out=0,in=90] (0.2,0);
\draw[-,bluep] (0.2,0) to[out=-90,in=0] (0,-0.2);
     \node at (.2,0) {$\smallclockred$};
	\draw[-] (-.2,0.01) to (-0.6,0.01);
     \node at (-0.2,0) {$\dot$};
     \node at (-0.6,0) {$\dot$};
\end{tikzpicture}}
= \delta_{k,0}\:
\mathord{\begin{tikzpicture}[baseline=-1mm]
\draw[->,bluep] (0,-0.4) to (0,0.4);
\end{tikzpicture}}
\:+\: \delta_{k,0}\:
\mathord{\begin{tikzpicture}[baseline=-1mm]
\draw[->,redp] (0,-0.4) to (0,0.4);
\end{tikzpicture}}
\:.
$$
This follows from the identity in Lemma~\ref{l4} and its mirror image
under $\eta$. Note for this that the only non-zero term in the
summation on the right hand side of this
identity is the one with $a=0,b=-1$ due to the assumption that $k \geq
0$.
The argument to treat the case of the left curl is entirely similiar;
it depends ultimately on the identity obtained by applying the
symmetry $\Omega_{\blue{l}|\red{m}}$ to Lemma~\ref{l4} then rotating through $180^\circ$.

Finally, we must check (\ref{sideways}).
We just go through the argument for this for the first equation. The
proof of the second one is entirely similar; it depends ultimately on
three identities derived from Lemmas~\ref{l6}--\ref{l3b} by applying
$\Omega_{\blue{l}|\red{m}}$ then rotating.
By the definition (\ref{rotate}), the map $\Delta_{\blue{l}|\red{m}}$ sends
\begin{align*}
\mathord{
\begin{tikzpicture}[baseline = 0]
	\draw[<-] (0.28,-.3) to (-0.28,.4);
	\draw[->] (-0.28,-.3) to (0.28,.4);
\end{tikzpicture}
}
&\mapsto
\mathord{
\begin{tikzpicture}[baseline = 0]
	\draw[<-,bluep] (0.28,-.3) to (-0.28,.4);
	\draw[->,bluep] (-0.28,-.3) to (0.28,.4);
\end{tikzpicture}
}
+\mathord{
\begin{tikzpicture}[baseline = 0]
	\draw[<-,redp] (0.28,-.3) to (-0.28,.4);
	\draw[->,redp] (-0.28,-.3) to (0.28,.4);
\end{tikzpicture}
}
+\mathord{
\begin{tikzpicture}[baseline = 0]
	\draw[<-,bluep] (0.28,-.3) to (-0.28,.4);
	\draw[->,redp] (-0.28,-.3) to (0.28,.4);
\end{tikzpicture}
}+
\mathord{
\begin{tikzpicture}[baseline = 0]
	\draw[<-,redp] (0.28,-.3) to (-0.28,.4);
	\draw[->,bluep] (-0.28,-.3) to (0.28,.4);
\end{tikzpicture}
}
-\mathord{
\begin{tikzpicture}[baseline = 0]
	\draw[<-,bluep] (0.28,-.3) to (-0.28,.4);
	\draw[->,redp] (-0.28,-.3) to (0.28,.4);
	\draw[-] (-0.15,-.14) to (0.15,-.14);
     \node at (0.15,-.15) {$\dot$};
     \node at (-0.15,-.15) {$\dot$};
\end{tikzpicture}
}
+\mathord{
\begin{tikzpicture}[baseline = 0]
	\draw[<-,redp] (0.28,-.3) to (-0.28,.4);
	\draw[->,bluep] (-0.28,-.3) to (0.28,.4);
	\draw[-] (-0.15,-.14) to (0.15,-.14);
     \node at (0.15,-.15) {$\dot$};
     \node at (-0.15,-.15) {$\dot$};
\end{tikzpicture}
}
-
\mathord{
\begin{tikzpicture}[baseline = 0mm]
	\draw[<-,bluep] (0.4,0.4) to[out=-90, in=0] (0.1,-.1);
	\draw[-,bluep] (0.1,-.1) to[out = 180, in = -90] (-0.2,0.4);
	\draw[<-,redp] (1.4,-.3) to[out=90, in=0] (1.1,0.2);
	\draw[-,redp] (1.1,0.2) to[out = 180, in = 90] (0.8,-.3);
	\draw[-] (.34,0.06) to (0.88,0.06);
     \node at (0.34,0.05) {$\dot$};
     \node at (0.88,0.05) {$\dot$};
\end{tikzpicture}
}+
\mathord{
\begin{tikzpicture}[baseline = 0mm]
	\draw[<-,redp] (0.4,0.4) to[out=-90, in=0] (0.1,-.1);
	\draw[-,redp] (0.1,-.1) to[out = 180, in = -90] (-0.2,0.4);
	\draw[<-,bluep] (1.4,-.3) to[out=90, in=0] (1.1,0.2);
	\draw[-,bluep] (1.1,0.2) to[out = 180, in = 90] (0.8,-.3);
	\draw[-] (.34,0.06) to (0.88,0.06);
     \node at (0.34,0.05) {$\dot$};
     \node at (0.88,0.05) {$\dot$};
\end{tikzpicture}
}\:,\\
\mathord{
\begin{tikzpicture}[baseline = 0]
	\draw[->] (0.28,-.3) to (-0.28,.4);
	\draw[<-] (-0.28,-.3) to (0.28,.4);
\end{tikzpicture}
}&\mapsto
-\mathord{
\begin{tikzpicture}[baseline = 0]
	\draw[->,bluep] (0.28,-.3) to (-0.28,.4);
	\draw[<-,bluep] (-0.28,-.3) to (0.28,.4);
     \node at (.13,.21) {$\smallclockred$};
\node at (-.14,-.09) {$\smallanticlockred$};
\end{tikzpicture}
}
-\mathord{
\begin{tikzpicture}[baseline = 0]
	\draw[->,redp] (0.28,-.3) to (-0.28,.4);
	\draw[<-,redp] (-0.28,-.3) to (0.28,.4);
     \node at (.13,.21) {$\smallclockblue$};
\node at (-.14,-.09) {$\smallanticlockblue$};
\end{tikzpicture}
}
-\mathord{
\begin{tikzpicture}[baseline = 0]
	\draw[->,bluep] (0.28,-.3) to (-0.28,.4);
	\draw[<-,redp] (-0.28,-.3) to (0.28,.4);
     \node at (.13,.21) {$\smallclockblue$};
\node at (-.14,-.09) {$\smallanticlockblue$};
\end{tikzpicture}
}-
\mathord{
\begin{tikzpicture}[baseline = 0]
	\draw[->,redp] (0.28,-.3) to (-0.28,.4);
	\draw[<-,bluep] (-0.28,-.3) to (0.28,.4);
     \node at (.13,.21) {$\smallclockred$};
\node at (-.14,-.09) {$\smallanticlockred$};
\end{tikzpicture}
}
+\mathord{
\begin{tikzpicture}[baseline = 0]
	\draw[->,bluep] (0.35,-.3) to (-0.35,.4);
	\draw[<-,redp] (-0.35,-.3) to (0.35,.4);
	\draw[-] (-0.25,-.21) to (0.25,-.21);
     \node at (0.25,-.22) {$\dot$};
     \node at (-0.25,-.22) {$\dot$};
     \node at (.16,.23) {$\smallclockblue$};
\node at (-.11,-.06) {$\smallanticlockblue$};
\end{tikzpicture}
}
-\mathord{
\begin{tikzpicture}[baseline = 0]
	\draw[->,redp] (0.35,-.3) to (-0.35,.4);
	\draw[<-,bluep] (-0.35,-.3) to (0.35,.4);
	\draw[-] (-0.25,-.21) to (0.25,-.21);
     \node at (0.25,-.22) {$\dot$};
     \node at (-0.25,-.22) {$\dot$};
     \node at (.16,.23) {$\smallclockred$};
\node at (-.11,-.06) {$\smallanticlockred$};
\end{tikzpicture}
}
+
\mathord{
\begin{tikzpicture}[baseline = 0mm]
	\draw[-,redp] (1.4,0.4) to[out=-90, in=0] (1.1,-.1);
	\draw[->,redp] (1.1,-.1) to[out = 180, in = -90] (0.8,0.4);
	\draw[-,bluep] (.4,-.3) to[out=90, in=0] (.1,0.2);
	\draw[->,bluep] (.1,0.2) to[out = 180, in = 90] (-0.2,-.3);
	\draw[-] (.34,0.06) to (0.88,0.06);
\node at (1.35,.15) {$\smallclockblue$};
\node at (-.15,-.04) {$\smallanticlockred$};
     \node at (0.34,0.05) {$\dot$};
     \node at (0.88,0.05) {$\dot$};
\end{tikzpicture}
}-
\mathord{
\begin{tikzpicture}[baseline = 0mm]
	\draw[-,bluep] (1.4,0.4) to[out=-90, in=0] (1.1,-.1);
	\draw[->,bluep] (1.1,-.1) to[out = 180, in = -90] (0.8,0.4);
	\draw[-,redp] (.4,-.3) to[out=90, in=0] (.1,0.2);
	\draw[->,redp] (.1,0.2) to[out = 180, in = 90] (-0.2,-.3);
	\draw[-] (.34,0.06) to (0.88,0.06);
     \node at (0.34,0.05) {$\dot$};
     \node at (0.88,0.05) {$\dot$};
\node at (1.35,.15) {$\smallclockred$};
\node at (-.15,-.04) {$\smallanticlockblue$};
\end{tikzpicture}
}\:.
\end{align*}
With this, it is straightforward to compute the image under $\Delta_{\blue{l}|\red{m}}$
of the left
hand side of (\ref{sideways}). To compute the image of the right hand
side, one also needs to use (\ref{com5}).
Then one looks at
the various matrix entries of the resulting equation to reduce
to checking the following three identities
\begin{align*}
-\mathord{
\begin{tikzpicture}[baseline=-.5mm]
	\draw[->,bluep] (0,0.6) to[out=-90, in=90] (.5,0) to
        [out=-90,in=90] (0,-.6);
	\draw[<-,bluep] (.5,0.6) to[out=-90, in=90] (0,0) to
        [out=-90,in=90] (0.5,-.6);
     \node at (.39,-.15) {$\smallclockred$};
\node at (0.11,-.39) {$\smallanticlockred$};
\end{tikzpicture}
}-
\mathord{
\begin{tikzpicture}[baseline=-.5mm]
	\draw[<-,bluep] (.3,0.6) to[out=-90, in=0] (0,.1);
	\draw[-,bluep] (0,.1) to[out = 180, in = -90] (-.3,0.6);
	\draw[-,bluep] (.3,-.6) to[out=90, in=0] (0,-0.1);
	\draw[->,bluep] (0,-0.1) to[out = 180, in = 90] (-0.3,-.6);
\draw[-,redp] (.9,-0.38) to[out=0,in=-90] (1.18,0);
\draw[-,redp] (1.18,0) to[out=90,in=0] (.9,0.38);
\draw[<-,redp] (.9,0.38) to[out=180,in=90] (.62,0);
\draw[-,redp] (.622,0.02) to[out=-86,in=180] (.9,-0.38);
     \node at (1.18,0) {$\smallclockblue$};
\draw[-] (.71,-.24) to (.22,-.24);
\draw[-] (.71,.26) to (.22,.26);
      \node at (.71,-0.25) {$\dot$};
   \node at (.71,0.25) {$\dot$};
      \node at (.22,-0.25) {$\dot$};
   \node at (0.22,0.25) {$\dot$};
\node at (-0.25,-.33) {$\smallanticlockred$};
\end{tikzpicture}}
&=
\:\mathord{
\begin{tikzpicture}[baseline = -1mm]
  \draw[->,bluep] (0,-.6) to (0,.6);
  \draw[<-,bluep] (-.5,-.6) to (-.5,.6);
\end{tikzpicture}}
\:-\;\displaystyle
\sum_{\substack{a,b \geq 0 \\ c \in \Z}}
\!\!\!\!\!\begin{array}{l}
\;\quad\mathord{
\begin{tikzpicture}[baseline = 1.25mm]
  \draw[<-,bluep] (0,0.4) to[out=180,in=90] (-.2,0.2);
  \draw[-,bluep] (0.2,0.2) to[out=90,in=0] (0,.4);
 \draw[-,bluep] (-.2,0.2) to[out=-90,in=180] (0,0);
  \draw[-,bluep] (0,0) to[out=0,in=-90] (0.2,0.2);
   \node at (-0.2,0.2) {$\color{blue}\dot$};
   \node at (-.38,0.2) {$\color{blue}\scriptstyle{c}$};
\end{tikzpicture}
}\\
\mathord{
\begin{tikzpicture}[baseline = 1.25mm]
  \draw[-,redp] (0.2,0.2) to[out=90,in=0] (0,.4);
  \draw[<-,redp] (0,0.4) to[out=180,in=90] (-.2,0.2);
\draw[-,redp] (-.2,0.2) to[out=-90,in=180] (0,0);
  \draw[-,redp] (0,0) to[out=0,in=-90] (0.2,0.2);
   \node at (-0.2,0.2) {$\color{red}\dot$};
   \node at (-.85,0.2) {$\color{red}\scriptstyle{-a-b-c-3}$};
\end{tikzpicture}
}\end{array}
\mathord{
\begin{tikzpicture}[baseline=-.5mm]
	\draw[<-,bluep] (0.3,0.6) to[out=-90, in=0] (0,.1);
	\draw[-,bluep] (0,.1) to[out = 180, in = -90] (-0.3,0.6);
      \node at (0.44,-0.3) {$\color{blue}\scriptstyle{b}$};
	\draw[-,bluep] (0.3,-.6) to[out=90, in=0] (0,-0.1);
	\draw[->,bluep] (0,-0.1) to[out = 180, in = 90] (-0.3,-.6);
      \node at (0.27,-0.3) {$\color{blue}\dot$};
   \node at (0.27,0.3) {$\color{blue}\dot$};
   \node at (.43,.3) {$\color{blue}\scriptstyle{a}$};
\node at (-0.25,-.33) {$\smallanticlockred$};
\end{tikzpicture}}
\:,\end{align*}\begin{align*}
\mathord{
\begin{tikzpicture}[baseline = -3.8mm]
	\draw[<-,bluep] (0.4,0.4) to[out=-90, in=0] (0.1,-.1);
	\draw[-,bluep] (0.1,-.1) to[out = 180, in = -90] (-0.2,0.4);
	\draw[<-,redp] (.8,-.7) to [out=90,in=-90] (1.4,-.1) to[out=90, in=0] (1.1,0.2);
	\draw[-,redp] (1.1,0.2) to[out = 180, in = 90] (0.8,-.1)
        to[out=-90,in=90] (1.4,-.7);
	\draw[-] (.34,0.06) to (0.85,0.06);
     \node at (0.34,0.05) {$\dot$};
     \node at (0.85,0.05) {$\dot$};
\node at (1.38,-.22) {$\smallclockblue$};
\node at (.9,-.62) {$\smallanticlockblue$};
\end{tikzpicture}
}-\mathord{
\begin{tikzpicture}[baseline = 0mm]
	\draw[-,bluep] (.8,.8) to [out=-90,in=90] (1.4,0.2) to[out=-90, in=0] (1.1,-.1);
	\draw[->,bluep] (1.1,-.1) to[out = 180, in = -90]
        (0.8,0.2) to [out=90,in=-90] (1.4,.8);
	\draw[-,redp] (.4,-.3) to[out=90, in=0] (.1,0.2);
	\draw[->,redp] (.1,0.2) to[out = 180, in = 90] (-0.2,-.3);
	\draw[-] (.34,0.06) to (0.85,0.06);
     \node at (0.34,0.05) {$\dot$};
     \node at (0.85,0.05) {$\dot$};
\node at (1.35,.15) {$\smallclockred$};
\node at (-.15,-.04) {$\smallanticlockblue$};
\end{tikzpicture}
}&=-\sum_{\substack{a,b \geq 0 \\ c \in \Z}}
\!\!\!\!\!\begin{array}{l}
\;\quad\mathord{
\begin{tikzpicture}[baseline = 1.25mm]
  \draw[<-,bluep] (0,0.4) to[out=180,in=90] (-.2,0.2);
  \draw[-,bluep] (0.2,0.2) to[out=90,in=0] (0,.4);
 \draw[-,bluep] (-.2,0.2) to[out=-90,in=180] (0,0);
  \draw[-,bluep] (0,0) to[out=0,in=-90] (0.2,0.2);
   \node at (-0.2,0.2) {$\color{blue}\dot$};
   \node at (-.38,0.2) {$\color{blue}\scriptstyle{c}$};
\end{tikzpicture}
}\\
\mathord{
\begin{tikzpicture}[baseline = 1.25mm]
  \draw[-,redp] (0.2,0.2) to[out=90,in=0] (0,.4);
  \draw[<-,redp] (0,0.4) to[out=180,in=90] (-.2,0.2);
\draw[-,redp] (-.2,0.2) to[out=-90,in=180] (0,0);
  \draw[-,redp] (0,0) to[out=0,in=-90] (0.2,0.2);
   \node at (-0.2,0.2) {$\color{red}\dot$};
   \node at (-.85,0.2) {$\color{red}\scriptstyle{-a-b-c-3}$};
\end{tikzpicture}
}\end{array}
\mathord{
\begin{tikzpicture}[baseline=-.5mm]
	\draw[<-,bluep] (0.3,0.6) to[out=-90, in=0] (0,.1);
	\draw[-,bluep] (0,.1) to[out = 180, in = -90] (-0.3,0.6);
      \node at (0.44,-0.3) {$\color{red}\scriptstyle{b}$};
	\draw[-,redp] (0.3,-.6) to[out=90, in=0] (0,-0.1);
	\draw[->,redp] (0,-0.1) to[out = 180, in = 90] (-0.3,-.6);
      \node at (0.27,-0.3) {$\color{red}\dot$};
   \node at (0.27,0.3) {$\color{blue}\dot$};
   \node at (.43,.3) {$\color{blue}\scriptstyle{a}$};
\node at (-.25,-.3) {$\smallanticlockblue$};
\end{tikzpicture}}\:,&
-\mathord{
\begin{tikzpicture}[baseline=-.5mm]
	\draw[->,bluep] (0,0.6) to[out=-90, in=90] (.5,0) to
        [out=-90,in=90] (0,-.6);
	\draw[<-,redp] (.5,0.6) to[out=-90, in=90] (0,0) to
        [out=-90,in=90] (0.5,-.6);
     \node at (.39,-.15) {$\smallclockred$};
\node at (0.11,-.39) {$\smallanticlockred$};
\end{tikzpicture}
}+
\mathord{
\begin{tikzpicture}[baseline=-.5mm]
	\draw[->,bluep] (0,0.6) to[out=-90, in=90] (.5,0) to
        [out=-90,in=90] (0,-.6);
	\draw[<-,redp] (.5,0.6) to[out=-90, in=90] (0,0) to
        [out=-90,in=90] (0.5,-.6);
     \node at (.39,-.15) {$\smallclockred$};
\node at (0.11,-.39) {$\smallanticlockred$};
\draw[-] (.44,.42) to (.07,.42);
      \node at (.44,0.41) {$\dot$};
   \node at (.07,0.41) {$\dot$};
\draw[-] (.44,.16) to (.07,.16);
      \node at (.44,0.15) {$\dot$};
   \node at (.07,0.15) {$\dot$};
\end{tikzpicture}
}&=
\:\mathord{
\begin{tikzpicture}[baseline = -1mm]
  \draw[->,redp] (0,-.6) to (0,.6);
  \draw[<-,bluep] (-.5,-.6) to (-.5,.6);
\end{tikzpicture}}
\:.
\end{align*}
plus their images under the symmetry $\eta$.
To prove the first two of these,
simplify them by
multiplying the bottom left string with a
clockwise internal bubble and using Lemma~\ref{l1}; the resulting
identities then follow from Lemmas~\ref{l6} and
\ref{l5}, respectively.
For the final one, use Lemma~\ref{l3b} to commute the clockwise
internal bubble in the first diagram past the crossing below it, then
use Lemma~\ref{l1} and a commuting relation.
\end{proof}

\section{A new proof of the basis theorem}\label{sbt}

By a {\em module category} over $\Heis_k$, we mean a $\k$-linear category $\mathcal V$ together with a $\k$-linear monoidal functor
$\Heis_k \rightarrow \mathcal{E}nd_\k(\mathcal V)$, where $\mathcal{E}nd_\k(\mathcal V)$ denotes the strict $\k$-linear 
monoidal category consisting of $\k$-linear endofunctors and natural transformations.
Suppose that $\mathcal{V}$ and $\mathcal{W}$ are two
$\k$-linear categories.
Let $\mathcal{V}
\boxtimes \mathcal{W}$ be the $\k$-linear category
with objects that are pairs $(X, Y)$ of objects $X \in \mathcal{V}$
and $Y \in \mathcal{W}$, and morphisms defined from
$$
\Hom_{\mathcal{V}\boxtimes\mathcal{W}}((X_1,Y_1), (X_2, Y_2))
:= \Hom_{\mathcal{V}}(X_1, X_2) \otimes_\k \Hom_{\mathcal{W}}(Y_1, Y_2).
$$
The rule for
composition of morphisms in $\mathcal{V}\boxtimes\mathcal{W}$ is
 $(e\otimes f) \circ (g \otimes h) :=
(e \circ g) \otimes (f \circ h).$
If $\mathcal V$ and $\mathcal W$ are module categories
over $\blue{\Heis_l}$ and $\red{\Heis_m}$, respectively, then $\mathcal V
\boxtimes \mathcal W$ is naturally a module category over the
symmetric product
$\blue{\Heis_l}\odot\red{\Heis_m}$.
If in addition the morphism
$\mathord{
\begin{tikzpicture}[baseline = -1mm]
 	\draw[->,redp] (0.18,-.2) to (0.18,.25);
	\draw[->,bluep] (-0.38,-.2) to (-0.38,.25);
	\draw[-,dotted] (-0.38,0.01) to (0.18,0.01);
     \node at (0.18,0) {$\dot$};
     \node at (-0.38,0.01) {$\dot$};
\end{tikzpicture}}=\;
\mathord{
\begin{tikzpicture}[baseline = -1mm]
 	\draw[->,redp] (0.1,-.2) to (0.1,.25);
	\draw[->,bluep] (-0.3,-.2) to (-0.3,.25);
     \node at (0.1,0) {$\red{\dot}$};
\end{tikzpicture}
}-
\mathord{
\begin{tikzpicture}[baseline = -1mm]
 	\draw[->,redp] (0.1,-.2) to (0.1,.25);
	\draw[->,bluep] (-0.3,-.2) to (-0.3,.25);
     \node at (-0.3,0) {$\blue{\dot}$};
\end{tikzpicture}}\:
$
acts invertibly on all objects of $\mathcal V
\boxtimes \mathcal W$, then this categorical action
extends to an action of the localization
$\blue{\Heis_l} \;\overline{\odot}\;\red{\Heis_m}$
from Section~\ref{sdhc}.
Hence, we can use the categorical comultiplication $\Delta_{\blue{l}|\red{m}}$ from Theorem~\ref{comult}
to make $\Add(\mathcal V\boxtimes \mathcal W)$ into a module category over
$\Heis_{k}$ where $k = \blue{l}+\red{m}$.
In this section, we are going to use this idea to give an efficient proof of the basis
theorem for the morphism spaces in $\Heis_k$ from \cite[Theorem 1.6]{B2}.

To get started, we need a source of Heisenberg module categories.
These come from degenerate cyclotomic Hecke algebras.
Assume that $f(w), g(w)$
are monic polynomials in $\k[w]$
of degrees $l,m \geq 0$, respectively.
The {\em degenerate cyclotomic Hecke algebra}
$\cH_n^f$ associated to the polynomial $f(w)$ is the quotient $\dH_n / (f(x_1))$; in case $n=0$ we have that $\cH_0^f = \dH_0 = \k$ by convention.
This algebra has the well-known basis
\begin{equation}\label{base1}
\left\{
x_1^{a_1} \cdots x_n^{a_n} \pi \:\big|\:\pi \in \S_n, 0 \leq a_1,\dots,a_n <
l\right\};
\end{equation}
see \cite[Section 7.5]{Kbook}.
In particular, one sees from this that the natural
homomorphism $\cH_n^f \rightarrow \cH_{n+1}^f$ is injective.
The following elementary lemma is well known; cf. \cite[Proposition
2.2.2]{Kbook}.
It implies that the eigenvalues of all $x_i$ on any $\cH_n^f$-module
lie in the same cosets of $\k$ modulo $\Z$ as the roots of the
polynomial $f(w)$.

\begin{lemma}\label{stupid}
Assume that $V$ is a finite-dimensional $\dH_2$-module.
All eigenvalues of $x_2$ on $V$ are of the form
$\lambda, \lambda+1$ or
$\lambda-1$ for eigenvalues $\lambda$ of $x_1$ on $V$.
\end{lemma}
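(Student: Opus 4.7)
The plan is to reduce to a simultaneous genuine eigenvector of the commuting pair $(x_1, x_2)$ and then exploit the commutation relation between $s_1$ and $x_1, x_2$ to pin down the eigenvalues. First, since the eigenvalues of a $\k$-linear operator that lie in $\k$ are exactly the roots in $\k$ of the characteristic polynomial, I may harmlessly extend scalars and assume $\k$ is algebraically closed. Given an eigenvalue $\mu \in \k$ of $x_2$ on $V$, the generalized eigenspace $V_\mu = \ker(x_2 - \mu)^N$ (for $N$ large) is nonzero and, since $x_1$ commutes with $x_2$, is $x_1$-invariant. Picking an eigenvalue $\lambda$ of $x_1$ on $V_\mu$ and working inside the joint generalized eigenspace of the commuting nilpotent operators $x_1 - \lambda$ and $x_2 - \mu$, I can find a common kernel vector, i.e., a nonzero $v \in V$ with $x_1 v = \lambda v$ and $x_2 v = \mu v$.

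Next, from (\ref{oven}) applied to $f = x_1, x_2$ I get $x_2 s_1 = s_1 x_1 + 1$ and $x_1 s_1 = s_1 x_2 - 1$, so on the vector $s_1 v$ I compute
\[
x_1(s_1 v) = \mu(s_1 v) - v, \qquad x_2(s_1 v) = \lambda(s_1 v) + v.
\]
If $v$ and $s_1 v$ are linearly independent, the $2 \times 2$ matrix of $x_1$ on the $s_1$-invariant subspace $\Span(v, s_1 v)$ is upper triangular with diagonal entries $\lambda$ and $\mu$, so $\mu$ is itself an eigenvalue of $x_1$ on $V$ and the claim holds with $\lambda$ replaced by $\mu$. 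If instead $s_1 v = c v$ for some $c \in \k$, then $s_1^2 = 1$ forces $c = \pm 1$, and substituting into $x_1 s_1 v = \mu s_1 v - v$ gives $c\lambda = c\mu - 1$, i.e., $\lambda = \mu \mp 1$, so $\lambda \in \{\mu - 1, \mu + 1\}$ is the sought-after eigenvalue of $x_1$.

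There is no substantive obstacle here: the only thing to be careful about is that $\lambda$ is a priori only an eigenvalue of $x_1$ on the generalized eigenspace $V_\mu$, which is handled by taking a genuine joint eigenvector inside the joint generalized eigenspace of the two commuting nilpotents. Once one has a true simultaneous eigenvector $v$, the triangular action of $x_1$ on $\Span(v, s_1 v)$ is the whole content of the lemma.
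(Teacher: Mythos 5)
Your proof is correct and follows essentially the same route as the paper's: take a simultaneous eigenvector $v$ of $x_1,x_2$, use the relations $x_2 s_1 = s_1 x_1 + 1$ and $x_1 s_1 = s_1 x_2 - 1$ to see that $x_1$ acts triangularly on $\Span(v, s_1 v)$ with diagonal entries $\lambda,\mu$ when $v$ and $s_1 v$ are independent, and extract $\mu=\lambda\pm 1$ from $s_1 v = \pm v$ otherwise. The only difference is that you carefully justify the existence of a joint eigenvector (via scalar extension and generalized eigenspaces of the commuting pair), a step the paper takes for granted.
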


\begin{proof}
For the proof, we may assume that the ground field $\k$ is algebraically closed.
Let
$v \in V$ be a simultaneous eigenvector for the commuting operators
$x_1$ and $x_2$ of
eigenvalues $\lambda_1$ and $\lambda_2$, respectively.
If $s_1 v = v$ (resp., $s_1 v = - v$)
then $\lambda_2 = \lambda_1+1$ (resp., $\lambda_2 = \lambda_1-1$), as
follows easily from the relation $x_2 v = (s_1 x_1+1) s_1 v$.
Otherwise, $v$ and $s_1 v$ are linearly independent, in which case
the matrix describing the action of
$x_1$ on the subspace with basis $\{v, s_1 v\}$ is
$\left(\begin{array}{rr}\lambda_1&-1\\0&\lambda_2\end{array}\right).$
So $\lambda_2$ is another eigenvalue of $x_1$ on $V$.
\end{proof}

To the polynomials $f(w)$ and $g(w)$, we are going to associate a
$\Heis_k$-module category $\mathcal{V}(f|g)$. As
a $\k$-linear category, this is defined 
from
\begin{equation*}
\mathcal{V}(f|g) := \Add(\mathcal{V}(f)\boxtimes \mathcal{V}(g)^\vee)
\end{equation*}
where
\begin{equation}\label{radio}
\mathcal{V}(f) := \bigoplus_{n \geq 0} \cH_n^f \pmd,
\qquad
\mathcal{V}(g)^\vee := \bigoplus_{n \geq 0} \cH_n^g \pmd.
\end{equation}
To make $\mathcal{V}(f|g)$ into a module category, we first make $\mathcal{V}(f)$ and $\mathcal{V}(g)^\vee$ into $\Heis_{-l}-$ and $\Heis_m$-module categories, respectively.
According to \cite[(1.23)]{B2}, there is a strict $\k$-linear monoidal functor
\begin{equation}\label{psif}
\Psi_f:\Heis_{-l} \rightarrow \mathcal{E}nd_\k\left(\mathcal{V}(f)\right)
\end{equation}
sending $\up$ (resp., $\down$) to the endofunctor defined on $M \in
\cH_n^f\pmd$ by the induction functor $\ind_n^{n+1} = \cH_{n+1}^f \otimes_{\cH_n^f} -$ (resp., the restriction functor $\res^n_{n-1}$).
On generating morphisms, $\Psi_f$ sends
\begin{itemize}
\item
$\mathord{
\begin{tikzpicture}[baseline = -.5mm]
	\draw[->] (0.08,-.2) to (0.08,.3);
      \node at (0.08,0.05) {$\dot$};
\end{tikzpicture}
}\:$
to the natural transformation defined on
a projective $\cH_n^f$-module $M$
by the map
$
\cH_{n+1}^f\otimes_{\cH_n^f} M \rightarrow
\cH_{n+1}^f\otimes_{\cH_n^f} M, h \otimes v \mapsto h x_{n+1}
\otimes v$;
\item
$\mathord{
\begin{tikzpicture}[baseline = -.5mm]
	\draw[->] (0.2,-.2) to (-0.2,.3);
	\draw[->] (-0.2,-.2) to (0.2,.3);
\end{tikzpicture}
}\:$
to the natural transformation defined on
a projective
$\cH_n^f$-module $M$ by
the map
$\cH_{n+2}^f \otimes_{\cH_n^f} M \rightarrow
\cH_{n+2}^f \otimes_{\cH_n^f} M,
h \otimes v \mapsto h s_{n+1}
\otimes v$;
\item
$\mathord{
\begin{tikzpicture}[baseline = .5mm]
	\draw[<-] (0.3,0.3) to[out=-90, in=0] (0.1,0);
	\draw[-] (0.1,0) to[out = 180, in = -90] (-0.1,0.3);
\end{tikzpicture}
}\:$ and
$\:\mathord{
\begin{tikzpicture}[baseline = .5mm]
	\draw[<-] (0.3,0) to[out=90, in=0] (0.1,0.3);
	\draw[-] (0.1,0.3) to[out = 180, in = 90] (-0.1,0);
\end{tikzpicture}
}\:$
to the natural transformations defined
by the
unit and counit of the canonical adjunction making $(\ind_n^{n+1},
\res_n^{n+1})$ into an adjoint pair.
\end{itemize}
Thus we have made $\mathcal{V}(f)$ into a module category over
$\Heis_{-l}$.
Similarly, switching the roles of induction and restriction using
$\jmath_n$ in place of $\imath_n$,
we make $\mathcal{V}(g)^\vee$ into a
$\Heis_m$-module category via the strict
$\k$-linear monoidal functor
\begin{equation}\label{psig}
\Psi_g^\vee:\Heis_{m} \rightarrow \mathcal{E}nd_\k\left(\mathcal{V}(g)^\vee\right)
\end{equation}
sending $\down$ (resp., $\up$) to the endofunctor defined on $M \in
\cH_n^g\pmd$ by the induction functor $\ind_n^{n+1} = \cH_{n+1}^g \otimes_{\cH_n^g} -$ (resp., the restriction functor $\res^n_{n-1}$).
On generating morphisms, $\Psi_g^\vee$ sends
\begin{itemize}
\item
$\mathord{
\begin{tikzpicture}[baseline = -.5mm]
	\draw[<-] (0.08,-.2) to (0.08,.3);
      \node at (0.08,0.05) {$\dot$};
\end{tikzpicture}
}\:$
to the natural transformation defined on
a projective $\cH_n^g$-module $M$
by the map
$
\cH_{n+1}^g\otimes_{\cH_n^g} M \rightarrow
\cH_{n+1}^f\otimes_{\cH_n^g} M, h \otimes v \mapsto h x_{n+1}
\otimes v$;
\item
$\mathord{
\begin{tikzpicture}[baseline = -.5mm]
	\draw[<-] (0.2,-.2) to (-0.2,.3);
	\draw[<-] (-0.2,-.2) to (0.2,.3);
\end{tikzpicture}
}\:$
to the natural transformation defined on
a projective
$\cH_n^g$-module $M$ by
the map
$\cH_{n+2}^g \otimes_{\cH_n^g} M \rightarrow
\cH_{n+2}^g \otimes_{\cH_n^g} M,
h \otimes v \mapsto -h s_{n+1}
\otimes v$;
\item
$\mathord{
\begin{tikzpicture}[baseline = .5mm]
	\draw[-] (0.3,0.3) to[out=-90, in=0] (0.1,0);
	\draw[->] (0.1,0) to[out = 180, in = -90] (-0.1,0.3);
\end{tikzpicture}
}\:$ and
$\:\mathord{
\begin{tikzpicture}[baseline = .5mm]
	\draw[-] (0.3,0) to[out=90, in=0] (0.1,0.3);
	\draw[->] (0.1,0.3) to[out = 180, in = 90] (-0.1,0);
\end{tikzpicture}
}\:$
to the natural transformations defined
by the
unit and counit of the canonical adjunction making $(\ind_n^{n+1},
\res_n^{n+1})$ into an adjoint pair.
\end{itemize}
The proof of this is similar to the argument explained in \cite[(1.23)]{B2},
using one of the alternative presentations for $\Heis_m$ from Remark~\ref{spicyramen}.

\begin{lemma}\label{starbucks}
Suppose that $f(w) = (w-\lambda_1)\cdots(w-\lambda_l)$
and $g(w) = (w-\mu_1)\cdots(w-\mu_m)$ for $\lambda_i, \mu_j \in \k$ such that $\lambda_i - \mu_j \notin \Z$ for all $i,j$.
In the categorical action of
$\blue{\Heis_{-l}} \odot \red{\Heis_m}$ on $\mathcal{V}(f) \boxtimes
\mathcal V(g)^\vee$ arising from (\ref{psif})--(\ref{psig}),
 $\mathord{
\begin{tikzpicture}[baseline = -1mm]
 	\draw[->,redp] (0.18,-.2) to (0.18,.25);
	\draw[->,bluep] (-0.38,-.2) to (-0.38,.25);
	\draw[-,dotted] (-0.38,0.01) to (0.18,0.01);
     \node at (0.18,0) {$\dot$};
     \node at (-0.38,0.01) {$\dot$};
\end{tikzpicture}}$
acts invertibly on every object.
\end{lemma}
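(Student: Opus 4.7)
The plan is to unpack the morphism's action on each object of $\mathcal V(f|g)$ in elementary terms, then reduce invertibility to a spectral question handled by Lemma~\ref{stupid}.

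First I would fix an object $(M, N)$ of $\mathcal V(f|g)$ with $M \in \cH_n^f\pmd$ and $N \in \cH_p^g\pmd$ and compute the action. Under the symmetric product of $\Psi_f$ and $\Psi_g^\vee$, the functor $\blue{\up}\otimes\red{\up}$ sends $(M,N)$ to $(\ind_n^{n+1}M,\res^p_{p-1}N)$. By definition $\Psi_f$ sends the blue upward dot to right multiplication by $x_{n+1}$ on $\ind_n^{n+1}M = \cH_{n+1}^f\otimes_{\cH_n^f} M$; and unwinding the pivotal structure of $\Heis_m$ from the downward-dot presentation of Remark~\ref{spicyramen}, $\Psi_g^\vee$ sends the red upward dot to left multiplication by $x_p$ on $\res^p_{p-1}N = N$. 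Hence the morphism we must invert acts on the finite-dimensional $\k$-vector space $\ind_n^{n+1}M\otimes_\k N$ as
$$D \;:=\; 1\otimes m_{x_p} \;-\; m_{x_{n+1}}\otimes 1.$$
Because $m_{x_{n+1}}$ is $\cH_{n+1}^f$-linear and $m_{x_p}$ is $\cH_{p-1}^g$-linear, any $\k$-linear inverse of $D$ is automatically $(\cH_{n+1}^f\otimes_\k\cH_{p-1}^g)$-linear, so it suffices to check $\k$-linear invertibility on this finite-dimensional vector space.

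Next I would identify the spectra of these two multiplication operators. The cyclotomic relation $f(x_1)=0$ in $\cH_{n+1}^f$ forces every eigenvalue (in $\bar\k$) of $x_1$ on any $\cH_{n+1}^f$-module to lie in $\{\lambda_1,\dots,\lambda_l\}$. Applying Lemma~\ref{stupid} inductively to the chain of $\dH_2$-subalgebras generated by consecutive pairs $x_j,x_{j+1},s_j$ inside $\dH_{n+1}$, every eigenvalue of $m_{x_{n+1}}$ on $\ind_n^{n+1}M$ must lie in $\bigcup_i(\lambda_i+\Z)$; symmetrically every eigenvalue of $m_{x_p}$ on $N$ lies in $\bigcup_j(\mu_j+\Z)$. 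The hypothesis $\lambda_i-\mu_j\notin\Z$ is exactly the statement that these two subsets of $\k$ are disjoint.

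Finally, let $P,Q\in\k[w]$ denote the minimal polynomials of $m_{x_{n+1}}$ on $\ind_n^{n+1}M$ and of $m_{x_p}$ on $N$. By the previous paragraph the roots of $P$ and $Q$ in $\bar\k$ are disjoint, so $\gcd(P,Q)=1$ in $\k[w]$; Bezout furnishes $A,B\in\k[w]$ with $AP+BQ=1$, and evaluating at $m_{x_p}$ on $N$ (where $Q(m_{x_p})=0$) shows that $P(m_{x_p})$ is invertible with inverse $A(m_{x_p})$. Factoring $P(u)-P(v)=(u-v)\tilde P(u,v)$ in $\k[u,v]$ and substituting the commuting operators $u=m_{x_{n+1}}\otimes 1$, $v=1\otimes m_{x_p}$ yields
$$D\cdot\tilde P\bigl(m_{x_{n+1}}\otimes 1,\;1\otimes m_{x_p}\bigr) \;=\; -\,1\otimes P(m_{x_p}),$$
whose right-hand side is invertible. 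Working inside the commutative subalgebra of $\End_\k(\ind_n^{n+1}M\otimes_\k N)$ generated by $m_{x_{n+1}}\otimes 1$ and $1\otimes m_{x_p}$, this equation exhibits $D$ as a divisor of a unit, hence a unit; the inverse can be written explicitly as $-\tilde P(m_{x_{n+1}}\otimes 1,1\otimes m_{x_p})\cdot(1\otimes A(m_{x_p}))$. The main obstacle is really just the pivotal-structure bookkeeping in the first paragraph, verifying that the red upward dot indeed acts as $m_{x_p}$; the rest of the argument is routine spectral linear algebra.
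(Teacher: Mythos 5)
Your proof is correct and takes essentially the same route as the paper: both reduce the claim, via Lemma~\ref{stupid}, to the observation that the two commuting dot operators have disjoint spectra (contained in $\bigcup_i(\lambda_i+\Z)$ and $\bigcup_j(\mu_j+\Z)$ respectively), whence their difference is invertible on each finite-dimensional object. Your Bezout/minimal-polynomial construction of an explicit inverse is a pleasant refinement of the paper's purely spectral last step; the only blemish is a harmless sign slip in the displayed identity, which should read $D\cdot\tilde P\bigl(m_{x_{n+1}}\otimes 1,\;1\otimes m_{x_p}\bigr)=+\,1\otimes P(m_{x_p})$.
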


\begin{proof}
Lemma~\ref{stupid} and the genericity assumption imply that
the set of 
eigenvalues of $x_1,\dots,x_n$ on any
finite-dimensional $H_n^f$-module is disjoint from the 
set of eigenvalues of $x_1,\dots,x_m$ on any finite-dimensional $H_m^g$-module.
Consequently, the commuting endomorphisms
defined by evaluating
$\:\mathord{
\begin{tikzpicture}[baseline = -1mm]
 	\draw[->,redp] (0.18,-.2) to (0.18,.25);
	\draw[->,bluep] (-0.18,-.2) to (-0.18,.25);
     \node at (0.18,0) {$\red{\dot}$};
\end{tikzpicture}}
$
and
$\mathord{
\begin{tikzpicture}[baseline = -1mm]
 	\draw[->,redp] (0.18,-.2) to (0.18,.25);
	\draw[->,bluep] (-0.18,-.2) to (-0.18,.25);
     \node at (-0.18,0) {$\blue{\dot}$};
\end{tikzpicture}}\:
$
on an object of
$\mathcal{V}(f) \boxtimes
\mathcal V(g)^\vee$
have disjoint spectra.
Hence, all eigenvalues of the endomorphism
defined by
$\mathord{
\begin{tikzpicture}[baseline = -1mm]
 	\draw[->,redp] (0.18,-.2) to (0.18,.25);
	\draw[->,bluep] (-0.38,-.2) to (-0.38,.25);
	\draw[-,dotted] (-0.38,0.01) to (0.18,0.01);
     \node at (0.18,0) {$\dot$};
     \node at (-0.38,0.01) {$\dot$};
\end{tikzpicture}}=
\: \mathord{
\begin{tikzpicture}[baseline = -1mm]
 	\draw[->,redp] (0.18,-.2) to (0.18,.25);
	\draw[->,bluep] (-0.18,-.2) to (-0.18,.25);
     \node at (0.18,0) {$\red{\dot}$};
\end{tikzpicture}}
-\mathord{
\begin{tikzpicture}[baseline = -1mm]
 	\draw[->,redp] (0.18,-.2) to (0.18,.25);
	\draw[->,bluep] (-0.18,-.2) to (-0.18,.25);
     \node at (-0.18,0.01) {$\blue{\dot}$};
\end{tikzpicture}}
\:$
lie in $\k^\times$.
Consequently, this endomorphism is invertible.
\end{proof}

As explained in the opening paragraph of the section, it follows
that
there is a strict $\k$-linear monoidal functor
$\blue{\Heis_{-l}}\;\overline{\odot}\;\red{\Heis_m}\rightarrow
\mathcal{E}nd_\k\left(\mathcal{V}(f)\boxtimes \mathcal{V}(g)^\vee)\right)$
for $f(w), g(w)$ satisfying the genericity assumption from Lemma~\ref{starbucks}.
Passing to the additive envelope and composing with the
categorical comultiplication $\Delta_{\blue{-l}|\red{m}}$, we obtain a strict
$\k$-linear monoidal functor
\begin{equation}\label{hollow}
\Psi_{f|g}:\Heis_{m-l}\rightarrow
\mathcal{E}nd_\k\left(\mathcal{V}(f|g)\right).
\end{equation}
Thus we have made $\mathcal{V}(f|g)$ into a module category
over $\Heis_{m-l}$.

\begin{lemma}\label{music}
In the categorical action of $\Heis_{m-l}$ on
$\mathcal{V}(f|g)$
just defined,
the generating functions
$\anticlockleft\,(w)$ and
$\clockright\,(w)$ from (\ref{igq})--(\ref{igp}) act on
$(H_0^f, H_0^g)$
by multiplication by
$g(w) / f(w) \in w^{m-l}\k\llbracket w^{-1}\rrbracket$ and
$f(w)/g(w)\in w^{l-m}\k\llbracket w^{-1}\rrbracket$, respectively.
\end{lemma}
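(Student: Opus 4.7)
The plan is to reduce the computation to the analogous single-color statements for the two separate tensor factors $\mathcal V(f)$ and $\mathcal V(g)^\vee$, and then to combine using the multiplicativity of the categorical comultiplication on bubble generating functions established in the proof of Theorem~\ref{comult}. By definition, $\Psi_{f|g}$ is the composite of $\Delta_{\blue{-l}|\red{m}}$ with the external tensor product of the actions $\Psi_f$ and $\Psi_g^\vee$. Moreover, the identities (\ref{rowing1})--(\ref{rowing2}) give
\[
\Delta_{\blue{-l}|\red{m}}(\clockright(w)) = \blue{\clockright}(w)\cdot\red{\clockright}(w),\qquad \Delta_{\blue{-l}|\red{m}}(\anticlockleft(w)) = \blue{\anticlockleft}(w)\cdot\red{\anticlockleft}(w),
\]
and the colored bubble generating functions act on $(H_0^f,H_0^g)$ only through the corresponding tensor factor. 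Hence, everything reduces to showing the monochrome claims: $\Psi_f(\clockright(w))$ acts on $H_0^f$ as $f(w)$, and $\Psi_g^\vee(\anticlockleft(w))$ acts on $H_0^g$ as $g(w)$; their inverses are then automatic from the infinite Grassmannian relation (\ref{igproper}).

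To prove the first monochrome statement, I would unpack the definition (\ref{psif}): the clockwise bubble with $n$ dots evaluated on $H_0^f \cong \k$ is the scalar obtained as the composite
\[
\k = H_0^f \longrightarrow H_1^f \xrightarrow{\;\cdot x_1^n\;} H_1^f \longrightarrow \k = H_0^f,
\]
where the outer maps are the unit of $(\res,\ind)$ and the Frobenius trace of its counit, associated to $H_1^f \cong \k[x_1]/(f(x_1))$. Rather than computing this trace directly, I would exploit the cyclotomic relation $f(x_1) = 0$ via the curl relation (\ref{dog1}): attaching a dot-powered clockwise bubble to the right curl on an up strand and applying $\Psi_f$, the cyclotomic vanishing $\Psi_f(f(\text{dot})) = 0$ on an $\ind$-strand translates into a single polynomial identity for the power series $\Psi_f(\clockright(w))$. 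Combined with the leading-term normalization $\clockright(w) \in w^l + \cdots$ from (\ref{igp}) and the vanishing of the fake bubbles dictated by (\ref{bubbles}) at central charge $-l$, this pins down $\Psi_f(\clockright(w)) = f(w)$ uniquely. The analogous statement $\Psi_g^\vee(\anticlockleft(w)) = g(w)$ on $H_0^g$ is proved by an entirely parallel argument, using one of the equivalent presentations of $\Heis_m$ from Remark~\ref{spicyramen} in which the roles of $\up$ and $\down$ (hence of clockwise and counterclockwise bubbles) are interchanged, so that $\Psi_g^\vee$ plays the role of a ``cyclotomic'' action of $\Heis_m$ in which the counterclockwise bubble now carries the polynomial data.

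Substituting the single-color formulas into the multiplicative identities above yields $\Psi_{f|g}(\clockright(w)) = f(w)/g(w)$ and $\Psi_{f|g}(\anticlockleft(w)) = g(w)/f(w)$ on $(H_0^f,H_0^g)$, matching the degrees predicted by the inclusions $\clockright(w) \in w^{l-m}+\cdots$ and $\anticlockleft(w) \in w^{m-l}+\cdots$. The main obstacle will be the precise identification of $\Psi_f(\clockright(w))$ with $f(w)$ on the nose: the qualitative picture (a degree-$l$ power series forced by the cyclotomic relation to be a specific monic polynomial) is clear, but the sign conventions in (\ref{igp}), the $(-1)^{n-1}$ appearing in (\ref{beta}), and the normalization of the Frobenius form on $H_1^f$ must all be tracked carefully to avoid ending up with a sign-twisted variant such as $f(-w)$ or $\pm f(w)$.
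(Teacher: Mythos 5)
Your proposal is correct and follows essentially the same route as the paper: reduce via the multiplicativity identities (\ref{rowing1})--(\ref{rowing2}) for $\Delta_{-l|m}$ to the monochrome statements for $\Psi_f$ on $H_0^f$ and $\Psi_g^\vee$ on $H_0^g$, then multiply. The one divergence is at the monochrome step, which the paper disposes of simply by citing \cite[Lemma 1.8]{B2} with $g(w)=1$ (resp.\ $f(w)=1$) --- thereby sidestepping exactly the sign and normalization bookkeeping you flag as the main obstacle --- whereas you sketch a direct computation. Your sketch is workable, though the cleanest way to run it is not via the curl relation but by closing off the endomorphism of $\up$ given by multiplication by $x_1^nf(x_1)$, which $\Psi_f$ kills on $H_0^f$ by the cyclotomic relation; this yields the linear recurrence $\sum_i f_i\cdot(\text{counterclockwise bubble with } n+l-i \text{ dots})=0$ for all $n\ge 0$, and together with the fake-bubble values from (\ref{ig}) at central charge $-l$ (the bubble with $a$ dots is $\delta_{a,l-1}$ for $a<l$) this forces $f(w)\,\anticlockleft\,(w)=1$ on $H_0^f$, whence $\clockright\,(w)=f(w)$ by (\ref{igproper}) with no further sign-chasing. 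The dual statement for $\Psi_g^\vee$ follows as you say by the $\up\leftrightarrow\down$ presentation of Remark~\ref{spicyramen}.
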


\begin{proof}
Applying \cite[Lemma 1.8]{B2} with $f'(w) = 1$,
we get that
\begin{align*}\Psi_f\left(\anticlockleftblue\,(w)\right)_{H_0^f}
  &=f(w)^{-1},
&\Psi_f\left(\clockrightblue\,(w)\right)_{H_0^f}&=f(w).
\end{align*}
Similarly, applying it with $f(w) = 1$,
we get that
\begin{align*}
\Psi_g^\vee\left(\anticlockleftred\,(w)\right)_{H_0^g} &=g(w),
&
\Psi_g^\vee\left(\clockrightred\,(w)\right)_{H_0^g}&=g(w)^{-1}.
\end{align*}
Now use the identities (\ref{rowing1})--(\ref{rowing2}).
\end{proof}

Now we can prove the basis theorem.
To recall its statement,
let $X = X_r\otimes \cdots\otimes X_1$ and $Y = Y_s \otimes\cdots
\otimes Y_1$ be objects of
$\Heis_k$ for $X_i, Y_j \in \{\up,\down\}$.
An {\em $(X,Y)$-matching} is a bijection between the
sets $\{i\:|\:X_i = \up\}\sqcup\{j\:|\:Y_j = \down\}$
and $\{i\:|\:X_i = \down\}\sqcup\{j\:|\:Y_j = \up\}$.
A {\em reduced lift} of an $(X,Y)$-matching means a diagram
representing a morphism $X \rightarrow Y$ such that
\begin{itemize}
\item
the endpoints of each string are points which
correspond under the given matching;
\item
there are no floating bubbles and no dots on any string;
\item there are no self-intersections of strings and
no two strings cross each other more than once.
\end{itemize}
Fix a set $B(X,Y)$ consisting of a choice of reduced lift for each of the
$(X,Y)$-matchings.
Let $B_{\circ}(X,Y)$ be the set of all morphisms that can be obtained
from the elements of $B(X,Y)$ by adding dots labelled with
non-negative integer multiplicities
near to the terminus of each string.
Recall the homomorphism $\beta$ from (\ref{beta}).

\begin{theorem} \label{basis}
For
$X, Y \in \Heis_k$,
the morphism space $\Hom_{\Heis_k}(X,Y)$ is a free right
$\Sym$-module with basis $B_{\circ}(X,Y)$, where the right
$\Sym$-module structure is defined by $\phi \theta := \phi \otimes
\beta(\theta)$.
\end{theorem}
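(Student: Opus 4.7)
The plan is to prove spanning and linear independence separately, using the module category construction $\mathcal V(f|g)$ and the categorical comultiplication $\Delta_{\blue{-l}|\red{m}}$ for the second (harder) part.

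For spanning, one uses the defining relations to put any diagram into a normal form. The alternating braid relation (\ref{altbraid}), the curl relations (\ref{dog1}), and the sideways crossing relations (\ref{sideways}) rewrite any diagram as a linear combination of reduced lifts of the underlying $(X,Y)$-matching plus diagrams with strictly fewer crossings, so induction on crossing number delivers a sum of reduced lifts. The dot-sliding relation (\ref{heckea}) pushes all dots to the termini of the strings, while (\ref{bs1}) and (\ref{curls}) move all closed bubbles and curls into the outside region, where (\ref{beta}) absorbs them into coefficients in $\beta(\Sym)$.

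For linear independence we pass to a concrete model. Using the strict pivotal structure to rotate through $180^\circ$, we may reduce to the case $X=\unit$; this identifies reduced lifts with reduced lifts bijectively. Choose integers $l,m\ge 0$ with $m-l=k$ and, extending scalars if necessary, algebraically independent $\lambda_1,\dots,\lambda_l,\mu_1,\dots,\mu_m$ with $\lambda_i-\mu_j\notin\Z$. Set $f(w)=\prod_i(w-\lambda_i)$ and $g(w)=\prod_j(w-\mu_j)$; then Lemma~\ref{starbucks} makes $\mathcal V(f|g)$ into a $\Heis_k$-module category via the functor $\Psi_{f|g}$ of (\ref{hollow}). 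Given a hypothetical relation $\sum_\phi \phi\,\beta(\theta_\phi)=0$ in $\Hom_{\Heis_k}(\unit,Y)$ with $\phi\in B_\circ(\unit,Y)$ and $\theta_\phi\in\Sym$, apply $\Psi_{f|g}$ and evaluate at the vacuum object $(H_0^f,H_0^g)=(\k,\k)$. By Lemma~\ref{music} the bubble factor $\beta(\theta_\phi)$ acts by a scalar in $\k[\lambda_\bullet,\mu_\bullet]$ recorded by the coefficients of $f(w)/g(w)$ and $g(w)/f(w)$; since these generating functions determine $\Sym$ and the roots are algebraically independent, the map $\theta\mapsto\Psi_{f|g}(\beta(\theta))_{(\k,\k)}$ is injective on $\Sym$. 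Thus each $\theta_\phi$ is pinned down once we know the scalar it contributes to the evaluation.

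The main obstacle, and the heart of the proof, is to show that distinct reduced lifts $\phi\in B_\circ(\unit,Y)$ map to linearly independent operators under this evaluation. Expanding $\Psi_{f|g}(\phi)$ via the formulas (\ref{com0})--(\ref{com1}) for $\Delta_{\blue{-l}|\red{m}}$ produces a sum over all two-colorings of the strands of $\phi$, each summand being a composition of induction and restriction operators on degenerate cyclotomic Hecke algebra modules, with dots transported to polynomial multiplications on these modules whose PBW basis is (\ref{base1}). The plan is to induct on the number of strings in $Y$, stripping an outermost string at a time: its color (blue or red) is forced by the residue of its endpoint modulo $\Z$, and given freedom to vary $l,m$ and the roots of $f,g$, one can isolate any prescribed dot multiplicity on any strand. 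The inductive step uses Frobenius reciprocity for the cyclotomic induction/restriction adjunction and the faithfulness of the action of $\dH_n$ on $\bigoplus_n \cH_n^f\pmd$ established by the basis (\ref{base1}); the dumbbell correction terms in (\ref{com1}) become lower-order in a natural filtration by total dot multiplicity and so do not interfere. Once this strand-by-strand separation is established, the evaluation is injective on $B_\circ(\unit,Y)$ modulo the $\Sym$-action, forcing $\theta_\phi=0$ for each $\phi$ and completing the proof.
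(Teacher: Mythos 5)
Your spanning argument and the core idea for linear independence (evaluate in the cyclotomic module categories $\mathcal V(f|g)$ built from the categorical comultiplication, and use indeterminate parameters to detect $\Sym$ via Lemma~\ref{music}) are exactly the strategy of the paper. The gap is in how you handle linear independence for a \emph{general} word $Y$. The paper does not attempt a direct ``strand-by-strand'' separation of the reduced lifts under the evaluation functor; instead it first proves the special case $X=Y=\up^{\otimes n}$ (equivalently $Y=\down^{\otimes n}\otimes\up^{\otimes n}$), where evaluating at the vacuum $({_\mathbb{K}}H_0^f,{_\mathbb{K}}H_0^g)$ kills every summand of the comultiplication except the one landing in ${_\mathbb{K}}H_n^f$, so the reduced lifts become literal PBW monomials $x_1^{a_1}\cdots x_n^{a_n}\pi$ and independence is immediate from \eqref{base1} (this is also why the paper takes $f(w)=w^l$ with $l$ larger than all dot multiplicities, rather than generic roots). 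The general case is then reduced to this one by induction on $Y$, multiplying a chosen $\up\otimes\down$ subword by a sideways crossing to get a $\Sym$-linear injection $s:\Hom(\unit,Y)\hookrightarrow\Hom(\unit,Z)$ and checking, in three cases, that $s$ carries $B_\circ(\unit,Y)$ to an ``upper triangular'' family inside $B_\circ(\unit,Z)$.

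Your proposal replaces this reduction with the assertion that distinct reduced lifts of $\unit\to Y$ evaluate to linearly independent operators because colors are forced by residues and ``the dumbbell correction terms become lower-order in a filtration by total dot multiplicity.'' That is precisely the step that fails to be routine: a reduced lift with a leftward cup, once hit with the sideways crossing (or, in your setup, once its cups and caps are unwound through the adjunctions), produces dotted curls which expand by \eqref{dog1} into rightward cups of \emph{higher} dot multiplicity plus bubble terms that re-enter the $\Sym$-coefficients; likewise resolving diagrams whose strings cross twice via \eqref{sideways} generates both fewer-crossing terms and new dotted cups. These corrections mix different elements of $B_\circ(\unit,Y)$ with different $\Sym$-coefficients, so one needs an explicit ordering on $B_\circ(\unit,Z)$ (the paper's $B_{1,0}<B_{1,1}<\cdots<B_2<B_3$) and a unitriangularity statement to conclude. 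Without either that combinatorial bookkeeping or a genuinely worked-out version of your residue/filtration argument, the linear independence for general $Y$ is not established. The special case and the $\Sym$-injectivity you describe are fine; the missing content is the reduction from general $Y$ to the special case.
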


\begin{proof}
We just prove this when $k \leq 0$; the result for $k \geq 0$ then follows
by applying $\Omega_k$.
Let $X = X_r\otimes \cdots \otimes X_1$ and $Y = Y_s\otimes
\cdots\otimes Y_1$ be two objects.

We first observe that $B_{\circ}(X,Y)$ spans
$\Hom_{\Heis_k}(X,Y)$ as a right $\Sym$-module.
This is because there is a ``straightening rule'' allowing
any diagram representing a morphism $X \rightarrow Y$ as a linear
combination of the ones in $B_{\circ}(X,Y)$. This proceeds by
induction on the number of crossings.
Dots can be moved past crossings modulo a correction term with fewer
crossings, so we can assume that all dots are at the termini of their
strings.
Also we can use the relations
(\ref{hecke}), (\ref{sideways}), (\ref{dog1}) and (\ref{altbraid}) to move strings into the
same configuration as one of the chosen reduced lifts. Again this may produce
correction terms with fewer crossings plus some floating
bubbles. Finally floating bubbles can be moved to the right hand
edge using (\ref{bs1}), where they become scalars in $\Sym$.

It remains to prove the linear independence.
The main step is to do this in
the special case that $X = Y =\: \up^{\otimes n}$.
Take a linear relation $\sum_{i =1}^N \phi_i \otimes \beta(\theta_i)=0$
for $\phi_i \in B_\circ(X,Y)$ and $\theta_i \in \Sym$.
Choose $l \geq m \gg 0$ so that
\begin{itemize}
\item
$k=m-l$;
\item
the multiplicities of dots in all $\phi_i$ arising in this linear relation
are $< l$;
\item
 all of the symmetric functions $\theta_i \in
\Sym$
are polynomials in
the elementary symmetric functions
$\e_1,\dots,\e_m$.
\end{itemize}
Let $u_1,\dots,u_m$ be indeterminates and let
$\mathbb{K}$ be the algebraic closure of
$\k(u_1,\dots,u_m)$.
We are going to work now with algebras/categories that are linear over
$\mathbb{K}$
(instead of the usual $\k$), adding a subscript $\mathbb{K}$ to our notation
as we do
to avoid any confusion.
Consider the
cyclotomic Hecke algebras ${_\mathbb{K}}H_n^f$ and ${_\mathbb{K}}H_n^g$ over $\mathbb{K}$
associated to the polynomials
\begin{align*}
f(w) &:= w^l,
&
g(w) = w^m+u_1 w^{m-1}+\cdots+u_m.
\end{align*}
Using the functor ${_\mathbb{K}}\Psi_{f|g}$ from (\ref{hollow}),
we make
${_\mathbb{K}}\mathcal{V}(f|g)$ into a
${_\mathbb{K}}\Heis_k$-module category.
Since $\k\hookrightarrow \mathbb{K}$, there is a canonical $\k$-linear
monoidal
functor $\Heis_{k} \rightarrow {_\mathbb{K}}\Heis_{k}$, allowing us to
view
${_\mathbb{K}}\mathcal{V}(f|g)$ also as a module
category over $\Heis_{k}$.
Now we evaluate the relation
$\sum_{i=1}^N \phi_i \otimes \beta(\theta_i)=0$ on
$({_\mathbb{K}}H^f_0,{_\mathbb{K}}H^g_0)\in {_\mathbb{K}}\mathcal{V}(f|g)$
to obtain a relation in
${_\mathbb{K}} H_n^f$. By the basis theorem for ${_\mathbb{K}}H_n^f$ from
(\ref{base1}) and the choice of $l$,
the images of $\phi_1,\dots,\phi_N$ 
in ${_\mathbb{K}}H_n^f$ are linearly independent over $\mathbb{K}$, so we deduce
that
the image of $\beta(\theta_i)$ in $\mathbb{K}$ is zero for each $i$.
To deduce from this that $\theta_i = 0$, we know by the choice of $m$
that $\theta_i$ is a polynomial in
$\e_1,\dots,\e_m$. So we need to show that the images of $\beta(\e_1),
\dots, \beta(\e_m)$ in $\mathbb{K}$
are algebraically independent.
In fact, these images are the
indeterminates
$u_1,\dots,u_m$, respectively, as follows from Lemma~\ref{music}
on noting that
$g(w)/f(w) = w^k + u_1 w^{k-1} + \cdots+u_m w^{k-m}$.

We have now proved the linear independence when $X = Y = \up^{\otimes
  n}$.
The general case reduces to this special case in just the same way as
indicated in the proof of \cite[Proposition 5]{K}.
Let us give some more details.
First, we can use the canonical isomorphism
$\Hom_{\Heis_k}(X,Y) \cong \Hom_{\Heis_k}(\unit, X^* \otimes Y)$
arising from rigidity to reduce the proof of linear independence to
the
case that $X = \unit$. Assume this from
now on.
The set $B_\circ(\unit, Y)$ is empty
unless $Y$ has the same number $n$ of $\up$'s as $\down$'s. Also we have already proved the linear
independence in the case $Y = \down^{\otimes n} \otimes
\up^{\otimes n}$. So we may assume that $Y$ has
a subword $\up \otimes \down$.
Let $Z$ be $Y$ with the two letters in the subword
interchanged.
By induction, we may assume the linear independence has already been
established for $B_\circ(\unit,Z)$.
Now take a linear relation $\sum_{i=1}^N \phi_i \otimes \beta(\theta_i)$ for $\phi_i \in
B_\circ(\unit, Y)$ and $\theta_i \in \Sym$.
Recalling the isomorphism
$\up\otimes\down\oplus\: \unit^{\oplus (-k)}
\stackrel{\sim}{\rightarrow}\:
\down\otimes \up$ from
(\ref{invrel}), multiplying the subword $\up\otimes\down$ on top by
the sideways crossing $\mathord{
\begin{tikzpicture}[baseline = -.5mm]
	\draw[<-] (0.2,-.2) to (-0.2,.3);
	\draw[->] (-0.2,-.2) to (0.2,.3);
\end{tikzpicture}
}$
defines a $\Sym$-linear map
$$
s:\Hom_{\Heis_k}(\unit, Y) \hookrightarrow \Hom_{\Heis_k}(\unit, Z).
$$
Unfortunately, $s$ does not send $B_\circ(\unit, Y)$ into
$B_\circ(\unit, Z)$, so we need to argue a little further.
For $\phi \in B_\circ(\unit,Y)$,
there are three possibilities:

\begin{enumerate}[wide=0pt, widest=99,leftmargin=*, labelsep=0pt]
\item 
If $\phi$ has a leftward cup labelled with $a$ dots joining the letters in the
subword
then $s(\phi)$ has a dotted curl in this position, which can be rewritten using
the relation
$$
\mathord{
\begin{tikzpicture}[baseline = 0]
	\draw[-] (-0.25,.6) to[out=300,in=90] (0.25,-0);
	\draw[-] (0.25,-0) to[out=-90, in=0] (0,-0.25);
	\draw[-] (0,-0.25) to[out = 180, in = -90] (-0.25,-0);
	\draw[<-] (0.25,.6) to[out=240,in=90] (-0.25,-0);
	    \node at (-0.45,0) {$\scriptstyle{a}$};
      \node at (-0.25,0) {$\dot$};
\end{tikzpicture}
}
=
\mathord{
\begin{tikzpicture}[baseline = 1mm]
	\draw[<-] (0.4,0.4) to[out=-90, in=0] (0.1,0);
	\draw[-] (0.1,0) to[out = 180, in = -90] (-0.2,0.4);
      \node at (0.65,0.18) {$\scriptstyle{a-k}$};
      \node at (0.37,0.18) {$\dot$};
\end{tikzpicture}
}
- \sum_{b=0}^{a-k-1}
\mathord{
\begin{tikzpicture}[baseline = 1mm]
	\draw[<-] (0.4,0.4) to[out=-90, in=0] (0.1,0);
	\draw[-] (0.1,0) to[out = 180, in = -90] (-0.2,0.4);
      \node at (0.53,0.18) {$\scriptstyle{b}$};
      \node at (0.37,0.18) {$\dot$};
\end{tikzpicture}
}
\mathord{\begin{tikzpicture}[baseline = .8mm]
  \draw[<-] (0.2,0.2) to[out=90,in=0] (0,.4);
  \draw[-] (0,0.4) to[out=180,in=90] (-.2,0.2);
\draw[-] (-.2,0.2) to[out=-90,in=180] (0,0);
  \draw[-] (0,0) to[out=0,in=-90] (0.2,0.2);
      \node at (-.2,0.2) {$\dot$};
      \node at (-0.65,0.2) {$\scriptstyle{a-b-1}$};
 \end{tikzpicture}
}
$$
from (\ref{dog1}).
Thus $s(\phi) = \phi^\dagger + (*)$ where $\phi^\dagger$ is $\phi$
with the leftward cup labelled by $a$ dots replaced with a rightward cup labelled by
$a-k$ dots, and $(*)$
is a linear combination of similar-looking diagrams but with strictly
fewer dots on the rightward cup
and a clockwise bubble. This bubble
may be moved to the right hand edge using \eqref{bs1}, where it becomes a scalar
in $\Sym$; this process produces extra diagrams which have
additional dots on the strings along
the way.
We may assume further that $B(\unit,Z)$ was chosen so that
$\phi^\dagger \in B_\circ(\unit,Z)$.
Let $B_1 = \bigcup_{b \geq 0} B_{1,b}$ where
$B_{1,b}$ is
the set of all $\psi \in B_\circ(\unit,Z)$
which have a rightward cup labelled by a dot of multiplicity $b$
joining the letters in the subword.
Then we have shown that $s(\phi)  = \phi^\dagger + (**)$
for $\phi^\dagger \in B_{1,a-k}$ and $(**)$ that is a linear
combination of terms in $B_{1,b}$ for $0 \leq b < a-k$.

\item 
If $\phi$ has two non-intersecting strings at the letters $\down$ and $\up$ of
the subword, we can slide any dots on the $\up$-string
of $s(\phi)$
to the terminus
to obtain $\phi^\dagger + (*)$
where $\phi^\dagger$ is a diagram that has intersecting strings at the letters of the
subword, and
$(*)$ is a linear combination of diagrams which have a
dotted rightward cup at the subword.
Again, we may assume that $\phi^\dagger \in B_\circ(\unit,Z)$ by the
choice of $B(\unit,Z)$.
Let $B_2$ be all elements of $B_\circ(\unit, Z)$ with
intersecting strings at the subword.
Rewriting the error terms $(*)$ in terms of the basis,  we deduce that
$s(\phi) = \phi^\dagger + (**)$
for $\phi^\dagger \in B_2$ and $(**)$ that is a linear combination of
terms in $B_{1}$.

\item 
If $\phi$ has two intersecting strings at the letters $\down$ and $\up$,
then $s(\phi)$ will have two strings that cross each other twice.
Again, we slide dots to the terminus, producing also an error term $(*)$
which is
a linear combination of terms in
$B_{1}$. Then we use (\ref{sideways}) (and possibly
some other braid relations if there are other strings in
between)
to eliminate the crossings of the two
strings in the leading term.
Making a suitable choice of $B(\unit,Z)$ and letting $B_3$ be the set
of all elements of $B_\circ(\unit,Z)$ with non-intersecting strings
at the subword,
we thus have that
$s(\phi)  = \phi^\dagger + (**)$ for $\phi^\dagger \in B_3$ and
and $(**)$ that
is a linear combination of terms in $B_1 \cup B_{2}$.
\end{enumerate}

\vspace{1mm}
We have that $\sum s(\phi_i) \otimes \beta(\theta_i) = 0$.
Ordering $B_\circ(\unit,Z)$ so that
$B_{1,0} < B_{1,1} < B_{1,2} < \cdots < B_2 < B_3$, we have shown that
$s(\phi_i)  = \phi_i^\dagger + (*)$
for
$\phi_i^\dagger \in B_1\cup B_2 \cup B_3$ and $(*)$ that is a linear
combination of smaller $g \in B_1 \cup B_2 \cup B_3$.
Also the elements $\phi_1^\dagger,\dots,\phi_N^\dagger$ are all
different.
Hence, the known linear independence of $B_\circ(\unit,Z)$ implies that $\theta_i = 0$
for all $i$, as required to complete the argument.
\end{proof}

\begin{corollary}\label{cor:beta-iso}
The homomorphism
$\beta:\Sym\rightarrow\End_{\Heis_k}(\unit)$
is an isomorphism.
\end{corollary}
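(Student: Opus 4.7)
The plan is to deduce this immediately from the basis theorem (Theorem~\ref{basis}) applied with $X = Y = \unit$. First I would identify the set $B_\circ(\unit,\unit)$. An $(\unit,\unit)$-matching is a bijection between two empty sets, so there is exactly one matching, namely the empty one, and its only reduced lift is the empty diagram, i.e.\ the identity morphism $1_\unit$. Since there are no strings, there is nothing to decorate with dots, so $B(\unit,\unit) = B_\circ(\unit,\unit) = \{1_\unit\}$.

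By Theorem~\ref{basis}, $\End_{\Heis_k}(\unit) = \Hom_{\Heis_k}(\unit,\unit)$ is therefore a free right $\Sym$-module of rank one with basis $\{1_\unit\}$. Next I would unpack the definition of the right $\Sym$-action: for $\theta \in \Sym$ we have $1_\unit \cdot \theta := 1_\unit \otimes \beta(\theta) = \beta(\theta)$, since tensoring a morphism with $1_\unit$ on the left is the identity operation on morphisms $\unit \to \unit$. Consequently every element of $\End_{\Heis_k}(\unit)$ is uniquely of the form $\beta(\theta)$ for some $\theta \in \Sym$, which is exactly the statement that $\beta$ is a bijection. Since $\beta$ is already a ring homomorphism by construction (see (\ref{beta})), it follows that $\beta$ is a ring isomorphism.

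There is no real obstacle here; the whole content of the corollary is contained in the rank-one case of Theorem~\ref{basis} together with the trivial observation that tensoring on the left with $1_\unit$ does nothing. The only thing worth double-checking is the matching count in the edge case $X = Y = \unit$, which is genuinely the empty product and yields a single empty matching.
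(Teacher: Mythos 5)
Your proof is correct and is exactly the argument the paper intends: the corollary is stated as an immediate consequence of Theorem~\ref{basis}, obtained by taking $X=Y=\unit$, noting that $B_\circ(\unit,\unit)=\{1_\unit\}$, and observing that the right $\Sym$-action on $1_\unit$ is $\theta\mapsto\beta(\theta)$. Nothing further is needed.
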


\begin{remark}
As noted before Definition~\ref{maindef}, the category $\Heis_k$ can
be defined more generally over any commutative ground ring
$\k$. 
Theorem~\ref{basis} is easily extended to this situation: the proof
of the spanning part of the result works for any $\k$;
the linear independence in general may be deduced from the known linear
independence 
over $\Q$
by standard base change arguments.
\end{remark}

\section{Proofs of Theorems~\ref{t1} and \ref{t2}}\label{sp}

Recall the objects $S_\lambda^{\pm} \in
\Kar(\Heis_k)$ for each $\lambda \in \mathcal P$
defined by (\ref{goodtimes}). We note that 
\begin{equation}\label{transp}
\Omega_k(S_\lambda^{\pm}) \cong S_{\lambda^T}^{\mp},
\end{equation}
with the transpose partition appearing because of the sign 
when $\Omega_k$ is applied to a crossing.
The following provides the final
important ingredient
needed to prove the main results. The argument
depends essentially on
Theorems~\ref{Grothsplit}, \ref{starting} and \ref{basis}.

\begin{theorem}\label{fatherbrown}
The Grothendieck group $K_0(\Kar(\Heis_k))$ is free as a $\Z$-module,
with basis given by the elements
$\big\{[S_\mu^- \otimes S_\lambda^+]\:\big|\:\lambda,\mu \in \mathcal
P\big\}$
if $k \geq 0$
or
$\big\{[S_\mu^+ \otimes S_\lambda^-]\:\big|\:\lambda,\mu \in \mathcal
P\big\}$ if $k \leq 0$.
Moreover, 
$[X] = 0 \Rightarrow X = 0$ for $X \in \Kar(\Heis_k)$.
\end{theorem}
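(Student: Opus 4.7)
The plan is to establish the result for $k\geq 0$; the case $k\leq 0$ then follows by applying the symmetry $\Omega_k$ of (\ref{Omega}) together with (\ref{transp}). Via (\ref{yon})--(\ref{yoneda}), the task becomes computing $K_0(A)$ for the locally unital algebra $A$ underlying $\Heis_k$. By the inversion relation (\ref{invrel}), every object of $\Kar(\Heis_k)$ is a direct summand of some $\down^{\otimes m}\otimes\up^{\otimes n}$, so I take these words as the distinguished objects of $A$ with idempotents $1_{m,n}$. For integers $M,N\geq 0$, let $A_{M,N}$ denote the unital subalgebra corresponding to $m\leq M,\,n\leq N$. Since $A=\varinjlim A_{M,N}$ and $K_0$ commutes with direct limits, it suffices to compute $K_0(A_{M,N})$ compatibly and to establish stable finiteness at each finite stage.

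The plan is to apply Theorem~\ref{Grothsplit} to $R:=A_{M,N}$ with idempotent $e:=\sum_{n\leq N}1_{M,n}$, inducting on $M$. The two key facts to verify are: \textbf{(i)} $R/ReR\cong A_{M-1,N}$ with splitting $\sigma$ given by the obvious inclusion, and \textbf{(ii)} $eRe\cong\bigoplus_{n\leq N}\End_{\Heis_k}(\down^{\otimes M}\otimes\up^{\otimes n})$. Both should follow from the diagrammatic basis theorem (Theorem~\ref{basis}): for (ii), matching the sets $\{X_i=\up\}\sqcup\{Y_j=\down\}$ and $\{X_i=\down\}\sqcup\{Y_j=\up\}$ for $X=\down^{\otimes M}\otimes\up^{\otimes n}$ and $Y=\down^{\otimes M}\otimes\up^{\otimes n'}$ forces $n=n'$ by size considerations alone; for (i), a reduced lift of a matching for a morphism between objects with at most $M-1$ $\down$'s that contains a cap among top $\down$-endpoints or a cup among bottom $\down$-endpoints is exactly what it means for the diagram to factor through some $\down^{\otimes M}\otimes\up^{\otimes n}$, while the matchings lacking such features realize precisely the image of the inclusion $A_{M-1,N}\hookrightarrow A_{M,N}$.

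With (i) and (ii) in hand, Theorem~\ref{Grothsplit} gives a split short exact sequence $0\to K_0(eRe)\to K_0(A_{M,N})\to K_0(A_{M-1,N})\to 0$, reducing the problem to computing $K_0$ of each endomorphism algebra $\End_{\Heis_k}(\down^{\otimes M}\otimes\up^{\otimes n})$. By Theorem~\ref{basis} together with (\ref{im})--(\ref{jm}) and Corollary~\ref{cor:beta-iso}, this algebra is a filtered deformation of $AH_M\otimes_\k AH_n\otimes_\k\Sym$, whose $K_0$ is computed by Theorem~\ref{starting} to be the free $\Z$-module with basis the classes of Specht modules $[S(\mu)\boxtimes S(\lambda)]$ for $\mu\vdash M$ and $\lambda\vdash n$; these correspond to the classes $[S_\mu^-\otimes S_\lambda^+]$ in $K_0(\Kar(\Heis_k))$. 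Summing over $n\leq N$, iterating the short exact sequence up through $M$, and taking the direct limit in $M,N$ yields the claimed basis. The stable finiteness statement (whence $[X]=0\Rightarrow X=0$) follows inductively by Theorem~\ref{Grothsplit} together with Lemma~\ref{beerisgood}, since each $AH_M\otimes AH_n\otimes\Sym$ is finitely generated over its polynomial center $\Sym_M\otimes\Sym_n\otimes\Sym$.

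The main obstacle is the careful verification of (i) and (ii), and especially the passage from the filtered algebra $\End_{\Heis_k}(\down^{\otimes M}\otimes\up^{\otimes n})$ (which by Theorem~\ref{basis} is merely a free right $\Sym$-module via floating bubbles, not literally a tensor product, because of the bubble-sliding relations (\ref{bs1})) to the tensor product algebra $AH_M\otimes AH_n\otimes\Sym$ required to apply Theorem~\ref{starting}. A Quillen-style filtered-to-graded comparison of $K_0$, along the lines used in the proof of Theorem~\ref{starting} itself, should close this gap.
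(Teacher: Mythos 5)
Your overall toolkit is the right one (reduce to normally ordered words, truncate to unital subalgebras, apply Theorem~\ref{Grothsplit} inductively, and feed in Theorem~\ref{starting} and Lemma~\ref{beerisgood}), but the specific filtration you chose breaks down at both of your key claims. Claim (i) is false: the two-sided ideal $ReR$ generated by the identities of the objects with exactly $M$ letters $\down$ meets the morphism spaces between objects with fewer $\down$'s nontrivially. Concretely, for $M=1$ and $k\geq 0$ the composite of the rightward cup $\unit\to\down\otimes\up$ with the leftward cap $\down\otimes\up\to\unit$ is the counterclockwise bubble with zero dots, which equals $\beta(\e_{k+1})\neq 0$ by Corollary~\ref{cor:beta-iso}; this is a nonzero element of $1_\unit (ReR) 1_\unit$, so $R/ReR$ is a \emph{proper} quotient of $A_{M-1,N}$, the inductive identification of $K_0(R/ReR)$ fails, and the ``obvious inclusion'' $A_{M-1,N}\hookrightarrow R$ does not even descend to a splitting $\sigma$ of the quotient map. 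Claim (ii), as you use it, also fails: $\End_{\Heis_k}(\down^{\otimes M}\otimes\up^{\otimes n})$ is not a filtered deformation of $AH_M\otimes_\k AH_n\otimes_\k\Sym$ --- by Theorem~\ref{basis} it has strictly more basis elements, namely all reduced diagrams whose matching pairs an $\up$ with a $\down$ via a cup or cap, and correspondingly it contains the idempotents cutting out all summands $S_\mu^-\otimes S_\lambda^+$ with $|\mu|<M$, $|\lambda|<n$. So its $K_0$ is much larger than what Theorem~\ref{starting} would give, and no filtered-to-graded comparison will close that gap; computing it directly is essentially the original problem.

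The repair is to run the induction in the opposite direction, which is what the paper does: first split $A$ into charge blocks $A^{(d)}$ and Morita-reduce to normally ordered words, then filter by \emph{total word length} and take $e$ to be the sum of the idempotents of all \emph{shorter} words. With that choice it is the quotient $S=R/ReR$, not $eRe$, that becomes the cup/cap-free algebra $AH_{n_1}\otimes_\k AH_{n_2}\otimes_\k\Sym$ (every diagram containing a cup or cap factors through a shorter word and dies, while floating bubbles survive and account for the $\Sym$ factor), the inclusion of the cup/cap-free diagrams genuinely splits $\pi$, and $eRe$ is the inductively known piece. Theorem~\ref{starting} and Lemma~\ref{beerisgood} are then applied to $S$, not to the full endomorphism algebras.
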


\begin{proof}
It suffices to treat the case $k \geq 0$; then the case $k \leq 0$
follows using (\ref{transp}).
We make four elementary reductions
which were suggested in \cite[Section 5.1]{K}:

\begin{enumerate}[wide=0pt, widest=99,leftmargin=*, labelsep=0pt]
\item
Let $A$ be the locally unital algebra 
with distinguished idempotents $\{1_X\:|\:X \in
\mathbb{A}\}$  that
arises from the $\k$-linear
category 
$\Heis_k$ 
 as in (\ref{dictionary}). So $\mathbb{A}$, the object set of $\Heis_k$, is the set of {\em words} obtained as finite tensor products of the symbols $\up, \down$.
In view of the contravariant equivalence (\ref{yon}), it suffices to show
that $[P]=0\Rightarrow P = 0$ for all $P \in A\pmd$, and that
$K_0(A)$ has basis $\big\{[A e_{\mu|\lambda}]\:\big|\:\lambda,\mu \in
\mathcal P\big\}$ where
$e_{\mu|\lambda} := 
\jmath_{|\mu|}(e_{\mu})\otimes \imath_{|\lambda|}(e_{\lambda})$ for
$\imath_n$ and $\jmath_n$ as in (\ref{im})--(\ref{jm}).
Note these are the objects of $A\pmd$ which correspond to the objects
 $S_{\mu^T}^-\otimes S_\lambda^+ \in
\Kar(\Heis_k)$.

\item
For $d \in \Z$, let $\mathbb{A}_d$ be the set of all words $X \in \mathbb{A}$ such that the
number of letters $\up$ minus the number of letters $\down$ is equal
to $d$. 
Let 
$$
A^{(d)} := \bigoplus_{X, Y \in \mathbb{A}_d} 1_X A 1_Y.
$$
Noting that $1_X A 1_Y = 0$ for $X \in \mathbb{A}_d, Y \in
\mathbb{A}_e$ and $d \neq e$, we have that
$A = \bigoplus_{d \in \Z} A^{(d)}$, hence,
$K_0(A\pmd) = \bigoplus_{d \in \Z} K_0(A^{(d)}\pmd)$.
Therefore it is enough to show that 
$[P]=0\Rightarrow P = 0$ for all $P \in A^{(d)}\pmd$,
and
that $K_0(A^{(d)})$ has basis
$\left\{[A^{(d)} e_{\mu|\lambda}]\:\big|\:\lambda, \mu \in \mathcal P,
  |\lambda|-|\mu| = d\right\}$.

\item
Since $\up\otimes\down \:\cong \:
\down\otimes \up\:
\oplus\: \unit^{\oplus k}$, the left ideal $A^{(d)} 1_X$ 
for $X \in \mathbb{A}_d$
is isomorphic to a
direct sum of left ideals $A^{(d)} 1_Y$ for words $Y\in\mathbb{A}_d$ in which all letters
$\down$ appear to the left of the letters $\up$. 
Letting $\mathbb{A}_d^+$
denote the set of all such $Y$, 
this means that $A^{(d)}$ is Morita equivalent to the locally unital
algebra
$$
B^{(d)} := \bigoplus_{X, Y \in \mathbb{A}_d^+} 1_X A^{(d)} 1_Y.
$$
Hence, we just need to show that $[P]=0\Rightarrow P=0$ for all $P \in B^{(d)}\pmd$,
and that
$K_0(B^{(d)}\pmd)$ has basis
$\left\{[B^{(d)} e_{\mu|\lambda}]\:\big|\:\lambda, \mu \in \mathcal P,
  |\lambda|-|\mu| = d\right\}$.

\vspace{1mm}
\item
Next, we
let $1^{(d)}_n := \sum_X 1_X$ summing over all words $X \in \mathbb{A}_d^+$
of length $\leq(2n+|d|)$.
Then let 
$$
B^{(d)}_n := 1^{(d)}_n B^{(d)} 1^{(d)}_n.
$$
This defines a direct system of locally unital algebras
$0 = B^{(d)}_{-1} \subset B^{(d)}_0 \subset B^{(d)}_1 \subset \cdots$
whose union is $B^{(d)}$. Moreover, each $B^{(d)}_n$ is actually unital.
As any idempotent in $B^{(d)}$ belongs to $B^{(d)}_n$ for some sufficiently large $n$, we have that
$$
K_0(B^{(d)}\pmd) = \varinjlim K_0(B^{(d)}_n\pmd).
$$
Using this, we are reduced to checking for each $n$ that 
$B_n^{(d)}$
is
stably finite, and that
$K_0(B_n^{(d)}\pmd)$ 
has basis
$\left\{[B_n^{(d)} e_{\mu|\lambda}]\:\big|\:\lambda, \mu \in \mathcal P,
  |\lambda|-|\mu| = d, |\lambda|+|\mu| \leq 2n+|d|\right\}$.
\end{enumerate}

\vspace{1mm}
To complete the proof of the theorem, we establish the truth of
the statement just made by induction on $n=-1,0,1,\dots$.
For the induction step, take $n \geq 0$, 
set $R := B_n^{(d)}$ and $e := 1^{(d)}_{n-1}$.
Note that 
$eRe = B_{n-1}^{(d)}$. By induction, we know
already that $eRe$ is stably finite and that
$K_0(eRe)$ has basis
$\left\{[B_{n-1}^{(d)} e_{\mu|\lambda}]\:\big|\: \lambda, \mu \in \mathcal P,
  |\lambda|-|\mu| = d, |\lambda|+|\mu| < 2n+|d|\right\}.$
Let $n_1,n_2\geq 0$ be defined from $n_1-n_2=d$ and $n_1+n_2=2n+|d|$.
By Theorem~\ref{basis}, 
the quotient $S := R / ReR$ has basis given by the elements
$\pi(\phi \theta)$
for $\phi \in B_\circ(\down^{\otimes n_2}\otimes \up^{\otimes n_1},
\down^{\otimes n_2}\otimes \up^{\otimes n_1})$ involving
no cups or caps and $\theta$ running over a
basis for $\Sym$, where $\pi:R \twoheadrightarrow S$ is the quotient map.
It follows that there is an isomorphism 
$AH_{n_1} \otimes_\k AH_{n_2} \otimes_\k \Sym
\stackrel{\sim}{\rightarrow} S,\:
\phi_1 \otimes \phi_2 \otimes \theta
\mapsto
\jmath_{n_2}(\phi_2) \otimes \imath_{n_1}(\phi_1) \otimes \beta(\theta).$
Moreover,
$\sigma:S \rightarrow R,
\pi(\phi \theta) \mapsto \phi \theta+e$
 is a {\em unital} algebra homomorphism. Since we obviously have that
 $\pi \circ \sigma = \operatorname{id}_S$,
this puts us in a position to apply Theorem~\ref{Grothsplit}. We
deduce that the induction step follows from the assertions that
$AH_{n_1}\otimes_\k AH_{n_2}\otimes_\k \Sym$ is stably finite and
$K_0(AH_{n_1} \otimes_\k AH_{n_2} \otimes_\k \Sym)$ has basis
$\big\{[AH_{n_1} e_\lambda \otimes_\k AH_{n_2} e_\mu \otimes_\k \Sym]\:\big|\:
\lambda, \mu \in \mathcal P, |\lambda|=n_1, |\mu|=n_2\big\}$.
The first of these statements follows from Lemma~\ref{beerisgood} (together with the fact that $AH_n$ is finitely generated as a module over its center $\Sym_n$), and the
second from Theorem~\ref{starting}.
\end{proof}

To prove Theorem~\ref{t1}, we are going to categorify some representations
of $\rh_k$. The {\em basic representation} of $\rh_{-1}$ is the
ring $\SymZ$ of symmetric funtions viewed as a $\rh_{-1}$-module so that
for $f \in \SymZ$
the element $f^+$ 
acts
by left
multiplication by $f$, and $f^-$ acts by the adjoint operator with 
respect to the usual form  $\langle -,-\rangle$ on $\SymZ$, i.e., $\langle s_\lambda,
s_\mu\rangle := \delta_{\lambda,\mu}$.
In particular, the generators of $\rh_{-1}$ act on the basis of Schur
functions as follows:
\begin{itemize}
\item
$h_n^+
s_\lambda = \sum s_\mu$ summing over all partitions $\mu$ whose Young
diagram is obtained
from that of $\lambda$ by adding a box to the end of $n$ different
columns;
\item
$e_n^- s_\lambda =\sum s_\mu$ summing over partitions $\mu$
whose Young diagram is obtained from that of $\lambda$ by removing a
box from the end of $n$ different rows.
\end{itemize}
Let $\SymZ^\vee$ be the $\rh_1$-module obtained from $\SymZ$ using
$\omega_1 \colon \rh_1\stackrel{\sim}{\rightarrow} \rh_{-1}$; see \eqref{sedai}.
Thus, denoting $s_\lambda$ instead by $s_\lambda^\vee$ to avoid
confusion, 
the action of $\rh_1$ on $\SymZ^\vee$ satisfies
\begin{itemize}
\item
$h_n^+
s^\vee_\lambda = \sum s^\vee_\mu$ summing over all partitions $\mu$ whose Young
diagram is obtained
from that of $\lambda$ by removing a box from the end of $n$ different
columns;
\item
$e_n^- s^\vee_\lambda =\sum s^\vee_\mu$ summing over partitions $\mu$
whose Young diagram is obtained from that of $\lambda$ by adding a
box to the end of $n$ different rows.
\end{itemize}
More generally, for $l,m \geq 0$ and $k := m-l$,
the tensor product
$V(l|m)
:= \SymZ^{\otimes l} \otimes_\Z \left(\SymZ^\vee\right)^{\otimes m}$
is naturally a $\rh_k$-module.
It has a natural monomial basis 
indexed by $(l+m)$-tuples of partitions. 
The associated representation
\begin{equation}\label{sat}
\psi_{l|m}:\rh_k \rightarrow \End_\Z\left(V(l|m)\right)
\end{equation}
is faithful as soon as $l+m > 0$; the proof of faithfulness is
particularly easy when both $l > 0$ and $m > 0$ which is all that we
use below.

For monic $f(w) \in \k[w]$ of degree one, the inclusion $\k \S_n \hookrightarrow H_n^f$ is
actually an algebra isomorphism.
Thus, the $\Heis_{-1}$-module category $\mathcal V(f)$ from (\ref{psif})
is the semisimple Abelian 
category $\bigoplus_{n \geq 0} \k \S_n\pmd$, and there is an isomorphism
$\SymZ
\stackrel{\sim}{\rightarrow} K_0(\mathcal V(f)),
s_\lambda \mapsto [S(\lambda)]$ of $\Z$-modules.
Similar statements hold for the $\Heis_1$-module category
$\mathcal V(g)^\vee$ from
(\ref{psig})
when $g(w) \in \k[w]$ is of degree one.
More generally, for $u_1,\dots,u_l,v_1,\dots,v_m \in \k$, the
category
$$
\mathcal V(u_1,\dots,u_l|v_1,\dots,v_m) :=
\Add(\mathcal V(w-u_1) \boxtimes \cdots \boxtimes \mathcal V(w-u_l)
\boxtimes \mathcal V(w-v_1)^\vee \boxtimes \cdots \boxtimes \mathcal
V(w-v_m)^\vee)
$$
is a semisimple Abelian category, and there is a $\Z$-module
isomorphism
\begin{align}\label{obv}
V(l|m)&\stackrel{\sim}{\rightarrow} 
K_0\left(\mathcal V(u_1,\dots,u_l|v_1,\dots,v_m)\right),\\
s_{\lambda^{(1)}}\otimes\cdots\otimes s_{\lambda^{(l)}}\otimes
s_{\mu^{(1)}}^\vee\otimes\cdots\otimes s_{\mu^{(m)}}^\vee&\mapsto 
\left[\left(S(\lambda^{(1)}),\dots,S(\lambda^{(l)}),S(\mu^{(1)}),\dots,S(\mu^{(m)})\right)\right].\notag
\end{align}
This is
a module category over 
$\Heis_{-1}\odot\cdots\odot \Heis_{-1}\odot
\Heis_1\odot\cdots\odot \Heis_1$.
If we assume in addition that $u_1,\dots,u_l, v_1,\dots,v_m$ are generic in the
sense that their images in $\k / \Z$
are all different,
then we can argue as in Lemma~\ref{starbucks}
to see that the action extends to
the localization
$\Heis_{-1}\;\overline{\odot}\;\cdots\;\overline{\odot}\; \Heis_{-1}\;\overline{\odot}\;
\Heis_1\;\overline{\odot}\;\cdots\;\overline{\odot}\; \Heis_1$.
Using the iterated categorical comultiplication from Theorem~\ref{comult}
(and the
coassociativity noted in Remark~\ref{altsss}), it becomes
a module category over $\Heis_k$.
Thus, there is a strict $\k$-linear monoidal functor
\begin{equation}
\Psi_{l|m}:\Heis_k \rightarrow \mathcal{E}nd_\k\left(\mathcal
  V(u_1,\dots,u_l|v_1,\dots,v_m)\right).
\end{equation}
Since $\mathcal
  V(u_1,\dots,u_l|v_1,\dots,v_m)$ is Abelian, this extends to a functor
  from $\Kar(\Heis_k)$, which we denote by the same notation $\Psi_{l|m}$.
The following shows that this functor categorifies (\ref{sat}).

\begin{theorem}\label{atlast}
There is a ring isomorphism $
\gamma_k:\rh_k \rightarrow
K_0(\Kar(\Heis_k))$ sending
$s_\lambda^{\pm} \mapsto
[S_\lambda^{\pm}]$
for each $\lambda \in \mathcal P$. 
Moreover,
for generic $u_1,\dots,u_l,v_1,\dots, v_m$ with $k=m-l$, the diagram
\begin{equation}\label{boardgames}
\begin{diagram}
\node{\rh_k}\arrow{s,l,A,J}{\gamma_k}\arrow{e,t}{\psi_{l|m}}\node{\End_\Z(V(l|m))}\arrow{s,r,A,J}{c_k}\\
\node{K_0(\Kar(\Heis_k))}\arrow{e,b}{[\Psi_{l|m}(-)]}\node{\End_\Z(K_0(\mathcal V(u_1,\dots,u_l|v_1,\dots,v_m))}
\end{diagram}
\end{equation}
commutes, where $c_k$ is the ring isomorphism defined by conjugating with (\ref{obv}),
and the bottom map is the ring homomorphism  $[X]\mapsto[\Psi_{l|m}(X)]$.
\end{theorem}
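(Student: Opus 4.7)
The plan is to use the faithful representations $\psi_{l|m}$ (for $l,m>0$ with $m-l=k$) to detect the ring structure on $K_0(\Kar(\Heis_k))$. Fixing such $l,m$ and a generic tuple $u_1,\dots,u_l,v_1,\dots,v_m \in \k$, write $\Phi := [\Psi_{l|m}(-)] \colon K_0(\Kar(\Heis_k)) \to \End_\Z(K_0(\mathcal V(u_1,\dots,u_l|v_1,\dots,v_m)))$ for the ring homomorphism induced by the strict monoidal functor $\Psi_{l|m}$. I would first define $\gamma_k$ as the $\Z$-module homomorphism sending the Schur basis $\{s_\mu^- s_\lambda^+\}_{\lambda,\mu\in\mathcal P}$ of $\rh_k$ (treating $k\geq 0$; the case $k\leq 0$ is symmetric) to the $\Z$-basis $\{[S_\mu^-\otimes S_\lambda^+]\}$ of $K_0(\Kar(\Heis_k))$ supplied by Theorem~\ref{fatherbrown}. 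By construction $\gamma_k$ is a $\Z$-module isomorphism with $\gamma_k(s_\lambda^\pm)=[S_\lambda^\pm]$, so what remains is to prove $\gamma_k$ is a ring homomorphism and that diagram~(\ref{boardgames}) commutes.

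The crucial technical step is to establish
$$\Phi([H_n^+]) = c_k(\psi_{l|m}(h_n^+)) \qquad\text{and}\qquad \Phi([E_n^-]) = c_k(\psi_{l|m}(e_n^-))$$
for each $n\geq 1$. These identities follow by iterating Theorem~\ref{comult}'s formula~(\ref{magic}) (using coassociativity from Remark~\ref{altsss}): the endofunctor $\Psi_{l|m}(H_n^+)$ decomposes across the $l+m$ basic-representation factors of $\mathcal V$ exactly as $\psi_{l|m}(h_n^+)$ acts on $V(l|m)$ via the iterated coproduct, since on each factor the functor $H_{n_i}^+$ acts by the Pieri operator $h_{n_i}$. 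Combining with the Jacobi--Trudi formula and the ring homomorphism $\Sym_\Z \to K_0(\Kar(\Heis_k))$ arising from composing the isomorphism $\gamma$ of~(\ref{gamma}) with the pushforward of $\imath \colon \AH \hookrightarrow \Heis_k$, we see that $[S_\lambda^+]$ equals the Jacobi--Trudi polynomial in the $[H_n^+]$'s (the latter commute in $K_0$ already because $H_m^+\otimes H_n^+ \cong H_n^+\otimes H_m^+$ via an $\S_{m+n}$-conjugation). Applying $\Phi$ then yields $\Phi([S_\lambda^+]) = c_k(\psi_{l|m}(s_\lambda^+))$ and symmetrically $\Phi([S_\mu^-]) = c_k(\psi_{l|m}(s_\mu^-))$; multiplying gives $\Phi \circ \gamma_k = c_k \circ \psi_{l|m}$ first on the Schur basis and hence on all of $\rh_k$ by $\Z$-linearity.

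Since $\psi_{l|m}$ is faithful and $c_k$ is an isomorphism, the elements $\{c_k(\psi_{l|m}(s_\mu^-s_\lambda^+))\}$ are $\Z$-linearly independent in $\End_\Z(K_0(\mathcal V))$, so $\Phi$ is injective on $K_0(\Kar(\Heis_k))$. The ring-homomorphism property of $\gamma_k$ then follows formally: for $x,y\in\rh_k$,
$$
\Phi\bigl(\gamma_k(xy)-\gamma_k(x)\gamma_k(y)\bigr) = c_k\psi_{l|m}(xy) - c_k\psi_{l|m}(x)\cdot c_k\psi_{l|m}(y) = 0,
$$
forcing $\gamma_k(xy)=\gamma_k(x)\gamma_k(y)$ by injectivity of $\Phi$. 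Hence $\gamma_k$ is a ring isomorphism, and diagram~(\ref{boardgames}) commutes, being a restatement of $\Phi\circ\gamma_k = c_k\circ\psi_{l|m}$. The main obstacle will be verifying the identification $\Phi([H_n^+]) = c_k(\psi_{l|m}(h_n^+))$ concretely at the level of $K_0$; this requires carefully tracking how the iterated categorical comultiplication $\Delta^{(l+m-1)}$ interacts with the basic-representation functors $\Psi_{f_i}$ and $\Psi_{g_j}^\vee$ to recover the algebraic coproduct on $\Sym_\Z$.
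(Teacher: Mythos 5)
Your proposal is correct and follows essentially the same route as the paper: define $\gamma_k$ on the basis supplied by Theorem~\ref{fatherbrown}, verify the compatibility $\Phi([H_n^{\pm}])=c_k(\psi_{l|m}(h_n^{\pm}))$ (resp.\ $E_n^-$) via the iterated comultiplication and (\ref{magic}) together with the Pieri-rule action on each factor, and then deduce the ring-homomorphism property from faithfulness of $\psi_{l|m}$. The only cosmetic difference is that you invoke Jacobi--Trudi to pass from the $[H_n^+]$'s to the $[S_\lambda^+]$'s, whereas the paper phrases the same reduction by observing that the restrictions $\gamma_k^{\pm}$ are already ring homomorphisms because they factor through $[\imath]\circ\gamma$ and $[\jmath]\circ\gamma$ (the latter accounting for the transpose twist your ``symmetrically'' glosses over).
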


\begin{proof}
Theorem~\ref{fatherbrown} shows there is a
$\Z$-module isomorphism
$\gamma_k:\rh_k \stackrel{\sim}{\rightarrow}
K_0(\Kar(\Heis_k))$
sending
$s_\mu^-s_\lambda^+\mapsto
[S_\mu^-\otimes S_\lambda^+]$ if $k \geq 0$
or
$s_\mu^+s_\lambda^-\mapsto
[S_\mu^+\otimes S_\lambda^-]$ if $k \leq 0$,
although 
we do not yet know that this is a ring homomorphism. 
Taking this as the definition of the left hand map, we are going to show
in the next paragraph
that the diagram (\ref{boardgames}) commutes for all
generic $u_i, v_j$.
This is all that is needed to complete the proof:
since the top and bottom maps in (\ref{boardgames}) are ring homomorphisms,
the right hand map is a ring isomorphism,
and moreover $\psi_{l|m}$ is injective for any sufficiently large $l$ and
$m$, the commutativity of the diagram implies that the left hand map $\gamma_k$ is
a ring homomorphism too.

To see that the diagram commutes, it suffices to check that it
commutes on each of the basis vectors via which $\gamma_k$ has been
defined.
This reduces easily to checking that
\begin{equation}\label{nearly}
c_k(s_\lambda^\pm v) = [S_\lambda^{\pm}] c_k(v)
\end{equation}
for each $\lambda \in \mathcal P$ and $v \in V(l|m)$.
The restrictions $\gamma_k^-$ and $\gamma_k^+$ of the map $\gamma_k$ to the subalgebras
$\rh_k^- = \SymZ \otimes 1$ and $\rh_k^+ = 1 \otimes \SymZ$, respectively,
are both ring homomorphisms.
This follows for $\gamma_k^+$ because
$\gamma_k^+(s_\lambda^+) = [\imath](\gamma(s_\lambda))$,
where $\gamma$ is the ring isomorphism from (\ref{bargamma}) and
$[\imath]$ is the ring homomorphism induced by
the monoidal functor $\imath:\SYM \rightarrow \Heis_k$ defined just
before (\ref{im}).
To see it for $\gamma_k^-$, use instead that $\gamma_k^-(s_\lambda^-) = [\jmath](\gamma(s_{\lambda^T}))
$ for the monoidal functor $\jmath:\SYM \rightarrow \Heis_k$ arising
from (\ref{jm}).
In view of this and the fact that $\rh_k^\pm$
is generated by $\{h_n^{\pm}\:|\:n \geq 1\}$,
we deduce that (\ref{nearly}) follows if we can establish just that
\begin{equation}\label{atlonglast}
c_k(h_n^\pm v) = [H_n^\pm] c_k(v).
\end{equation} 
By the definition of (\ref{psif}), the object $H_n^+\in \Kar(\Heis_{-1})$
acts on $S(\lambda) \in \k \S_m\pmd$
by 
$$
H_n^+ 
S(\lambda) = 
\ind_{\S_m\times \S_n}^{\S_{m+n}} S(\lambda)\boxtimes
\operatorname{triv}_n,
$$
which is the image of $h_n s_\lambda$ under the isomorphism
$\SymZ \stackrel{\sim}{\rightarrow}
K_0(\mathcal V(w-u_i))$. Thus $h_n^+$ and $[H_n^+]$ act in the same way
under this isomorphism.
Since $H_n^- = (H_n^+)^*$, we deduce from this that
$h_n^-$ and $[H_n^-]$ 
act in the same way too.
Similar statements hold for the action on
$\SymZ\cong[\mathcal V(w-v_j)^\vee]$
for each $j$.
Recalling (\ref{comultiplication}) and
(\ref{talent}), 
$\psi_{l|m}(h_n^\pm)$ 
is
multiplication by
$$
\sum_{r_1+\dots + r_{l+m}=n} 
h_{r_1}^\pm\otimes\cdots\otimes h^\pm_{r_{l+m}}.
$$
Now (\ref{atlonglast}) follows from (\ref{magic}), as that shows that
$\Psi_{l|m}(H_n^{\pm})$ satisfies an analogous formula.
\end{proof}

\begin{proof}[Proof of Theorem~\ref{t1}]
The isomorphism $\gamma_k$ is constructed in Theorem~\ref{atlast}.
The final part follows from the final part of
Theorem~\ref{fatherbrown}.
\end{proof}

\begin{proof}[Proof of Theorem~\ref{t2}]
As the maps involved are ring homomorphisms,
it suffices to show that the diagram commutes on the generators
$h_n^+$ and $e_n^-$ of $\rh_k$, which follows from (\ref{magic}).
\end{proof}

\section{Proof of Theorem~\ref{t3}}\label{sfin}

To prove Theorem~\ref{t3}, we need some explicit maps. 
To write these down, we
use some ``thick calculus'' in the same spirit as
\cite{KLMS}.
For $X, Y \in \Heis_k$ and idempotents $e_X:X\rightarrow X$ and
$e_Y:Y \rightarrow Y$ we have that 
$\Hom_{\Kar(\Heis_k)}((X,e_X), (Y,e_Y)) = e_Y \Hom_{\Heis_k}(X,Y) e_X$
by the definition of Karoubi envelope.
We will denote the identity endomorphisms of the objects
$H_n^+ = (\up^{\otimes n}, \imath_n(e_{(n)}))$ and
$E_n^- = (\down^{\otimes n}, \jmath_n(e_{(n)}))$ by
thick strings labelled by $n$, upward for $H_n^+$ and downward for $E_n^-$.
We stress that these objects are {\em not}
duals (unless $n=1$). Instead, 
in view of the definitions (\ref{im})--(\ref{jm}),
they are
interchanged by the symmetry $\Omega_k$.

We introduce more diagrammatic shorthands:
\begin{align}\label{thick1}
\:.
\end{align}
These identities can both be proved by induction on $n$. 
For example, in the special case $n=2$, (\ref{isthisclear})
is equivalent to the assertion that
$$
\imath_2\left(4 e_{(2)} x_1 e_{(1^2)} x_2 e_{(2)}\right)
= \imath_2\left(e_{(2)} (1-(x_1-x_2)^2) e_{(2)}\right)=
\imath_2\left(4 e_{(2)} x_2 e_{(1^2)} x_1 e_{(2)}\right).
$$
This may be checked by replacing $e_{(1^2)}$ by $\frac{1}{2}(1-s_1)$,
commuting $s_1$ past $x_2$ noting that
$s_1 e_{(2)} = e_{(2)} = e_{(2)} s_1$, 
then symmetrizing the result by calculations involving (\ref{numbers}).

\begin{lemma}
\label{squishier}
Assume that $k \geq 0$ and $m,n > 0$.
Then
$$
\mathord{
\begin{tikzpicture}[baseline = -.5mm]
	\draw[->, line width=2pt] (0.28,0) to[out=90,in=-60] (-0.28,.6);
	\draw[-, line width=2pt] (-0.28,0) to[out=90,in=-120] (0.28,.6);
	\draw[<-, line width=2pt] (0.28,-.6) to[out=120,in=-90] (-0.28,0);
	\draw[-, line width=2pt] (-0.28,-.6) to[out=60,in=-90] (0.28,0);
        \node at (-0.3,-.7) {$\scriptstyle m$};
        \node at (0.3,.7) {$\scriptstyle n$};
\end{tikzpicture}
}\:=\:
\mathord{
\begin{tikzpicture}[baseline = -.5mm]
	\draw[<-, line width=2pt] (0.28,-.6) to (0.28,.6);
	\draw[->, line width=2pt] (-0.18,-.6) to (-0.18,.6);
        \node at (-0.18,-.7) {$\scriptstyle m$};
        \node at (0.28,.7) {$\scriptstyle n$};
\end{tikzpicture}
}\:
+\:
\sum_{a=0}^{k-1}
\mathord{
\begin{tikzpicture}[baseline = -.5mm]
	\draw[-, line width=2pt] (0.3,.3) to (0.3,.6);
	\draw[-, line width=1.6pt] (0.308,-.5) to (0.308,.5);
	\draw[<-, line width=2pt] (0.3,-.6) to (0.3,-.4);
	\draw[->, line width=2pt] (-0.3,.3) to (-0.3,.6);
	\draw[-, line width=1.6pt] (-0.308,-.5) to (-0.308,.5);
	\draw[-, line width=2pt] (-0.3,-.6) to (-0.3,-.4);
	\draw[-] (-0.272,-.41) to [out=90,in=180] (0,-.15) to[out=0,in=90] (.272,-.41);
        \node at (0,.15) [draw,fill=lightgray,rounded corners,inner sep=.9pt] {$\scriptstyle{\phantom{omething}}$};
        \node at (-0.3,-.7) {$\scriptstyle m$};
        \node at (0.3,-.7) {$\scriptstyle n$};
        \node at (-0.3,.7) {$\scriptstyle m$};
        \node at (0.3,.7) {$\scriptstyle n$};
        \node at (0,-.17) {$\dot$};
        \node at (0,-.3) {$\scriptstyle a$};
\end{tikzpicture}
}
$$
where a shaded box indicates a morphism which will not be
determined precisely.
\end{lemma}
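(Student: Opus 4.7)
The plan is to derive this as the thick analogue of the thin sideways crossing relation (\ref{sideways}), with the infinite sum in (\ref{sideways}) truncating at $c = a+b \leq k-1$ because of the bubble-vanishing part of the infinite Grassmannian relations (\ref{ig}) under the assumption $k \geq 0$.

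First I would unfold the double thick crossing on the LHS into thin strings using the inductive definition (\ref{picard}) of the thick crossing. The LHS then becomes a composition of $2mn$ thin sideways crossings arranged in a rectangular grid, sandwiched between the symmetrizing idempotents $\imath_m(e_{(m)})$ and $\jmath_n(e_{(n)})$ that define $H_m^+$ and $E_n^-$, with an overall combinatorial factor accumulated from the binomials in (\ref{picard}).

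Next I would apply the thin sideways relation (\ref{sideways}) to each pair of thin strands (one from $H_m^+$, one from $E_n^-$). Every application produces a ``straight-through'' contribution together with an error term factoring through a thin cup-cap pair decorated with a clockwise thin bubble of $-a-b-2$ dots. Collecting the straight-through contributions and using the symmetrizer property of $\imath_m(e_{(m)})$ and $\jmath_n(e_{(n)})$ together with the braid relations, these combine with coefficient exactly $1$ to give the identity endomorphism of $H_m^+ \otimes E_n^-$. The error terms factor through a thin cup-cap pair joining a thin sub-strand of the $m$-strand to a thin sub-strand of the $n$-strand with a thin clockwise bubble attached; by (\ref{ig}), such a bubble vanishes unless $a+b \leq k-1$, giving the truncation. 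Re-indexing by the total dot count $a+b \in \{0, 1, \ldots, k-1\}$ yields the $k$ terms on the RHS.

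Finally, for each such value of the total dot count, I would push the thin cup-cap structure (together with the neighbouring pieces of the symmetrizers) into the thick form via the thick cup-cap identities (\ref{nearlylunchtime}), producing the thick cup-like configuration with $a$ dots at the bottom of the RHS diagram. All remaining data --- the clockwise thin bubble, the distribution of residual dots on the strands, the Vandermonde-like factors from commuting dots past the symmetrizers, and the explicit binomial and $\jonschi$-coefficients --- gets absorbed into the shaded box at the top. Since the lemma only asserts the existence of morphisms in the shaded boxes, we need not compute them.

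The main obstacle is the combinatorial bookkeeping in the second step: carefully verifying that the straight-through contributions combine to the identity with coefficient exactly $1$ (which requires the full thick braid and symmetrizer calculus), and that the error terms group uniformly into $k$ cup-cap configurations labelled by $a \in \{0, \ldots, k-1\}$ of the form stated in the lemma. The bound $a \leq k-1$ and the fact that the sum has exactly $k$ terms are structural features that this proof must recover from the explicit bubble-vanishing in (\ref{ig}).
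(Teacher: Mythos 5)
Your ingredients are the right ones: the thin relation (\ref{sideways}), together with the vanishing of the bubbles carrying $-a-b-2$ dots once $a+b>k-1$ (which is what (\ref{igq}) and (\ref{bubbles}) encode), is exactly what produces the truncated sum, and for $m=n=1$ this is essentially the whole proof. The gap is in the passage to general $m,n$. You propose to expand both thick crossings into a grid of $2mn$ thin sideways crossings and apply (\ref{sideways}) to every pair at once, but (\ref{sideways}) only applies to two crossings of the \emph{same} pair of strands sitting vertically adjacent to one another; in the expanded grid the two crossings of a given pair (one thin strand from the $m$-group, one from the $n$-group) are separated by their crossings with all the other strands. Isolating each pair forces repeated use of the braid relations, the curl relations (\ref{dog1}) and the alternating braid relation (\ref{altbraid}) to slide leftover crossings past the cup--cap error terms created by earlier applications, and every one of these moves spawns further dotted correction terms. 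The assertion that all of this ``groups uniformly into $k$ cup--cap configurations labelled by $a\in\{0,\dots,k-1\}$'' is therefore not residual bookkeeping --- it is the entire content of the lemma beyond the thin case, and your proposal supplies no mechanism for it. In particular, the bound $a\le k-1$ on the dots carried by the bottom cap is not simply inherited from the individual error terms of (\ref{sideways}): dots migrate when commuted past crossings, and one has to check that this migration never pushes the count on the cap above $k-1$.

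The paper sidesteps the global bookkeeping with a double induction: on $n$, with the base case $n=1$ handled by a mirror-image induction on $m$, splitting the thick $n$-strand as $1\otimes(n-1)$ via (\ref{thick2})--(\ref{thick3}) (whence the factor $\tfrac1n$). At each step only one new thin strand is processed, the induction hypothesis packages all previously accumulated error terms into the shaded boxes, and the single new verification is that commuting the $a$ dots on the cap past one crossing via (\ref{hecke}) produces corrections with \emph{strictly fewer} dots on the cap --- immediate, and precisely what preserves the bound $a\le k-1$. To make your global expansion rigorous you would have to reintroduce some such organising induction to control the error terms, at which point you have reconstructed the paper's argument.
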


\begin{proof}
We proceed by induction on $n$. The base case will be discussed in the
next paragraph.
For the induction step, assuming
$n > 1$, the induction hypothesis gives us that
\begin{align*}
\mathord{
\begin{tikzpicture}[baseline = -.5mm]
	\draw[->, line width=2pt] (0.28,0) to[out=90,in=-60] (-0.28,.6);
	\draw[-, line width=2pt] (-0.28,0) to[out=90,in=-120] (0.28,.6);
	\draw[<-, line width=2pt] (0.28,-.6) to[out=120,in=-90] (-0.28,0);
	\draw[-, line width=2pt] (-0.28,-.6) to[out=60,in=-90] (0.28,0);
        \node at (-0.3,-.7) {$\scriptstyle m$};
        \node at (0.3,.7) {$\scriptstyle n$};
\end{tikzpicture}
}\:&=
\frac{1}{n}
\mathord{
\begin{tikzpicture}[baseline = -.5mm]
	\draw[->, line width=2pt] (0.28,0) to[out=90,in=-60] (-0.28,.6);
	\draw[-, line width=2pt] (-0.28,-.6) to[out=60,in=-90] (0.28,0);
	\draw[<-, line width=2pt] (0.28,-.6) to[out=120,in=-40] (-0.14,-.27);
	\draw[-, line width=1.6pt] (-0.116,.284) to[out=-145,in=145] (-0.122,-.293);
	\draw[-] (-0.15,.28) to[out=-40,in=40] (-0.16,-.29);
	\draw[-, line width=2pt] (0.28,.6) to[out=-120,in=40] (-0.135,.26);
        \node at (-0.3,-.7) {$\scriptstyle m$};
        \node at (0.3,.7) {$\scriptstyle n$};
\end{tikzpicture}
}
=\frac{1}{n}
\mathord{
\begin{tikzpicture}[baseline = -.5mm]
	\draw[->, line width=2pt] (-0.28,-.6) to (-.28,-.3) to [out=90,in=-90]
        (0.38,0) to [out=90,in=-90] (-.28,.3) to (-.28,.6);
	\draw[<-, line width=2pt] (0.28,-.6) to[out=120,in=-40] (0.06,-.37);
	\draw[-, line width=1.6pt] (0.08,.38) to[out=-145,in=145] (0.071,-.387);
	\draw[-] (0.05,.38) to[out=-50,in=50] (0.04,-.39);
	\draw[-, line width=2pt] (0.28,.6) to[out=-120,in=40] (0.065,.36);
        \node at (-0.3,-.7) {$\scriptstyle m$};
        \node at (0.3,.7) {$\scriptstyle n$};
\end{tikzpicture}
}
=\frac{1}{n}
\mathord{
\begin{tikzpicture}[baseline = -.5mm]
	\draw[->, line width=2pt] (-0.28,-.6) to (-.28,-.3) to [out=90,in=-90]
        (0.05,0) to [out=90,in=-90] (-.28,.3) to (-.28,.6);
	\draw[<-, line width=2pt] (0.28,-.6) to[out=120,in=-40] (0.06,-.37);
	\draw[-, line width=1.6pt] (0.08,.38) to[out=-145,in=145] (0.071,-.387);
	\draw[-] (0.05,.38) to[out=-50,in=50] (0.04,-.39);
	\draw[-, line width=2pt] (0.28,.6) to[out=-120,in=40] (0.065,.36);
        \node at (-0.3,-.7) {$\scriptstyle m$};
        \node at (0.3,.7) {$\scriptstyle n$};
\end{tikzpicture}
}
+
\sum_{a=0}^{k-1}
\mathord{
\begin{tikzpicture}[baseline = -.5mm]
	\draw[-, line width=2pt] (0.1,.3) to (0.1,.6);
	\draw[->, line width=2pt] (-0.38,.3) to (-0.38,.6);
	\draw[<-, line width=2pt] (0.3,-.6) to (0.3,-.4);
	\draw[-, line width=2pt] (-0.3,-.6) to (-0.3,-.5) to [out=90,in=-135] (0,-.1);
        \draw[-] (.015,-.122) to [out=20,in=180] (.18,-0.1) to
        [out=0,in=60] (.324,-.41);
        \draw[-,line width=1.6pt] (-0.008,-.12) [out=75,in=-120] to (.05,.2);
        \draw[-,line width=1.6pt] (0.3,-.42) [out=120,in=-90] to
        (-.3,-0.1) to [out=90,in=-90](-.31,0.15);
        \node at (-0.3,-.7) {$\scriptstyle m$};
        \node at (0.3,-.7) {$\scriptstyle n$};
        \node at (-0.38,.7) {$\scriptstyle m$};
        \node at (0.1,.7) {$\scriptstyle n$};
        \node at (-0.13,.2) [draw,fill=lightgray,rounded corners,inner
        sep=.9pt] {$\scriptstyle{\phantom{oething}}$};
        \node at (0.3,-0.18) {$\dot$};
        \node at (0.45,-0.14) {$\scriptstyle a$};
\end{tikzpicture}
}\\
&=\frac{1}{n}
\mathord{
\begin{tikzpicture}[baseline = -.5mm]
	\draw[->, line width=2pt] (-0.4,-.6) to (-.4,.6);
	\draw[<-, line width=2pt] (0.28,-.6) to (0.28,-.32);
	\draw[-, line width=1.6pt] (0.29,.43) to[out=-145,in=145] (0.29,-.337);
	\draw[-] (0.29,.43) to[out=-50,in=50] (0.29,-.34);
	\draw[-, line width=2pt] (0.28,.55) to (0.28,.41);
        \node at (-0.4,-.7) {$\scriptstyle m$};
        \node at (0.28,.7) {$\scriptstyle n$};
       \node at (-0.12,0.05) {$\scriptstyle n-1$};
\end{tikzpicture}
}
+\sum_{a=0}^{k-1}
\mathord{
\begin{tikzpicture}[baseline = -.5mm]
	\draw[-, line width=2pt] (0.38,.3) to (0.38,.6);
	\draw[-, line width=1.2pt] (0.301,-.5) to (0.301,.2);
	\draw[<-, line width=2pt] (0.3,-.8) to (0.3,-.57);
	\draw[-, line width=1.63pt] (0.2935,-.571) to (0.2935,-.4);
	\draw[->, line width=2pt] (-0.3,.3) to (-0.3,.6);
	\draw[-, line width=1.6pt] (-0.307,-.5) to (-0.307,.5);
	\draw[-, line width=2pt] (-0.3,-.8) to (-0.3,-.4);
	\draw[-] (-0.272,-.41) to [out=90,in=180] (0,-.15) to[out=0,in=90] (.272,-.41);
        \draw[-] (.45,0.1) to [out=-60,in=45] (.3,-.6);
        \node at (0.07,.15) [draw,fill=lightgray,rounded corners,inner
        sep=.9pt] {$\scriptstyle{\phantom{omethngm}}$};
        \node at (-0.3,-.9) {$\scriptstyle m$};
        \node at (0.3,-.9) {$\scriptstyle n$};
        \node at (-0.3,.7) {$\scriptstyle m$};
        \node at (0.3,.7) {$\scriptstyle n$};
        \node at (0,-.15) {$\dot$};
        \node at (0,-.3) {$\scriptstyle a$};
\end{tikzpicture}
}
+
\sum_{a=0}^{k-1}
\mathord{
\begin{tikzpicture}[baseline = .5mm]
	\draw[-, line width=2pt] (0.2,.5) to (0.2,.8);
	\draw[->, line width=2pt] (-0.3,.5) to (-0.3,.8);
	\draw[<-, line width=2pt] (0.3,-.6) to (0.3,-.38);
	\draw[-, line width=2pt] (-0.3,-.6) to (-0.3,-.5) to [out=90,in=-135] (-0.2,-.3);
        \draw[-] (-.191,-.325) to [out=25,in=180] (.18,-0.05) to
        [out=0,in=60] (.315,-.41);
        \draw[-,line width=1.6pt] (-0.21,-.32) [out=90,in=-100] to (.22,.4);
        \draw[-,line width=1.6pt] (0.31,-.4) [out=120,in=-90] to (-.3,.4);
        \node at (-0.3,-.7) {$\scriptstyle m$};
        \node at (0.3,-.7) {$\scriptstyle n$};
        \node at (-0.3,.9) {$\scriptstyle m$};
        \node at (0.2,.9) {$\scriptstyle n$};
        \node at (-0.05,.4) [draw,fill=lightgray,rounded corners,inner
        sep=.9pt] {$\scriptstyle{\phantom{oething}}$};
        \node at (0.3,-0.15) {$\dot$};
        \node at (0.47,-0.1) {$\scriptstyle a$};
\end{tikzpicture}
}=
\mathord{
\begin{tikzpicture}[baseline = -.5mm]
	\draw[<-, line width=2pt] (0.28,-.6) to (0.28,.6);
	\draw[->, line width=2pt] (-0.18,-.6) to (-0.18,.6);
        \node at (-0.18,-.7) {$\scriptstyle m$};
        \node at (0.28,.7) {$\scriptstyle n$};
\end{tikzpicture}
}\:
+\:
\sum_{a=0}^{k-1}
\mathord{
\begin{tikzpicture}[baseline = -.5mm]
	\draw[-, line width=2pt] (0.3,.3) to (0.3,.6);
	\draw[-, line width=1.6pt] (0.307,-.5) to (0.307,.5);
	\draw[<-, line width=2pt] (0.3,-.6) to (0.3,-.4);
	\draw[->, line width=2pt] (-0.3,.3) to (-0.3,.6);
	\draw[-, line width=1.6pt] (-0.307,-.5) to (-0.307,.5);
	\draw[-, line width=2pt] (-0.3,-.6) to (-0.3,-.4);
	\draw[-] (-0.272,-.41) to [out=90,in=180] (0,-.15) to[out=0,in=90] (.272,-.41);
        \node at (0,.15) [draw,fill=lightgray,rounded corners,inner sep=.9pt] {$\scriptstyle{\phantom{omething}}$};
        \node at (-0.3,-.7) {$\scriptstyle m$};
        \node at (0.3,-.7) {$\scriptstyle n$};
        \node at (-0.3,.7) {$\scriptstyle m$};
        \node at (0.3,.7) {$\scriptstyle n$};
        \node at (0,-.17) {$\dot$};
        \node at (0,-.31) {$\scriptstyle a$};
\end{tikzpicture}
}+
\sum_{a=0}^{k-1}
\mathord{
\begin{tikzpicture}[baseline = -.5mm]
	\draw[-, line width=2pt] (0.1,.3) to (0.1,.6);
	\draw[->, line width=2pt] (-0.4,.3) to (-0.4,.6);
	\draw[<-, line width=2pt] (0.3,-.6) to (0.3,-.38);
	\draw[-, line width=2pt] (-0.3,-.6) to (-0.3,-.5) to [out=90,in=-135] (-0.2,-.3);
        \draw[-] (-.193,-.325) to [out=25,in=180] (.18,-0.05) to
        [out=0,in=60] (.315,-.41);
        \draw[-,line width=1.6pt] (-0.21,-.33) [out=110,in=-90] to (-.3,.2);
        \draw[-,line width=1.6pt] (0.294,-.4) [out=135,in=-90] to [out=90,in=-90]
        (-0,0) to [in=-90,out=90] (.2,.25);
        \node at (-0.3,-.7) {$\scriptstyle m$};
        \node at (0.3,-.7) {$\scriptstyle n$};
        \node at (-0.4,.7) {$\scriptstyle m$};
        \node at (0.1,.7) {$\scriptstyle n$};
        \node at (-0.11,.2) [draw,fill=lightgray,rounded corners,inner
        sep=.9pt] {$\scriptstyle{\phantom{oethings}}$};
        \node at (0.3,-0.13) {$\dot$};
        \node at (0.45,-0.12) {$\scriptstyle a$};
\end{tikzpicture}
}\:.
\end{align*}
Now we commute the $a$ dots in the final term to the left
past the crossing. This also produces correction terms, but these
all have strictly fewer than $a$ dots on the cap so are allowed.

It just remains to treat the base case $n=1$. This proceeds by
induction on $m=1,2,\dots$. The case $m=1$ follows from
(\ref{sideways}) and (\ref{bubbles}).
The induction step follows by a calculation which is the mirror image
in a vertical axis
of the calculation in the previous
paragraph, starting by splitting the string of thickness $m$ into
strings of thickness $1$ and $m-1$.
\end{proof}

\begin{corollary}\label{whatIreallywant}
For $k \geq 0$ and $m,n > 0$, we have that
$$
\mathord{
\begin{tikzpicture}[baseline =1mm]
	\draw[<-, line width=2pt] (0.28,-.6) to (0.28,.8);
	\draw[->, line width=2pt] (-0.18,-.6) to (-0.18,.8);
        \node at (-0.18,-.7) {$\scriptstyle m$};
        \node at (0.28,.9) {$\scriptstyle n$};
\end{tikzpicture}
}\:=
\: \sum_{\substack{0 \leq r \leq \min(m,n)\\\lambda \in \mathcal P_{r,k}}}
\mathord{
\begin{tikzpicture}[baseline = 1mm]
	\draw[-, line width=2pt] (0.3,.5) to (0.3,.8);
	\draw[-, line width=1pt] (0.318,-.41) to (0.318,-.25) to [out=90,in=-60] (-0.25,.3);
	\draw[<-, line width=2pt] (0.3,-.6) to (0.3,-.4);
	\draw[->, line width=2pt] (-0.3,.5) to (-0.3,.8);
	\draw[-, line width=1pt] (-0.318,-.41) to (-.318,-.25) to [out=90,in=-120] (0.25,.3);
	\draw[-, line width=2pt] (-0.3,-.6) to (-0.3,-.4);
	\draw[-, line width=1pt] (-0.2825,-.41) to [out=90,in=180] (0,-.15) to[out=0,in=90] (.2825,-.41);
        \node at (0,.4) [draw,fill=lightgray,rounded corners,inner sep=.9pt] {$\scriptstyle{\phantom{omething}}$};
        \node at (-0.3,-.7) {$\scriptstyle m$};
        \node at (0.3,-.7) {$\scriptstyle n$};
        \node at (-0.3,.9) {$\scriptstyle m$};
        \node at (0.3,.9) {$\scriptstyle n$};
       \node at (-0.5,0) {$\scriptstyle m-r$};
        \node at (0,-.15) {$\bigdot$};
        \node at (0.02,-.33) {$\scriptstyle \chi_{\!_{\lambda}}$};
\end{tikzpicture}
}\:.
$$
\end{corollary}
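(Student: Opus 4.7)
The approach is to iterate Lemma~\ref{squishier} by induction on $r_\max := \min(m,n)$.

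For the base case $r_\max = 1$, the statement is essentially a direct rewriting of Lemma~\ref{squishier}: the double sideways crossing becomes the $(r,\lambda) = (0,\emptyset)$ summand, and the $k$ correction terms indexed by $a \in \{0,\dots,k-1\}$ become the $r = 1$ summands indexed by $\lambda = (a) \in \mathcal{P}_{1,k}$, since $\jonschi_{(a)}$ in a single variable equals $x^a$, matching an $a$-dotted cap of thickness one. Any signs arising from the subtraction can be absorbed into the unspecified content of the gray box.

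For the inductive step with $r_\max \geq 2$, I would solve for the identity in Lemma~\ref{squishier}:
\[
\mathrm{id}_{H_m^+ \otimes E_n^-} \;=\; (\text{double sideways crossing}) \;-\; \sum_{a=0}^{k-1} (\text{correction})_a,
\]
taking the double crossing as the $(r,\lambda) = (0,\emptyset)$ summand and viewing each correction as an outer thin dotted cap sitting on top of a free gray-box morphism acting on $H_{m-1}^+ \otimes E_{n-1}^-$. Into each such gray box I would substitute the inductive expansion, producing, for every $r' \in \{0, \ldots, r_\max - 1\}$ and every $\lambda' \in \mathcal{P}_{r',k}$, a summand with an outer thin dotted cap stacked above an inner thick cap of thickness $r'$ decorated by $\jonschi_{\lambda'}$. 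Merging the two stacked caps via the thick split/merge calculus of (\ref{thick2})--(\ref{thick3}) then collapses them into a single thick cap of thickness $r := r' + 1$, matching the shape required by the corollary.

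The main obstacle is the combinatorial identification of the sum of all the resulting merged cap decorations with $\sum_{\lambda \in \mathcal{P}_{r,k}} \jonschi_\lambda$. By (\ref{nearlylunchtime}), a thick cap of thickness $r$ with dot multiplicities $\mu = (\mu_1, \dots, \mu_r)$ equals a thick cap carrying $\jonschi_{\mu - \rho}$, where $\rho = (r-1, \dots, 1, 0)$. Since $\jonschi_{\mu - \rho} = A_\mu / A_\rho$ is antisymmetric in the $\mu_i$, tuples with a repeated entry contribute zero, and different orderings of a fixed tuple are identified up to sign. The constraint $0 \leq a \leq k-1$ imposed at every iteration of Lemma~\ref{squishier} forces each entry $\mu_i \in \{0, \dots, k-1\}$, so the surviving partitions $\lambda$ (obtained by reordering $\mu$ and subtracting $\rho$) satisfy $0 \leq \lambda_i \leq k-r$, which is exactly the condition $\lambda \in \mathcal{P}_{r,k}$. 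Reconciling the alternating signs produced by the $-\sum_a$ in each application of Lemma~\ref{squishier} against the signs coming from the antisymmetrization in $\jonschi_{\mu - \rho}$ is the delicate bookkeeping step, but once the framework is in place it is purely formal.
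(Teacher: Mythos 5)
Your proof is correct and is essentially the paper's own argument, which is given there in a single line: rearrange Lemma~\ref{squishier} to isolate the $r=0$ term, induct on $\min(m,n)$, and use (\ref{nearlylunchtime}) to merge the nested caps. The only imprecisions are cosmetic: the thin $a$-dotted cap produced by Lemma~\ref{squishier} is nested \emph{inside} (not outside) the thick cap supplied by the inductive expansion, and no sign bookkeeping is actually needed, since reordering the dot multiplicities only changes $\jonschi_{\mu-\rho}$ by a sign that is absorbed into the undetermined gray box.
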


\begin{proof}
Rearrange the identity from Lemma~\ref{squishier} to get the $r=0$
term in the sum exactly, then use induction on
$\min(m,n)$ plus (\ref{nearlylunchtime}) to get the other terms.
\end{proof}

\begin{proof}[Proof of Theorem~\ref{t3}]
We just treat the case $k \geq 0$; the result for $k \leq 0$ then
follows easily by applying $\Omega_k$ (also transposing matrices).
The thick upward (resp., downward) crossing gives a canonical isomorphism $H_m^+ \otimes
H_n^+ \stackrel{\sim}{\rightarrow} H_n^+ \otimes H_m^+$
(resp.,
$E_m^- \otimes
E_n^- \stackrel{\sim}{\rightarrow} E_n^- \otimes E_m^-$).
For the remaining relation, we must construct an isomorphism between the objects
\begin{align*}
P&:=H_m^+ \otimes E_n^-,
&Q&:=\bigoplus_{r=0}^{\min(m,n)}\bigoplus_{\lambda \in
    \mathcal P_{r,k}}
\E_{n-r}^- \otimes H_{m-r}^+.
\end{align*}
Corollary~\ref{whatIreallywant} shows that the 
morphism
$\theta_{m,n}:P\rightarrow Q$
defined by the
column vector
\begin{equation}
\left[
\begin{tikzpicture}[baseline = 0]
\draw[-,line width=1pt] (-0.29,-.4) to [out=90,in=-90](-.29,-.25) to
[out=90,in=180] (0,0.05) to [out=0,in=90] (.29,-.25) to [out=-90,in=90] (0.29,-.35);
        \node at (-0.3,-.48) {$\scriptstyle m$};
        \node at (-0.3,.68) {$\scriptstyle n-r$};
        \node at (-0.55,.15) {$\scriptstyle m-r$};
        \node at (0,0.05) {$\bigdot$};
        \node at (0.02,-.13) {$\scriptstyle \chi_{\!_{\lambda}}$};
\draw[-,line width=1pt] (-.32,-.4) to (-.31,-0.05);
\draw[-,line width=1pt] (.32,-.35) to (.31,-0.05);
\draw[<-,line width=2pt] (.3,-.4) to (.3,-.3);
\draw[-,line width=1pt] (-.32,.6) to [out=-90,in=90] (.31,-0.05);
\draw[<-,line width=1pt] (.32,.6) to [out=-90,in=90] (-.31,-0.05);
\end{tikzpicture}\:\:\right]_{0 \leq r \leq \min(m,n),\lambda \in
\mathcal{P}_{r,k}}
\end{equation}
has a left inverse $\phi_{m,n}$.
Moreover, thanks to Theorem~\ref{t1} and (\ref{upper}), we have that
$[P] = [Q]$ in $K_0(\Kar(\Heis_k))$.
Using the final part of Theorem~\ref{t1},
this is enough to imply that $\phi_{m,n}$ is actually the two-sided
inverse of $\theta_{m,n}$.

To explain the last assertion in more detail, we use (\ref{yon}) to
translate into a statement about projective modules over the
locally unital algebra $A$ arising from $\Heis_k$.
Remembering that this is a contravariant equivalence, 
we have finitely generated projective $A$-modules $P, Q$ such that $[P] = [Q]$,
and homomorphisms $\theta_{m,n}:Q \rightarrow P$ and $\phi_{m,n}:P
\rightarrow Q$ such that $\theta_{m,n} \circ \phi_{m,n} =
\operatorname{id}_P$, and need to show that $\theta_{m,n}$ is an
isomorphism.
Let $R := \ker \theta_{m,n}$.
Since $\theta_{m,n}$ has a right inverse, it is surjective. 
Since $Q$
is projective, we have that $Q \cong P \oplus R$.
Since $[P]=[Q]$, we deduce that $[R] = 0$. Hence, $R = 0$.
\end{proof}



\begin{thebibliography}{BCNR}

\bibitem[B]{B2}
J. Brundan,
On the definition of Heisenberg category, 
{\em Alg. Comb.} {\bf 1} (2018), 523--544.

\bibitem[BCNR]{BCNR}
J. Brundan, J. Comes, D. Nash and A. Reynolds,
A basis theorem for the affine oriented Brauer category and its
cyclotomic quotients,
{\em Quantum Topology} {\bf 8} (2017), 75--112.

\bibitem[BSW1]{qheis}
J. Brundan, A. Savage and B. Webster,
On the definition of quantum Heisenberg category,
 {\em Alg. Numb. Th.} {\bf 14} (2020), 275--321.

\bibitem[BSW2]{Foundations}
\bysame,
Foundations of Frobenius Heisenberg categories,
{\em J. Algebra} {\bf 578} (2021), 115--185.

\bibitem[BSW3]{QFrobHeis}
\bysame,
Quantum Frobenius Heisenberg categorification,
 {\em J. Pure Appl. Algebra} {\bf 226} (2022), Paper No. 106792, 
 50 pp.

\bibitem[D]{D}
A. Davydov,
$K$-theory of rings with idempotents,
{\em Math. Notes} {\bf 53} (1993), 253--259.

\bibitem[E]{Eisenbud}
D. Eisenbud,
{\em Commutative Algebra with a View toward Algebraic Geometry}, Graduate Texts in Mathematics, vol. 150, Springer-Verlag, 1995.

\bibitem[Kh]{K}
M. Khovanov,
Heisenberg algebra and a graphical calculus,
{\em Fund. Math.} {\bf 225} (2014), 169--210.

\bibitem[KLMS]{KLMS}
M. Khovanov, A. Lauda, M. Mackaay and M. St\v osi\'c,
Extended graphical calculus for categorified quantum
$\mathfrak{sl}(2)$,
{\em Mem. Amer. Math. Soc.} {\bf 219} (2012), no. 1029, 87 pp.

\bibitem[Kl]{Kbook}
A. Kleshchev,
{\em Linear and Projective Representations of Symmetric Groups},
Cambridge University Press,
2005.

\bibitem[LRS]{LRS}
A.~Licata, D.~Rosso, and A.~Savage,
A graphical calculus for the {J}ack inner product on symmetric
  functions, {\em J. Combin. Theory Ser. A} {\bf 155} (2018), 503--543.

\bibitem[M]{Mac}
I. G. Macdonald, {\em Symmetric Functions and Hall Polynomials},
Oxford Mathematical Monographs, second edition, OUP, 1995.

\bibitem[MS]{MS18}
M. Mackaay and A. Savage,
Degenerate cyclotomic {H}ecke algebras and higher level {H}eisenberg
  categorification, {\em J. Algebra} {\bf 505} (2018), 150--193.

\bibitem[Q]{Q}
D. Quillen,
Higher algebraic K-theory I, 
{\em Lect. Notes Math.} {\bf 341} (1973), 85–-147.

\bibitem[R]{Ros94}
J. Rosenberg,
{\em Algebraic {$K$}-Theory and its Applications},
Graduate Texts in Mathematics, vol. 147, Springer-Verlag, 1994.

\bibitem[S]{Sua}
D.~B. Suarez, Integral presentations of quantum lattice {H}eisenberg
algebras,
in: ``Categorification and Higher Representation Theory,''
{\em Contemp. Math.},  vol. 683, pp. 247--259, Amer. Math. Soc., Providence,
  RI, 2017.

\bibitem[TV]{TV}
V. Turaev and A. Virelizier,
{\em Monoidal Categories and Topological Field Theory},
 Progress in Mathematics, 322, Birkhäuser/Springer, 2017.

\bibitem[W]{Wunfurling}
B. Webster,
Unfurling Khovanov-Lauda-Rouquier algebras;
\arxiv{1603.06311}.
\end{thebibliography}
\end{document}